\documentclass[amssymb,12pt]{amsart}
\usepackage{latexsym}
\headheight=7pt
\textheight=574pt
\textwidth=432pt
\topmargin=14pt
\oddsidemargin=18pt
\evensidemargin=18pt

\newtheorem{propo}{Proposition}[section]

\newtheorem{lemma}[propo]{Lemma}
\newtheorem{corol}[propo]{Corollary}

\newtheorem{theor}[propo]{Theorem}

\newtheorem{remar}[propo]{Remark}

\newcommand{\Ker}{\operatorname{Ker}}
\newcommand{\Aut}{{\mathrm {Aut}}}
\newcommand{\Out}{{\mathrm {Out}}}
\newcommand{\Mult}{{\mathrm {Mult}}}

\newcommand{\Irr}{{\mathrm {Irr}}}
\newcommand{\IBR}{{\mathrm {IBr}}}
\newcommand{\IBRL}{{\mathrm {IBr}}_{\ell}}

\newcommand{\Ind}{{\mathrm {Ind}}}
\newcommand{\diag}{{\mathrm {diag}}}
\newcommand{\soc}{{\mathrm {soc}}}
\newcommand{\End}{{\mathrm {End}}}

\newcommand{\Hom}{{\mathrm {Hom}}}

\newcommand{\Mat}{{\mathrm {Mat}}}

\newcommand{\Sym}{{\mathrm {Sym}}}
\newcommand{\Char}{{\mathrm {char}}}

\newcommand{\Ext}{{\mathrm {Ext}}}
\newcommand{\CC}{{\mathbb C}}
\newcommand{\RR}{{\mathbb R}}
\newcommand{\QQ}{{\mathbb Q}}
\newcommand{\ZZ}{{\mathbb Z}}

\newcommand{\SSS}{{\sf S}}
\newcommand{\SK}{\Sym^{k}}
\newcommand{\SM}{\Sym^{m}}
\newcommand{\WK}{\wedge^{k}}
\newcommand{\WM}{\wedge^{m}}
\newcommand{\WB}{\wedge^{2}}
\newcommand{\WD}{\wedge^{3}}
\newcommand{\WE}{\wedge^{4}}
\newcommand{\WF}{\wedge^{5}}
\newcommand{\SB}{\Sym^{2}}
\newcommand{\SD}{\Sym^{3}}
\newcommand{\SE}{\Sym^{4}}
\newcommand{\SF}{\Sym^{5}}
\newcommand{\AAA}{{\sf A}}
\newcommand{\FF}{{\mathbb F}}
\newcommand{\FQ}{\mathbb{F}_{q}}
\newcommand{\FQB}{\mathbb{F}_{q}^{\times}}

\newcommand{\FB}{\FF^{\times}}

\newcommand{\kn}{\kappa_{n}}

\newcommand{\GC}{\mathcal{G}}
\newcommand{\BQ}{\bar{\QQ}}
\newcommand{\GCC}{\mathcal{G}_{\CC}}

\newcommand{\HCQ}{\mathcal{H}_{\BQ}}
\newcommand{\GN}{G_{{\bf n}}}
\newcommand{\GNC}{\GN^{\circ}}
\newcommand{\HC}{\mathcal{H}}
\newcommand{\LC}{\mathcal{L}}
\newcommand{\SC}{\mathcal{S}}
\newcommand{\TC}{\mathcal{T}}
\newcommand{\UC}{\mathcal{U}}

\newcommand{\DC}{\mathcal{D}}

\newcommand{\eps}{\epsilon}
\newcommand{\lam}{\lambda}
\newcommand{\Lam}{\Lambda}

\newcommand{\gam}{\gamma}
\newcommand{\om}{\varpi}
\newcommand{\etab}{\bar{\eta}}
\newcommand{\xib}{\bar{\xi}}
\newcommand{\Om}{\Omega}
\newcommand{\OV}{{\Omega(V)}}
\newcommand{\OSV}{{\Omega^{*}(V)}}
\newcommand{\VL}{{V_{\lambda}}}
\newcommand{\VLB}{{V_{\lambda^{-1}}}}
\newcommand{\dvl}{d_{\lambda}(V)}
\newcommand{\dwl}{d_{\lambda^{-1}}(V)}
\newcommand{\bl}{\bar{L}}
\newcommand{\bm}{\bar{M}}
\newcommand{\la}{\langle}
\newcommand{\ra}{\rangle}
\newcommand{\EL}{E_{\lambda}}
\newcommand{\ELB}{E_{\lambda^{-1}}}
\newcommand{\ELC}{E^{*}_{\lambda}}
\newcommand{\QL}{Q_{\lambda}}
\newcommand{\SN}{\SSS_{n}}
\newcommand{\AN}{\AAA_{n}}
\newcommand{\HS}{\hat{\SSS}}
\newcommand{\TS}{\tilde{\SSS}}
\newcommand{\HA}{\hat{\AAA}}

\newcommand{\TSN}{\tilde{\SSS}_{n}}
\newcommand{\HAN}{\HA_{n}}
\newcommand{\DA}{D^{1}}
\newcommand{\DB}{D^{2}}
\newcommand{\DCN}{D^{3}_{n}}
\newcommand{\VC}{V_{\CC}}
\newcommand{\VR}{V_{R}}
\newcommand{\VQ}{V_{\BQ}}
\newcommand{\sta}{(\star)}
\newcommand{\cs}{({\bf S})}
\newcommand{\diam}{~^{(\diamondsuit)}}
\newcommand{\heart}{~^{(\heartsuit)}}
\newcommand{\spade}{~^{(\spadesuit)}}
\newcommand{\dl}{{\mathfrak d}}
\newcommand{\ml}{{\mathfrak m}}
\newcommand{\ts}{{\mathfrak t}}

\newcommand{\ta}{\hspace{0.5mm}^{2}\hspace*{-0.2mm}}
\newcommand{\tb}{\hspace{0.5mm}^{3}\hspace*{-0.2mm}}
\def\skipa{\vspace{-1.5mm} & \vspace{-1.5mm} & \vspace{-1.5mm}\\}
\renewcommand{\mod}{\bmod \,}

\marginparsep-0.5cm

\footnotesep6.5pt

\begin{document}
\title
{Symmetric Powers and a Problem of Koll\'ar and Larsen}
\author{Robert M. Guralnick}
\address{Department of Mathematics, University of Southern California,
Los Angeles, CA 90089-1113, USA}
\email{guralnic@math.usc.edu}
\author{Pham Huu Tiep}
\address{Department of Mathematics, University of Florida, Gainesville,
FL 32611, USA}
\address{{\it Since Aug. 2008}: 
Department of Mathematics, University of Arizona, Tucson, AZ 85721, USA}
\email{tiep@math.arizona.edu}

\keywords{}

\subjclass{}

\thanks{Part of this paper was written while the authors were participating
in the Workshop on Lie Groups, Representations and Discrete Mathematics at the
Institute for Advanced Study (Princeton). It is a pleasure to thank the Institute for its generous 
hospitality and support.} 

\thanks{The authors would like to thank J\'anos Koll\'ar for suggesting this problem to them and 
for insightful comments on the paper, Nolan Wallach for discussions about the 
decompositions for various tensor powers of the natural module for classical groups, 
J\"urgen M\"uller for proving some results about modular representations of
$\ta E_{6}(2)$, and Thomas Breuer, Gunter Malle, Frank L\"ubeck, and Alexandre Turull for their help 
with various computer calculations.}

\thanks{The authors gratefully acknowledge the support of the NSF (grants
DMS-0653873 and DMS-0600967), and of the NSA (grant H98230-04-0066).}

\maketitle

\section{Introduction}
Let $\FF$ be an algebraically closed field of characteristic $\ell \geq 0$, $V = \FF^{d}$, 
and let $\GC = GL(V)$, or $GO(V)$, resp. $Sp(V)$ (the full isometry group of a non-degenerate symmetric,
resp. alternating, bilinear form on $V$). In various applications, including in the 
classification of maximal subgroups of classical groups and in algebraic geometry, it is important 
to know which closed subgroups $G$ of $\GC$ can act irreducibly on $\SK(V)$ for some $k \geq 2$. The list 
of such subgroups $G$, under the assumption that $G$ is connected and positive dimensional, has been 
determined by Dynkin \cite{Dyn} in characteristic $0$ and by Seitz \cite{Se1} 
and Testerman \cite{Tes} in positive characteristic. 
A conjecture of Koll\'ar and Larsen \cite{KL} asserts that if $k$ is not too small, say $k \geq 4$, 
the complete list of such subgroups $G$ remains essentially the same when $G$ is assumed to be closed.
This conjecture has interesting implications, in particular on the holonomy group of a stable vector 
bundle on a smooth projective variety, cf. the very recent work of Balaji and Koll\'ar \cite{BK}. 
The main result of the paper proves this conjecture in the affirmative.

\begin{theor}\label{main}
{\sl Let $\FF$ be an algebraically closed field of characteristic $\ell \geq 0$ and $V = \FF^{d}$ with 
$d > 4$. Assume that a Zariski closed subgroup $G$ of $\GC := GL(V)$ acts irreducibly on $\SK(V)$ for 
some $k \geq 4$. Then either $\ell = 0$ or $\ell > k$. Moreover, one of the following holds.

{\rm (i)} $\HC \lhd G \leq N_{\GC}(\HC)$ with $\HC \in \{SL(V),Sp(V)\}$.

{\rm (ii)} $\ell > 0$, $L \lhd G \leq N_{\GC}(L)$, where $L$ is a quotient of $SL_{d}(q)$, $SU_{d}(q)$, or 
$Sp_{d}(q)$ for some power $q = \ell^{a}$.

{\rm (iii)} $k = 4,5$. Furthermore, $L \lhd G \leq N_{\GC}(L)$ with $(d,L) = (6,2J_{2})$, 
$(12,2G_{2}(4))$, $(12,6Suz)$.

{\rm (iv)} $k = 4,5$, $\ell = 5, 7$, $8900000 > d \geq 196882$, and $M \lhd G \leq N_{\GC}(M)$, 
where $M$ is the Monster sporadic finite simple group.\\
Conversely, the cases listed in {\rm (i) -- (iii)} give rise to examples.}
\end{theor}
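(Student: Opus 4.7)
The plan is to split the analysis of $G$ according to the structure of its connected component $G^{\circ}$, and then reduce the remaining (finite, or essentially finite modulo scalars) case to the classification of finite simple groups, treating each family separately.

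First I would observe that, since $G$ acts irreducibly on $\SK(V)$, the image of $G$ in $GL(\SK(V))$ is reductive, and a short argument shows that this forces $G$ itself to be reductive. Then $G^\circ$ is connected reductive, and there are two main subcases. If $G^{\circ}$ acts irreducibly on $V$, I would apply the results of Dynkin in characteristic zero and of Seitz and Testerman in positive characteristic on irreducible symmetric powers of a rational $G^{\circ}$-module: for $k \geq 4$ these force $V$ (up to twist) to be the natural module for $SL(V)$ or $Sp(V)$, giving conclusion (i), and the necessary condition $\ell = 0$ or $\ell > k$ falls out of the Steinberg/Weyl module analysis of $\SK$ for these groups. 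If instead $G^{\circ}$ is not irreducible on $V$, I would argue using Clifford theory and the rigidity of $\SK$ under restriction that $G^{\circ}$ must in fact lie in the scalars, so $G$ is finite modulo $Z(\GC)$, and we may replace $G$ by a finite quasi-simple extension $L \triangleleft G \leq N_{\GC}(L)$ after a standard reduction through the generalized Fitting subgroup.

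Next I would invoke CFSG to enumerate possibilities for the simple quotient $L/Z(L)$ and then rule out each type on the basis of (a) lower bounds for the dimension $d$ of faithful projective representations of $L$ in characteristic $\ell$, and (b) the requirement that $\SK(V)$, of dimension $\binom{d+k-1}{k}$, remain irreducible for $L$. For $L$ of Lie type in the defining characteristic $\ell$, the representation $V$ must be a restriction of an irreducible rational representation of the ambient algebraic group, and the Seitz--Testerman type classification of irreducible symmetric powers applies to that group; after a Frobenius-descent argument this yields exactly the quotients of $SL_{d}(q)$, $SU_{d}(q)$, $Sp_{d}(q)$ acting on (twists of) the natural module, which is conclusion (ii). For the alternating groups $\AAA_{n}$, for $L$ of Lie type in cross characteristic, and for the sporadic groups, I would use Landazuri--Seitz--Zalesskii bounds together with explicit upper bounds on the largest $\ell$-modular irreducible degree to show that $\binom{d+k-1}{k}$ is too large to arise for $k \geq 4$, except in a short finite list of low-dimensional exceptions that must be inspected individually.

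The main obstacle will be precisely this last, explicit step: showing that the only surviving small-dimensional cross-characteristic or sporadic cases are $(d,L)=(6,2J_{2})$, $(12,2G_{2}(4))$, $(12,6Suz)$ (conclusion (iii)) and the Monster at $d=196883$ or $196882$ in cross-characteristic $\ell=5,7$ (conclusion (iv)), and then checking irreducibility (or non-irreducibility) of $\SK(V)$ in each of these. This requires a careful combination of Brauer character computations, tensor product decompositions $V \otimes V = \SB(V) \oplus \WB(V)$ iterated, and ad hoc arguments for groups such as $\ta E_{6}(2)$ and the Monster where full modular character tables are not readily available; the acknowledgments already hint that this step is where computer algebra and external expertise are needed. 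The converse direction -- showing that the listed cases do give irreducible $\SK(V)$ -- is a direct verification using the relevant character tables and known Clebsch--Gordan-type rules, and I would defer it to the end.
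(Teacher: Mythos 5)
The step that would actually fail is your treatment of the finite almost quasi-simple cases. You propose to eliminate them by comparing $\dim \SK(V)$ (roughly $d^{k}/k!$) with an upper bound on the largest $\ell$-modular degree $\ml(G)$, using Landazuri--Seitz--Zalesskii lower bounds on $d$. That comparison only works when the rank of $S$ is bounded: for classical groups of unbounded rank with Weil modules (e.g. $S = PSU_{n}(q)$ or $PSp_{2n}(q)$ with $d \approx q^{n}$ while $\ml(G)$ is of order $q^{n^{2}/2}$), for the basic spin modules of $\AAA_{n}$ and $\SSS_{n}$ ($d \approx 2^{n/2}$ versus $\ml \approx \sqrt{n!}$), and for small modules such as the heart of the natural permutation module ($d \approx n$), the inequality $d^{k}/k! \leq \ml(G)$ gives no contradiction, and the surviving range of $d$ lies far beyond any available classification of low-dimensional cross-characteristic representations, so there is no ``short finite list'' to inspect. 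This is exactly the obstruction the paper flags: irreducibility of $\SK(V)$ does not yield a usable upper bound on $\dim(V)$. The paper's replacement idea, which is absent from your plan, is local: one finds a large subgroup $C$ (the centralizer of a long-root subgroup, $\HA_{n-2}$, or $2\cdot B$ inside the Monster) such that $\SK(V)|_{C}$ is forced to contain a very small submodule --- via the eigenspace decomposition of $V$ under $Z(Q)$ together with the extendability condition $\sta$ (Propositions \ref{key1}, \ref{key2}, \ref{root}), or via type $+$ constituents --- and then Frobenius reciprocity gives $\dim\SK(V) \leq \dim(\mathrm{small})\cdot(G:C)$, e.g. $|Q/Z(Q)|^{2} < c\,(G:C)$, which kills the generic classical cases; Weil modules, unitary groups in even characteristic, and spin modules then still need separate arguments (Propositions \ref{su}, \ref{weil}, \ref{mbasic}). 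Equally missing is the orthogonality obstruction (Lemma \ref{so-sym}: a noncentral normal subgroup inside $GO(V)$ forces $\SK(V)$ reducible), which is what eliminates the many self-dual type $+$ candidates that no degree count can touch.

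There is also a gap in your structural reduction: passing ``through the generalized Fitting subgroup'' to a quasi-simple $L$ omits the symplectic-type case $F^{*}(G) = Z(G)E$ with $E$ extraspecial. This case genuinely occurs among irreducible, primitive, tensor-indecomposable subgroups, it does support irreducible $\Sym^{2}$ and (for $p=2$) irreducible $\Sym^{3}$ over $N_{\GC}(E)$, and ruling it out for $k \geq 4$ requires a real argument (the paper's Theorem \ref{extra}, using Clifford theory over $P$ and an eigenvalue count for the central involution of $Sp_{2n}(p)$). You should also make explicit the preliminary verification that $V$ is primitive, tensor indecomposable and not tensor induced (hypothesis $\cs$), since this is both what enables the reduction and where the condition $\ell = 0$ or $\ell > k$ is proved for arbitrary $G$, not merely in your case (i). Finally, in the Monster case your claim that $d$ is $196882$ or $196883$ overstates what can be proved: the scarcity of $5$- and $7$-modular information for $M$ is precisely why the theorem only asserts $196882 \leq d < 8.9 \times 10^{6}$ and why the possibility (iv) with $\ell = 5,7$ is left open rather than resolved.
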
     

Observe that there are infinite series of examples of finite subgroups of $GL(V)$, not satisfying 
conclusions (ii)--(iv) of Theorem \ref{main} and such that $\SB(V)$ and $\SD(V)$ are irreducible over $G$,
cf. \cite{MT2}. 
Another curious example is that the subgroup 
$G_{2}(\CC)$ of $\GC = SO_{7}(\CC)$ is irreducible on all $\GC$-composition factors of $\SK(V)$ for 
all $k$, cf. \cite{Se1}. 

\medskip
The small dimensional case is handled by the following:

\begin{theor}\label{lowdim}
{\sl Let $\FF$ be an algebraically closed field of characteristic $\ell \geq 0$ and $V = \FF^{d}$ with 
$d \leq 4$. Assume that a Zariski closed subgroup $G$ of $\GC := GL(V)$ acts irreducibly on $\SK(V)$ for 
some $k \geq 4$. Then either $\ell = 0$ or $\ell > k$ or $d \leq 2$. Furthermore, one of the following holds.

{\rm (i)} $\HC \lhd G \leq N_{\GC}(\HC)$ with $\HC \in \{SL(V),Sp(V)\}$.

{\rm (ii)} $\ell > 0$, $L \lhd G \leq N_{\GC}(L)$, where $L$ is a quotient of $SL_{d}(q)$, $SU_{d}(q)$, or 
$Sp_{d}(q)$ for some power $q = \ell^{a}$.

{\rm (iii)} $\ell \neq 2$, $d = 2$, $G = Z(G) * SL_{2}(5)$. Furthermore, $k = 4,5$ if $\ell = 0$ or 
$\ell > 5$, $k = 4$ if $\ell = 5$, and $k = 5$ if $\ell = 3$. 

{\rm (iv)} $\ell = 5$, $d = 3$, $G = Z(G) * 3\AAA_{7}$, and $k = 4$.\\
Conversely, all the above cases give rise to examples.}
\end{theor}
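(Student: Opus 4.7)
The proof will follow the same template as Theorem~\ref{main} but exploit the very restrictive structure of irreducible subgroups of $\GC$ when $d \leq 4$. First, if $G^{\circ}$ is positive-dimensional, the Dynkin--Seitz--Testerman classification of connected closed subgroups with an irreducible symmetric power, specialized to $d \leq 4$, forces $G^{\circ} \supseteq \HC$ for some $\HC \in \{SL(V),Sp(V)\}$; the remaining candidates on those lists have $\SK(V)$ reducible once $k \geq 4$ in this dimension range. Normality of $G^{\circ}$ in $G$ then yields conclusion~(i), and we may assume $G^{\circ} = 1$, i.e., $G$ is finite.

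Second, I would apply an Aschbacher-type analysis to the finite irreducible subgroup $G \leq GL(V)$. The imprimitive class is excluded because a wreath product $H \wr \SSS_m$ preserving a decomposition $V = \bigoplus_{i=1}^{m} V_i$ always fixes the ``diagonal'' subspace $\bigoplus_i \SK(V_i)$ inside $\SK(V)$, contradicting irreducibility for $k \geq 2$. Tensor-induced and tensor-decomposable classes are impossible for $d = 2,3$ and easily handled for $d=4$; field-extension and normalizer-of-classical classes in defining characteristic feed directly into conclusion~(ii), using the standard fact that the symmetric powers of the natural module of $SL_d(q)$, $SU_d(q)$, $Sp_d(q)$ are irreducible when $\ell > k$. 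What remains is the case where $L := E(G)$ is a quasisimple cross-characteristic subgroup with $V$ absolutely irreducible and $G \leq N_{\GC}(L)$.

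Third, the list of finite quasisimple $L$ admitting a faithful absolutely irreducible representation of dimension $d \in \{2,3,4\}$ in any characteristic $\ell$ is short and classical (for $d=2$ the only option up to scalars is $2\AAA_5 = SL_2(5)$ in characteristic $\neq 2$; for $d=3,4$ one has a short explicit list including covers of $\AAA_5,\AAA_6,\AAA_7$, $L_2(7)$, $L_3(q)$, $U_3(q)$, $Sp_4(3)$, etc.). For each candidate $L$, each admissible $\ell$, and each $k \geq 4$, I would check whether $\binom{d+k-1}{k}$ appears as an irreducible $\ell$-modular degree of $L$ and whether the matching irreducible module really is $\SK(V)$. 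This finite computation leaves only the $SL_2(5)$ example in (iii) and the $3\AAA_7$ example in (iv). The converse direction is then a direct verification using the classical irreducibility of $\SK(V)$ under $SL(V)$ and $Sp(V)$ for $\ell = 0$ or $\ell > k$, together with explicit character calculations for (iii)--(iv).

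The main obstacle is the modular verification for small cross-characteristic. In characteristics $\ell \leq k$ the symmetric power functor behaves very differently from characteristic zero, and irreducibility of $\SK(V)$ over a finite quasisimple $L$ depends sensitively on $\ell$: this is exactly what produces the split behaviour in (iii), where $SL_2(5)$ is irreducible on $\SK(V)$ only for the specific pairs $(k,\ell) \in \{(4,0),(5,0),(4,\ell > 5),(5,\ell > 5),(4,5),(5,3)\}$, and the single isolated example $(d,L,\ell,k)=(3,3\AAA_7,5,4)$ in (iv). Establishing these precise ranges while simultaneously ruling out other small-dimensional candidates (for instance, showing that the many three- and four-dimensional $PSL_2(q)$-modules do not give new examples) requires direct computation with Brauer characters and decomposition matrices, and forms the technical heart of the proof.
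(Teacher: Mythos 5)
Your overall strategy coincides with the paper's: an Aschbacher-style reduction, the connected case via Dynkin--Seitz--Testerman, defining characteristic feeding into (ii), and a short cross-characteristic list finished by dimension bounds and Atlas/Brauer-character checks (the paper's list is $S \in \{\AAA_{5},\AAA_{6},\AAA_{7},PSL_{2}(7),PSL_{3}(4),PSU_{4}(2)\}$, obtained from the Landazuri--Seitz bounds together with $\dim\SE(V) \leq \ml(G)$). There is, however, one genuine gap in your second step: after excluding the reducible, imprimitive, tensor-decomposable/induced, field-extension and classical classes, you assert that ``what remains is the case where $L := E(G)$ is quasisimple''. That is false for $d \leq 4$: Aschbacher's class $\mathcal{C}_{6}$, the normalizers of symplectic-type $r$-groups, supplies primitive irreducible finite subgroups with trivial layer, namely (up to scalars) $N(Q_{8}) \leq GL_{2}$, $N(3^{1+2}) \leq GL_{3}$ and $N(2^{1+4}) \leq GL_{4}$, and these must be ruled out separately. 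The paper does this by lifting to characteristic $0$ (Lemma \ref{lift}), reducing to $k=4$ via Corollary \ref{sym2}, and observing that $\dim\SE(V) = 5$, $15$, $35$ cannot divide $|G/Z(G)|$, since $G/P$ embeds in $Sp_{2}(2)$, $Sp_{2}(3)$, $Sp_{4}(2)$ respectively; without some such argument your case enumeration is incomplete.

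Two further steps are glossed over. First, the dichotomy ``$G^{\circ}$ positive-dimensional versus $G$ finite'' omits the case where $G^{\circ}$ is a central torus (as in conclusion (iii) with $G = \FF^{\times} * SL_{2}(5)$): there the connected classification says nothing, and one needs the normalized subgroup $\GN$ of Proposition \ref{reduction} to pass to a subgroup with finite center before splitting into the algebraic, quasisimple and extraspecial cases. Second, even when $G^{\circ}$ is simple and noncentral, Dynkin--Seitz--Testerman applies only after one knows that $\SK(V)$ remains irreducible on restriction to $G^{\circ}$; this is the Clifford-theory/highest-weight-uniqueness argument of Theorem \ref{defi}(i). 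Likewise, the passage in defining characteristic from a finite quasisimple $L$ to the ambient algebraic group (Theorem \ref{defi}(ii)) is not a ``standard fact'' but a key lemma of the paper: it is what excludes, for instance, $SL_{2}(q)$ acting on the restricted $4$-dimensional module $L(3\om_{1})$, a case your sketch does not otherwise address. With these repairs (and the separate proof, as in Lemma \ref{step1}(i), that $\ell = 0$ or $\ell > k$ once $d \geq 3$), your outline matches the paper's proof, and the remaining Brauer-character verifications yielding exactly (iii) and (iv) proceed as you describe.
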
      

As shown in \cite{BK}, Theorems \ref{main} and \ref{lowdim} imply 

\begin{corol}\label{holonomy} {\rm \cite[Cor. 6]{BK}}
{\sl Let $E$ be a stable vector bundle on a complex smooth projective variety $X$ of rank different 
from $2$, $6$, $12$. Then the following are equivalent:

{\rm (i)} $\SK(E)$ is stable for some $k \geq 4$.

{\rm (ii)} $\SK(E)$ is stable for every $k \geq 4$.

{\rm (iii)} The commutator subgroup of the holonomy group is either $SL(E_{x})$ or $Sp(E_{x})$.
\hfill $\Box$ }
\end{corol}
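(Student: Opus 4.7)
The plan is to deduce the corollary from Theorems \ref{main} and \ref{lowdim} by way of the standard dictionary, made precise in \cite{BK}, between the slope-stability of an associated bundle $F(E)$ for a functorial construction $F$ and the irreducibility of $F(E_{x})$ as a representation of the (Narasimhan--Seshadri / Balaji--Koll\'ar) holonomy group $\HC\subset GL(E_{x})$. Since $X$ is complex smooth projective and $E$ is stable, the holonomy group $\HC$ is a compact subgroup of $GL(E_{x})$, and its Zariski closure $G$ is a reductive subgroup of $GL(E_{x})$ such that for any $k$, $\SK(E)$ is (poly)stable if and only if $\SK(E_{x})$ is semisimple as a $G$-module; moreover $\SK(E)$ is stable iff $\SK(E_{x})$ is an irreducible $G$-module. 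Under this translation the three conditions of the corollary become: (i) $G$ acts irreducibly on $\SK(E_{x})$ for some $k \geq 4$; (ii) $G$ acts irreducibly on $\SK(E_{x})$ for every $k \geq 4$; (iii) $[G,G]\in\{SL(E_{x}),Sp(E_{x})\}$.

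The implication (iii)$\Rightarrow$(ii) is the classical fact that in characteristic zero the natural modules for $SL(V)$ and $Sp(V)$ remain irreducible on all symmetric powers $\SK(V)$; this is inherited by any reductive overgroup $G$ with $[G,G]\in\{SL,Sp\}$. The implication (ii)$\Rightarrow$(i) is trivial. The core content is (i)$\Rightarrow$(iii), which is precisely where Theorems \ref{main} and \ref{lowdim} enter: apply the relevant theorem to the closed subgroup $G$ of $\GC = GL(E_{x})$ with $\ell = 0$ and $d = \rank(E)$. I would split into two cases according to whether $d \geq 5$ or $d \leq 4$, and in each case eliminate every conclusion of the theorem except the $SL/Sp$ conclusion (i).

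For $d \geq 5$, I would invoke Theorem \ref{main}: conclusion (ii) there requires $\ell > 0$ and is thus excluded, conclusion (iv) requires $\ell = 5$ or $7$ and is also excluded, and conclusion (iii) forces $d \in \{6,12\}$, which is excluded by the hypothesis $\rank(E)\notin\{2,6,12\}$. Hence conclusion (i) of Theorem \ref{main} holds, giving $[G,G]\in\{SL(E_{x}),Sp(E_{x})\}$. For $d\leq 4$, I would instead appeal to Theorem \ref{lowdim}: the hypothesis $d \neq 2$ eliminates the $2\cdot SL_{2}(5)$ case (iii), conclusion (iv) requires $\ell = 5$ and so is excluded in characteristic $0$, and conclusion (ii) again requires $\ell > 0$; rank $1$ is trivial (every $\SK(E)$ is a line bundle, automatically stable, and $[G,G] = 1 = SL(E_{x})$). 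The main obstacle is genuinely upstream, namely the correspondence between stability of $\SK(E)$ and irreducibility of $\SK(E_{x})$ as a $\HC$-module, together with the reductivity of $G$; taking those from \cite{BK} as given, the corollary becomes essentially a bookkeeping exercise that reads off which cases of Theorems \ref{main} and \ref{lowdim} survive the rank restriction in characteristic zero.
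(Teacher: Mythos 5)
Your proposal is correct and is essentially the intended argument: the paper itself offers no proof of this corollary, simply attributing it to \cite{BK}, where exactly this deduction (the stability--irreducibility dictionary for the algebraic holonomy group, plus elimination of the exceptional cases of Theorems \ref{main} and \ref{lowdim} in characteristic $0$ under the rank restriction) is carried out. The only slight imprecision is calling the holonomy group compact: the group appearing in (iii) is the reductive algebraic holonomy group of \cite{BK} (already Zariski closed), which is the $G$ to which you apply the theorems, and your case analysis and the identification $[G,G]=SL(E_{x})$ or $Sp(E_{x})$ from conclusion (i) are correct.
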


The exceptions in rank $2$, $6$, and $12$ are related to the possibilities described in Theorem 
\ref{main}(iii) and Theorem \ref{lowdim}(iii).
 
\medskip
The main ideas of our proofs can be outlined as follows. Suppose a subgroup $G$ of $GL(V)$ satisfies 
the hypotheses of Theorem \ref{main}, resp. Theorem \ref{lowdim}. First, arguments along the lines of 
Aschbacher's Theorem \cite{A} reduce the problem to the cases where $G$ normalizes either a certain 
$p$-group for a prime $p$ dividing $d$, a simple algebraic group, or a finite (quasi)simple group $S$, cf. 
Proposition \ref{red}. The first case can be handled quickly using character-theoretic methods, see
Theorem \ref{extra}. In the second case, as well as in the third case with $S$ a finite group of Lie type 
defined in the same characteristic $\ell$ as of $\FF$, various tools from the (modular) representation 
theory of algebraic groups (cf. Theorem \ref{defi}) allow us to reduce to the case of connected reductive 
algebraic groups and then apply the classic results of Dynkin \cite{Dyn} and Seitz \cite{Se1}. The main 
obstacles arise in the third case and moreover when $S$ is not a finite group of Lie type in the same 
characteristic as of $\FF$. Unlike the situations considered previously in \cite{GT2} and \cite{MMT}, the 
irreducibility of $\SK(V)$ does not yield (nontrivial) upper bounds on $\dim(V)$ -- such a bound was the
crucial step in the mentioned papers. The key idea here is to show that $G$ possesses a large enough 
subgroup $C$ such that the restriction of $\SK(V)$ to $C$ contains a small enough submodule. Even though 
this argument does not yield an upper bound on $\dim(V)$, it does lead to a strong constraint on $G$ and 
some of its natural subgroups which ultimately yields a contradiction, cf. for instance Proposition 
\ref{key1}. The case when $S$ is a sporadic finite simple group also presents considerable difficulties since 
for some of them (say the Monster) there is only very scarce information about their modular representation
theory (and this is usually available only when the Sylow $\ell$-subgroups of $S$ are cyclic). As usual,
low dimensional representations such as Weil representations of finite classical groups and basic spin
representations of (double covers of) symmetric and alternating groups also require special treatment as 
well. In certain situations when $\ell$ is large enough, results of Serre \cite{S} and McNinch \cite{McN} 
allow one to reduce to the complex case.           
 
In this paper we also obtain various results concerning the reducibility of 
exterior powers $\wedge^{k}(V)$ as well. But, as the example of $\SN$ acting on the heart of the natural 
permutation module shows, the irreducibility of $\wedge^{k}(V)$ is not enough to tell apart $\GC$ from its 
finite closed subgroups, cf. also \cite{MMT}. In fact, as shown in Proposition \ref{red-a} and Theorem
\ref{extra}, a Zariski closed subgroup of $GL(V)$ with $\dim(V) \geq 6$ can be 
irreducible on $\WK(V)$ for some $k \geq 3$ only when either $G$ is almost quasi-simple
(i.e. $\soc(G/Z(G))$ is a simple, algebraic or finite, group) or $G$ stabilizes a 
decomposition of $V$ into $1$-spaces. We intend to fully investigate this question in a sequel of the 
paper. Here we will prove the following theorem

\begin{theor}\label{sym-alt}
{\sl Let $\FF$ be an algebraically closed field of characteristic $0$, $V = \FF^{d}$ with 
$d > 4$, and let $G$ be a Zariski closed subgroup of $GO(V)$. Assume $G$ does not contain $SO(V)$.
Then $G$ is either reducible on $\SE(V)/\SB(V)$, or on $\wedge^{4}(V)$ for $d > 7$, or on 
$\wedge^{2}(V)$ for $d \leq 7$.}
\end{theor}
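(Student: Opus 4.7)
The plan is to prove the contrapositive: if $G$ acts irreducibly on both $\SE(V)/\SB(V)$ and on $\WE(V)$ (when $d>7$) or on $\WB(V)$ (when $d\le 7$), then $G\supseteq SO(V)$. First I would observe that irreducibility of $G$ on the harmonic quotient $\SE(V)/\SB(V)$ forces $V$ itself to be $G$-irreducible, since a proper $G$-invariant subspace is either non-degenerate or totally singular, and in either case induces a nontrivial tensor-product decomposition of $\Sym^{4}(V)$ that descends to the quotient by $q\cdot \Sym^{2}(V)$, where $q$ is the invariant form. Because $\Char(\FF)=0$, the connected component $G^{\circ}$ is reductive and acts completely reducibly; by Clifford theory either $G^{\circ}$ is itself irreducible on $V$, or $G$ stabilizes a nontrivial direct sum decomposition of $V$ (the imprimitive case).

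In the positive-dimensional case with $G^{\circ}$ irreducible on $V$, I would apply Dynkin's classification \cite{Dyn} of simple connected subgroups of $GL(V)$ acting irreducibly on $\Sym^{4}(V)$, equivalently on its top constituent $\SE(V)/\SB(V)$. Intersecting this list with $GO(V)$ and excluding $SO(V)$ itself leaves only finitely many small-rank exceptional embeddings, notably $G_{2}\hookrightarrow SO_{7}$ and $Spin_{7}\hookrightarrow SO_{8}$. For each surviving candidate the $G^{\circ}$-module structure of the relevant exterior power is read off from standard branching rules and turns out to be reducible; for instance $\WB(V)$ for $G_{2}\hookrightarrow SO_{7}$ decomposes as $V\oplus \mathfrak{g}_{2}$, giving the desired contradiction.

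If $G$ is imprimitive, stabilizing a decomposition $V=V_{1}\oplus\dots\oplus V_{s}$ with $s\ge 2$, then the irreducibility on $\SE(V)/\SB(V)$ fails at once, since this decomposition splits $\Sym^{4}(V)$ into summands indexed by $G$-orbits on multi-compositions of weight $4$, and the harmonic quotient retains several of these summands. The remaining situation is $G$ finite with $G^{\circ}=1$ and $V$ primitive. For $d>7$, Proposition \ref{red-a} and Theorem \ref{extra} force $G$ to be almost quasi-simple, and the combined irreducibility on $\SE(V)/\SB(V)$ (of dimension $\binom{d+3}{4}-\binom{d+1}{2}$) and on $\WE(V)$ yields extremely restrictive character-degree constraints on the covering group of the socle, which the subgroup-comparison technique of Proposition \ref{key1} eliminates. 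For $d=5,6,7$, where the relevant exterior power is $\WB(V)$, the argument falls outside the scope of Proposition \ref{red-a}, but the finite primitive irreducible subgroups of $SO_{d}(\FF)$ in these low ranks are classically known, and a direct case analysis suffices.

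The main obstacle is the almost quasi-simple finite case for moderate $d$: no a priori bound on $d$ follows from the hypotheses, so the argument must adapt the subgroup-comparison technique developed elsewhere in the paper, combined with detailed knowledge of tensor decompositions and character degrees of quasi-simple groups admitting a faithful complex representation of dimension $d$ with an invariant quadratic form.
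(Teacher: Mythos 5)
Your overall strategy (contrapositive, split into connected/imprimitive/finite-primitive cases) is reasonable, but the finite almost quasi-simple case -- which you yourself call ``the main obstacle'' -- is not actually proved, and the tools you point to cannot close it. Proposition \ref{key1} applies only to socles of Lie type in cross characteristic whose root-subgroup centralizers satisfy the condition $\sta$, and its built-in exceptions (Weil representations of symplectic and small unitary groups, cf.\ Lemma \ref{spec}) are precisely the families that genuinely survive here; alternating and sporadic socles would need separate arguments, and Theorem \ref{main} is of no use since a subgroup of $GO(V)$ is never irreducible on $\SE(V)$ itself, only possibly on the quotient $L(4\om_{1})$. Moreover, no abstract ``character-degree constraint'' coming from irreducibility of $L(4\om_{1})$ alone can eliminate the finite case, because genuine examples exist: $G_{2}(\FF)<SO_{7}$ is irreducible on $L(k\om_{1})$ for all $k$, and $2\cdot Co_{1}<SO_{24}$ is irreducible on $L(k\om_{1})$ for $k\le 5$. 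So one is forced to know the exceptional subgroups explicitly and to verify reducibility of $\WE(V)$ (or $\WB(V)$) case by case; your proposal supplies neither the list nor these verifications.

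The paper takes a much shorter route that you are missing: by Corollary \ref{sym2}(ii), irreducibility on $L(4\om_{1})=\SE(V)/\SB(V)$ forces irreducibility on $L(2\om_{1})$, and the closed subgroups of $GO(V)$ irreducible on $L(2\om_{1})$ are classified in \cite[Cor.~1.7]{T2}: $G_{2}(\FF)<SO_{7}$, normalizers of extraspecial $2$-groups, Weil representations of $PSp_{2n}(5)$, $SU_{2n+1}(3)$, $SU_{2n}(2)$, and a finite explicit list of further groups. Each case is then killed by Theorem \ref{extra}, by Lemma \ref{index}(ii) applied to a long-root-element centralizer or a unitary subgroup together with the known branching of Weil characters (producing $B\oplus B^{*}$ inside $V$), or by a direct dimension/character check on $L(4\om_{1})$ or $\WE(V)$. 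Two further, smaller points: your reduction to $V$ being $G$-irreducible is shaky as stated (a totally singular invariant subspace does not give a ``tensor-product decomposition'' of $\Sym^{4}(V)$; the paper handles it via the fixed points of the unipotent radical of the stabilizer, since $G$ need not be reductive and semisimplicity of $\Sym^{4}(V)$ is unavailable), and in the connected case Dynkin's tables must be applied to irreducibility of the harmonic constituent $L(4\om_{1})$ restricted to $G^{\circ}$ (after a Clifford-type argument passing from $G$ to $G^{\circ}$), not to $\Sym^{4}(V)$; incidentally $Spin_{7}<SO_{8}$ already fails on $L(2\om_{1})$, so it is not in fact a surviving candidate.
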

  
(In the situation of this theorem, one can identify 
$\Sym^{k}(V)$ with the space of homogeneous polynomials of degree $k$ in $d$ variables. The 
$GO(V)$-invariant quadratic form on $V$ yields a $GO(V)$-invariant quadratic polylomial $Q$,
and the multiplication by $Q$ yields an embedding of $\SB(V)$ into $\SE(V)$.)
     
Theorem \ref{sym-alt} in particular yields another proof of Larsen's conjecture proved in 
\cite{GT2}. (Indeed, the proof in \cite{GT2} uses the irreducibility of $G$ on every 
$GO(V)$-composition factor of $V^{\otimes 4}$ to derive the containment $G \ge SO(V)$, whereas the new
proof, see Corollary \ref{larsen} and its proof below, uses $G$-irreducibility only on a few 
specific composition factors.)  
Larsen's conjecture has already been used by Katz, to study the monodromy group attached 
to a Lefschetz pencil of smooth hypersurface sections of a projective smooth variety $X$ over a finite 
field $k$ \cite{Ka1}, and to determine the geometric monodromy group attached to a family of character sums
over finite fiels \cite{Ka2}. It also has implications on the holonomy group of a stable vector bundle on 
a complex smooth projective variety, cf. \cite{BK}. 

\begin{corol}\label{larsen} {\rm (Larsen's conjecture)} 
{\sl Let $\FF$ be an algebraically closed field of characteristic $0$, $V = \FF^{d}$ and let 
$\GC = GL(V)$, $GO(V)$, or $Sp(V)$. If $d \leq 4$, assume in addition that $\GC \neq GO(V)$.
Let $G$ be a Zariski closed subgroup of $\GC$ such that $G^{\circ}$ is reductive and $G$ does not 
contain $[\GC,\GC]$. Then one of the following holds.

{\rm (i)} $\dim(\End_{G}(V^{\otimes 4})) > \dim(\End_{\GC}(V^{\otimes 4}))$. 

{\rm (ii)} $d = 6$, $\GC = Sp(V)$, and $G = 2J_{2}$.

{\rm (iii)} $d = 2$, $\GC = GL(V)$, and $G = Z(G)*SL_{2}(5)$.}
\end{corol}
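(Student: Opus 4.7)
The plan is to deduce Corollary \ref{larsen} from Theorem \ref{sym-alt} together with Theorems \ref{main} and \ref{lowdim}, by first translating the failure of condition (i) into an irreducibility statement on every $\GC$-composition factor of $V^{\otimes 4}$. Since $\FF$ has characteristic zero and $G^{\circ}$ is reductive, both $\GC$ and $G$ act reductively on $V$, so $V^{\otimes 4}$ is completely reducible over each. A standard isotypic computation then yields
\[
\dim \End_G(V^{\otimes 4}) \geq \dim \End_{\GC}(V^{\otimes 4}),
\]
with equality precisely when every $\GC$-irreducible constituent of $V^{\otimes 4}$ restricts irreducibly to $G$ and no two non-isomorphic $\GC$-constituents become $G$-isomorphic. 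Assuming (i) fails, $G$ therefore acts irreducibly on every $\GC$-composition factor of $V^{\otimes 4}$.

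For $\GC = GO(V)$ (hence $d > 4$), the modules $\SE V/\SB V$, $\WE V$, and $\WB V$ all occur as $\GC$-composition factors of $V^{\otimes 4}$: the first from the harmonic decomposition of $\SE V$, the second as a $GL(V)$-summand, and the third via the $GO(V)$-equivariant contraction $V^{\otimes 4} \to V^{\otimes 2}$. Irreducibility of $G$ on each, combined with Theorem \ref{sym-alt}, forces $G \supseteq SO(V) = [\GC,\GC]$, contrary to the standing hypothesis; thus the orthogonal case yields no exception.

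For $\GC = GL(V)$ or $Sp(V)$, the module $\SE V$ is an irreducible $\GC$-constituent of $V^{\otimes 4}$, so $G$ acts irreducibly on it. Applying Theorem \ref{main} when $d > 4$, or Theorem \ref{lowdim} when $d \leq 4$, with $k = 4$ and $\ell = 0$, I obtain one of: (a) $\HC \lhd G$ with $\HC \in \{SL(V), Sp(V)\}$; (b) $L \lhd G$ with $(d, L) \in \{(6, 2J_2), (12, 2G_2(4)), (12, 6Suz)\}$; or (c) $d = 2$ and $G = Z(G) * SL_2(5)$. Case (a) with $\HC = SL(V)$ gives $G \supseteq SL(V) \supseteq [\GC, \GC]$ (the containment forcing $\GC = GL(V)$ unless $d = 2$, where $SL_2 = Sp_2$). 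Case (a) with $\HC = Sp(V)$, $\GC = Sp(V)$ gives $G \supseteq [\GC, \GC]$ directly; the sub-case $\HC = Sp(V)$, $\GC = GL(V)$ ($d$ even, $d \geq 4$) is excluded because then $G \leq Z(GL(V)) \cdot Sp(V)$ and the center acts by a single character on $V^{\otimes 4}$, so $\dim \End_G(V^{\otimes 4}) = \dim \End_{Sp(V)}(V^{\otimes 4}) > \dim \End_{GL(V)}(V^{\otimes 4})$, the strict inequality coming from the extra $Sp(V)$-invariants produced by double contractions using the symplectic form. Case (c) is precisely exception (iii), and the instance $(d, L, \GC) = (6, 2J_2, Sp_6)$ of case (b) is exception (ii), verified by a direct character computation.

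The main obstacle is the elimination of the remaining instances in case (b): $(6, 2J_2)$ with $\GC = GL_6$, together with the two $12$-dimensional cases $(12, 2G_2(4))$ and $(12, 6Suz)$ for either $\GC = GL_{12}$ or $\GC = Sp_{12}$. For each such $(L, \GC)$ one must exhibit a $\GC$-composition factor of $V^{\otimes 4}$ distinct from $\SE V$ on which $L$ acts reducibly -- typically $\WE V$ or an $Sp_d$-harmonic constituent thereof. This requires explicit inspection of the complex character tables of $2J_2$, $2G_2(4)$, and $6Suz$, comparing their irreducible character degrees with the dimensions of the various $\GC$-composition factors of $V^{\otimes 4}$.
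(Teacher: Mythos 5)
Your overall strategy is the paper's: you convert the failure of (i), via semisimplicity, into the statement that $G$ is irreducible on every $\GC$-composition factor of $V^{\otimes 4}$ (preserving non-isomorphy), dispose of $\GC = GO(V)$ by Theorem \ref{sym-alt} (noting that $L(4\om_{1})$, $\wedge^{4}(V)$ and $\wedge^{2}(V)$ all occur among such factors), and for $\GC = GL(V)$ or $Sp(V)$ feed the irreducibility of $\SE(V) = L(4\om_{1})$ into Theorems \ref{main} and \ref{lowdim} with $k=4$, $\ell = 0$, then weed out the surviving candidates; your elimination of $Sp(V) \lhd G \leq GL(V)$ by the extra symplectic invariants in $V^{\otimes 4}$ is also in substance what the paper does (it phrases it as reducibility of $Sp(V)$ on $\wedge^{4}(V)$ for $d \geq 8$ and on $\wedge^{2}(V)$ for $d = 4,6$).

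The genuine gap is that the decisive last step is announced but not executed. You yourself label as "the main obstacle" the configurations $(6,2J_{2})$ in $GL_{6}$, $(12,2G_{2}(4))$ and $(12,6Suz)$ in $GL_{12}$, and the $12$-dimensional cases in $Sp_{12}$, and then say only that "one must exhibit" a suitable reducible composition factor and that "this requires explicit inspection of the character tables." Until those checks are made, the trichotomy (i)--(iii) is not proved: a priori each of these groups could be a further exception. Note also that part of this obstacle dissolves without any character tables: for $2J_{2}$ in dimension $6$ and $2G_{2}(4)$ in dimension $12$ the module $V$ is self-dual of symplectic type, so $N_{GL(V)}(L) \leq Z(GL(V))\,Sp(V)$ and your own argument for the case $Sp(V) \lhd G \leq GL(V)$ already applies; the only configurations truly requiring group-specific input are $6Suz < GL_{12}$ (whose $12$-dimensional module is not self-dual, the center acting by sixth roots of unity) and $2G_{2}(4) < Sp_{12}$. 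The paper settles these in one line, by observing that $G$ is reducible on $\wedge^{4}(V)$ in the two $12$-dimensional $GL$-cases and on the $429$-dimensional constituent $L(\om_{4})$ of $\wedge^{4}(V)$ for $2G_{2}(4)$ inside $Sp_{12}$, after which only $(6,2J_{2})$ in $Sp_{6}$ and $(2,SL_{2}(5))$ survive, giving (ii) and (iii). Supplying these verifications (or the self-duality shortcut plus the two remaining checks) is exactly what your write-up still needs.
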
     

Notice that we do not consider orthogonal groups in dimensions $\leq 4$ in Theorem \ref{sym-alt} and Corollary 
\ref{larsen} because $SO_{d}(\FF)$ is not simple when $d = 1,2,4$ and isomorphic to $PSL_{2}(\FF)$ when
$d = 3$. We also obtain the following variant of Corollary \ref{sym2}(i) which holds in almost every 
characteristic:

\begin{corol}\label{sym3}
{\sl Let $G$ be a closed subgroup of $GL(V)$, with $V = \FF^{d}$ and $d \geq 3$ if $\Char(\FF) > 0$. 
Assume that the $G$-module $\SK(V)$ is irreducible for some $k \geq 4$. If $k \leq 5$ and 
$M \lhd G \leq N_{GL(V)}(M)$, $M$ being the Monster, assume furthermore that either $\Char(\FF) \neq 5,7$
or $\dim(V) \geq 8900000$. Then for every $m$, $1 \leq m \leq k$, the $G$-module $\SM(V)$ is also 
irreducible.}
\end{corol}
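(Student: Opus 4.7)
My plan is to reduce the corollary directly to the classification supplied by Theorems \ref{main} and \ref{lowdim}. The running hypothesis $d \geq 3$ when $\Char(\FF) > 0$ rules out Theorem \ref{lowdim}(iii) in positive characteristic, while the supplementary Monster assumption rules out Theorem \ref{main}(iv). Thus $G$ falls into one of three families: (A) $\HC \lhd G \leq N_{\GC}(\HC)$ with $\HC \in \{SL(V), Sp(V)\}$; (B) $L \lhd G \leq N_{\GC}(L)$ with $L$ a quotient of $SL_{d}(q)$, $SU_{d}(q)$, or $Sp_{d}(q)$ in the defining characteristic $\ell > k$; or (C) one of the small exceptional configurations $(d,L) \in \{(6,2J_{2}), (12,2G_{2}(4)), (12,6Suz), (3,3\AAA_{7})\}$, together with $(d,G) = (2, Z(G) * SL_{2}(5))$ in characteristic $0$ (arising from Theorem \ref{lowdim}(iii) in that regime).

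For cases (A) and (B), Theorems \ref{main} and \ref{lowdim} give $\ell = 0$ or $\ell > k \geq m$, so the weight $m\om_{1}$ associated to $\SM(V)$ (where $\om_{1}$ is the first fundamental weight of the ambient classical algebraic group) is restricted. In case (A) with $\HC = SL(V)$, the Weyl module $V(m\om_{1})$ is simple and coincides with $\SM(V)$. In case (A) with $\HC = Sp(V)$, the same conclusion holds because the only dominant weights strictly below $m\om_{1}$ are of the form $(m-2j)\om_{1}$ with $j \geq 1$, and a Weyl-character / good-filtration argument together with the fact that the symplectic module carries no invariant symmetric form rules out any such composition factor when $m < \ell$; hence $\SM(V) = L(m\om_{1})$ again. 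In case (B), Steinberg's tensor product theorem then ensures that the restricted-weight algebraic group module $L(m\om_{1})$ remains irreducible upon restriction to the associated finite group of Lie type, and hence to its quotient $L$. In all these cases $G/\HC$ (resp.\ $G/L$) acts on $\SM(V)$ by scalars, so irreducibility passes from $\HC$ (resp.\ $L$) up to $G$.

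Case (C) reduces to a finite verification: each group acts on a module of small dimension, the relevant characteristic is either $0$ or $\ell > k$ with $k \in \{4,5\}$, and the irreducibility of $\SM(V)$ for all $m \leq k$ can be read off from the ordinary character table (in the \textsc{Atlas}) together with the Brauer character tables of \cite{MT2} and the \textsc{Modular Atlas} where needed. Concretely, for $L = 2J_{2}$ on $V = \FF^{6}$ one checks that $\SB(V)$, $\SD(V)$, and $\SE(V)$ are irreducible, and one carries out analogous verifications for the other three pairs as well as for $SL_{2}(5)$ on $\FF^{2}$.

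The main obstacle, modest as it is, lies in the symplectic part of case (A): one must ensure that no lower symmetric power appears as a composition factor of $\SM(V)$ for $Sp(V)$ in the modular range $\ell > m$. Once that (well-known) Weyl-module fact is in hand, the remainder of the proof is essentially bookkeeping against the list of possibilities produced by Theorems \ref{main} and \ref{lowdim}.
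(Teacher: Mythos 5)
Your overall route is the paper's route: the paper's proof is precisely a reduction to Theorems \ref{main} and \ref{lowdim} (the characteristic $0$ case being disposed of first, via Corollary \ref{sym2}(i), which is why the paper can say it only needs $\ell>0$ and $d\geq 3$), followed by a direct computation for the small groups in Theorem \ref{main}(iii) and Theorem \ref{lowdim}(iv). Your exclusion of Theorem \ref{main}(iv) and of \ref{lowdim}(iii) in positive characteristic, your case (B) via Steinberg (restricted weight $m\om_{1}$ with $m\leq k<\ell\leq q$ stays irreducible on $SL_{d}(q)$, $SU_{d}(q)$, $Sp_{d}(q)$), and your case (C) finite check all match the intended argument.

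The one step you argue by hand, however, is wrong as written: for $\HC=Sp(V)$ it is not true that the only dominant weights strictly below $m\om_{1}$ are the $(m-2j)\om_{1}$. For instance $(m-2)\om_{1}+\om_{2}$ (in particular $\om_{2}<2\om_{1}$, since $2\om_{1}-\om_{2}=\alpha_{1}$) is a dominant weight of $\SM(V)$ strictly below $m\om_{1}$, and the observation that $V$ carries no invariant symmetric form only rules out composition factors arising from multiplication by an invariant quadratic, not factors such as $L((m-2)\om_{1}+\om_{2})$. So your ``Weyl-character / good-filtration'' paragraph does not establish $\SM(V)=L(m\om_{1})$ for $Sp(V)$ when $m<\ell$. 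The fact itself is true, but it is exactly the Dynkin--Seitz restriction theorem (the pair $Sp(V)\subset SL(V)$ with highest weight $m\om_{1}$, irreducible precisely when $m<\ell$) that the paper is implicitly invoking -- cite \cite{Dyn}, \cite{Se1} (or redo it via the Jantzen sum formula) rather than the dominance claim. Two smaller points: \cite{MT2} is not a source of Brauer character tables (the relevant references are \cite{Atlas}, \cite{JLPW}, \cite{ModAt}); and in case (C) the verification must be carried out for the same group $G$ on which $\SK(V)$ is irreducible -- e.g.\ for $(12,2G_{2}(4))$ the fifth symmetric power is irreducible only over $L\cdot 2$ -- so the bookkeeping is per extension, though this stays within the ``direct computation'' both you and the paper appeal to.
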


Corollary \ref{sym3} cannot hold for $d = 2$ when $\Char(\FF) > 0$, cf. Remark \ref{d=2} below. 

\medskip
Throughout the paper, we use the convention that $\ell > N$ means that either $\ell = 0$ or $\ell > N$.
The notation for simple groups is as in \cite{Atlas}; in particular, $M$ is the Monster, $6Suz$ is the 
sixth cover of the Suzuki group, and $2J_{2}$ is the double cover of the second Janko group. $\SN$, resp. 
$\AN$ is the symmetric, resp. alternating, group on $n$ symbols. $G * H$ denotes a central product of finite 
groups $G$ and $H$, and $G^{(\infty)}$ is the last term of the derived series of $G$. We will assume that 
$\ell > k$ whenever we address the irreducibility of $\SK(V)$ with $\dim(V) \geq 3$, cf. Lemma 
\ref{step1}(i). If $G$ is a closed subgroup of $GL(V)$ then $G^{\circ}$ denotes the connected component
of $G$, and the irreducible $G$-module with highest weight $\om$ is denoted by $L(\om)$. If $G$ is a 
finite group and $\chi$ a class function of $G$ then $\hat{\chi}$ denotes the restriction of $\chi$ to 
$\ell'$-elements in $G$;
furthermore, $\IBRL(G)$ denotes the set of all irreducible $\ell$-modular Brauer characters of 
$G$. A $G$-module (over field of characteristic $\neq 2$) is said to be of type $+$, resp.
$-$, if it supports a nondegenerate $G$-invariant symmetric, resp. alternating form; the same for 
(irreducible) ordinary or Brauer characters of $G$.
If $N \lhd G$, then a simple $N$-module $V$ is $G$-invariant if $V^{g} \simeq V$ for all $g \in G$.  
If $G$ is any finite group, then $\ml(G)$ denotes the largest degree of complex irreducible
characters of $G$; clearly, $\ml(G) \leq \sqrt{|G/Z(G)|}$. If $G$ is a finite group of Lie 
type in characteristic $p$, then $\dl(G)$ denotes the smallest degree of nontrivial projective 
representations of $G$ in characteristic other than $p$. We will freely use the 
Landazuri-Seitz-Zalesskii lower bounds on $\dl(G)$ and their latest improvements as recorded in 
\cite{T2}, and the upper bound for $\ml(G)$ as given in \cite{Se2}.

\section{Preliminaries}
Recall that $\FF$ is an algebraically closed field of characteristic $\ell \geq 0$.
Let $V = \FF^{d}$ and let $\GC = GL(V)$, $Sp(V)$, or $GO(V)$ throughout this section. 
We consider $V$ as the irreducible $\GC$-module with highest weight $\om_{1}$.

\subsection{Basic reductions.}
To get some basic reductions for our problem, one might apply the fundamental result of 
Aschbacher \cite{A}. But in our case one can give a direct argument (which in fact goes 
along the lines of the proof of Aschbacher's Theorem, and which also gives us some further
information that will be needed later). The first step is to reduce to the case where the subgroup
$G \leq GL(V)$ satisfies the following hypothesis:
$$\cs~:~\begin{array}{l}\mbox{The }G\mbox{-module }V \mbox{ is irreducible, primitive,}
        \\ \mbox{tensor indecomposable, and not tensor induced.}\end{array} \hspace{4cm}$$ 
(Recall that the $G$-representation $\Phi$ of $G$ on $V$ is {\it tensor induced}, if there is a 
decomposition $V = V_{1} \otimes \ldots \otimes V_{m}$ into $m > 1$ vector spaces $V_{i}$ of equal 
dimension, such that $\Phi(G) \leq (\otimes^{m}_{i=1}GL(V_{i}))\cdot \SSS_{m}$, with $\SSS_{m}$ 
naturally permuting the spaces $V_{i}$.)
  
\begin{lemma}\label{step1}
{\sl Assume $G \leq GL(V)$, $\dim(V) \geq 2$, and $\SK(V)$ is irreducible over $G$ for some $k \geq 2$. 

{\rm (i)} If $\dim(V) \geq 3$ then $\Char(\FF) > k$.

{\rm (ii)} $G$ satisfies the hypothesis $\cs$.}
\end{lemma}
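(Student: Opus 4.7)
Plan. The two parts are largely independent.

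Part (i). Since $G \leq GL(V)$, every $GL(V)$-submodule of $\Sym^k V$ is $G$-invariant, so $G$-irreducibility of $\Sym^k V$ forces its $GL(V)$-irreducibility. It therefore suffices to show that for $d \geq 3$ and $0 < \ell \leq k$, the $GL(V)$-module $\Sym^k V$ is reducible. I would appeal to Steinberg's tensor product theorem: writing $k = \sum_i a_i \ell^i$ in base $\ell$ with $0 \leq a_i < \ell$, the unique simple quotient of the Weyl module $\Sym^k V = V(k\omega_1)$ is $L(k\omega_1) = \bigotimes_i L(a_i\omega_1)^{(\ell^i)}$, of dimension $\prod_i \binom{d+a_i-1}{a_i}$, which is strictly smaller than $\binom{d+k-1}{k} = \dim \Sym^k V$ whenever $d \geq 3$ and $k \geq \ell$ by a direct combinatorial inequality. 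This is exactly the feature that fails in dimension $d = 2$ (e.g.\ $\Sym^3 V$ remains irreducible for $GL_2$ in characteristic $2$), which is why the hypothesis $d \geq 3$ is essential. The first instance $k = \ell$ already admits a direct proof: the Frobenius map $v \mapsto v^\ell$ is a well-defined $GL(V)$-equivariant embedding $V^{(\ell)} \hookrightarrow \Sym^\ell V$ (since $(v+w)^\ell = v^\ell + w^\ell$ in characteristic $\ell$) of codimension $\binom{d+\ell-1}{\ell} - d > 0$ for $d \geq 2$, $\ell \geq 2$.

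Part (ii). Assuming $\Sym^k V$ is $G$-irreducible, I would verify each clause of the hypothesis $\cs$ by contradiction, exhibiting in each case a proper nonzero $G$-submodule of $\Sym^k V$. If $V$ has a proper nonzero $G$-submodule $W$, then functoriality of $\Sym^k$ gives a proper nonzero $G$-submodule $\Sym^k W \hookrightarrow \Sym^k V$. A $G$-invariant direct-sum decomposition $V = V_1 \oplus \cdots \oplus V_m$ (with $G$ permuting the $V_i$ and $m \geq 2$) yields the proper $G$-submodule $\bigoplus_{i=1}^m \Sym^k V_i$ of ``pure'' symmetric tensors, since $\sum_i \binom{d_i+k-1}{k} < \binom{d+k-1}{k}$ for $k \geq 2$, $m \geq 2$. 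A $G$-invariant tensor factorization $V = V_1 \otimes V_2$ with $\dim V_i \geq 2$ triggers Cauchy's decomposition $\Sym^k V = \bigoplus_{\lambda\vdash k} S^\lambda V_1 \otimes S^\lambda V_2$, valid by part (i) in either characteristic $0$ or $\ell > k$; at least two summands are nonzero for $k \geq 2$ and $\dim V_i \geq 2$ (e.g.\ $\lambda = (k)$ and $\lambda = (k-1,1)$), each automatically $G$-invariant. Finally, in a tensor-induced situation $V = V_1 \otimes \cdots \otimes V_m$ (with $G$ permuting the factors), iterated Schur--Weyl duality decomposes $\Sym^k V$ under the base group $\prod_i GL(V_i)$ into pieces indexed by $m$-tuples $(\mu_1, \ldots, \mu_m)$ of partitions of $k$, and $\SSS_m$ permutes these pieces by permuting the tuples; the $\SSS_m$-orbits of $((k), \ldots, (k))$ and $((k-1,1), (k-1,1), (k), \ldots, (k))$ are distinct and both yield nonzero pieces, so the corresponding orbit sums give proper $G$-submodules.

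The main technical point is the combinatorial dimension inequality underlying part (i), which is specific to $d \geq 3$; once this is available, part (i) supplies the semisimple regime ($\ell = 0$ or $\ell > k$) needed to make the Cauchy and Schur--Weyl decompositions of part (ii) rigorous, and the remaining arguments are structural.
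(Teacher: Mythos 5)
Your proposal is correct and takes essentially the same route as the paper: part (i) is Steinberg's tensor product theorem combined with the dimension inequality $\prod_i \dim\Sym^{a_i}(V) < \dim\Sym^{k}(V)$ for $d \geq 3$ (the one point you assert rather than verify; the paper proves it by showing $\dim\Sym^{a+b\ell}(V) > \dim\Sym^{a}(V)\cdot\dim\Sym^{b}(V)$ and inducting on the base-$\ell$ digits), and part (ii) refutes each failure of $\cs$ by exhibiting a proper nonzero submodule exactly as the paper does. The only cosmetic difference is in the tensor-decomposable and tensor-induced cases, where the paper simply takes the single invariant summand $\Sym^{k}(A)\otimes\Sym^{k}(B)$, respectively $\bigotimes_{i}\Sym^{k}(V_{i})$ (fixed pointwise by $S_{k}$ once $\ell > k$ is in force), instead of invoking the full Cauchy/Schur-functor decomposition as you do.
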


\begin{proof}
(i) Assume that $d := \dim(V) \geq 3$ but $0 < \ell = \Char(\FF) \leq k$. We will show that 
$\SK(V)$ is reducible over $SL(V)$. Let $0 \leq a \leq \ell-1$ and $b, i > 0$ be any integers. Then 
notice that $(a+b\ell+i)/i \geq (a+i)(b+i)/i^{2}$ and in fact the inequality is strict if 
$i \geq 2$. Taking $i = 1, \ldots, d-1$, we see that 
\begin{equation}\label{abl}
  \dim(\Sym^{a+b\ell}(V)) > \dim(\Sym^{a}(V)) \cdot \dim(\Sym^{b}(V)).
\end{equation} 
Write $k = a_{0} + a_{1}\ell + \ldots + a_{s}\ell^{s}$ for some integers 
$0 \leq a_{0}, \ldots ,a_{s} \leq \ell-1$ and $a_{s} > 0$. Using induction on $s \geq 1$ with 
(\ref{abl}) as induction base, one can show that 
\begin{equation}\label{abls}
  \dim(\Sym^{k}(V)) > \dim(\Sym^{a_{0}}(V)) \cdot \dim(\Sym^{a_{1}}(V)) \cdot \ldots 
  \cdot \dim(\Sym^{a_{s}}(V)).
\end{equation}
Now if the $SL(V)$-module $V$ has highest weight $\om_{1}$, then $\SK(V)$ has highest weight 
$k\om_{1}$, whence it has a quotient isomorphic to 
$\Sym^{a_{0}}(V) \otimes (\Sym^{a_{1}}(V))^{(\ell)} \otimes \ldots \otimes (\Sym^{a_{s}}(V))^{(\ell^{s})}$ 
by Steinberg's tensor 
product theorem. Hence (\ref{abls}) implies that $\SK(V)$ is reducible. 

(ii) Assume that $G$ is reducible on $V$ and $A \neq 0$ is a 
proper $G$-submodule of $V$. Then $\SK(A) \neq 0$ is a proper $G$-submodule of $\SK(V)$, a contradiction.
Next assume that $G$ is imprimitive on $V$. Then $V = \oplus^{n}_{i=1}V_{i}$ 
with $G$ permuting the subspaces $V_{i}$'s transitively. It follows that 
$\oplus^{n}_{i=1}\SK(V_{i})$ is a proper $G$-submodule of $\SK(V)$, a contradiction.

Now assume that the $G$-module $V$ is tensor decomposable. Then $V = A  \otimes B$ as a 
$G$-module, with $\dim A, \dim B > 1$. In particular $d \geq 4$ and so in view of (i) we may assume 
that $\ell > k$. Under this assumption on $\ell$, $\SK(V)$ is just the 
fixed point subspace for $S_{k}$ with $S_{k}$ naturally permuting the $k$ factors of 
$V^{\otimes k}$. Since $\SK(A) \otimes \SK(B)$ is fixed by $S_{k}$ pointwise, it is a proper 
$G$-submodule of $\SK(V)$, again a contradiction.

Finally, assume that the $G$-module $V$ is tensor induced. In this case, 
$V = \otimes^{n}_{i=1}V_{i}$ with $G$ permuting the subspaces $V_{i}$ 
transitively. Again $d \geq 4$ and so we may assume $\ell > k$. Hence, $\SK(V)$ contains the proper 
$G$-submodule $\otimes^{n}_{i=1}\SK(V_{i})$, a contradiction.
\end{proof}

\begin{remar}\label{d=2}
{\em {\sl Notice that Lemma \ref{step1}(i) and Corollary \ref{sym3} fail if $\dim(V) \leq 2$.} In fact, 
if $\dim(V) = 2$ then for any integers $j \geq 1$ and $1 \leq b < \ell = \Char(\FF)$, 
$$\Sym^{(b+1)\ell^{j}-1}(V) = \Sym^{\ell-1}(V) \otimes \Sym^{\ell-1}(V)^{(\ell}) \otimes \ldots 
  \otimes \Sym^{\ell-1}(V)^{(\ell^{j-1})} \otimes \Sym^{b}(V)^{(\ell^{j})}$$
is irreducible over $SL(V)$ (as well as over any $SL_{2}(\ell^{n})$ with $n > j$). In particular,
$\Sym^{\ell^{j}}(V)$ is reducible but $\Sym^{2\ell^{j}-1}(V)$ is irreducible over $SL(V)$. Another example is
$G = SL_{2}(5)$ in $SL(V)$ with $\ell = 3$ and $d = 2$: here $\SF(V)$ is irreducible but $\SE(V)$ is 
reducible.}
\end{remar}

\begin{remar}\label{schur}
{\em (i) {\sl Notice that $\SK(V^{*}) \simeq (\SK(V))^{*}$ if $\Char(\FF) > k$ and 
$\WK(V^{*}) \simeq (\WK(V))^{*}$ if $1 \leq k \leq \dim(V)-1$, as modules over $GL(V)$.} Indeed, all these
modules are irreducible over $SL(V)$, and we can see the isomorphism by comparing their highest weights.

(ii) {\sl Assume $V$ and $W$ are $\FF G$-spaces and $\Char(\FF) > k$. Then  
$$\SK(V \oplus W) \simeq \bigoplus_{i+j = k}\Sym^{i}(V) \otimes \Sym^{j}(W),~~~
  \WK(V \oplus W) \simeq \bigoplus_{i+j = k}\wedge^{i}(V) \otimes \wedge^{j}(W),$$ 
and 
$$\SK(V \oplus W) \simeq \bigoplus (\SSS_{\lam}V \otimes \SSS_{\lam}W),~~~
  \WK(V \oplus W) \simeq \bigoplus (\SSS_{\lam}V \otimes \SSS_{\lam'}W),$$
where the first sum runs over all partitions $\lam$ of $k$ with at most $\dim(V)$ or $\dim(W)$ rows, and 
the second sum runs over all partitions $\lam$ of $k$ with at most $\dim(V)$ rows and at most $\dim(W)$ 
columns}, cf. \cite[p. 80]{FH}. Here $\lam'$ is the partition conjugate to $\lam$ and $\SSS_{\lam}$ is the 
Schur functor corresponding to the partition $\lam$. Indeed, the proof of these formulas in 
\cite[p. 521]{FH} uses only the semisimplicity of the group algebra $\FF S_{k}$.}
\end{remar}
 
\begin{lemma}\label{step1a}
{\sl Assume $d \geq 3$, $\Char(\FF) > 3$, $G \leq GL(V)$ and $\WB(V)$ is irreducible over $G$. 
Then one of the following holds.

{\rm (i)} $G$ satisfies the hypothesis $\cs$.

{\rm (ii)} $V = \oplus^{d}_{i=1}V_{i}$ is a sum of $1$-spaces and $G$ acts $2$-homogeneously on
$\{V_{1}, \ldots ,V_{d}\}$.

{\rm (iii)} $V = V_{1} \otimes V_{2}$ with $G$ permuting $V_{1}$ and $V_{2}$ transitively, and $\SB(V_{i})$ 
and $\WB(V_{i})$ are irreducible over $G_{1} := Stab_{G}(V_{1}) \cap Stab_{G}(V_{2})$ for $i = 1,2$. 
Furthermore, $G$ is reducible on $\WD(V)$ if $d \geq 5$.}
\end{lemma}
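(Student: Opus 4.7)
The plan is to imitate the proof of Lemma \ref{step1}(ii), cycling through the four defining conditions of $\cs$ and showing that each violation either forces an explicit proper submodule of $\WB V$ (contradicting the hypothesis) or places us in case (ii) or (iii). I first rule out reducibility of $V$: a $G$-submodule $A \subset V$ with $2 \leq \dim A \leq d-1$ yields the proper submodule $\WB A \subset \WB V$, while a $1$-dimensional $A$ gives the proper submodule $A \wedge V \simeq A \otimes (V/A)$ of dimension $d-1 < d(d-1)/2$. Either way contradicts the hypothesis, so $V$ is $G$-irreducible.

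For imprimitivity, say $V = \oplus_{i=1}^{n} V_i$ with $G$ transitive on the $V_i$, the canonical splitting
\[\WB V = \bigoplus_i \WB V_i \; \oplus \; \bigoplus_{i<j} V_i \otimes V_j\]
exhibits $\bigoplus_i \WB V_i$ as a $G$-invariant subspace, nonzero and proper as soon as $\dim V_i \geq 2$. So $\dim V_i = 1$, reducing $\WB V$ to $\bigoplus_{i<j} V_i \otimes V_j$; the irreducibility of this $G$-module is then equivalent to $G$-transitivity on unordered pairs $\{V_i,V_j\}$, which is precisely case (ii).

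Next, assume $V$ is tensor decomposable, $V = A \otimes B$ with $\dim A, \dim B \geq 2$. Since $\Char(\FF) > 2$,
\[\WB V = (\SB A \otimes \WB B) \; \oplus \; (\WB A \otimes \SB B)\]
as a module for $G_1 := Stab_G(A) \cap Stab_G(B)$, with both summands nonzero. Irreducibility of $\WB V$ then forces $G$ to swap the two summands, hence $\dim A = \dim B$ and $[G:G_1] = 2$; the direct summand $\SB A \otimes \WB B$ must therefore be $G_1$-irreducible (else a proper $G_1$-submodule together with its swap-image yields a proper $G$-submodule of $\WB V$), and any proper $G_1$-submodule in $\SB A$ or $\WB B$ would produce a proper submodule of this tensor product. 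So all four of $\SB V_i, \WB V_i$ ($i=1,2$) are $G_1$-irreducible, giving case (iii). The tensor-induced case with $n \geq 3$ factors is ruled out by the expansion
\[\WB V = \bigoplus_{|S| \text{ odd}} \Big( \bigotimes_{i \in S} \WB V_i \otimes \bigotimes_{j \notin S} \SB V_j \Big),\]
since the summands with $|S|=1$ and $|S|=3$ lie in distinct $G$-orbits and thus span disjoint nonzero $G$-invariant subspaces.

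Finally, for the $\WD V$ clause in case (iii), I would invoke the $S_3$-isotypic decomposition of $V^{\otimes 3}$ (valid since $\Char(\FF) > 3$), together with the standard calculation of the sign-isotypic component, to obtain
\[\WD V = (\SD V_1 \otimes \WD V_2) \; \oplus \; (\WD V_1 \otimes \SD V_2) \; \oplus \; (\SSS_{(2,1)} V_1 \otimes \SSS_{(2,1)} V_2),\]
where $\SSS_\lam$ denotes the Schur functor. When $d \geq 5$ we have $\dim V_1 = \dim V_2 \geq 3$, so the third summand is nonzero; and because $(2,1)' = (2,1)$ it is stabilized by the swap element in $G/G_1$, giving a proper $G$-submodule of $\WD V$. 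The most delicate step is this last Schur-functor bookkeeping — confirming both the nonvanishing and the $G$-invariance of the middle piece — while the remaining reductions are near-verbatim parallels of the symmetric-power argument in Lemma \ref{step1}.
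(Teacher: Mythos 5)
Your proof is correct and follows essentially the same route as the paper: ruling out reducibility, imprimitivity with factors of dimension at least $2$, and tensor (de)composition by exhibiting explicit invariant subspaces of $\WB(V)$, then using the Cauchy-type decomposition of $\WD(V_{1}\otimes V_{2})$ with the self-conjugate partition $(2,1)$ for the final clause. The only (harmless) deviations are that you treat a $1$-dimensional submodule directly via $A\wedge V$ where the paper passes to $V^{*}$ to assume $\dim(A)\geq 2$, and that you fold the tensor-decomposable case into the two-factor tensor-induced analysis.
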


\begin{proof}
Assume that $G$ is reducible on $V$ and $A \neq 0$ is a proper $G$-submodule of $V$. Replacing $V$ by 
$V^{*}$ if necessary, we may assume that $\dim(A) \geq 2$. Then $\WB(A) \neq 0$ is a proper $G$-submodule 
of $\WB(V)$, a contradiction. Next assume that $G$ is imprimitive on $V$. Then $V = \oplus^{n}_{i=1}V_{i}$ 
with $G$ permuting the subspaces $V_{i}$'s transitively. If $\dim(V_{i}) \geq 2$, then 
$\oplus^{n}_{i=1}\WB(V_{i})$ is a proper $G$-submodule of $\WB(V)$, a contradiction. If 
$\dim(V_{i}) = 1$, then the irreducibility of $\WB(V)$ implies that the $T$-module $V_{i}$'s are all 
nonisomorphic $T$-modules for $T := \cap^{d}_{i=1}Stab_{G}(V_{i})$, and the permutation action of $G$ on 
$\{V_{1}, \ldots ,V_{d}\}$ is $2$-homogeneous.

Now assume that the $G$-module $V$ is tensor decomposable. Then $V = A  \otimes B$ as a 
$G$-module, with $\dim A, \dim B > 1$. Clearly, $\SB(A) \otimes \WB(B)$ is a proper 
$G$-submodule of $\WB(V)$, again a contradiction.

Finally, assume that the $G$-module $V$ is tensor induced. In this case, 
$V = \otimes^{n}_{i=1}V_{i}$ with $G$ permuting the subspaces $V_{i}$'s 
transitively, and $\dim(V_{i}) \geq 2$. Hence, $\WB(V)$ contains the proper $G$-submodule 
$$\sum^{n}_{i=1}\left( \SB(V_{1}) \otimes \ldots \otimes \SB(V_{i-1})
  \otimes \WB(V_{i}) \otimes \SB(V_{i+1}) \otimes \ldots \otimes \SB(V_{n})\right),$$ 
a contradiction, unless $n = 2$. Consider the case $n = 2$. Since 
$$\WB(V_{1} \otimes V_{2}) \simeq (\SB(V_{1}) \otimes \WB(V_{2})) \oplus (\WB(V_{1}) \otimes \SB(V_{2}))$$
as $G_{1}$-modules and $(G:G_{1}) = 2$, $\SB(V_{i})$ 
and $\WB(V_{i})$ must be irreducible over $G_{1}$. Finally, assume $d \geq 5$. Then 
$\dim(V_{1}) = \dim(V_{2}) \geq 3$, and 
$$\WD(V_{1} \otimes V_{2}) \simeq (\SD(V_{1}) \otimes \WD(V_{2})) \oplus (\WD(V_{1}) \otimes \SD(V_{2}))
  \oplus (\SSS_{(2,1)}(V_{1}) \otimes \SSS_{(2,1)}(V_{2}))$$
as $G_{1}$-modules, whence $\WD(V)$ is reducible over $G$.    
\end{proof}

Note that both exceptions listed in Lemma \ref{step1a}(ii), (iii) do occur.

\begin{lemma}\label{step2}
{\sl Assume $G \leq GL(V)$ is an irreducible, primitive and tensor indecomposable subgroup, and $H \lhd G$.
Then $H$ is either central in $G$ or irreducible on $V$.}
\end{lemma}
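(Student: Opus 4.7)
The plan is a standard application of Clifford theory to the normal subgroup $H$, with the primitivity and tensor indecomposability parts of the hypothesis entering at different steps to eliminate the nontrivial possibilities.

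Since $V$ is $G$-irreducible and $H \lhd G$, Clifford's theorem writes $V|_{H} = W_{1} \oplus \cdots \oplus W_{m}$ as a direct sum of its $H$-isotypic components, and $G$ permutes the $W_{i}$ transitively. If $m \geq 2$, then $\{W_{1}, \ldots, W_{m}\}$ is a system of imprimitivity for $G$ on $V$, contradicting the primitivity in $\cs$. Hence $m = 1$ and $V|_{H} \cong U^{\oplus r}$ for a single irreducible $H$-module $U$ and some integer $r \geq 1$.

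Next I would split into three subcases. If $\dim U = 1$, then $H$ acts on $V$ by scalar matrices, so $H \leq Z(GL(V)) \cap G \leq Z(G)$, which is the first alternative. If $r = 1$, then $V \cong U$ is irreducible over $H$, which is the second alternative. The only configuration to rule out is $\dim U \geq 2$ and $r \geq 2$ simultaneously, and this is where tensor indecomposability is needed.

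To dispose of the last case, set $M := \Hom_{H}(U, V)$, so that $\dim M = r \geq 2$ and the evaluation map furnishes an $H$-module isomorphism $V \cong U \otimes M$ in which $H$ acts trivially on $M$. Because $U$ is the unique $H$-isotype appearing in $V|_{H}$ and its class is stable under the $G$-conjugation action on $H$, Schur's lemma supplies well-defined projective representations $G \to PGL(U)$ and $G \to PGL(M)$ whose tensor product reproduces the given $G$-action on $V$. This exhibits $V$ as a tensor decomposition in the sense built into $\cs$, contradicting tensor indecomposability. The main obstacle here is precisely the distinction between projective and honest tensor decompositions: the factorization produced by Clifford theory is a priori only projective, and one must appeal to the standard (Aschbacher-style) projective formulation of $\cs$, or equivalently lift both factors to linear representations of a suitable finite central extension of $G$ and absorb the central-scalar cocycle into one factor, in order to conclude.
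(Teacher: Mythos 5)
Your proof is correct and follows essentially the same route as the paper: Clifford theory plus primitivity to force a single $H$-isotypic component, the scalar case giving $H \leq Z(G)$, and Schur's lemma producing the factorization $\Phi(G) \leq GL(U) \otimes GL(M)$, contradicting tensor indecomposability. The caveat in your last sentence is moot, since the paper's notion of tensor decomposability is exactly this containment of $\Phi(G)$ in $GL(A)\otimes GL(B)$ (its own proof ends with precisely that conclusion), so the projective factorization suffices and no lifting to a central extension is required.
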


\begin{proof}
By Clifford theory, the $H$-module $V$ has only one isotypic component, i.e.
$V|_{H} = eW$ with $W$ an irreducible $H$-module. If $f := \dim(W) = 1$ then $H \leq Z(G)$. 
So assume $e,f > 1$. Let $\Phi$, resp.
$\Psi$, denote the representation of $G$ on $V$, resp. of $H$ on $W$. In a suitable basis
of $V$, $\Phi(g) = (g_{ij})_{1 \leq i,j \leq e}$ with $g_{ij} \in \Mat_{f}(\FF)$ and 
$\Phi(h) = \diag(\Psi(h), \ldots ,\Psi(h))$ for $g \in G$ and $h \in H$. Since 
$\Psi^{g} \simeq \Psi$, $\Psi(ghg^{-1}) = \Theta(g)\Psi(h)\Theta(g)^{-1}$ for some 
$\Theta(g) \in GL_{f}(\FF)$. The identity 
$\Phi(g)\Phi(h)\Phi(g)^{-1} = \Phi(ghg^{-1})$ now implies that 
$\Theta(g)^{-1}g_{ij}$ commutes with $\Psi(h)$ for all $h \in H$. By Schur's Lemma,
$g_{ij} = \Lam_{ij}(g)\Theta(g)$ for some $\Lam_{ij}(g) \in \FB$. Thus 
$\Phi(g) = \Lam(g) \otimes \Theta(g)$ if we set 
$\Lam(g) := (\Lam_{ij}(g))_{1 \leq i,j \leq e} \in \Mat_{e}(\FF)$. In particular, 
$\Lam(g) \in GL_{e}(\FF)$. We have shown that 
$\Phi(G) \leq GL_{e}(\FF) \otimes GL_{f}(\FF)$. In other words, $G$ is contained in 
$GL(A) \otimes GL(B)$ for some decomposition $V = A \otimes B$ with $e = \dim(A)$ and 
$f = \dim(B)$, a contradiction. 
\end{proof}

\begin{lemma}\label{tensor1}
{\sl Let $S$ be a group, and let $V_{1}, \ldots ,V_{m}$ be $\FF S$-modules such that the 
resulting representation $\Phi$ of $S$ on 
$V := V_{1} \otimes \ldots \otimes V_{m}$ is irreducible. Let $g \in GL(V)$ be 
an element that normalizes $\Phi(S)$.  

{\rm (i)} If $V_{i}^{g} \simeq V_{i}$ for all $i$, then 
$g \in K := \otimes^{m}_{i=1}GL(V_{i})$.

{\rm (ii)} Assume $\dim(V_{i}) = f$ for all $i$, and that for every $i$ there is some $j$ such 
that $V_{i}^{g} \simeq V_{j}$. Then $g \in H := (\otimes^{m}_{i=1}GL(V_{i})) \cdot \SSS_{m}$.}
\end{lemma}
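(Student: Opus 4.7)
My plan is to prove both parts by the same strategy: exhibit, explicitly, an element of the target subgroup whose conjugation action on $\Phi(S)$ coincides with that of $g$, and then invoke Schur's Lemma, applied to the irreducible $\Phi(S)$-module $V$, to conclude that $g$ agrees with this element up to a scalar --- which can then be absorbed into one tensor factor. Replacing $S$ by $\Phi(S)$ if necessary, I may assume $\Phi$ is injective, so that $g \in N_{GL(V)}(\Phi(S))$ induces a genuine automorphism $\sigma$ of $S$ via $g\Phi(s)g^{-1} = \Phi(\sigma(s))$ for all $s \in S$.

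For part (i), the hypothesis $V_i^g \simeq V_i$ supplies, for each $i$, an element $h_i \in GL(V_i)$ satisfying $h_i \Phi_i(s) h_i^{-1} = \Phi_i(\sigma(s))$ for all $s$. Set $h := h_1 \otimes \cdots \otimes h_m \in K$. Then
\[
h\Phi(s)h^{-1} = \bigotimes_{i=1}^{m} h_i \Phi_i(s) h_i^{-1} = \bigotimes_{i=1}^{m} \Phi_i(\sigma(s)) = \Phi(\sigma(s)) = g\Phi(s)g^{-1},
\]
so $h^{-1}g$ centralizes $\Phi(S)$. Irreducibility of $V$ and Schur's Lemma then force $h^{-1}g \in \FB \cdot \id_V$, whence $g \in K$ after absorbing the scalar into $h_1$.

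For part (ii), the hypothesis supplies a bijection $\tau \in \SSS_m$ with $V_i^g \simeq V_{\tau(i)}$ for all $i$, the bijectivity being forced by applying the same hypothesis to $g^{-1}$. I will pick linear isomorphisms $\psi_i : V_i \to V_{\tau(i)}$ realizing the $S$-module isomorphisms, so that $\psi_i \Phi_i(s) = \Phi_{\tau(i)}(\sigma(s)) \psi_i$. After fixing identifications $V_i \cong \FF^f$ --- which are what make $H$ a well-defined subgroup of $GL(V)$ in the first place --- let $P_\tau \in H$ be the permutation element corresponding to $\tau$ that reorders the tensor factors, and set $\psi := P_\tau \circ (\psi_1 \otimes \cdots \otimes \psi_m) \in H$. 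A direct computation, using the identity $P_\tau(A_{\tau(1)} \otimes \cdots \otimes A_{\tau(m)}) = (A_1 \otimes \cdots \otimes A_m) P_\tau$ valid for any $A_i \in GL(V_i)$, then yields $\psi\Phi(s) = \Phi(\sigma(s))\psi = g\Phi(s)g^{-1}\psi$. Hence $\psi^{-1}g$ centralizes $\Phi(S)$, and Schur's Lemma gives $g \in \FB \cdot \psi \subseteq H$.

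The only mildly delicate point is the index bookkeeping in part (ii) --- in particular, correctly tracking the subscript shifts in $\psi_i\Phi_i(s) = \Phi_{\tau(i)}(\sigma(s))\psi_i$ and the interaction of $P_\tau$ with the tensor product of operators. Conceptually both parts amount to a Clifford-style twisting argument combined with Schur's Lemma, so I do not anticipate any substantive obstacle beyond the notational care required to keep the permutation of factors consistent.
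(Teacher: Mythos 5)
Your proof is correct and follows essentially the same route as the paper: in (i) you tensor the intertwiners $h_i$ and apply Schur's Lemma to $h^{-1}g$, and in (ii) you combine a tensor product of intertwiners with the permutation operator $P_\tau$ and again conclude by Schur's Lemma, exactly as the paper does with its element $h\sigma$. The existence of the permutation $\tau$ is asserted in the paper at the same level of detail as your parenthetical remark, so there is no substantive difference between the two arguments.
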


\begin{proof}
(i) Let $\Phi_{i}$ denote the representation of $S$ on $V_{i}$. By assumption, there is 
$h_{i} \in GL(V_{i})$ such that $h_{i}$ and $g$ induce the same automorphism on 
$\Phi_{i}(S)$. It follows that $h := \otimes^{m}_{i=1}h_{i} \in K$ 
and $g$ induce the same automorphism on $\Phi(S)$. Thus $h^{-1}g$ centralizes $\Phi(S)$ 
and so it is scalar by irreducibility, whence is in $K$, and so is $g$.

(ii) Let $\Phi_{i}$ denote the matrix representation of $S$ relative to a fixed basis of $V_{i}$. 
Without loss we may replace $S$ by $\Phi(S)$, and 
denote the matrix representation of $\la S,g \ra$ on $V$ also by $\Phi$.  
By assumption, there is an element $\tau \in \SSS_{m}$ and $h_{i} \in GL_{f}(\FF)$ such that 
$\Phi_{i}(gsg^{-1}) = h_{i}\Phi_{\tau(i)}(s)h_{i}^{-1}$ for all $s \in S$. 
We may find an element $\sigma$ of the subgroup $\SSS_{m}$ of $H$ such that 
$\sigma\Phi(s)\sigma^{-1} = \sigma(\otimes^{m}_{i=1}\Phi_{i}(s))\sigma^{-1} = 
 \otimes^{m}_{i=1}\Phi_{\tau(i)}(s)$ for all $s \in S$. Setting 
$h := \otimes^{m}_{i=1}h_{i} \in H$, we see that 
\begin{eqnarray*}
\Phi(g)\Phi(s)\Phi(g)^{-1}& = &\Phi(gsg^{-1}) =   \otimes^{m}_{i=1}h_{i}\Phi_{\tau(i)}(s)h_{i}^{-1} \\ 
 & = & h(\otimes^{m}_{i=1}\Phi_{\tau(i)}(s))h^{-1} = h\sigma\Phi(s)\sigma^{-1}h^{-1}, 
\end{eqnarray*}
whence $\sigma^{-1}h^{-1}\Phi(g)$ centralizes $\Phi(s)$ for all $s \in S$. It follows again 
by irreducibility that $\sigma^{-1}h^{-1}\Phi(g)$ is scalar, and so $\Phi(g) \in H$. 
\end{proof}

Slightly abusing the language, in the situations (i) or (ii) of Lemma \ref{tensor1} 
we will say that $g$ {\it permutes the spaces $V_{1}, \ldots ,V_{m}$}.
 
Lemma \ref{tensor1}(i) is not true without the assumption that $G$ permutes the set of isomorphism 
classes of $\FF S$-modules $V_{1}, \ldots ,V_{m}$. Indeed, the group $G = Sp_{2n}(5) \cdot 2$
has an irreducible complex representation $V$ such that 
$V|_{S} = A \otimes B = A' \otimes B'$ with $S = Sp_{2n}(5)$, $A,A'$ distinct irreducible 
(Weil) $S$-modules of dimension $(5^{n}-1)/2$ permuted by $G$, and $B,B'$ distinct irreducible 
(Weil) $S$-modules of dimension $(5^{n}+1)/2$ permuted by $G$, cf. \cite{MT1}. However, see 
\cite{Ra1} for an important case where tensor decomposition of a complex module is uniquely 
determined. 
  
\begin{corol}\label{tensor2}
{\sl Assume $G \leq GL(V)$ and a normal subgroup $S$ of $G$ is a central product
of subgroups $H_{1}, \ldots ,H_{m}$, which are permuted by $G$ via conjugation. Assume 
furthermore that $V|_{S} = V_{1} \otimes \ldots \otimes V_{m}$ is irreducible with each $H_{i}$ acting 
on $V_{i}$ and trivially on $V_{j}$ for $j \neq i$. Then $G$ also permutes the spaces 
$V_{1}, \ldots ,V_{m}$.}
\end{corol}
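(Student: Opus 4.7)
The plan is to verify, for every $g \in G$, the hypothesis of Lemma \ref{tensor1}(ii), and thereby deduce that $g$ normalises the given tensor decomposition up to a permutation of factors. Because $G$ normalises $S$ and permutes $\{H_1, \dots, H_m\}$ by conjugation, each $g \in G$ determines a permutation $\tau = \tau_g \in \SSS_m$ via $g H_i g^{-1} = H_{\tau(i)}$. Writing $V_i^g$ for $V_i$ with the $S$-action twisted by $s \mapsto g^{-1}sg$, the crucial step is to establish
\[ V_i^g \simeq V_{\tau(i)} \]
as $S$-modules (equivalently, as $H_{\tau(i)}$-modules with the other $H_j$ acting trivially) for every $i$.

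The argument I would give is Clifford-theoretic. Since each $H_j$ acts trivially on $V_k$ for $k \ne j$, the restriction $V|_{H_j}$ is isotypic of type $V_j$; moreover $V_j$ is simple as $H_j$-module, for otherwise a nontrivial $H_j$-submodule of $V_j$ would tensor to a proper $S$-submodule of $V$, contradicting irreducibility of $V|_S$. Thus $V_j$ is intrinsically characterised as the unique simple $H_j$-constituent of $V|_{H_j}$. On the other hand, the map $v \mapsto g^{-1}v$ is an $S$-module isomorphism $V \simeq V^g$, while the $S$-action on the tensor product $V_1^g \otimes \cdots \otimes V_m^g$ is readily checked to coincide with that on $V^g$; hence $V^g \simeq V_1^g \otimes \cdots \otimes V_m^g$. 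Since $g^{-1}H_{\tau(i)}g = H_i$ acts nontrivially only on the $i$-th tensor factor $V_i$, the subgroup $H_{\tau(i)}$ acts nontrivially only on $V_i^g$ in the twisted tensor decomposition, so $V|_{H_{\tau(i)}}$ is isotypic of type $V_i^g$. Comparing the two descriptions of the $H_{\tau(i)}$-isotypic structure of $V$ yields the desired isomorphism $V_i^g \simeq V_{\tau(i)}$.

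From this isomorphism one has $\dim V_i = \dim V_{\tau(i)}$, so $\tau_g$ preserves the partition of indices by common dimension. If all $V_i$ share a common dimension, then Lemma \ref{tensor1}(ii) applies directly and places $g$ in $\bigl(\otimes^m_{i=1}GL(V_i)\bigr) \cdot \SSS_m$; otherwise the same proof goes through block by block, with $\SSS_m$ replaced by the product of symmetric groups on the equal-dimension blocks. Either way $g$ stabilises the tensor decomposition up to a permutation of its factors, and since $g$ was arbitrary, $G$ permutes the spaces $V_1, \dots, V_m$. The main obstacle is the identification $V_i^g \simeq V_{\tau(i)}$, since tensor decompositions of irreducible $S$-modules are not unique in general (as the $Sp_{2n}(5) \cdot 2$ example noted after Lemma \ref{tensor1} illustrates); the present hypothesis that each $H_j$ acts on exactly one tensor factor is precisely what circumvents this non-uniqueness, by making each $V_j$ recoverable from the $H_j$-isotypic structure of $V|_{H_j}$.
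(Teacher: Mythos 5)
Your proof is correct and follows essentially the same route as the paper: the paper's (much terser) proof likewise observes that $V|_{H_{i}}$ is a direct sum of copies of $V_{i}$, so that $G$ permutes the isomorphism classes of the $\FF S$-modules $V_{1},\ldots,V_{m}$, and then invokes Lemma \ref{tensor1}. Your additional care in verifying $V_i^g \simeq V_{\tau(i)}$ via the isotypic structure, and in handling unequal dimensions by applying the argument of Lemma \ref{tensor1}(ii) block by block, fills in details the paper's one-line proof leaves implicit.
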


\begin{proof}
Notice that $V|_{H_{i}}$ is a direct sum of some copies of $V_{i}$. Viewing the $V_{i}$ as
$S$-modules, we see that $G$ permutes the set of 
isomorphism classes of $\FF S$-modules $V_{1}, \ldots ,V_{m}$. Hence we are done by Lemma 
\ref{tensor1}.
\end{proof}

\smallskip 
Now we prove a version of Aschbacher's Theorem \cite{A} which we need in the sequel and 
which may be applied to other situations as well: 

\begin{propo}\label{reduction}
{\sl Assume $V = \FF^{d}$, $\ell := \Char(\FF)$, and $\GC \in \{GL(V),Sp(V),GO(V)\}$.
Let $G \le \GC$ be a Zariski 
closed subgroup that satisfies the hypothesis $\cs$. Then there is a subgroup $H \leq \GC$ such that 
$Z(\GC)G = Z(\GC)H$, $Z(H)$ is finite, and moreover, one of the following statements holds.

{\rm (i)} $H^{\circ}$ is a simple algebraic group. Furthermore, $V|_{H^{\circ}}$ is a Frobenius 
twist of a restricted irreducible module if $\ell > 0$.

{\rm (ii)} $H$ is finite, $S$ is nonabelian simple, and $S \lhd H/Z(H) \leq \Aut(S)$ 
for some nonabelian simple group $S$. 

{\rm (iii)} $H$ is finite, and $H \leq N_{\GC}(P)$, with $P = Z(P)E$ and $E$ an 
extraspecial $p$-group for some $p \neq \ell$; furthermore, $\dim(V) = \sqrt{|E/Z(E)|}$.}
\end{propo}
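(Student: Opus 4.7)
The overall strategy follows the classical proof of Aschbacher's Theorem, adapted to run uniformly in the algebraic and the finite settings. The idea is to split according to whether the identity component $G^{\circ}$ is central in $\GC$ or not, and in each case to exploit the strong hypothesis $\cs$ to rule out everything but the three listed alternatives.

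Suppose first $G^{\circ} \not\le Z(\GC)$. Since $V$ is $G$-irreducible, $Z(G)$ acts by scalars, so $Z(G^{\circ}) \le Z(\GC)$ and $G^{\circ}$ is noncentral. By Lemma \ref{step2}, $G^{\circ}$ is irreducible on $V$, hence connected reductive. Its commutator $L := [G^{\circ}, G^{\circ}]$ is semisimple and still irreducible on $V$ (as $Z(G^{\circ})^{\circ}$ acts by scalars). Decomposing $L = L_{1} * \cdots * L_{m}$ as a central product of simple algebraic groups gives $V|_{L} = V_{1} \otimes \cdots \otimes V_{m}$ with $L_{i}$ acting on $V_{i}$; since $G$ permutes the $L_{i}$, Corollary \ref{tensor2} shows that $G$ also permutes the $V_{i}$. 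If $m > 1$ then $V$ would be tensor decomposable or tensor induced over $G$, contradicting $\cs$; hence $m = 1$ and $L$ is simple. In characteristic $\ell > 0$, Steinberg's tensor product theorem writes $V|_{L}$ as a tensor product of Frobenius twists of restricted modules, and by uniqueness of the base-$\ell$ expansion combined with $G$-invariance of the isomorphism class of $V|_{L}$, each restricted factor is individually $G$-stable; tensor indecomposability then forces a single factor and the restricted-twist conclusion. Setting $H := L \cdot F$ for a finite set $F$ of representatives of $G/G^{\circ}$ produces a subgroup with $H Z(\GC) = G Z(\GC)$, $H^{\circ} = L$ simple, and $Z(H) \le C_{\GC}(L) \cap H = Z(\GC) \cap H$ finite, yielding (i).

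Now suppose $G^{\circ} \le Z(\GC)$, so that $G$ is finite modulo scalars. After passing to a finite subgroup $H$ with $H Z(\GC) = G Z(\GC)$ and $Z(H)$ finite, analyze the generalized Fitting subgroup $F^{*}(H) = E(H) F(H)$. For each prime $p$, $O_{p}(H) \lhd H$ acts homogeneously on $V$ by Clifford theory and primitivity; faithfulness forces $p \neq \ell$, and the classical classification of primitive faithful representations of nilpotent $p'$-groups forces $O_{p}(H)$ to be of symplectic type, i.e.\ $O_{p}(H) = Z(O_{p}(H)) \cdot E_{p}$ with $E_{p}$ extraspecial or trivial. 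The central product $F(H) = \prod_{p} O_{p}(H)$ yields a tensor decomposition of $V$; by $\cs$ at most one prime contributes, so $F(H) = P$ is of symplectic type for a single prime. Similarly $E(H)$ is a central product of quasi-simple components permuted by $H$, and the same tensor-type argument (via Corollary \ref{tensor2}) reduces it to at most one quasi-simple component $L$. Finally, if $L$ and the extraspecial $E \le P$ are simultaneously nontrivial, their commuting irreducible actions would produce another tensor decomposition of $V$, contradicting $\cs$. Thus either $E(H) = 1$, giving (iii) with $H \le N_{\GC}(P)$ and $\dim V = \sqrt{|E/Z(E)|}$ read off from the symplectic-type structure, or $P \le Z(H)$ and $E(H) = L$ is quasi-simple, giving (ii) with $S = L/Z(L)$ the simple socle.

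The principal obstacles are the passage from $G$ to a subgroup $H$ with $Z(H)$ finite (which in Case A requires peeling off the positive-dimensional scalar center $Z(G^{\circ})^{\circ}$ by intersecting with $[\GC,\GC]$ and then appending finitely many outer coset representatives) and the argument that the Steinberg factorization of $V|_{L}$ actually descends to a $G$-tensor decomposition. The latter is the most delicate step: algebraic automorphisms of a simple algebraic group $L$ permute restricted dominant weights among themselves, so $G$-invariance of the highest weight $\sum_{i} \ell^{i} \lambda_{i}$ of $V|_{L}$ together with uniqueness of base-$\ell$ expansion forces each $\lambda_{i}$ to be individually $G$-invariant, which is exactly what is needed to upgrade each Frobenius twist into a $G$-module factor and close the argument via $\cs$.
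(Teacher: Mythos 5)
Your reduction is essentially the paper's own argument: the dichotomy on $G^{\circ}$, the use of Lemma \ref{step2} to make noncentral normal subgroups act irreducibly, Corollary \ref{tensor2} and Lemma \ref{tensor1} together with $\cs$ to exclude several simple components, several Steinberg factors, and a simultaneous noncentral $p$-core and quasi-simple component, and the symplectic-type analysis for the nilpotent case. Organizing the finite case around $F^{*}(H)=E(H)F(H)$ instead of around a minimal normal subgroup of $H/Z(H)$ (the paper's route, via the Three-Subgroup Lemma) is only a cosmetic difference — though note you implicitly need $C_{H}(F^{*}(H))\leq F^{*}(H)$ to exclude the case $E(H)=1$ with every $O_{p}(H)$ scalar, which would make $H$ abelian. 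Likewise, your digit-by-digit invariance of the highest weight is just another way of saying that $G$ preserves the isomorphism classes of the Steinberg factors of $V|_{L}$, after which Lemma \ref{tensor1} and tensor indecomposability finish exactly as in the paper.

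The one genuine soft spot is the part of the conclusion that concerns $H$ itself: producing $H$ with $Z(\GC)G=Z(\GC)H$ and $Z(H)$ \emph{finite}. As written, $H:=L\cdot F$ need not even be a subgroup (you want $L\la F\ra$), and, more importantly, arbitrary coset representatives do not give a finite center: if $Z(G^{\circ})^{\circ}$ is the full scalar torus you may pick a representative $f=\mu f_{0}$ with $\mu\in\FF^{\times}$ of infinite multiplicative order, and then $H$ contains an infinite group of scalars, so $Z(H)$ is infinite (the same issue arises in your finite case, where the existence of a finite $H$ is simply asserted). Your suggested repair — intersect with $[\GC,\GC]$ and append representatives — does not obviously work either, since a coset of $G^{\circ}$ need not meet $G\cap SL(V)$, and appended representatives can reintroduce infinite-order scalars. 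The paper disposes of this uniformly at the outset: for $\GC=Sp(V),GO(V)$ the center of $\GC$ is finite and one takes $H=G$, while for $\GC=GL(V)$ one takes $H=\la\det(g)^{-1/d}g\mid g\in G\ra\leq SL(V)$, so that $Z(H)\leq Z(SL(V))$ is automatically finite. With that normalization inserted at the start, the rest of your argument goes through and coincides with the paper's proof.
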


\begin{proof}
In the case $\GC = Sp(V)$ or $GO(V)$, we take $H = G$. If $\GC = GL(V)$, then we 
choose $H = \la \det(g)^{-1/d}g \mid g \in G \ra$, which implies
$H \leq SL(V)$ and so $Z(H)$ is finite, and $Z(\GC)G = Z(\GC)H$. 

1) Assume $H^{\circ} \neq 1$. Since $Z(H)$ is finite, $H^{\circ} \not\leq Z(H)$, whence
$V|_{H^{\circ}}$ is irreducible (and faithful) by Lemma \ref{step2}. In particular, 
$H^{\circ}$ is reductive. Clearly, $Z(H^{\circ}) \leq Z(H)$ by Schur's Lemma, whence it is
finite and so $H^{\circ}$ is semisimple. Since $H$ acts on the set $\{ H_{1}, \ldots ,H_{n}\}$ 
of simple components of $H^{\circ}$, Corollary \ref{tensor2} and $\cs$ imply
that $n = 1$ and so $H^{\circ}$ is simple. Finally, if $\ell > 0$ then $H$ acts on the set 
of isomorphism classes of Steinberg factors of $V|_{H^{\circ}}$, whence the latter must 
be restricted (up to a Frobenius twist) by Lemma \ref{tensor1} and $\cs$. 
 
2) Now we may assume that $H$ is finite. Let $\bl$ be a minimal normal subgroup of $H/Z(H)$. Here we 
consider the case $\bl$ is nonabelian; in particular $\bl$ is perfect. Assume that $\bm$ is another minimal 
normal subgroup of $H/Z(H)$. Consider the complete inverse images $M$ and $L$ of $\bm$ and $\bl$ in $G$ and 
set $K := L^{(\infty)}$. Notice that $M, K \lhd H$, $M,K \not\leq Z(H)$, $[M,K] \leq M \cap K \leq Z(K)$, 
and $[K,K] = K$. Hence $[[M,K],K] = 1$ and so $[M,K] = 1$ by the Three-Subgroup Lemma. By 
Lemma \ref{step2}, $V|_{K}$ is irreducible. But then by Schur's Lemma, $M \leq Z(H)$, a contradiction.

Next we show that $\bl$ is simple. Write 
$\bl = \bar{S}_{1} \times \ldots \times \bar{S}_{n}$, where 
$\bar{S}_{1} \simeq \ldots \simeq \bar{S}_{n}$ are simple. Let $S_{i}$ be the 
complete inverse image of $\bar{S}_{i}$ in $H$ and let $R_{i} := S_{i}^{(\infty)}$.
Since $R_{i}$ is perfect and $[R_{i},R_{j}] \leq Z(G)$ for $i \neq j$, as above
we can check that $[R_{i},R_{j}] = 1$ for $i \neq j$. Again by Lemma \ref{step2},
$V|_{K}$ is irreducible; furthermore, $K = R_{1} * \ldots * R_{n}$. Hence 
$V|_{K} = V_{1} \otimes \ldots \otimes V_{n}$, where $V_{i}$ is an irreducible 
$R_{i}$-module. Notice that $H$ permutes the subgroups $R_{i}$'s transitively,
whence $H$ permutes the spaces $V_{i}$'s transitively by Corollary \ref{tensor2}. Thus the 
$H$-module $V$ is tensor induced if $n > 1$. By Lemma \ref{step1}, $n = 1$ and so $\bl =: S$ is simple.
Clearly, $H/Z(H)$ acts on $\bl$, and $C_{H/Z(H)}(\bl)$ intersects 
$\bl$ trivially. But $\bl$ is a unique minimal normal subgroup of 
$H/Z(H)$, hence $C_{H/Z(H)}(\bl) = 1$. We conclude that 
$\bl \lhd H/Z(H) \leq \Aut(\bl)$.

3) Now we may assume that $H/Z(H)$ has an elementary abelian, minimal normal 
$p$-subgroup $\bar{L}$ for some prime $p$. Let $R$ denote the complete inverse image of 
$\bar{L}$ in $H$. Then $R' \leq Z(H)$, whence $R$ is nilpotent. In particular, 
$R = O_{p}(E) \times O_{p'}(E)$ and $O_{p'}(E) \leq Z(H)$, and so $p \neq \ell$. Let $P$ be 
the subgroup generated by all elements of $R$ of order $p$ if $p > 2$ and of order $2$ or $4$ if 
$p = 2$. Then $P \lhd H$ and $P \not\leq Z(H)$. Moreover, by Lemma \ref{step2} any 
characteristic abelian subgroup of $P$ is cyclic. It follows that $P = Z(P)E$ for some 
extraspecial $p$-group (and either $P = E$, or $|Z(P)| = 4$). By Lemma \ref{step2}, 
$V|_{P}$ is irreducible, whence $\dim(V) = \sqrt{|E/Z(E)|}$. 
\end{proof}

In what follows, the subgroup $H$ described in Proposition \ref{reduction} will be 
referred to as the {\it normalized version} of $G$ and denoted by $\GN$. Notice that 
$\det(g) = \pm 1$ for all $g \in \GN$.

To deal with self-dual modules, we will need the following two statements.

\begin{lemma}\label{dual1}
{\sl Let $V$ be an $\FF G$-module, and let $M,N \leq G$ be such that $V|_{M \cap N}$ is 
irreducible, $V|_{M}$ and $V|_{N}$ are self-dual. Then $V|_{\la M,N \ra}$ is also self-dual.}
\end{lemma}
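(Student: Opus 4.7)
The plan is to exploit Schur's lemma in the classical way: when $V|_{M\cap N}$ is absolutely irreducible, the space of $(M\cap N)$-invariant bilinear forms on $V$ is at most one-dimensional, so any two nonzero such forms must be proportional.

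More precisely, I would first translate self-duality into the existence of a nondegenerate invariant bilinear form: since $V|_M$ is self-dual, there exists a nonzero $M$-invariant (hence nondegenerate) bilinear form $B_M : V \times V \to \FF$, and similarly a nonzero $N$-invariant form $B_N$. Both $B_M$ and $B_N$ are in particular $(M\cap N)$-invariant.

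Next, since $V|_{M\cap N}$ is irreducible and $\FF$ is algebraically closed, an application of Schur's lemma to the map $V \to V^{*}$ induced by any nonzero $(M\cap N)$-invariant bilinear form shows that such a form is unique up to scalar. Therefore $B_N = \lambda B_M$ for some $\lambda \in \FF^{\times}$. It follows that $B_M$ is simultaneously $M$-invariant and $N$-invariant, hence invariant under the subgroup $\la M, N \ra$ they generate. Since $B_M$ is nondegenerate, this gives a $\la M, N \ra$-equivariant isomorphism $V \cong V^{*}$, i.e.\ $V|_{\la M, N\ra}$ is self-dual.

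There is no real obstacle here; the only subtlety is just to notice that $V|_{M\cap N}$ irreducible forces uniqueness of the invariant form up to scalar, which is what lets the two forms $B_M$ and $B_N$ be identified. The same argument in fact shows that $V|_{\la M, N\ra}$ is of the same type ($+$ or $-$) as $V|_M$ and $V|_N$ (and that the types of $V|_M$ and $V|_N$ must agree), though we do not need this strengthening for the lemma as stated.
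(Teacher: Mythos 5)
Your proof is correct and is essentially the paper's own argument: both proofs take the nondegenerate invariant forms $B_M$ and $B_N$, use Schur's lemma together with the irreducibility of $V|_{M\cap N}$ to see that the $(M\cap N)$-invariant form is unique up to scalar, rescale so the two forms coincide, and conclude that the common form is $\la M,N\ra$-invariant. Nothing to add.
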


\begin{proof}
By the assumptions, $V$ affords a non-degenerate bilinear form $B_{M}$, resp. $B_{N}$, which
is $M$-invariant, resp. $N$-invariant. By irreducibility, $M \cap N$ admits a unique (up
to scalar) non-degenerate invariant bilinear form on $V$. Hence after a suitable rescaling we
have $B_{M} = B_{N}$ and so $B_{M}$ is $\la M,N \ra$-invariant.   
\end{proof}

\begin{lemma}\label{dual2}
{\sl Let $G \leq \GC := GL(V)$, and let $N \lhd G$ such that $V|_{N}$ is irreducible
and self-dual. Then there is $G^{*} \leq GL(V)$ such that $N \lhd G^{*}$, $Z(\GC)G = Z(\GC)G^{*}$,
and $V|_{G^{*}}$ is also self-dual.}
\end{lemma}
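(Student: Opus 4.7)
The plan is to rescale each $g \in G$ by a central scalar chosen so that the rescaled element preserves a common non-degenerate bilinear form on $V$; collecting these rescaled elements will yield the desired group $G^*$. The key input, provided by the irreducibility and self-duality of $V|_N$, will be a non-degenerate $N$-invariant bilinear form $B$ on $V$, unique up to a scalar by Schur's lemma. Since $N \lhd G$, for every $g \in G$ the twisted form $B^g(u,v) := B(gu,gv)$ is again $N$-invariant, so I will have $B^g = \lambda(g) B$ for a uniquely determined scalar $\lambda(g) \in \FF^\times$; a short calculation from $B^{gh} = (B^h)^g$ will show that $\lambda : G \to \FF^\times$ is a homomorphism, and evidently $\lambda|_N \equiv 1$.

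I will then define
\[
 G^* \;:=\; \bigl\{\, \mu g \in GL(V) \,:\, g \in G,\ \mu \in \FF^\times,\ \mu^2 \lambda(g) = 1 \,\bigr\}.
\]
Because $\lambda$ is a homomorphism, $G^*$ will be closed under multiplication and inversion, hence a subgroup of $GL(V)$. Every element $\mu g \in G^*$ will satisfy $B(\mu g u, \mu g v) = \mu^2 \lambda(g) B(u,v) = B(u,v)$, so $G^*$ will preserve the non-degenerate form $B$, and $V|_{G^*}$ will be self-dual. Since $\lambda|_N \equiv 1$, taking $\mu = 1$ places $N$ inside $G^*$, and $N \lhd G^*$ then follows because scalars commute with every element.

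It remains to verify $Z(\GC)G = Z(\GC) G^*$. The inclusion $G^* \subseteq Z(\GC) G$ is built into the definition; conversely, since $\FF$ is algebraically closed the equation $\mu^2 = \lambda(g)^{-1}$ is solvable for every $g \in G$, so some scalar multiple of each $g \in G$ lies in $G^*$, giving $G \subseteq Z(\GC) G^*$. The argument is essentially formal and presents no serious obstacle; the one point that warrants attention is that $\lambda$ is genuinely a group homomorphism, which depends precisely on the uniqueness up to scalars of the $N$-invariant form $B$, and this in turn rests on Schur's lemma applied to the irreducible $N$-module $V$.
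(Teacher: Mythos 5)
Your argument is correct and is essentially the paper's proof: both use Schur's lemma to get the unique (up to scalar) non-degenerate $N$-invariant form $B$, note that each $g \in G$ scales $B$ by $\lambda(g) \in \FF^{\times}$, and rescale $g$ by a square root of $\lambda(g)^{-1}$ to obtain $G^{*}$ preserving $B$ (the paper simply takes $G^{*} = \la \lambda_{g}^{-1/2} g \mid g \in G \ra$, while you take all admissible rescalings, which is the same group up to $\pm 1$ and equally serves the conclusion). The only blemish is the identity $B^{gh} = (B^{h})^{g}$, which should read $B^{gh} = (B^{g})^{h}$, but since the target $\FF^{\times}$ is abelian this does not affect the conclusion that $\lambda$ is a homomorphism.
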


\begin{proof}
By the assumptions, $V$ affords a (unique up to scalar) non-degenerate $N$-invariant bilinear 
form $B_{N}$. Since $N \lhd G$, each $g$ changes $B_{N}$ by a scalar $\lam_{g} \in \FF^{\times}$,
in which case define $g^{*} = \lam_{g}^{-1/2}g$. Now just take $G := \la g^{*} \mid g \in G \ra$.
\end{proof}

\begin{lemma}\label{lift}
{\sl Let $G$ be a finite group, $\varphi \in \IBRL(G)$, 
and let $N \lhd G$ be such that $\varphi|_{N}$ is irreducible
and lifts to a complex character $\rho$ of $N$. Assume that $\rho$ extends to $G$. Then $\varphi$ also 
lifts to a complex character of $G$.}
\end{lemma}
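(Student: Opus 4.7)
The plan is to invoke Gallagher's theorem in its Brauer-character form, reduce the problem to lifting a linear Brauer character, and then use the fact that linear Brauer characters always lift.

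First, I would let $\tilde{\rho} \in \Irr(G)$ be an extension of $\rho$ and consider the reduction $\widehat{\tilde{\rho}}$ modulo $\ell$. Since $\widehat{\tilde{\rho}}|_N = \widehat{\rho} = \varphi|_N$ is irreducible by hypothesis, the standard argument shows $\widehat{\tilde{\rho}}$ itself must be an irreducible Brauer character of $G$: any decomposition $\widehat{\tilde{\rho}} = \sum_i m_i \psi_i$ with $\psi_i \in \IBR(G)$ would, upon restriction to $N$, contain $\widehat{\rho}$ with total multiplicity at least $\sum m_i$, forcing $\sum m_i = 1$. Thus $\widehat{\tilde{\rho}} \in \IBR(G)$ extends $\varphi|_N$.

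Next, since $\varphi|_N = \widehat{\rho}$ is automatically $G$-invariant (being the restriction of a class function of $G$) and extends both to $\varphi$ and to $\widehat{\tilde{\rho}}$ in $\IBR(G)$, I would apply Gallagher's theorem for Brauer characters: there exists a unique $\mu \in \IBR(G/N)$ such that
$$\varphi = \mu \cdot \widehat{\tilde{\rho}}.$$
Comparing degrees, $\varphi(1) = \widehat{\rho}(1) = \widehat{\tilde{\rho}}(1)$, so $\mu(1) = 1$, i.e.\ $\mu$ is a linear Brauer character of $G/N$.

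Finally, I would use that any linear Brauer character lifts to a linear ordinary character. The reason is that a linear Brauer character corresponds to a homomorphism $G/N \to \bar{\FF}^{\times}$, which factors through the $\ell'$-part of $(G/N)^{\mathrm{ab}}$ (since $\bar{\FF}^{\times}$ has no $\ell$-torsion); applying the Teichm\"uller lift to this $\ell'$-quotient gives an ordinary linear character $\tilde{\mu} \in \Irr(G/N)$ with $\widehat{\tilde{\mu}} = \mu$. Inflating to $G$, the product $\tilde{\mu} \cdot \tilde{\rho}$ is an ordinary irreducible character of $G$ (by Gallagher's theorem for ordinary characters) whose reduction mod $\ell$ equals $\mu \cdot \widehat{\tilde{\rho}} = \varphi$, providing the desired lift.

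The main (minor) obstacle is justifying the Brauer-character version of Gallagher's theorem, but this is standard modular Clifford theory (see e.g.\ Navarro's book); no subtle point arises because the hypothesis that $\varphi|_N$ is irreducible already places us squarely in the situation where ordinary and modular Clifford theory run in parallel.
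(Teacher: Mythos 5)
Your proposal is correct and follows essentially the same route as the paper: there too one takes an irreducible extension $\mu\in\Irr(G)$ of $\rho$, notes $\hat{\mu}$ is an irreducible Brauer extension of $\varphi|_{N}$, writes $\varphi=\alpha\otimes\hat{\mu}$ with $\alpha$ a linear Brauer character by the Brauer-character form of Clifford/Gallagher theory, lifts $\alpha$ through the $\ell'$-part of $G/G'$ (your Teichm\"uller lift), and multiplies back. The only cosmetic difference is that you spell out the irreducibility of $\widehat{\tilde{\rho}}$ and the uniqueness in Gallagher, which the paper leaves implicit.
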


\begin{proof}
By assumption, $\rho = \mu|_{N}$ for some $\mu \in \Irr(G)$. Hence $\hat{\mu}$ is an extension of 
$\varphi|_{N}$ to $G$. By Clifford theory, in this case $\varphi = \alpha \otimes \hat{\mu}$,
where $\alpha \in \IBRL(G)$ and $\alpha(1) = 1$. In particular, $\alpha$ is a Brauer character of 
$A := O_{\ell'}(G/G')$. Now we can view $\alpha$ as a complex character $\beta$ of 
$A \simeq (G/G')/O_{\ell}(G/G')$ and then of $G/G'$. Setting $\chi = \beta \otimes \mu$, we see that 
$\varphi = \widehat{\beta \otimes \mu}$, as stated.  
\end{proof}

\begin{lemma}\label{sub}
{\sl Let $N \lhd G$, $V$ an $\FF G$-module, and $A$ a submodule of $V|_{N}$. 

{\rm (i)} If $|G/N|$ is a prime and $A$ extends to $G$, then $V$ contains a simple $G$-module of 
dimension $\leq \dim(A)$.

{\rm (ii)} If $A$ is simple and $G$-invariant, then $V$ contains a simple $G$-module of dimension at 
most $m\dim(A)$, where $m$ is the largest degree of projective irreducible $\FF(G/N)$-representations.}
\end{lemma}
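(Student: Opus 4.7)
The plan is to analyze $V|_N$ via Clifford theory in both parts, locating a small simple $G$-constituent inside the $A$-isotypic piece of $V|_N$.

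For part (i), let $\tilde A$ denote an extension of $A$ to a $G$-module, and form $H := \Hom_N(\tilde A, V)$, which contains the given inclusion $A \hookrightarrow V$ as a nonzero element. The space $H$ carries a natural $\FF(G/N)$-action via $(g\cdot f)(a) := g\cdot f(g^{-1}\cdot a)$, well-defined on the quotient because $f$ is $N$-equivariant. Since $G/N$ is cyclic of prime order and $\FF$ is algebraically closed, every simple $\FF(G/N)$-module is one-dimensional: if $\ell \nmid |G/N|$ this is because $\FF(G/N)$ is split semisimple abelian, while if $\ell = |G/N|$ a generator of $G/N$ acts unipotently on $H$ and therefore admits an eigenvector with eigenvalue $1$. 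Hence $H$ contains a one-dimensional $G/N$-submodule spanned by some $f$ with $g\cdot f = \chi(g)f$ for a character $\chi$ of $G/N$. Twisting the $G$-action on $\tilde A$ by $\chi$ (replacing $g$ by $\chi(g)g$, which is again an extension of the original $N$-action since $\chi$ is trivial on $N$) converts $f$ into a genuine $G$-equivariant map $\tilde A' \to V$; its image is a nonzero $G$-submodule of $V$ of dimension at most $\dim A$, and any simple $G$-submodule of the image completes (i).

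For part (ii), the hypotheses that $A$ is simple and $G$-invariant force the $A$-isotypic component $W$ of $V|_N$ to be a nonzero $G$-submodule of $V$. Although $A$ need not itself extend to $G$, the projective form of Clifford's theorem identifies $W$ with $A \otimes_\FF H$ as $\FF$-vector spaces, where $H := \Hom_N(A,V)$ is a module over the twisted group algebra $\FF^\alpha(G/N)$ for a $2$-cocycle $\alpha \in Z^2(G/N,\FB)$ representing the obstruction class of $A$, and simple $G$-submodules of $W$ correspond bijectively to simple $\FF^\alpha(G/N)$-submodules of $H$ via this tensor factorization. Simple $\FF^\alpha(G/N)$-modules are by definition the irreducible projective $\FF(G/N)$-representations with factor set $\alpha$, so their dimensions are bounded above by $m$; therefore $H$ contains a simple submodule of dimension $\leq m$, and the corresponding simple $G$-submodule of $W$ has dimension $\leq m\dim A$, as required.

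The only delicate point I anticipate is producing a one-dimensional $\FF(G/N)$-submodule in (i) when $\ell$ divides $|G/N|$, which the unipotent-eigenvector remark resolves; setting up the twisted tensor decomposition in (ii) with the correct cocycle tracking is entirely standard. The overall proof should be short.
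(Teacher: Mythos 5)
Your proof is correct, but it follows a different route from the paper's. The paper proves both parts by applying Frobenius reciprocity to a simple $N$-submodule $B \subseteq A$: the nonzero map $\Ind^{G}_{N}(B) \to V$ coming from $\Hom_{N}(B,V|_{N}) \simeq \Hom_{G}(\Ind^{G}_{N}(B),V)$ forces $V$ to contain a composition factor of $\Ind^{G}_{N}(B)$, whose dimension is then bounded: in (i) by a case split on whether $B$ is $G$-invariant (the hypothesis that $A$ extends is used only in the non-invariant case, where the simple module $\Ind^{G}_{N}(B)$ is shown to embed into an extension $A_{1}$ of $A$), and in (ii) by the Clifford-theoretic shape $A_{2}\otimes X$ of the composition factors of $\Ind^{G}_{N}(A)$. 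You avoid induced modules entirely: in (i) you make $\Hom_{N}(\tilde A,V)$ a $G/N$-module, extract a one-dimensional submodule (your unipotent-eigenvector remark correctly handles $\ell=|G/N|$; note your argument needs only that $G/N$ is abelian, not of prime order), and twist $\tilde A$ by the resulting linear character to produce a genuine $G$-map into $V$ -- this removes both the passage to a simple submodule of $A$ and the case analysis; in (ii) you invoke the lattice correspondence $U \mapsto A\otimes U$ between $\FF^{\alpha}(G/N)$-submodules of $\Hom_{N}(A,V)$ and $G$-submodules of the $A$-isotypic component of $V|_{N}$, which is the same Clifford-theoretic fact the paper uses but applied inside $V$ rather than to $\Ind^{G}_{N}(A)$, and it requires the (standard) full submodule-correspondence form rather than merely the description of composition factors. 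Both arguments are complete; the small verifications in yours (the direction of the twist $g\mapsto \chi(g)g$, the identification of $\Hom_{N}(A,V)$ with $\Hom_{N}(A,W)$ for $W$ the isotypic component) all check out.
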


\begin{proof}
Consider a simple $N$-submodule $B$ of $A$. By assumption, 
$0 \neq \Hom_{N}(B,V|_{N})$.  By Frobenius reciprocity, 
$ \Hom_{N}(B,V|_{N}) \simeq \Hom_{G}(\Ind^{G}_{N}(B),V)$.

Consider the case of (i). If $B$ is $G$-invariant, then $B$ extends to $G$ and all
composition factors of $\Ind^{G}_{N}(B)$ are of dimension equal to $\dim(B) \leq \dim(A)$; in particular
a simple submodule of $G$ has this dimension. Otherwise $\Ind^{G}_{N}(B)$ is simple, and embeds in both 
$V$ and $A_{1}$, an extension of $A$ to $G$. 

In the case of (ii), $B = A$. By Clifford theory, any composition factor of $\Ind^{G}_{N}(B)$ is 
of the form $A_{2} \otimes X$ for some projective $\FF G$-representation $A_{2}$ of degree $\dim(A)$ 
and some irreducible projectice $\FF(G/N)$-representation $X$, whence the claim follows. 
\end{proof}

We will discard the groups $G$ with a normal subgroup contained in $GO(V)$ as follows:

\begin{lemma}\label{so-sym}
{\sl Assume $G \leq GL(V)$ has a normal subgroup $N \leq GO(V)$. 

{\rm (i)} Then the $N$-module $\SK(V)$ contains a submodule isomorphic to
$1_{N}$ if $k$ is even, and $V|_{N}$ if $k$ is odd. 

{\rm (ii)} Assume $|N| > 2$ and $\dim(V) \geq 2$. Then $\SK(V)$ cannot be irreducible 
over $G$ for any $k \geq 2$.}
\end{lemma}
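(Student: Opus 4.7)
The plan for (i) is to exploit the nonzero $N$-invariant element $Q \in \Sym^{2}(V)^{N}$ coming from the $GO(V)$-invariant symmetric bilinear form on $V$, which identifies $V$ with $V^{*}$ as $N$-modules and thereby supplies such a $Q$. Since the symmetric algebra $\Sym(V)$ is a polynomial ring in $d$ variables over $\FF$, hence an integral domain, multiplication by $Q^{j}$ is injective on each graded piece, yielding an $N$-equivariant embedding $\Sym^{k-2j}(V) \hookrightarrow \SK(V)$. Taking $j = k/2$ for even $k$ produces a copy of $1_{N}$ spanned by $Q^{k/2}$, while taking $j = (k-1)/2$ for odd $k$ produces a copy of $V|_{N}$ given by $V \cdot Q^{(k-1)/2}$.

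For (ii), I would argue by contradiction. If $\SK(V)$ is irreducible over $G$, then Lemma \ref{step1}(ii) forces $G$ to satisfy the hypothesis $\cs$; in particular $V|_{G}$ is irreducible, primitive, and tensor indecomposable. Applying Lemma \ref{step2} to $N \lhd G$ leaves two cases: either $N \leq Z(G)$, or $V|_{N}$ is irreducible. In the first case, Schur's Lemma on the irreducible $G$-module $V$ shows that $N$ acts by scalars, and the only scalars lying in $GO(V)$ are $\pm I$, so $|N| \leq 2$, contradicting the hypothesis $|N| > 2$.

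In the remaining case, the plan is to promote the $N$-submodule $W$ built in (i) to a $G$-submodule. Since $V|_{N}$ is irreducible and self-dual of type $+$, Schur's Lemma shows $\Sym^{2}(V)^{N}$ is one-dimensional and therefore spanned by the $Q$ from part (i). Normality of $N$ in $G$ implies that $g \cdot Q$ remains $N$-invariant for every $g \in G$, and hence $g \cdot Q = c_{g} Q$ for some scalar $c_{g} \in \FF^{\times}$. Consequently the subspace $W$ -- equal to $\FF \cdot Q^{k/2}$ if $k$ is even and $V \cdot Q^{(k-1)/2}$ if $k$ is odd -- is automatically preserved by all of $G$. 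Since $\dim W \leq d$ while $\dim \SK(V) = \binom{d+k-1}{k} > d$ whenever $d \geq 2$ and $k \geq 2$, this $W$ is a proper nonzero $G$-submodule, contradicting irreducibility. I do not foresee any serious obstacle in this plan; the crucial step is the promotion of the $N$-submodule from (i) to a $G$-submodule, which follows at once from normality of $N$ together with the one-dimensionality of $\Sym^{2}(V)^{N}$.
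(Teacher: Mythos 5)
Your proof is correct and follows essentially the same route as the paper: part (i) is the paper's multiplication-by-the-invariant-quadric argument, and in part (ii) you reduce via Lemmas \ref{step1} and \ref{step2} to the same scalar/irreducible dichotomy, disposing of the scalar case exactly as the paper does. The only cosmetic difference is that where the paper rescales $G$ to a subgroup $G^{*}$ preserving the form (Lemma \ref{dual2}) and then applies (i) to $G^{*}$, you inline that argument by noting $g\cdot Q=c_{g}Q$, so the canonical submodules $\FF\, Q^{k/2}$ and $V\cdot Q^{(k-1)/2}$ are already $G$-stable; both versions rest on the same Schur-uniqueness of the $N$-invariant symmetric form.
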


\begin{proof}
(i) We realize $\SK(V)$ as the space $P_{k}$ of homogeneous polynomials of degree $k$ in variables 
$x_{1}, \ldots ,x_{d}$, with $V = \la x_{1}, \ldots ,x_{d} \ra_{\FF}$ as a $GO(V)$-module (and equipped with
the standard scalar product). Notice that $h := \sum^{d}_{i=1}x_{i}^{2}$ is $GO(V)$-invariant, and so the map 
$f \mapsto hf$ yields an injective $GO(V)$-homomorphism $P_{k-2} \hookrightarrow P_{k}$ (as 
$\FF[x_{1}, \ldots ,x_{d}]$ is an integral domain). Hence the claim follows.

(ii) Assume the contrary: $\SK(V)$ is irreducible for some $k \geq 2$. By Lemmas \ref{step1} and 
\ref{step2}, $\ell > 2$ and $N$ acts either scalarly or irreducibly on $V$. In the former
case, $|N| \leq 2$ as $N \leq GO(V)$, a contradiction. Hence $N$ is irreducible on $V$.
By (i) applied to the subgroup $G^{*}$ constructed in Lemma \ref{dual2}, $\SK(V)$ has an
$G^{*}$-submodule $A$ of dimension $1$ if $k$ is even and dimension $\dim(V)$ if $k$ is odd. Since $\FB$ acts 
scalarly on $\SK(V)$ and $\dim(V) \geq 2$, $A$ is a proper $G$-submodule in $\SK(V)$, a contradiction. 
\end{proof}
 
The main reduction is provided by the following: 

\begin{propo}\label{red}
{\sl Assume $\GC := GL(V)$, $Sp(V)$ or $GO(V)$, $d := \dim(V) \geq 2$, $G \leq \GC$ is Zariski closed, and 
that $\SK(V)$ is irreducible over $G$ for some $k \geq 2$. Then $G \not\leq GO(V)$ and $\SK(V)$ is 
irreducible over $H := \GN$. Moreover, $\GN$ satisfies one of the conclusions {\rm (i) -- (iii)} of
Proposition \ref{reduction}.}
\end{propo}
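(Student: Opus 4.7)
My plan is to deduce the three assertions --- that $G \not\leq GO(V)$, that $\SK(V)$ remains irreducible after the normalization, and that $\GN$ falls under one of the cases (i)--(iii) of Proposition \ref{reduction} --- directly from the preparatory results already proved.

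First I would dispose of the possibility $G \leq GO(V)$. Suppose for contradiction that $G \leq GO(V)$, and take $N := G$ in Lemma \ref{so-sym}(i). Then $\SK(V)$ contains a $G$-submodule of dimension $1$ (if $k$ is even) or of dimension $\dim V = d$ (if $k$ is odd). Since $\dim \SK(V) = \binom{d+k-1}{k} > d$ whenever $d \geq 2$ and $k \geq 2$, this submodule is proper, contradicting the assumed irreducibility of $\SK(V)$ over $G$.

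Next I would apply Lemma \ref{step1}(ii), whose hypotheses $d \geq 2$ and $k \geq 2$ are met, to conclude that $G$ itself satisfies the hypothesis $\cs$. With $\cs$ in hand, Proposition \ref{reduction} applies to $G$ verbatim, producing the subgroup $H := \GN \leq \GC$ with $Z(\GC) G = Z(\GC) H$ and $H$ falling into one of the three structural cases (i)--(iii) listed there.

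The last step is to transfer irreducibility of $\SK(V)$ from $G$ to $H$. In each of the three allowed choices of $\GC$, the centre $Z(\GC)$ consists of scalar matrices in $GL(V)$, hence acts on $\SK(V)$ by scalars and so preserves every subspace. Consequently, a subspace $U \leq \SK(V)$ is $G$-invariant if and only if it is $Z(\GC) G$-invariant, which (using $Z(\GC) G = Z(\GC) H$) is the same as being $H$-invariant; thus irreducibility over $G$ transfers to $H$ immediately. There is no genuine obstacle in this proposition, since all the substantive work is already accomplished in Lemmas \ref{step1}, \ref{so-sym} and Proposition \ref{reduction}; the present proposition merely assembles them and exploits the observation that the passage $G \mapsto \GN$ modifies $G$ only by central scalar twists, which preserve every $\SK(V)$-level piece of information as well as every property comprising $\cs$.
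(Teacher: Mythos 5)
Your proposal is correct and follows essentially the same route as the paper: Lemma \ref{so-sym} rules out $G \leq GO(V)$, Lemma \ref{step1}(ii) gives the hypothesis $\cs$, and then Proposition \ref{reduction} is applied, with the passage to $\GN$ harmless because $Z(\GC)$ acts by scalars on $\SK(V)$ and $Z(\GC)G = Z(\GC)\GN$. The only (immaterial) difference is that you derive the exclusion of $G \leq GO(V)$ directly from Lemma \ref{so-sym}(i) plus the dimension count $\binom{d+k-1}{k} > d$, whereas the paper simply cites Lemma \ref{so-sym}; both are fine.
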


\begin{proof}
The claim $G \not\leq GO(V)$ follows from Lemma \ref{so-sym}. By Lemma \ref{step1}, $G$ satisfies 
$\cs$. Now one just applies Proposition \ref{reduction}.
\end{proof}

The following simple argument is useful in various situations:

\begin{lemma}\label{index}
{\sl Let $H$ be a subgroup of a finite group $G$, and $V$ an irreducible $\FF G$-module of dimension $d$.

{\rm (i)} Assume that any irreducible $\ell$-modular Brauer character of $H$ of degree $\leq d$ is 
of type $+$. If $\SK(V)$ is irreducible for some $k \geq 2$, then
$\dim(\SK(V)) \leq (G:Z(G)H)$ if $k$ is even, and $\dim(\SK(V)) \leq d(G:Z(G)H)$ if $k$ is odd.

{\rm (ii)} Assume $V|_{H}$ contains the submodule $B \oplus B^{*}$ for some $H$-module $B$.
If $\WK(V)$ is irreducible for some even $k \leq 2\dim(B)$, then $\dim(\WK(V)) \leq (G:Z(G)H)$.}
\end{lemma}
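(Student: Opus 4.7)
The plan for both parts is uniform: locate a small $H$-invariant submodule inside $\SK(V)|_{H}$ (resp.\ $\WK(V)|_{H}$), upgrade it to a $Z(G)H$-submodule via the fact that $Z(G)$ acts by scalars, and then combine Frobenius reciprocity with the irreducibility hypothesis to read off the desired bound on $\dim \SK(V)$ (resp.\ $\dim \WK(V)$).

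For part (i), I would first pick a simple $H$-submodule $U$ of $V|_{H}$; since $\dim U \leq d$, the hypothesis provides an $H$-invariant non-degenerate symmetric bilinear form on $U$, equivalently a non-zero $H$-invariant element $Q \in \SB(U)$. As the symmetric algebra $\Sym^{\bullet}(U)$ is a polynomial ring, every power $Q^{m}$ is non-zero. If $k=2m$ is even, then $Q^{m}$ spans a trivial $H$-submodule of $\SK(U)$; if $k$ is odd, the $H$-equivariant map $u \mapsto Q^{(k-1)/2}u$ embeds $U$ into $\SK(U)$. Functoriality of symmetric powers on the injection $U \hookrightarrow V|_{H}$ places these submodules inside $\SK(V)|_{H}$. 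Because $Z(G)$ acts on $V$ by scalars, it preserves every subspace and acts by a single scalar on $\SK(V)$, so both submodules are automatically $Z(G)H$-stable; call the resulting $Z(G)H$-submodule $T$. Frobenius reciprocity gives a non-zero map $\Ind_{Z(G)H}^{G}(T) \to \SK(V)$, and simplicity of $\SK(V)$ forces $\dim \SK(V) \leq (G:Z(G)H) \cdot \dim T$, which is $(G:Z(G)H)$ when $k$ is even and at most $d(G:Z(G)H)$ when $k$ is odd.

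For part (ii), the hypothesis supplies an $H$-invariant non-degenerate alternating form on $B \oplus B^{*}$, namely the natural pairing $\omega((b_{1},\varphi_{1}),(b_{2},\varphi_{2})) = \varphi_{1}(b_{2}) - \varphi_{2}(b_{1})$, which corresponds via $B \oplus B^{*} \cong (B \oplus B^{*})^{*}$ to a non-zero $H$-invariant $\omega \in \WB(B \oplus B^{*})$. For even $k$ with $k \leq 2\dim B = \dim(B \oplus B^{*})$, the Pfaffian-type element $\omega^{k/2} \in \WK(B \oplus B^{*})$ is non-zero, so it spans a trivial $H$-line that embeds into $\WK(V)|_{H}$ through the canonical injection $\WK(B \oplus B^{*}) \hookrightarrow \WK(V)|_{H}$. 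The same scalar argument makes this line $Z(G)H$-stable, and applying Frobenius reciprocity together with the irreducibility of $\WK(V)$ delivers the bound $\dim \WK(V) \leq (G:Z(G)H)$. The only technical point requiring care is the non-vanishing of $\omega^{k/2}$, which is standard for non-degenerate symplectic forms in the stated range; everything else is routine bookkeeping (checking $Z(G)$-stability, which is immediate, and correctly invoking Frobenius reciprocity against the simplicity hypothesis).
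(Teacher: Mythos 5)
Part (i) of your proposal is correct and is essentially the paper's own route: your multiplication-by-$Q$ construction is precisely the content of Lemma \ref{so-sym}(i) (there phrased as multiplication by the invariant quadratic polynomial $h$ in the polynomial realization of the symmetric algebra, injective because one is in an integral domain), and the passage to a $Z(G)H$-submodule followed by Frobenius reciprocity is the same bookkeeping as in the paper.

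Part (ii), however, has a genuine gap in positive characteristic. Writing $\omega=\sum_{i=1}^{n}e_{i}\wedge f_{i}$ with $n=\dim(B)$ in a symplectic basis of $B\oplus B^{*}$, one has $\omega^{m}=m!\sum_{i_{1}<\cdots<i_{m}}e_{i_{1}}\wedge f_{i_{1}}\wedge\cdots\wedge e_{i_{m}}\wedge f_{i_{m}}$, so $\omega^{k/2}=0$ whenever $0<\ell\leq k/2$; the hypothesis $k\leq 2\dim(B)$ alone does not guarantee non-vanishing, contrary to your remark that this is ``standard in the stated range''. Since the lemma is stated for arbitrary $\ell\geq 0$ (and, unlike for $\SK(V)$, there is no standing convention $\ell>k$ when discussing irreducibility of $\WK(V)$), your argument proves (ii) only when $\ell=0$ or $\ell>k/2$. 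The repair is easy: replace $\omega^{k/2}$ by the divided power $\sum_{|S|=k/2}\bigwedge_{i\in S}(e_{i}\wedge f_{i})$, which is a nonzero $H$-fixed vector in every characteristic; equivalently, argue as the paper does, namely $\WK(V)|_{H}\supset \wedge^{k/2}(B)\otimes\wedge^{k/2}(B^{*})\simeq \wedge^{k/2}(B)\otimes(\wedge^{k/2}(B))^{*}\supset 1_{H}$ via the coevaluation vector, which needs only $k/2\leq\dim(B)$. With that fixed-vector step corrected, your $Z(G)$-scalar observation and the Frobenius reciprocity bound go through unchanged.
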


\begin{proof}
(i) Consider a simple submodule $U$ of $V|_{H}$. By assumption, $U$ is of type $+$. Hence by Lemma 
\ref{so-sym}, $\SK(V)|_{H}$ contains a submodule $A$, where $A = 1_{H}$ if $k$ is even and $A \simeq U$ if 
$k$ is odd. By Lemma \ref{step1}, $Z(G)$ acts scalarly on $V$ and on $\SK(V)$, so we may view
$A$ as a $Z(G)H$-module. Now the claim follows by Frobenius' reciprocity.

(ii) Clearly, 
$\WK(V)|_{H} \supset \wedge^{k/2}(B) \otimes \wedge^{k/2}(B^{*}) \simeq 
 \wedge^{k/2}(B) \otimes (\wedge^{k/2}(B))^{*} \supset 1_{H}$. Thus the $H$-fixed point subspace on 
$\WK(V)$ is nonzero, and $Z(G)$ certainly has a $1$-dimensional submodule in it, whence the claim again 
follows by Frobenius' reciprocity.  
\end{proof}

If $V$ is a self-dual simple $\CC G$-module, then the type of $V$ can be determined using the 
Frobenius-Schur indicator. In the modular case, the following result of Thompson is very useful:

\begin{lemma}\label{type} {\rm \cite{Th}} 
{\sl Let $G$ be a finite group and let $\chi \in \Irr(G)$ be a 
real-valued character. For an odd prime $\ell$, assume that $\varphi \in \IBR_{\ell}(G)$ is a real-valued 
constituent of odd multiplicity in $\hat{\chi}$. Then $\varphi$ has the same type as of $\chi$.
\hfill $\Box$}
\end{lemma}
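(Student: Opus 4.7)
The plan is to exploit a $G$-invariant bilinear form on a lift of $\chi$, pass to a mod-$\ell$ reduction on a $kG$-lattice, and prove a parity statement: in a $kG$-module equipped with a nondegenerate $G$-invariant bilinear form of type $\epsilon$, every self-dual irreducible constituent of the opposite type occurs with even multiplicity. The lemma then follows by the contrapositive.

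To set up, I would fix an $\ell$-modular system $(K,R,k)$ with $K$ of characteristic zero, $R$ a complete DVR with uniformizer $\pi$, and $k = R/\pi R$ of characteristic $\ell$. Let $V$ be a $KG$-module affording $\chi$. Since $\chi$ is real-valued, $V$ admits a nondegenerate $G$-invariant bilinear form $B$ of type $\epsilon := \nu(\chi)$ (symmetric if $\epsilon = +$, alternating if $\epsilon = -$). Choose any $G$-invariant $R$-lattice $L \subset V$ and rescale $B$ by a scalar in $K^{\times}$ so that $B(L,L) \subseteq R$ and $B(L,L) \not\subseteq \pi R$. The reduction $\bar{L} := L/\pi L$ is then a $kG$-module with Brauer character $\hat\chi$, and the reduced form $\bar{B}$ is a nonzero $G$-invariant bilinear form on $\bar{L}$; because $\ell$ is odd, this form is of the same type $\epsilon$.

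The central lemma to establish is: $(\star)$ if $(M,\beta)$ is a $kG$-module equipped with a nondegenerate $G$-invariant bilinear form of type $\epsilon$, then every self-dual irreducible composition factor of $M$ of type $-\epsilon$ appears in $M$ with even multiplicity. I would prove $(\star)$ by induction on $\dim_{k} M$. Let $N$ be a maximal submodule, so that $\psi := M/N$ is simple, and let $N^{\perp}$ be its orthogonal. If $N^{\perp} \not\subseteq N$, then $M = N \oplus N^{\perp}$ splits orthogonally, both summands carry nondegenerate induced forms of type $\epsilon$, and the simple summand $N^{\perp} \cong \psi^{*}$ then inherits a nondegenerate form of type $\epsilon$, forcing $\psi$ (if self-dual) to be of type $\epsilon$; induction applied to $N$ concludes this case. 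If $N^{\perp} \subseteq N$, then $N/N^{\perp}$ inherits a nondegenerate form of type $\epsilon$, and the pair consisting of the top factor $\psi$ and the bottom submodule $N^{\perp} \cong \psi^{*}$ automatically contributes an even multiplicity whenever $\psi$ is self-dual; induction applied to $N/N^{\perp}$ finishes the case.

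The main obstacle is that $\bar{B}$ may be \emph{degenerate} on $\bar{L}$, so $(\star)$ does not apply to $\bar{L}$ directly; its radical $W \subseteq \bar{L}$ must also be controlled. To address this, I would bring in the dual lattice $L^{\sharp} := \{v \in V : B(v,L) \subseteq R\}$, a $G$-invariant lattice with $L \subseteq L^{\sharp}$; the symmetry (or antisymmetry) of $B$ induces a nondegenerate $G$-invariant pairing $L^{\sharp}/L \times L^{\sharp}/L \to K/R$ of type $\epsilon$ on the finite torsion $RG$-module $L^{\sharp}/L$. A short computation identifies $W \cong (L^{\sharp}/L)[\pi]$ as $kG$-modules, while $\bar{L}/W$ inherits a nondegenerate form of type $\epsilon$ and so satisfies the conclusion of $(\star)$. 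A variant of $(\star)$ for finite torsion $RG$-modules with nondegenerate pairings into $K/R$ of type $\epsilon$ (proved by the same inductive scheme, substituting Pontryagin annihilators for orthogonal complements and inducting on $R$-length) yields the corresponding parity statement for $L^{\sharp}/L$, hence in particular for its $\pi$-torsion $W$. Combining the two parities, every self-dual constituent of type $-\epsilon$ in $\hat\chi$ has even multiplicity, which is precisely the contrapositive of the lemma.
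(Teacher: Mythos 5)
The paper does not prove this lemma at all --- it is quoted from Thompson's paper [Th] --- so your proposal can only be judged on its own terms; its overall strategy (reduce a lattice with an invariant $\epsilon$-form modulo $\ell$ and prove a parity statement for self-dual constituents of the opposite type) is indeed the standard route. Your key lemma $(\star)$ is correct, and your inductive proof of it (splitting off $N^{\perp}\cong\psi^{*}$ when $N^{\perp}\not\subseteq N$, and pairing the top factor $\psi$ with the bottom factor $\psi^{*}$ when $N^{\perp}\subseteq N$) is sound; the identification of the radical $W$ of $\bar B$ with $(L^{\sharp}/L)[\pi]$ via multiplication by $\pi$ is also correct.

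The gap is in the last sentence. What you need is that the self-dual type-$(-\epsilon)$ factors occur with even multiplicity in $W=(L^{\sharp}/L)[\pi]$, since the multiplicity in $\hat\chi$ is the sum of the multiplicities in $\bar L/W$ (controlled by $(\star)$) and in $W$. Your torsion variant of $(\star)$, proved by the annihilator induction, gives parity for the composition factors of the whole module $T=L^{\sharp}/L$; but the inference ``hence in particular for its $\pi$-torsion $W$'' is a non sequitur: parity of a multiplicity in $T$ says nothing about the multiplicity in the submodule $T[\pi]$ (the relevant factors could be split between $T[\pi]$ and $T/T[\pi]$), and the induced $k$-valued form on $T[\pi]$ (with values in $\pi^{-1}R/R$) is in general degenerate, with radical $T[\pi]\cap\pi T$, so $(\star)$ cannot be applied to $T[\pi]$ directly either. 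The standard repair is an extra lattice-improvement step: starting from an integral invariant lattice $L\subseteq L^{\sharp}$, if $\pi^{a}$ is the largest elementary divisor of $L^{\sharp}/L$ with $a\geq 2$, replace $L$ by $L+\pi^{a-1}L^{\sharp}$ (still integral, since $B(L^{\sharp},L^{\sharp})\subseteq\pi^{-a}R$) and iterate; one ends with an invariant lattice satisfying $\pi L^{\sharp}\subseteq L\subseteq L^{\sharp}$. For such $L$ one has $W\cong L^{\sharp}/L$ killed by $\pi$, and $\pi B$ induces a nondegenerate $G$-invariant $k$-valued form of type $\epsilon$ on it, so $(\star)$ applies to both $\bar L/W$ and $W$ and the argument closes. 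Without this (or some substitute analysis of $T[\pi]$), the proof as written does not establish the lemma.
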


\subsection{Reduction to lower symmetric/exterior powers.}

\begin{lemma}\label{sym1}
{\sl Assume $k,l \geq 1$ and that either $\Char(\FF) = 0$, or $\GC \in \{GL(V),Sp(V)\}$ and 
$\Char(\FF) \geq \max\{k,l\}+2$. Then $L(k\om_{1}) \otimes L(l\om_{1})^{*}$ embeds in 
$L((k+1)\om_{1}) \otimes L((l+1)\om_{1})^{*}$ as a $\GC$-submodule.}
\end{lemma}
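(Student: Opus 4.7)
The plan is to identify both sides as spaces of homomorphisms between symmetric powers and then construct an explicit $\GC$-equivariant embedding by an averaging operator. Under the hypotheses (either $\Char(\FF)=0$ or $\Char(\FF)\ge \max(k,l)+2$), the Weyl module $\Sym^i V$ is already irreducible of highest weight $i\om_1$ for each $i\in\{k,k+1,l,l+1\}$ whenever $\GC\in\{GL(V),Sp(V)\}$; so $L(k\om_1)=\Sym^k V$ and $L(l\om_1)^{*}\cong\Sym^l V^{*}$, and the claim reduces to producing an injective $\GC$-equivariant map
\[
\Hom_{\FF}(\Sym^l V,\Sym^k V)\;\hookrightarrow\;\Hom_{\FF}(\Sym^{l+1} V,\Sym^{k+1} V).
\]

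The candidate is the averaging operator
\[
T(\phi)(v_1\cdots v_{l+1})=\sum_{j=1}^{l+1}v_j\cdot\phi\bigl(v_1\cdots\widehat{v_j}\cdots v_{l+1}\bigr).
\]
The right-hand side is symmetric multilinear in $v_1,\ldots,v_{l+1}$, so $T(\phi)$ descends to a well-defined element of $\Hom_{\FF}(\Sym^{l+1}V,\Sym^{k+1}V)$; the operator $T$ is visibly $GL(V)$-equivariant and hence $\GC$-equivariant.

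The crucial (and main) step will be injectivity of $T$. Suppose $T(\phi)=0$ and specialize $v_1=\cdots=v_{l+1}=v$, which gives $(l+1)\,v\cdot\phi(v^l)=0$ in $\Sym^{k+1}V$. Since $l+1\le\max(k,l)+1<\Char(\FF)$ (or $\Char(\FF)=0$) the factor $l+1$ is invertible in $\FF$, and because $\Sym V$ is an integral domain we conclude $\phi(v^l)=0$ for every $v\in V$. The polarization identity, valid because $l!$ is invertible in $\FF$ under our hypothesis, then yields $\phi(v_1\cdots v_l)=0$ for all $v_1,\ldots,v_l\in V$, so $\phi\equiv 0$. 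This disposes of the cases $\GC\in\{GL(V),Sp(V)\}$; this is precisely the place where the hypothesis $\Char(\FF)\ge\max(k,l)+2$ is indispensable (it is needed both to invert $l+1$ and to justify polarization).

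The remaining case $\GC=GO(V)$ occurs only in characteristic zero, where $L(k\om_1)$ is realized as the space $\mathrm{Harm}^k\subset\Sym^k V$ of degree-$k$ spherical harmonics. Here I would invoke the Newell--Littlewood branching rule, which gives the $SO(V)$-decomposition
\[
\mathrm{Harm}^k\otimes\mathrm{Harm}^l\;=\;\bigoplus_{(a,b)}L_{SO}\bigl((a,b)\bigr),
\]
indexed by two-row partitions $(a,b)$ with $a\ge b\ge 0$, $|a-b|\ge|k-l|$, $a+b\le k+l$, and $a+b\equiv k+l\pmod 2$, each appearing with multiplicity one; the analogous decomposition of $\mathrm{Harm}^{k+1}\otimes\mathrm{Harm}^{l+1}$ is identical except that $a+b$ may be as large as $k+l+2$, so every simple summand of the former is present in the latter with the same multiplicity. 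Complete reducibility in characteristic zero then produces the required $GO(V)$-equivariant embedding.
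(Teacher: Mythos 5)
Your argument for $\GC\in\{GL(V),Sp(V)\}$ is correct and is essentially the paper's own proof in a different dress: under the identification $L(k\om_1)\otimes L(l\om_1)^*\cong \Hom_\FF(\Sym^lV,\Sym^kV)$, your operator $T$ is exactly the adjoint of multiplication by the invariant bilinear element $\sum_i x_iy_i$ that the paper uses in its bipolynomial model $P_{l,k}$, and both injectivity arguments ultimately rest on $\Sym(V)$ being a domain (you add a polarization step, which is fine since $l!<\Char(\FF)$); both proofs also share the unproved but standard identification $L(m\om_1)=\Sym^mV$ for $Sp(V)$ when $m<\Char(\FF)$. Where you genuinely diverge is the case $\GC=GO(V)$, $\Char(\FF)=0$: the paper realizes $L(k\om_1)\otimes L(l\om_1)^*$ as the space of bihomogeneous polynomials harmonic in each set of variables (Do Carmo--Wallach) and uses the mixed Laplacian $\sum_i\partial^2/\partial x_i\partial y_i$, which is a surjective $GO(V)$-map from the $(l+1,k+1)$-level onto the $(l,k)$-level, complete reducibility then giving the embedding; you instead invoke the Newell--Littlewood decomposition of ${\mathrm{Harm}}^k\otimes{\mathrm{Harm}}^l$ into two-row constituents and observe the index set only grows when $(k,l)$ is replaced by $(k+1,l+1)$. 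Your route works, but two points need to be said explicitly: (1) the decomposition you quote is stated for $SO(V)$, while the lemma asks for a $\GC=GO(V)$-embedding; since an $SO(V)$-constituent may extend to $O(V)$ in two ways (twist by determinant), you must note that the Newell--Littlewood rule in fact computes the $O(V)$-decomposition with constituents labelled by partitions (no determinant twists), and that scalars act by $\lambda^{k-l}$ on both sides, before complete reducibility yields a $GO(V)$-map; (2) the two-row multiplicity-one formula is the stable-range statement and requires roughly $\dim V\ge 5$ (for $\dim V=3,4$ modification rules alter it), whereas the paper's operator argument is uniform in $\dim V$. These are repairable omissions rather than errors, and in the paper's applications $\dim V>4$, so your proof, once these points are added, is a valid alternative for the orthogonal case; the paper's approach buys an explicit intertwiner with no appeal to branching combinatorics or to the distinction between $SO$ and $O$.
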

 
\begin{proof}
First we assume $\GC = GL(V)$. 
Let $P_{l,k}$ be the subspace of   $\FF[x_{1}, \ldots ,x_{d},y_{1}, \ldots ,y_{d}]$
consisting of homogeneous polynomials of degree $l$ in $x_{1}, \ldots ,x_{d}$ and 
of degree $k$ in $y_{1}, \ldots ,y_{d}$. Furthermore, let $GL(V)$ act naturally on 
$\FF^{d}$ and let it act on $P_{l,k}$ via  
$g \cdot f(x,y) = f(^{t}gx,g^{-1}y)$. The condition on $\Char(\FF)$ ensures that 
$L(k\om_{1}) \otimes L(l\om_{1})^{*} \simeq P_{l,k}$ and 
$L((k+1)\om_{1}) \otimes L((l+1)\om_{1})^{*} \simeq P_{l+1,k+1}$. Observe that 
$h := \sum^{d}_{i=1}x_{i}y_{i}$ is $GL(V)$-invariant. Also, 
$\FF[x_{1}, \ldots ,x_{d},y_{1}, \ldots ,y_{d}]$ is an integral domain. So 
the multiplication by $h$ yields an injective $\GC$-homomorphism
$P_{l,k} \hookrightarrow P_{l+1,k+1}$. Notice that, under the given assumptions, the modules 
$L(m\om_{1})$ of $GL(V)$ and of $Sp(V)$, with $m \in \{k,l,k+1,l+1\}$, are the same, so 
we are also done with $Sp(V)$.

Next we consider the case $\GC = GO(V)$ and $\Char(\FF) = 0$. It is proved in \cite{DW} that
$L(k\om_{1}) \otimes L(l\om_{1})^{*} \simeq P_{l,k} \cap \HC$, where 
$$\HC := \left\{ f(x,y) \in \FF[x_{1}, \ldots ,x_{d},y_{1}, \ldots ,y_{d}] \mid 
  \sum^{d}_{i=1}\frac{\partial^{2}f}{\partial x_{i}^{2}} = 
  \sum^{d}_{i=1}\frac{\partial^{2}f}{\partial y_{i}^{2}} = 0\right\},$$
and that the operator $\sum^{d}_{i=1}\dfrac{\partial^{2}}{\partial x_{i} \partial y_{i}}$
yields a surjective $SO(V)$-homomorphism from $(P_{l+1,k+1} \cap \HC)$
to $ (P_{l,k} \cap \HC)$. In fact this
homomorphism is also a $GO(V)$-homomorphism.  
\end{proof}

The following is a theorem of Serre:

\begin{lemma}\label{serre} {\rm \cite{S}}
{\sl Assume $V_{1}, \ldots ,V_{m}$ are semisimple $\FF G$-modules and 
$\Char(\FF) > \sum^{m}_{i=1}(\dim(V_{i}) -1)$. Then the $G$-module 
$V_{1} \otimes \ldots \otimes V_{m}$ is also semisimple. 
\hfill $\Box$}
\end{lemma}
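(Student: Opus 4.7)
The result is Serre's theorem \cite{S}, which I plan to prove along the following lines. First I would reduce to the case where each $V_i$ is simple, by splitting off direct summands and using that tensor products distribute over direct sums and that semisimplicity is stable under direct summands. Then I would replace $G$ by the Zariski closure $H$ of its image in $\prod_{i=1}^m GL(V_i)$; since $\bigoplus V_i$ is a faithful semisimple representation of $H$, its identity component $H^\circ$ is reductive, and the $V_i$ remain semisimple rational $H$-modules. The question thus reduces to showing that the rational $H$-module $V_1 \otimes \cdots \otimes V_m$ is semisimple.

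Next, for the crucial two-factor case (tensor product of two simple rational modules $V$, $W$ of a connected reductive algebraic group under the bound $\dim V + \dim W - 1 \leq \Char(\FF)$), I would appeal to $\Ext$-vanishing. Any nonsplit extension $0 \to L \to E \to L' \to 0$ of simple $H^\circ$-modules appearing inside $V \otimes W$ would, by the linkage principle, require the highest weights of $L$ and $L'$ to lie in the same block and differ by a root translate whose length is controlled by $p$. The characteristic bound ensures that such a translate cannot fit inside the weight polytope of $V \otimes W$, so that the relevant $\Ext^{1}$ groups all vanish. Alternatively, one can exploit the classical identification of unipotent action with Jordan blocks: for a unipotent $u \in H^\circ$ and a simple $V_i$, the operator $u-1$ has nilpotency degree at most $\dim V_i$, so on $V \otimes W$ the operator $u-1$ has nilpotency degree at most $\dim V + \dim W - 1 \leq \Char(\FF)$, and a standard saturation argument produces a one-parameter unipotent subgroup through $u$ whose action is semisimplifiable under the characteristic hypothesis.

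Finally, to pass from two factors to $m$ factors, one cannot simply iterate the two-factor statement, since $\dim(V_1 \otimes V_2)$ may already violate the hypothesis needed to apply it with $V_3$. Instead, I would argue directly in the ambient reductive group: if $V_1 \otimes \cdots \otimes V_m$ were non-semisimple, one could extract a nontrivial self-extension and translate it into the existence of a nilpotent operator in the universal enveloping algebra of $\mathrm{Lie}(H^\circ)$ acting with nilpotency degree at least $\Char(\FF)+1$ on the tensor product. This would contradict the elementary calculation that the product of $m$ commuting nilpotents $N_i$ of orders $\dim V_i$ satisfies $\bigl(\sum_i N_i\bigr)^{\sum(\dim V_i - 1) + 1} = 0$. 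The hard part will be this passage from numerical bounds to cohomological vanishing: making the last step rigorous requires either the Jantzen sum formula or Serre's own saturation machinery, and this is the technical heart of \cite{S} where the specific bound $\sum(\dim V_i - 1) < \Char(\FF)$ plays its essential role.
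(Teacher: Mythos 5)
The paper gives no proof of this lemma: it is quoted directly from Serre \cite{S}, so the only question is whether your sketch would stand on its own, and as written it would not. The fatal gap is structural. After passing to the Zariski closure $H$ (harmless, and $H^{\circ}$ is indeed reductive because a faithful semisimple rational representation kills the unipotent radical), everything you subsequently do --- linkage, weight polytopes, $\Ext$-vanishing, nilpotents in the enveloping algebra of $\mathrm{Lie}(H^{\circ})$ --- only addresses semisimplicity over the connected group $H^{\circ}$. In characteristic $p$, semisimplicity over $H^{\circ}$ does not imply semisimplicity over $H$ when $p$ divides $|H/H^{\circ}|$ (already $H=\ZZ/p$, $H^{\circ}=1$ gives a nonsplit extension of trivial modules), and in the situation where this lemma is actually applied in the paper $G$ is a \emph{finite} group, so $H$ is finite, $H^{\circ}=1$, $\mathrm{Lie}(H^{\circ})=0$, and your entire argument becomes vacuous. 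Serre's theorem is hard precisely because $G$ is arbitrary; his mechanism is saturation: for unipotent $u\in G$ the commuting nilpotents $N_{i}=u-1$ on the factors give $(u^{\otimes m}-1)^{\,\sum_{i}(\dim V_{i}-1)+1}=0$, so under the stated bound $u$ acts on the tensor product with $u^{p}=1$; one then adjoins the one-parameter groups $t\mapsto u^{t}$ (truncated binomial series), proves that enlarging $G$ to its saturation does not affect semisimplicity, and only after that does the problem reduce to connected saturated subgroups and to semisimple groups with small restricted weights. Your sketch gestures at this (``a standard saturation argument'') but never supplies the step that actually bridges the finite/disconnected case to the connected one.

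Beyond that structural issue, the two ingredients you do describe are not arguments yet: the claim that the linkage principle forces the relevant extensions to ``not fit inside the weight polytope'' is not substantiated (and is not how the bound $\dim V+\dim W-1\leq p$ enters Serre's or Jantzen-type semisimplicity results), and in the $m$-factor step the passage from non-semisimplicity to ``a nilpotent operator in the enveloping algebra of nilpotency degree at least $p+1$'' does not follow from anything stated. The commuting-nilpotents computation $\bigl(\sum_{i}N_{i}\bigr)^{\sum_{i}(\dim V_{i}-1)+1}=0$ is indeed the reason the numerical bound appears, but its correct use is to guarantee $u^{p}=1$ so that saturation applies, not to produce a contradiction with a hypothetical self-extension. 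Since you yourself flag that the ``technical heart'' is left to \cite{S}, the honest course here is the one the paper takes: cite Serre's theorem rather than reprove it.
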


\begin{corol}\label{sym2}
{\sl {\rm (i)} Let $G$ be a subgroup of $GL(V)$. Assume that the $G$-module $\SK(V)$ is
reducible for some $k \geq 1$. Assume furthermore that either 
$\ell := \Char(\FF) = 0$ or $\ell > k(\dim(V)-1)$. Then 
for every $m \geq k$, the $G$-module $\SM(V)$ is also reducible.

{\rm (ii)} Let $\Char(\FF) = 0$ and let $G$ be any subgroup of $\GC := GO(V)$. Assume that the 
$\GC$-module $L(k\om_{1})$ is irreducible over $G$ for some $k \geq 1$. Then for every $m$, 
$1 \leq m \leq k$, the $\GC$-module $L(m\om_{1})$ is also irreducible over $G$.}
\end{corol}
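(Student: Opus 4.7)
The plan is to iterate Lemma \ref{sym1} with equal indices $k = l$ and exploit the canonical identification $L(j\om_{1}) \otimes L(j\om_{1})^{*} \simeq \End_{\FF}(L(j\om_{1}))$ of $\GC$-modules. Each instance of Lemma \ref{sym1} then provides a $G$-equivariant injection $\End_{\FF}(L(j\om_{1})) \hookrightarrow \End_{\FF}(L((j+1)\om_{1}))$, and since the $G$-invariants functor is left exact, we obtain $\End_{G}(L(j\om_{1})) \hookrightarrow \End_{G}(L((j+1)\om_{1}))$. Over the algebraically closed field $\FF$, Schur's Lemma identifies $G$-irreducibility of a module $M$ with $\dim_{\FF}\End_{G}(M) = 1$, so this comparison of endomorphism algebras gives both desired statements.

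For part (i), I would first dispose of the case $\ell := \Char(\FF) > 0$ with $\ell \leq m$: here $L(m\om_{1})$ is a proper subquotient of $\Sym^{m}(V)$ by Steinberg's tensor product theorem, so $\Sym^{m}(V)$ is already $GL(V)$-reducible and therefore $G$-reducible. Otherwise $\ell = 0$ or $\ell > m$, in which case $\Sym^{j}(V) = L(j\om_{1})$ for all $k \leq j \leq m$ and the characteristic hypothesis $\ell > k(d-1) \geq k$ ensures Lemma \ref{sym1} applies at each intermediate step. Composing the embeddings from $j = k$ up to $j = m-1$ yields an injection $\End_{G}(\SK(V)) \hookrightarrow \End_{G}(\SM(V))$. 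Since $\SK(V)$ is $G$-reducible, $\dim \End_{G}(\SK(V)) \geq 2$, hence $\dim \End_{G}(\SM(V)) \geq 2$, and $\SM(V)$ is $G$-reducible.

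For part (ii), the same strategy applies with $\GC = GO(V)$ in characteristic zero, where Lemma \ref{sym1} is established in the orthogonal setting. Iterating the embedding from $j = m$ up to $j = k-1$ yields $\End_{G}(L(m\om_{1})) \hookrightarrow \End_{G}(L(k\om_{1}))$. The hypothesis that $L(k\om_{1})$ is $G$-irreducible gives $\dim \End_{G}(L(k\om_{1})) = 1$, so $\dim \End_{G}(L(m\om_{1})) \leq 1$, forcing equality and hence $L(m\om_{1})$ irreducible. The one point requiring verification is that the embedding of Lemma \ref{sym1} intertwines with the canonical identification $L(j\om_{1}) \otimes L(j\om_{1})^{*} \simeq \End_{\FF}(L(j\om_{1}))$ in a $\GC$-equivariant manner; this is a routine bookkeeping check, so no substantial obstacle arises.
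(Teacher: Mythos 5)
Your overall skeleton (iterate Lemma \ref{sym1}, pass to $G$-invariants of $L(j\om_{1})\otimes L(j\om_{1})^{*}=\End_{\FF}(L(j\om_{1}))$, then apply Schur's Lemma) is the same as the paper's, but you have omitted the semisimplicity input that makes the Schur-type inferences legitimate, and this is a genuine gap in both parts. In (i) you deduce $\dim\End_{G}(\SK(V))\geq 2$ from the mere reducibility of $\SK(V)$. That implication is false for non-semisimple modules: a reducible indecomposable module with non-isomorphic composition factors (e.g.\ a uniserial module of length $2$) has only scalar endomorphisms. Since $G$ is an \emph{arbitrary} subgroup of $GL(V)$ (not assumed reductive or finite), $V|_{G}$ and hence $\SK(V)|_{G}$ need not be semisimple even in characteristic $0$. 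The paper's proof first reduces, via the submodule argument of Lemma \ref{step1} (a proper $G$-submodule $A\subset V$ gives the proper submodule $\SM(A)\subset\SM(V)$, and small characteristic is excluded), to the case where $V|_{G}$ is irreducible and $\ell>m$, and then invokes Serre's theorem (Lemma \ref{serre}) to conclude that $\SK(V)$ is a semisimple $G$-module; only then does reducibility force $\dim\bigl(\Sym^{k}(V)\otimes\Sym^{k}(V)^{*}\bigr)^{G}\geq 2$. Note that this is exactly where the hypothesis $\ell>k(\dim(V)-1)$, as opposed to just $\ell>k$, is used; your argument never uses it, which is a sign that something essential is missing.

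In (ii) the corresponding inference is even more clearly invalid: from $\dim\End_{G}(L(m\om_{1}))\leq 1$ you conclude that $L(m\om_{1})$ is irreducible, but this is the \emph{converse} of Schur's Lemma and fails for non-semisimple modules (the uniserial example above again has one-dimensional endomorphism algebra). Here $G$ is an arbitrary subgroup of $GO(V)$, so $V|_{G}$ may fail to be semisimple, e.g.\ when $G$ lies in the stabilizer of a totally singular subspace. The paper's proof splits accordingly: if $V|_{G}$ is semisimple, then Lemma \ref{serre} (in characteristic $0$) makes $V^{\otimes k}$ and hence the $L(j\om_{1})$ semisimple over $G$, and your invariant-dimension comparison goes through in the contrapositive form; if not, $G$ stabilizes a nonzero proper totally singular subspace $W$, and one shows directly that the unipotent radical of $Stab_{\GC}(W)$ acts nontrivially on each $L(j\om_{1})$, so its fixed-point space is a proper nonzero $G$-submodule and the hypothesis of (ii) cannot hold. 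Without some argument of this kind your proof does not cover the non-semisimple situation. (A smaller point: your disposal of the case $\ell\leq m$ in (i) via Steinberg's theorem requires $\dim(V)\geq 3$; for $\dim(V)=2$ one has, e.g., $\Sym^{2\ell-1}(V)=L((2\ell-1)\om_{1})$ irreducible over $SL(V)$, cf.\ Remark \ref{d=2}, so that case needs separate treatment.)
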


\begin{proof} Clearly, we may assume $\dim(V) > 1$.
 
(i) Consider any $m \geq k$. By Lemma \ref{step1} we may assume that $G$ is irreducible (and faithful) on 
$V$ and that $\ell > m$. In particular, $\SM(V) = L(m\om_{1})$. Furthermore, the condition on $\ell$ implies 
that the $G$-module $\SK(V)$ is semisimple by Lemma \ref{serre}. Since $\SK(V)$ is reducible over
$G$, the semisimplicity implies that the fixed point subspace $M_{k}^{G}$ has dimension 
$\geq 2$, where we set 
$$M_{n} := \Sym^{n}(V) \otimes (\Sym^{n}(V))^{*} \simeq \Sym^{n}(V) \otimes \Sym^{n}(V^{*}).$$ 
But $M_{k}$ embeds in $M_{m}$ as a $G$-module by Lemma \ref{sym1}. It follows that 
$\dim(M_{m}^{G}) \geq 2$, and so the $G$-module $\SM(V)$ is reducible by Schur's Lemma. 

(ii) Recall that $L(k\om_{1})$ is a $\GC$-submodule of $V^{\otimes k}$. First we consider the case
the $G$-module $V$ is semisimple. By Lemma \ref{serre}, the $G$-module $V^{\otimes k}$ is 
semisimple, and so is $L(k\om_{1})$. Now we can apply Lemma \ref{sym1} and argue as above.

Next we consider the general case. If every simple $G$-submodule of $V$ is non-degenerate (w.r.t. the
bilinear form on $V$), then clearly the $G$-module $V$ is semisimple and so we are done. Otherwise
$G$ preserves a nonzero (and proper) totally singular subspace $W$ of $V$. It suffices to show that 
$\HC := Stab_{\GC}(W)$ is reducible on $L(m\om_{1})$. Let $\UC$ be the unipotent radical of $\HC$. 
The kernel of the action of $\GC$ on $L(m\om_{1})$ is obviously normal in $\GC$ and therefore has order 
$\leq 2$. It follows that $\UC$ acts nontrivially on $L(m\om_{1})$ and so its fixed point subspace $F$ on 
$L(m\om_{1})$ gives a nonzero proper $\HC$-submodule.
\end{proof}

Similarly, the following statement holds for exterior powers:

\begin{lemma}\label{wedge}
{\sl Assume that $(d+1)/2 \geq k \geq 1$ and that either $\ell := \Char(\FF) = 0$ or $\ell > 2k(d-k)$. 

{\rm (i)} Then $\wedge^{k-1}(V) \otimes \wedge^{k-1}(V)^{*}$ embeds in 
$\WK(V) \otimes \WK(V)^{*}$ as an $SL(V)$-submodule.

{\rm (ii)} Assume that $G \leq GL(V)$ and that $\WK(V)$ is irreducible over $G$. Then 
for every $m$ with $1 \leq m \leq k$, the $G$-module $\WM(V)$ is also irreducible.}
\end{lemma}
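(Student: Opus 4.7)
The approach is to follow the blueprint of Lemma \ref{sym1} and Corollary \ref{sym2}(i), replacing the symmetric invariant $h = \sum x_iy_i$ by the natural $GL(V)$-invariant ``trace'' element $H := \sum_{i=1}^d e_i \otimes e_i^* \in V \otimes V^*$, where $(e_i)$ is a basis of $V$ and $(e_i^*)$ is the dual basis.

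For part (i), the plan is to define the $GL(V)$-equivariant map
\[
\Phi : \wedge^{k-1}(V) \otimes \wedge^{k-1}(V)^* \longrightarrow \WK(V) \otimes \WK(V)^*, \quad \omega \otimes \phi \longmapsto \sum_{i=1}^d (\omega \wedge e_i) \otimes (\phi \wedge e_i^*),
\]
i.e.\ multiplication by $H$ regarded as an element of the bigraded exterior algebra $\wedge^*(V) \otimes \wedge^*(V)^*$. Equivariance is immediate from $GL(V)$-invariance of $H$. The real content is injectivity. The cleanest way to establish this is to view $L := \Phi$, together with the contraction operator $\Lambda := \sum_i \iota_{e_i^*} \otimes \iota_{e_i}$ and an appropriate grading element, as an $\mathfrak{sl}_2$-triple acting on $\bigoplus_{p,q} \wedge^p(V) \otimes \wedge^q(V)^*$. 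A direct computation using the graded Leibniz rule $\iota_{v^*}(\omega \wedge v) = \iota_{v^*}(\omega) \wedge v + (-1)^{\deg \omega} \langle v, v^* \rangle \omega$ and the identity $\sum_i \iota_{e_i^*}(\alpha) \wedge e_i = (\deg \alpha)\, \alpha$ yields an explicit formula for $[\Lambda, L]$ on each bidegree $(p,q)$ piece. A standard Lefschetz-type $\mathfrak{sl}_2$-argument then forces $L$ to be injective on $A_{k-1,k-1}$, with the hypothesis $\ell > 2k(d-k)$ precisely ensuring that the relevant $\mathfrak{sl}_2$-structure constants (equivalently, the determinant of $\Phi$ restricted to each $SL(V)$-isotypic component) are nonzero in $\FF$.

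For part (ii), I would mimic Corollary \ref{sym2}(i) almost verbatim. By Lemma \ref{step1} one may assume $G$ is irreducible on $V$. Suppose for contradiction that $\WM(V)$ is reducible for some $1 \leq m < k$. Since $k \leq (d+1)/2$, the hypothesis $\ell > 2k(d-k)$ implies $\ell > m(d-1)$, so by Lemma \ref{serre} the $G$-module $\wedge^m(V)$ is semisimple; hence $\dim \End_G(\WM(V)) \geq 2$. Iterating the embedding of part (i) successively at exponents $j = m+1, m+2, \ldots, k$, one obtains an $SL(V)$-equivariant (hence $G$-equivariant) embedding
\[
\End(\WM(V)) \hookrightarrow \End(\WK(V)).
\]
At each step the characteristic condition is preserved, since the function $j \mapsto 2j(d-j)$ is non-decreasing on the interval $[0,(d+1)/2]$ and so $2j(d-j) \leq 2k(d-k)$ throughout. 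Consequently $\dim \End_G(\WK(V)) \geq 2$, contradicting the irreducibility of $\WK(V)$ by Schur's lemma.

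The main obstacle I anticipate is the injectivity assertion in part (i): because the exterior algebra has zero divisors, one cannot appeal to an integral-domain argument as in Lemma \ref{sym1}, and the injectivity of $\Phi$ must be extracted either from the $\mathfrak{sl}_2$-triple computation sketched above or from an explicit combinatorial calculation of a retraction of $\Phi$. In either approach, the precise appearance of the factor $2k(d-k)$ in the characteristic bound reflects the need for careful tracking of sign and degree contributions in the structure constants.
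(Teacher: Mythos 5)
Your argument is correct in substance but proves part (i) by a genuinely different route than the paper. The paper does not construct an explicit map at all: in characteristic $0$ it quotes the decomposition $\WK(V)\otimes\WK(V)^{*}=\bigoplus_{0\le i\le k}L(\om_{i}+\om_{d-i})$ from Fulton--Harris, and for $\ell>2k(d-k)$ it invokes McNinch's semisimplicity theorem for $\WM(V)\otimes\WM(V^{*})$ and compares composition-factor multiplicities with the reductions of the corresponding Weyl modules from $SL_{d}(\CC)$; the embedding then exists by semisimplicity. Your approach instead exhibits the embedding concretely as multiplication by the canonical invariant of $V\otimes V^{*}$ inside $\wedge^{\bullet}(V\oplus V^{*})$, with injectivity coming from the symplectic Lefschetz $\mathfrak{sl}_2$; this is more self-contained (it parallels the integral-domain trick of Lemma \ref{sym1} and avoids both the plethysm formula and McNinch), at the price of owing the reader the structure-constant computation you only sketch. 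That computation does close: with $L=$ wedging by the form, $\Lambda$ the contraction, one gets $[L,\Lambda^{m}]=m(j-d-m+1)\Lambda^{m-1}$ on $\wedge^{j}(V\oplus V^{*})$, and applying this to a hypothetical $v\in\ker L$ in total degree $j=2k-2\le d-1$ (take $m=r+1$ with $\Lambda^{r}v\neq0=\Lambda^{r+1}v$) shows the obstruction to injectivity is divisibility of $(r+1)(d-2k+2+r)$, $0\le r\le k-1$, by $\ell$; these products are at most $k(d-k+1)\le 2k(d-k)$, so the stated hypothesis is (more than) sufficient, and the argument also works verbatim in characteristic $0$. Two small repairs: in (ii) the reduction to $G$ irreducible on $V$ should not be attributed to Lemma \ref{step1} (which concerns symmetric powers) but is an easy direct argument with $\wedge^{k}$ of a proper submodule or quotient, exactly as the paper does; and $2j(d-j)$ is not literally non-decreasing up to $(d+1)/2$, though the inequality you actually need, $2j(d-j)\le 2k(d-k)$ for $1\le j\le k\le (d+1)/2$, does hold since $2k(d-k)-2j(d-j)=2(k-j)\bigl(d-(k+j)\bigr)\ge 0$. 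Your use of Serre's theorem (Lemma \ref{serre}) in place of the paper's citation of McNinch in (ii) is legitimate, because $\ell>2k(d-k)\ge k(d-1)\ge m(d-1)$ precisely under the hypothesis $k\le (d+1)/2$; otherwise part (ii) coincides with the paper's argument via Corollary \ref{sym2}(i).
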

 
\begin{proof}
If $d$ is odd and $k = (d+1)/2$ then $\wedge^{k-1}(V) \simeq \WK(V)^{*}$. So we may assume that 
$k \leq d/2$. 

(i) First we consider the case $\ell = 0$. Let $\om_{1}, \ldots ,\om_{d-1}$ denote the 
fundamental weights of $SL(V)$. Using \cite[Prop. 15.25]{FH} one can show that   
$$\WK(V) \otimes \WK(V)^{*} = \bigoplus_{0 \leq i \leq k}L(\om_{i} + \om_{d-i})$$
as an $SL(V)$-module. Hence the claim follows.

Now we assume that $\ell > 2k(d-k)$. According to \cite{McN}, $\WM(V) \otimes \WM(V^{*})$ is semisimple 
over $\GC := SL(V)$ for $1 \leq m \leq k$. Consider the complex Lie group $\GCC = SL_{d}(\CC)$ and its
natural module $V_{\CC} = \CC^{d}$, and label the fundamental weights of $\GCC$ in the same way as we did
for $\GC$. Notice that, for a given highest weight $\om$, the Weyl module $V(\om)$ of $\GC$ can be 
obtained by a reduction modulo $\ell$ of the irreducible module $L_{\CC}(\om)$ of $\GCC$, and
$L_{\CC}(\om_{m}) = \WM(V_{\CC})$. So the above claim applied to the $\GCC$-module
$V_{\CC}$ now implies that the multiplicity of each $\GC$-composition factor in 
$\wedge^{k-1}(V) \otimes \wedge^{k-1}(V)^{*}$ is at most that of the same composition factor in 
$\WK(V) \otimes \WK(V)^{*}$. Hence our claim follows by semisimplicity.        
 
(ii) As in the proof of Proposition \ref{red} we may assume that $G \leq SL(V)$. If $G$ is reducible on 
$V$ then it is easy to see that $G$ is also reducible on $\WM(V)$. So we may assume $G$ is irreducible
on $V$, whence $\wedge^{m}(V)$ is semisimple by \cite{McN}. Now argue as in the proof of Corollary 
\ref{sym2}(i). 
\end{proof}

Now we provide analogues of Proposition \ref{red} and Lemma \ref{so-sym} for exterior powers:

\begin{propo}\label{red-a}
{\sl Assume $\GC := GL(V)$, $Sp(V)$ or $GO(V)$, $d := \dim(V) \geq 4$, $G \leq \GC$ is Zariski closed, and 
that $\WK(V)$ is irreducible over $G$ for some $k$, $2 \leq k \leq d/2$. Then $G \not\leq Sp(V)$ and 
$\WK(V)$ is irreducible over $H := \GN$. Moreover, one of the following statements holds.

{\rm (a)} $G$ satisfies $\cs$, and $H$ satisfies one of the conclusions {\rm (i) -- (iii)} of 
Proposition \ref{reduction}. 

{\rm (b)} Either $V = \oplus^{d}_{i=1}V_{i}$ is a sum of $1$-spaces and $G$ acts $k$-homogeneously on
$\{V_{1}, \ldots ,V_{d}\}$, or $k = 2$ and $V = V_{1} \otimes V_{2}$ with $G$ permuting $V_{1}$ and 
$V_{2}$ transitively.}
\end{propo}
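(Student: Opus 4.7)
My plan is to follow the blueprint of Proposition~\ref{red}, replacing symmetric powers with exterior powers, replacing the role of $GO(V)$ (handled by Lemma~\ref{so-sym}) with that of $Sp(V)$, and extending Lemma~\ref{step1a} from $k=2$ to $k\ge 3$.

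First I would dispose of the inclusion $G\le Sp(V)$. If $G\le Sp(V)$, the invariant alternating form on $V$ produces a nonzero $G$-invariant element $\omega\in\wedge^2(V)$, and the wedge operator $\omega\wedge(-)\colon \wedge^{k-2}(V)\to\wedge^k(V)$ is $G$-equivariant. An explicit check on a symplectic basis $e_1,f_1,\ldots,e_m,f_m$ of $V$ shows this operator is injective for $k\le d/2=m$ in every characteristic (a combinatorial fact about the Lefschetz operator on $\wedge^{*}V$). Since $\binom{d}{k-2}<\binom{d}{k}$ whenever $2\le k\le d/2$ and $d\ge 4$, its image is a nonzero proper $G$-submodule, contradicting irreducibility. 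For $k=2$ the argument is even more direct: $\omega$ spans a $G$-invariant line and $\dim\wedge^2(V)\ge 6$. The assertion that $\wedge^k(V)$ remains irreducible over $H=\GN$ is then routine: by construction $Z(\GC)G=Z(\GC)H$, and $Z(\GC)$ acts by scalars on $\wedge^k(V)$, so the lattices of $G$- and $H$-submodules coincide.

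The core of the proof is the case analysis. For $k=2$ I would simply invoke Lemma~\ref{step1a}: its conclusion (i) is our conclusion (a), and (ii)--(iii) together give our conclusion (b). For $k\ge 3$ I would rerun the four Aschbacher-style steps. \emph{Reducibility:} a proper $G$-submodule $A\subset V$, after possibly replacing $V$ by $V^*$ via $\wedge^k(V^*)\simeq\wedge^k(V)^*$ (Remark~\ref{schur}(i)), may be taken of dimension $\ge d/2\ge k$, so $\wedge^k(A)$ is a proper $G$-submodule. \emph{Imprimitivity:} if $V=\bigoplus_{i=1}^n V_i$ with $G$ transitively permuting the $V_i$ and $\dim V_i=f\ge 2$, then at least two distinct $S_n$-orbits of admissible compositions $(k_1,\ldots,k_n)$ of $k$ occur (for example $(k,0,\ldots,0)$ and $(k-1,1,0,\ldots,0)$ when $f\ge k$, with clear analogues for smaller $f$), and each orbit gives a nonzero $G$-stable summand of $\wedge^k(V)$ of the form $\bigoplus_{\sigma}\bigotimes_i\wedge^{k_{\sigma(i)}}(V_i)$, forcing reducibility. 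If $f=1$, then $\wedge^k(V)=\bigoplus_{|S|=k}\bigotimes_{i\in S}V_i$, and irreducibility forces $G$ to be $k$-homogeneous on the lines $V_i$, which is exactly case (b). \emph{Tensor decomposable, not tensor induced:} $V=A\otimes B$ with $G\le GL(A)\otimes GL(B)$ and $\dim A,\dim B\ge 2$; then $\mathrm{Sym}^k(A)\otimes\wedge^k(B)$ and $\wedge^k(A)\otimes\mathrm{Sym}^k(B)$ (both nonzero under $k\le d/2$) are distinct $G$-stable summands. \emph{Tensor induced:} $V=\bigotimes_{i=1}^n V_i$ with $n\ge 2$ and $\dim V_i=f\ge 2$. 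For $k=2,n=2$ we land in case (b); for $k=2,n\ge 3$ the submodule exhibited in Lemma~\ref{step1a} finishes the job; for $k\ge 3$ and $n=2$, the $S_2$-action on the Schur summands $\mathbb{S}_\lambda(V_1)\otimes\mathbb{S}_{\lambda'}(V_2)$ pairs $\lambda$ with its conjugate $\lambda'$, and any self-conjugate partition (such as $(2,1)$ for $k=3$) produces a $G$-fixed proper summand, the non-self-conjugate partitions contributing the remaining nonzero terms; the inequality $k\le d/2=f^2/2$ forces $f\ge\lceil\sqrt{2k}\rceil$, which guarantees all the relevant $\mathbb{S}_\lambda(V_i)$ are nonzero. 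Finally, once $G$ satisfies $\cs$, Proposition~\ref{reduction} applied to $H=\GN$ immediately yields case (a).

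The main obstacle will be the tensor-induced and tensor-decomposable cases for $k\ge 3$ in arbitrary characteristic: the clean Schur/plethysm decompositions of Remark~\ref{schur}(ii) are stated under $\mathrm{Char}(\FF)>k$, which we are not assuming here. I would circumvent this by exhibiting explicit $G$-invariant submodules of $\wedge^k(V)$ directly---as kernels or images of natural equivariant operators (in the spirit of the $\omega\wedge(-)$ map used in Step~1, together with appropriate partial contractions on tensor products)---rather than invoking the full semisimple decomposition. This is sufficient for the argument, since at each step we only need to contradict irreducibility by producing one proper $G$-stable submodule.
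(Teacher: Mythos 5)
Your handling of $G\le Sp(V)$, of the passage to $H=\GN$, and of the reducible and imprimitive cases is sound in all characteristics (one inaccuracy: $\omega\wedge(-)\colon\wedge^{k-2}(V)\to\WK(V)$ is \emph{not} injective in every characteristic --- already $\omega\wedge\omega=0$ in characteristic $2$, so for $d\ge 8$, $k=4$ the map $\wedge^{2}(V)\to\wedge^{4}(V)$ has a kernel --- but this is harmless, since you only need its image to be nonzero and of dimension at most $\binom{d}{k-2}<\binom{d}{k}$). The genuine gap is in the tensor-decomposable and tensor-induced cases in small positive characteristic, which is precisely the crux of the proposition: unlike for symmetric powers, nothing here forces $\ell>k$, and the proposition carries no characteristic hypothesis (so even your $k=2$ step is incomplete, since Lemma \ref{step1a} assumes $\Char(\FF)>3$). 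Concretely: (1) your two designated summands $\Sym^{k}(A)\otimes\WK(B)$ and $\WK(A)\otimes\Sym^{k}(B)$ can both vanish under $k\le d/2$ (e.g.\ $\dim A=3$, $\dim B=4$, $k=5$), so even in characteristic $0$ the argument as written fails and one must use other partitions in the Cauchy decomposition (you also never treat the tensor-induced case with $k\ge 3$ and $n\ge 3$ factors); (2) in characteristic $\ell\le k$ the decompositions of Remark \ref{schur}(ii) are unavailable, and the proposed substitute --- ``kernels or images of natural equivariant operators'' --- is not an argument: the natural maps one reaches for degenerate exactly in small characteristic (the map $\WK(A)\otimes\Sym^{k}(B)\to\WK(A\otimes B)$ already kills $(a_{1}\wedge a_{2})\otimes b^{2}$ in characteristic $2$, and the natural surjections of $\WK(A\otimes B)$ onto the extreme Cauchy pieces are identically zero as soon as $k>\max(\dim A,\dim B)$). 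So the case that actually needs proof is left unproved.

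The paper closes this gap by a different mechanism, which you would need (or a genuinely characteristic-free construction of invariant subspaces, e.g.\ via the Cauchy filtration): for $\ell=0$ or $\ell>2k(d-k)$ it reduces to $k=2$ by Lemma \ref{wedge}(ii) and quotes Lemma \ref{step1a}; in the remaining small characteristics it does not decompose $\WK(V)$ at all, but transfers reducibility from characteristic zero. Namely, after showing $G$ is irreducible on $V$ and stabilizes an imprimitive, tensor, or tensor-induced decomposition, it fixes a compatible basis, lets $\HC$ be the full stabilizer of the decomposition in $\GC$ (so $\WK(V)$ is irreducible over $\HC$), and notes that the corresponding group $\HC_{1}$ over $\overline{\FF}_{\ell}$ is Zariski dense in $\HC$. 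The characteristic-zero case gives a proper nonzero $\HCQ$-submodule $U$ of $\WK(\VQ)$; intersecting $U$ with the lattice $\WK(\VR)$ over the ring $R$ of algebraic integers (a B\'ezout domain, so the intersection is an $R$-module direct summand) and reducing modulo a maximal ideal $\pi$ of $R$ over $\ell$ produces a proper nonzero $\HC_{1}$-invariant subspace of $\WK(V_{1})$, contradicting irreducibility. Your direct-construction route is attractive where it works (reducible, imprimitive, and the large-characteristic tensor cases), but without this lattice-transfer step, or a worked-out characteristic-free replacement for the Schur-functor summands, the proof is incomplete.
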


\begin{proof}
It is well known, cf. \cite{Se1} that $G \not\leq Sp(V)$. By Lemmas \ref{step1a} and \ref{wedge} we
may assume that $\ell = \Char(\FF) > 0$ and that $G$ fails (a). Then $G$ fails the 
condition $\cs$. Arguing as in the proof of Lemma \ref{step1a} we see that $G$ is irreducible on $V$.
Now $G$ stabilizes an imprimitive decomposition, a tensor decomposition, or a 
tensor induced decomposition of $V$. Fix a basis $(e_{1}, \ldots ,e_{d})$ for $V$ that is compatible 
with this $G$-invariant decomposition. Then we use this basis to define the space 
$\VQ := \la e_{1}, \ldots ,e_{d} \ra_{\BQ}$ and the $R$-module 
$\VR := \la e_{1}, \ldots ,e_{d} \ra_{R}$, where $R$ is the ring of all 
algebraic integers in $\BQ$. Notice that if $\pi$ is a maximal ideal of $R$ that contains $\ell$, then 
$R/\pi \simeq \overline{\FF}_{\ell}$ can be embedded in $\FF$. Moreover, 
$V_{1} := \la e_{1}, \ldots ,e_{d} \ra_{R/\pi}$ and $\WM(V_{1})$ can be obtained by reducing $\VR$ and 
$\WM(\VR)$ modulo $\pi$. Now let $\HC$, resp. $\HCQ$, $\HC_{R}$, $\HC_{1}$, denote the stabilizer of 
the aforementioned $G$-invariant decomposition in $\GC$, resp. in $GL(\VQ)$, $GL(\VR)$, $GL(V_{1})$. Then 
the action of $\HC_{1}$ on $\WK(V_{1})$ can be obtained by reducing modulo $\pi$ the action of $\HC_{R}$ 
on $\WK(\VR)$. Since $\WK(V)$ is irreducible over $G \leq \HC$ and $\HC_{1}$ is Zariski dense in $\HC$, 
$\WK(V_{1})$ is irreducible over $\HC_{1}$. 

On the other hand, the statement in the characteristic zero case applied to $\BQ$ implies that $\WK(\VQ)$
is reducible over $\HCQ$. Let $U \neq 0$ be a proper $\HCQ$-submodule in $\WK(\VQ)$. Notice that 
$L = \WK(\VR)$ is the free $R$-module spanned by $w_{j}$ of the form 
$e_{i_{1}} \wedge \ldots \wedge e_{i_{k}}$, 
$1 \leq i_{1} < \ldots < i_{k} \leq d$. Claim that the $R$-module $L/M$ is torsion free, where 
$M := U \cap L$. (Indeed, assume $0 \neq r \in R$, $v \in L$ and $rv \in M$. Then $v = r^{-1} rv \in U$ 
and $v \in L$, whence $v \in M$.) Observe that $R$ is a Bezout domain of dimension $1$, i.e. every 
finitely generated ideal of $R$ is principal and every finitely generated torsion free $R$-module is free. 
Since $L/M$ is finitely generated, it follows that $L/M$ has an $R$-basis
$(f_{1} + M, \ldots ,f_{s} + M)$ for some $f_{i} \in L$, $1 \leq i \leq s$. Notice
that $s > 0$ as otherwise $U \supseteq L$ and so $U$ would not be proper in $\WK(\VQ)$. Setting 
$J := \la f_{1}, \ldots ,f_{s} \ra_{R}$, we see that $L = M \oplus J$. Reducing modulo $\pi$,
we get $L/\pi L = (M+\pi L)/\pi L \oplus (J + \pi L)/\pi L$ as $R/\pi$-spaces. Claim that 
$(M + \pi L)/\pi L \neq 0$. (Indeed, since $U \neq 0$ and $\BQ = {\rm Quot}(R)$, we can find 
$0 \neq u = \sum_{j}a_{j}w_{j} \in U$ for some $a_{j} \in R$. Now the ideal of $R$ generated by the 
$a_{j}$ is finitely generated and so a principal ideal, say $bR$ with $0 \neq b \in R$. In this case, 
$b^{-1}u = \sum_{j}b^{-1}a_{j}w_{j} \in M$, and not all $b^{-1}a_{j}$ can belong to $\pi$, whence 
$b^{-1}u \notin \pi L$.) Consequently, $(M+\pi L)/\pi L$ is a nonzero proper $\HC_{1}$-invariant subspace
in $L/\pi L = \WK(V_{1})$, a contradiction.
\end{proof}
 
\begin{lemma}\label{sp-wed}
{\sl Assume $G \leq GL(V)$ has a normal subgroup $N \leq Sp(V)$. 

{\rm (i)} Then the $N$-module $\WK(V)$, where $1 \leq k \leq \dim(V)-1$, contains a submodule isomorphic to
$1_{N}$ if $k$ is even, and $V|_{N}$ if $k$ is odd. 

{\rm (ii)} Assume $N$ is finite and that $O^{\ell}(N)$ is not abelian. Then $\WK(V)$ cannot be 
irreducible over $G$ for any $k$, $2 \leq k \leq \dim(V)-2$.}
\end{lemma}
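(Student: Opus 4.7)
For Part (i), the plan is to exploit the $N$-invariance of the symplectic form $\omega$ on $V$. The identification $V \cong V^*$ via $\omega$ produces an $N$-invariant element $\tau = \sum_{i=1}^n e_i \wedge f_i \in \wedge^2 V$ (in a symplectic basis, $d = 2n$), and the divided power $w_j := \sum_{|I|=j, I \subset [n]} \bigwedge_{i \in I}(e_i \wedge f_i) \in \wedge^{2j} V$ is defined over $\ZZ$ (with $\tau^{\wedge j} = j! \cdot w_j$), is $Sp(V)$-invariant, and is nonzero in any characteristic since the monomials indexed by distinct $I$ are linearly independent. For $k = 2j$ even, $\FF \cdot w_j \cong 1_N$ is the required trivial submodule. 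For $k = 2j+1$ odd, I would use the $N$-map $\phi : V \to \wedge^k V$, $v \mapsto v \wedge w_j$; a direct expansion shows that $\phi(e_m)$ and $\phi(f_m)$ are supported on pairwise disjoint standard basis vectors of $\wedge^k V$ (with coefficients $\pm 1$), so $\phi$ is injective in any characteristic and $\phi(V) \cong V|_N$.

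For Part (ii), assume for contradiction that $\wedge^k V$ is $G$-irreducible for some $k \in [2, d-2]$; note $d$ is even. The case $k$ even is quick: by (i), $(\wedge^k V)^N \neq 0$, and since $N \lhd G$ this subspace is $G$-invariant, so $G$-irreducibility forces $(\wedge^k V)^N = \wedge^k V$. Then $N$ lies in the kernel of $Sp(V) \to GL(\wedge^k V)$, a proper normal subgroup of $Sp(V)$ which, because $\wedge^k V$ is nontrivial and $PSp(V)$ is simple, must be contained in $Z(Sp(V)) = \{\pm I\}$. Hence $|N| \leq 2$, contradicting that $O^{\ell}(N)$ is nonabelian.

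For $k$ odd, first note $V|_G$ is irreducible (a proper $G$-submodule of $V$ induces a nontrivial $G$-filtration on $\wedge^k V$), and apply Clifford theory to $V|_N$. If $V|_N$ is irreducible, Lemma \ref{dual2} supplies $G^* \leq GL(V)$ with $N \lhd G^*$, $Z(GL(V))G = Z(GL(V))G^*$, and $V|_{G^*}$ self-dual; uniqueness of the $N$-invariant form forces the extended form to be symplectic, so $G^* \leq Sp(V)$. Applying (i) to $G^*$ produces a $G^*$-submodule of $\wedge^k V$ of dimension $d$, which is proper since $\binom{d}{k} > d$ for odd $k \in [3, d-3]$, $d \geq 6$ --- contradicting $G^*$-irreducibility (equivalent to $G$-irreducibility as $Z(GL(V))$ acts scalarly on $\wedge^k V$). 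If instead $V|_N$ is reducible, write either its $N$-isotypic decomposition $V|_N = U_1 \oplus \cdots \oplus U_s$ with $s \geq 2$ and $G$-permuted factors, or --- in the isotypic-with-multiplicity case --- a tensor factorization $V = W \otimes U$ with $\dim W, \dim U \geq 2$ and $G \leq GL(W) \otimes GL(U)$ (as in the proof of Lemma \ref{step2}). In the first situation, $\wedge^k V = \bigoplus_{k_1 + \cdots + k_s = k} \bigotimes_j \wedge^{k_j} U_j$ splits $G$-equivariantly by partitions, with at least two $G$-orbits for $k \geq 2$, $s \geq 2$. In the second, $\wedge^k(W \otimes U) = \bigoplus_\lambda S_\lambda W \otimes S_{\lambda'} U$ splits as a $G$-module over partitions $\lambda \vdash k$ fitting in the $\dim W \times \dim U$ box, with at least two pieces for $k \in [2, d-2]$ by a standard Gaussian-binomial count. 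Either way, $\wedge^k V$ is $G$-reducible, a contradiction.

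The main obstacle is the last sub-case: the Schur--Weyl-type decomposition of $\wedge^k(W \otimes U)$ is a genuine direct sum only when $\Char(\FF) > k$ or $\Char(\FF) = 0$. In small characteristic I would replace the direct sum by a good $GL(W) \times GL(U)$-invariant filtration of $\wedge^k V$ whose factors are indexed by the same partitions; any proper step of such a $G$-invariant filtration with at least two nonzero factors still furnishes the needed proper $G$-submodule of $\wedge^k V$.
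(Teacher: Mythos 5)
Your argument is correct in substance but takes a genuinely different route from the paper's, and one step is overstated and must be patched using the hypothesis on $N$.

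For (i), the paper works dually: it iterates the contraction map $\wedge^{k}(V_{\CC})\to\wedge^{k-2}(V_{\CC})$, rescales its integral matrix by a power of $\ell$, reduces modulo a maximal ideal, uses Zariski density of the symplectic group over $\overline{\FF}_{\ell}$, and finally turns the resulting nonzero map onto $\wedge^{r}(V)$ ($r=0,1$) into a submodule via self-duality of $\wedge^{k}(V)$ over $Sp(V)$. Your explicit divided power $w_{j}$ and the injection $v\mapsto v\wedge w_{j}$ (with the disjoint-support argument, which is sound: each monomial in $e_{m}\wedge w_{j}$ has exactly one unpaired basis vector, namely $e_{m}$) is a more elementary substitute. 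Add one sentence, though: when $\Char(\FF)\leq j$ the relation $\tau^{j}=j!\,w_{j}$ alone does not give invariance of $w_{j}$, since $j!$ may vanish; invariance must be established over $\ZZ$ (or the ring of algebraic integers) and transported to $\FF$ by reduction and density, in the same spirit as the paper's treatment of its contraction map. For (ii), the paper settles $k=2$ by the fixed-point argument and funnels $k\geq 3$ through Proposition \ref{red-a} and the proof of Lemma \ref{so-sym}(ii); you instead handle every even $k$ at once via $(\wedge^{k}V)^{N}$ and simplicity of $PSp(V)$, and for odd $k$ you run Clifford theory on $V|_{N}$ directly, using Lemma \ref{dual2} in the irreducible case and explicit decompositions (correctly replaced by the Cauchy filtration in small characteristic) otherwise. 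This is self-contained and avoids the reduction machinery; your reduction to $V|_{G}$ irreducible is also fine, provided you use the full filtration by the number of wedge factors lying in the submodule $A$, since a single term such as $\wedge^{k}(A)$ or $A\wedge\wedge^{k-1}(V)$ can degenerate to $0$ or to all of $\wedge^{k}(V)$.

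The gap: in the branch with $s\geq 2$ isotypic components you claim the summands indexed by compositions $(k_{1},\dots,k_{s})$ always fall into at least two $G$-orbits once $k\geq 2$. This is false when every isotypic component is a line: then the only nonzero summands have all $k_{j}\in\{0,1\}$, they share a single multiset type, and $G$ may permute them transitively (for the full monomial group $\wedge^{k}(V)$ is irreducible for every $k$); this is exactly the exceptional configuration (b) of Proposition \ref{red-a} which the paper also has to exclude. Similarly, your tensor branch assumes $\dim U\geq 2$ and so silently omits the case where $V|_{N}$ is isotypic on a $1$-dimensional module, i.e. $N$ is scalar. Both degenerate cases are precisely where the hypothesis on $N$ must be invoked: if all irreducible $N$-constituents of $V$ are $1$-dimensional, then $N$, acting faithfully, is abelian, so $O^{\ell}(N)$ is abelian, contrary to assumption; this is the same one-line dismissal the paper uses for its exceptional case. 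Once the components have common dimension $m\geq 2$, your count is valid (for $2\leq k\leq d-2$ there are at least two admissible multisets of exponents bounded by $m$ with at most $s$ parts), so with these remarks inserted the proof of (ii) is complete.
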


\begin{proof}
(i) We fix a symplectic basis $(e_{1}, \ldots ,e_{d})$ for $V$ with $d := \dim(V)$ and use it to define a 
complex space $\VC$ and the corresponding symplectic group $Sp(\VC)$, as well as the $R$-module 
$\VR := \la e_{1}, \ldots ,e_{d} \ra_{R}$ and the symplectic group $Sp(\VR)$, where $R$ is the ring of all
algebraic integers in $\CC$. Then the contraction map \cite[p. 260]{FH}
$$\partial~:~v_{1} \wedge \ldots \wedge v_{m} \mapsto \sum_{i < j}(v_{i},v_{j})(-1)^{i+j-1}
  v_{1} \wedge \ldots \wedge v_{i-1} \wedge v_{i+1} \wedge \ldots \wedge v_{j-1} 
  \wedge v_{j+1} \wedge \ldots \wedge v_{m}$$
is an $Sp(\VC)$-homomorphism $\WK(\VC) \to \wedge^{k-2}(\VC)$. In particular, if $s = \lfloor k/2 \rfloor$
and $r = k-2s$, then $\partial^{k}~:~\WK(\VC) \to \wedge^{r}(\VC)$ is surjective, where $\wedge^{0}(\VC)$ 
is defined to be the trivial module $\CC$, and we are done if $\Char(\FF) = 0$. Assume 
$\ell = \Char(\FF) > 0$. Observe that 
$\{e_{i_{1}} \wedge \ldots \wedge e_{i_{m}} \mid 1 \leq i_{1} < \ldots < i_{m} \leq d\}$ is a basis for
$\WM(\VC)$ for each $m$, and relative to these bases, $\partial$ has an integer matrix. Let $\ell^{c}$ be 
the highest power of $\ell$ that divides all the coefficients of the matrix of $\partial^{s}$ and let 
$\sigma = \ell^{-c}\partial^{s}$. Then $\sigma$ commutes with $Sp(\VR)$. Since the matrix of $\sigma$ has 
integer entries, we can use this matrix to define a map 
$\sigma_{1}~:~\WK(V_{1}) \to \wedge^{r}(V_{1})$ that 
commutes with $Sp(V_{1})$, where $V_{1} := \la e_{1}, \ldots ,e_{d} \ra_{R/\pi}$ and $\pi$ is a 
maximal ideal of $R$ containing $\ell$. Since $Sp(V_{1})$ is Zariski dense in $Sp(V)$, $\sigma_{1}$ 
is an $Sp(V)$-homomorphism $\WK(V) \to \wedge^{r}(V)$. Since $\WM(V)$ is a self-dual module over 
$Sp(V)$, we are done. 

(ii) We may assume $k \leq d/2$. As mentioned in (i), $N$ has an $1$-dimensional trivial submodule 
but does not act trivially on $\WB(V)$ (otherwise $O^{\ell}(N) \leq Z(G)$ as shown in the proof of 
\cite[Lem. 3.6]{GT2}), so $G$ is reducible on $\WB(V)$. If $k \geq 3$, we can argue as 
in the proof of Lemma \ref{so-sym}(ii), using Proposition \ref{red-a}. The only exception that may arise 
here is that $N$ preserves every component of a decomposition of $V$ into a direct sum of $1$-spaces; but
in this case $N = O^{\ell}(N)$ is abelian.
\end{proof} 

\section{The defining characteristic case}

\begin{theor}\label{defi}
{\sl Assume $G \leq \GC := GL(V)$ is Zariski closed and $\SK(V)$ is irreducible over $G$ for some
$k \geq 2$. 

{\rm (i)} Assume that $\GNC$ is a simple algebraic group. Then 
$\GNC = SL(V)$ or $Sp(V)$.

{\rm (ii)} Assume that $\ell > 0$ and that $\bl \lhd \GN/Z(\GN) \leq \Aut(\bl)$ for a nonabelian simple  
group $\bl \in Lie(\ell)$. Then $S \lhd \GN \leq N_{\GC}(S)$ with $S = SL_{d}(q)$, 
$SU_{d}(q)$, or $Sp_{d}(q)$, for some power $q = \ell^{a}$ and $d = \dim(V)$.}
\end{theor}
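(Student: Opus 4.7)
My approach is to reduce both parts to the classification of irreducible actions of connected simple algebraic groups on $\SK(V)$ due to Dynkin, Seitz, and Testerman. By Proposition \ref{red}, $\SK(V)$ is irreducible over $H := \GN$; in case (i), Proposition \ref{reduction}(i) moreover says $H^\circ$ is a simple algebraic group acting irreducibly on $V$, with $V|_{H^\circ}$ a Frobenius twist of a restricted irreducible module when $\ell > 0$. Lemma \ref{step1}(i) gives $\ell > k$ whenever $d \geq 3$; the $d = 2$ case is immediate since $SL(V) = Sp(V)$ there.

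For part (i), the crucial step is to upgrade $\SK(V)$-irreducibility over $H$ to irreducibility over $H^\circ$. Let $\om$ denote the highest weight of $V|_{H^\circ}$. Under $\ell > k$ or $\ell = 0$, Serre's Lemma \ref{serre} shows that $V^{\otimes k}$, and hence its direct summand $\SK(V)$, is semisimple over $H^\circ$. The $k\om$-weight space of $\SK(V)$ is one-dimensional, spanned by $v^k$ for a highest-weight vector $v$ of $V$, so $L(k\om)$ is a multiplicity-one composition factor of $\SK(V)|_{H^\circ}$. Since $V|_{H^\circ}$ is $H$-stable up to isomorphism, $H$ acts on $H^\circ$ fixing $\om$, so the $L(k\om)$-isotypic summand is $H$-invariant; $H$-irreducibility then forces $\SK(V) = L(k\om)$. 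The Dynkin--Seitz--Testerman classification of simple algebraic subgroups of $GL(V)$ acting irreducibly both on $V$ and on some $\SK(V)$ for $k \geq 2$ now yields $H^\circ = SL(V)$ or $Sp(V)$. The main difficulty lies in the semisimplicity step, which is why $\ell > k$ is essential; outside this range the Weyl module $V(k\om)$ and the simple module $L(k\om)$ can differ and the highest-weight argument can break down.

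For part (ii), let $K := H^{(\infty)}$, a central cover of $\bl$; by Lemma \ref{step2}, $V|_K$ is irreducible. Since $\bl \in Lie(\ell)$, Steinberg's restriction theorem realizes the action of $K$ on $V$ as the restriction of a rational representation $\rho \colon \tilde G \to GL(V)$ of the ambient simply connected simple algebraic group $\tilde G$ (so that $K$ is contained in $\rho(\tilde G^F)$ for a suitable Steinberg endomorphism $F$). Because $\rho(K)$ is Zariski dense in $\rho(\tilde G)$, $H$ normalizes $\rho(\tilde G)$ inside $GL(V)$; hence $M := H \cdot \rho(\tilde G)$ is Zariski closed, with $M^\circ = \rho(\tilde G)$ simple algebraic and $M$ inheriting $\cs$ and $\SK(V)$-irreducibility from $H$. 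Part (i) applied to $M$ forces $\rho(\tilde G) = SL(V)$ or $Sp(V)$, so $\tilde G = SL_d$ or $Sp_d$ by dimension, and consequently $\bl$ is one of $PSL_d(q)$, $PSU_d(q)$, $PSp_d(q)$ for some $q = \ell^a$; the claimed $S$ is then the corresponding quotient of $SL_d(q)$, $SU_d(q)$, or $Sp_d(q)$ satisfying $S \lhd \GN \leq N_{\GC}(S)$. The anticipated obstacle here is the lifting step: one must verify that the abstract $\FF$-linear action of $K \subseteq GL(V)$ genuinely extends to a rational action of $\tilde G$ on the same space $V$, which rests on Steinberg's theorem together with the fact that the Zariski closure of $K$ inside $GL(V)$ equals $\rho(\tilde G)$.
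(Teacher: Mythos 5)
In part (i) your overall architecture matches the paper's (highest weight $k\om$, one-dimensional $k\om$-weight space, $H$-invariance of $\om$, then Dynkin--Seitz--Testerman), but the semisimplicity step is justified incorrectly. Serre's theorem (Lemma \ref{serre}) requires $\ell > k(\dim(V)-1)$, whereas the hypotheses only give $\ell > k$ (Lemma \ref{step1}); under $\ell > k$ alone, $V^{\otimes k}$ need not be semisimple over $H^{\circ}$ (e.g.\ for $SL_{2}$ in characteristic $5$ with $V = L(4)$ the Steinberg module, $\SB(V)$ already contains a nonsplit self-extension of the trivial module). The fix is cheap and is exactly what the paper does: since $\SK(V)$ is irreducible over $H$ and $H^{\circ}\lhd H$, Clifford's theorem gives that $\SK(V)|_{H^{\circ}}$ is semisimple with all composition factors $H$-conjugate, hence all isomorphic to $L(k\om)$ because $k\om$ is $H$-invariant; the uniqueness of the $k\om$-weight vector then forces $\SK(V)|_{H^{\circ}}\simeq L(k\om)$. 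So part (i) is repairable, but as written the cited lemma does not apply.

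Part (ii) has a genuine gap at its central step. You assert that $\rho(K)$ is Zariski dense in $\rho(\tilde{G})$ and use this (twice) to conclude that $H$ normalizes $\rho(\tilde{G})$. But here $H=\GN$ is finite, so $K=H^{(\infty)}$ is a finite subgroup of $GL(V)$ and is therefore Zariski closed; it is certainly not dense in the positive-dimensional group $\rho(\tilde{G})$. Consequently, for $g\in\GN$ you only know $gKg^{-1}=K$, and you cannot deduce $g\rho(\tilde{G})g^{-1}=\rho(\tilde{G})$; a priori the conjugate is a different simple algebraic subgroup of $GL(V)$ containing $K$ (this genuinely happens for extensions of non-restricted modules, e.g.\ tensor products of Frobenius twists, where the same $K$-module extends to algebraic groups embedded differently). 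This is precisely the point the paper's proof spends most of its effort on: it first uses Lemma \ref{tensor1} together with the hypothesis $\cs$ to replace $V|_{S}$ by a Frobenius twist of a restricted module, and then proves, via tensor indecomposability (Seitz (1.6)), Steinberg's tensor product theorem, and the classification of irreducible $S$-modules (with the twisted and untwisted Frobenius cases treated separately), that \emph{every} rational extension of $\Phi|_{S}$ to the simply connected group has image exactly $\LC$; only then does $\GN\leq N_{GL(V)}(\LC)$ follow, allowing part (i) to be applied. Your proposal omits both the reduction to restricted highest weight and this image-uniqueness argument, so the normalization claim — and with it the rest of part (ii) — is unsupported.
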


\begin{proof}
(i) By Proposition \ref{red} (and Proposition \ref{reduction}(i)), we know that $V|_{\HC} = L(\om)$, 
an irreducible module with (restricted if $\ell > 0$) highest weight $\om$,
where $\HC := \GNC$ and we fix a maximal torus $\TC$ of $\HC$. Then $k\om$ is the highest 
weight in $\SK(V)$, and moreover there is a unique (up to scalar) vector $v$ corresponding to
the weight $k\om$ of $\TC$. As usual, we may assume $\ell > k$. Then $L(k\om)$ is a 
composition factor of the $\HC$-module $\SK(V)$. Since $\GN$ normalizes $\HC$, $\om$ is 
$\GN$-invariant, and so is $k\om$. By Clifford's Theorem, it follows that every composition
factor of the $\HC$-module $\SK(V)$ is isomorphic to $L(k\om)$. But then the uniqueness of 
$v$ implies that $\SK(V)|_{\HC} \simeq L(k\om)$ and so $\SK(V)$ is irreducible over 
$\HC$. Now we can apply the fundamental result of Dynkin \cite{Dyn} (in the case $\ell = 0$) and
of Seitz \cite{Se1} (in the case $\ell > 0$, see also \cite{Su}), to $\HC$ and conclude that 
$\HC = SL(V)$ or $Sp(V)$.

(ii) Let $M$ be the complete inverse image of $\bl$ in $H$ and let $S := M^{(\infty)}$. Then
$S$ is quasisimple. Since $V|_{S}$ is irreducible by Lemma \ref{step2}, $O_{\ell}(Z(S)) = 1$.
It follows that there is a simple simply connected algebraic group $\SC$ in characteristic 
$\ell$ and a Frobenius map $F$ on $\SC$ such that $S$ is a quotient of $\SC^{F}$. Without loss
we may assume that $S = \SC^{F}$. Since $\GN$ preserves the set of isomorphism classes of 
Steinberg factors of $V|_{S}$, Lemmas \ref{tensor1} and \ref{step1} imply that $V|_{S}$ is a 
Frobenius twist of a restricted module. So without loss we may assume that 
$V|_{S} = L(\om)|_{S}$ for some irreducible $\SC$-module $L(\om)$ with restricted highest 
weight $\om$. Let $\Phi~:~\SC \to GL(V)$ be the representation afforded by $L(\om)$ (where
we identify the spaces $L(\om)$ and $V$) and let $\LC := \Phi(\SC)$. Also let
$q = \ell^{a}$ be the absolute value of eigenvalues of $F$.

Recall that $\ell > k \geq 2$. Notice that $(\SC,\ell) \neq (G_{2},3)$. (For otherwise
$V|_{S}$ is self-dual and furthermore it has odd dimension by \cite{JLPW}, whence 
$\Phi(S) \leq SO(V)$). 
Since $k = 2$ in this case, we get a contradiction by Lemma \ref{so-sym}.) 

Consider an arbitrary (rational) representation of $\SC$ on $V$ that extends $\Phi|_{S}$, say 
yielding an irreducible module $L(\gam)$ with highest weight $\gam$. Claim that this 
representation has image equal to $\LC$. 

As shown in \cite{MT1}, \cite[(1.6)]{Se1} (and the assumption on $\ell$) implies that 
$L(\om)|_{S}$ is tensor indecomposable. Observe that $\gam = p^{m}\beta$ for some restricted 
weight $\beta$. Otherwise using Steinberg's tensor product theorem we would see that 
$L(\om)|_{S} = L(\gamma)|_{S}$ is tensor decomposable. Now if $F$ is untwisted, then
the equality $L(\om)|_{S} = L(\gam)|_{S}$ implies by the classification of 
irreducible $S$-modules that $\gam = q^{b}\om$ for some integer $b \geq 0$, whence $L(\gam)$ 
can be obtained from $L(\om)$ by twisting it using the Frobenius twist 
$(x_{ij}) \mapsto (x_{ij}^{q})$ and so the claim follows. 
If $F$ is twisted, then $F = q\rho$ and $\rho$ induces an automorphism $\sigma$ of 
$\SC$. In this case, $L(q\beta)|_{S} = L(\rho^{-1}(\beta))|_{S}$, whence
by the classification of irreducible $S$-modules we obtain 
$\gam = q^{b}\rho^{c}\om$ for some integers $b,c \geq 0$. Thus $L(\gam)$ can be obtained from
$L(\om)$ by twisting it using the Frobenius twist $(x_{ij}) \mapsto (x_{ij}^{q})$ and 
the automorphism $\sigma$, whence our claim follows.   
 
Next we show that $\GN \leq N_{GL(V)}(\LC)$. Indeed, for any $g \in \GN$ the representation 
$x \mapsto g\Phi(x)g^{-1}$ of $\SC$ on $V$ extends $(\Phi|_{S})^{g} \simeq \Phi|_{S}$ and so has image 
equal to $\LC$ by our claim. Thus $g\LC g^{-1} = \LC$. Since 
$N_{GL(V)}(\LC) = Z(\GC) \cdot N_{SL(V)}(\LC)$, it now follows that $N_{SL(V)}(\LC)$ is 
also irreducible on $\SK(V)$. Notice that the irreducibility of $V|_{S}$ and the simplicity 
of $\SC$ implies that $\LC$ is a quotient of $\SC$ by a finite subgroup, and that
$\LC$ has finite index in $N_{SL(V)}(\LC)$; in particular, 
$N_{SL(V)}(\LC)^{\circ} = \LC$. Applying (i) to $N_{SL(V)}(\LC)$, we see that 
$\LC = SL(V)$ or $Sp(V)$. As $\SC$ is simply connected, we get $\SC = SL(V)$ or $Sp(V)$.
Taking the $F$-fixed points, we obtain $S = SL_{d}(q)$, $SU_{d}(q)$, or $Sp_{d}(q)$.       
\end{proof}

Now we provide an analogue of Theorem \ref{defi} for exterior powers:

\begin{propo}\label{defi-a}
{\sl Assume $G \leq \GC := GL(V)$ is Zariski closed, and that $\WK(V)$ is irreducible 
over $G$ for some $k$, where $d/2 \geq k \geq 2$. Assume furthermore that either 
$\ell = 0$, or $\ell > 2k(d-k)$, or $k = 2$. 
Assume in addition that $\GNC$ is a simple algebraic group. Then one of the following holds.

{\rm (i)} $\GNC$ = $SL(V)$ or $SO(V)$.

{\rm (ii)} $k = 2$ and $(\GNC,\GC)$ has type $(A_{n},A_{n(n+3)/2})$ or $(A_{n},A_{(n-1)(n+2)/2})$.

{\rm (iii)} $k \leq 3$ and $(\GNC,\GC)$ has type $(D_{5},A_{15})$.

{\rm (iv)} $k \leq 4$ and $(\GNC,\GC)$ has type $(E_{6},A_{26})$.\\
In particular, if $G \not\geq SL(V)$ then $\SB(V)$ is reducible over $G$.}
\end{propo}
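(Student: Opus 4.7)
The plan is to mimic the proof of Theorem~\ref{defi}(i), replacing $\SK(V)$ by $\WK(V)$, and then to reduce from $\WK$ to $\WB$ via Lemma~\ref{wedge} so that the Dynkin--Seitz classification of irreducible exterior squares can be applied. First I would descend the irreducibility from $\GN$ to $\HC := \GNC$. By Propositions~\ref{red-a} and \ref{reduction}(i), $V|_{\HC} = L(\om)$ with restricted (if $\ell>0$) highest weight $\om$. Letting $\mu$ denote the highest weight of $\WK(V)$ as an $\HC$-module --- the sum of the $k$ largest weights of $V$, counted with multiplicity --- the $\GN$-invariance of $[V|_{\HC}]$ forces the $\GN$-invariance of $\om$ and hence of $\mu$. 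The Clifford argument of Theorem~\ref{defi}(i) then shows that every $\HC$-composition factor of $\WK(V)$ is isomorphic to $L(\mu)$, so $\WK(V)|_{\HC}$ is $L(\mu)$-isotypic.

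Next I would invoke the reduction to exterior squares. Under our hypotheses (either $k=2$, $\ell=0$, or $\ell>2k(d-k)$), Lemma~\ref{wedge} shows that $\WB(V)$ is also irreducible over $G$; by the same Clifford argument, $\WB(V)|_{\HC}$ is $L(\mu')$-isotypic for $\mu' = \om^{(1)} + \om^{(2)}$. Dynkin \cite{Dyn} (characteristic $0$) and Seitz \cite{Se1} (positive characteristic) classify simple algebraic groups with an irreducible representation whose exterior square has such a small-constituent structure. After discarding $Sp(V)$ (excluded by Proposition~\ref{red-a}), the list consists of: the natural modules of $SL$ and $SO$; the modules $\SB(\mathrm{nat})$ and $\WB(\mathrm{nat})$ of $SL_{n+1}$; the $16$-dimensional half-spin module of $D_{5}$; and the $27$-dimensional minuscule module of $E_{6}$. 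In each of these the second-highest weight of $V$ has multiplicity one, so ``isotypic'' above is in fact ``irreducible''. A case-by-case check --- again from \cite{Dyn,Se1} --- of which higher $\WK$ remain irreducible on each listed module produces precisely the bounds $k=2$, $k\leq 3$, $k\leq 4$ in (ii)--(iv); case (i) records the remaining two infinite families.

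For the final assertion, assume $G \not\geq SL(V)$, so that either $\GNC = SO(V)$ or we are in one of (ii)--(iv). In the first case the $SO(V)$-invariant quadratic form spans a $1$-dimensional trivial $\HC$-submodule of $\SB(V)$. In cases (ii)--(iv), a weight-multiplicity computation (e.g.\ $\SB(L(\om_1)) = L(2\om_1)\oplus L(\om_2)$ for $E_6$ on its $27$-dimensional module, and analogously in the other cases; the characteristic-$0$ decomposition transfers to characteristic $\ell$ via the semisimplicity supplied by Lemma~\ref{serre} and \cite{McN} under the imposed bound on $\ell$) exhibits at least two $\HC$-composition factors in $\SB(L(\om))$. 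The Clifford argument from the first paragraph then promotes any proper $\HC$-submodule to a proper $G$-submodule of $\SB(V)$. The principal technical obstacle lies in this last step in positive characteristic: one must ensure that, under $\ell > 2k(d-k)$, the weight decomposition of $\SB(V)$ does not collapse to a single non-semisimple constituent in the exceptional $D_5$ and $E_6$ cases --- which is precisely what the McNinch bound \cite{McN} guarantees.
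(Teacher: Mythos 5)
The central step of your argument --- descending the irreducibility of $\WK(V)$ from $\GN$ to $\HC := \GNC$ by the Clifford/highest-weight argument of Theorem \ref{defi}(i) --- does not go through for exterior powers, and this is exactly why the paper argues differently here. In the symmetric power case, $k\om$ is the unique maximal weight of $\SK(V)$ and it occurs with multiplicity one, which is what forces $\SK(V)|_{\HC}$ to be irreducible. For $\WK(V)$ neither fact is available: $\wedge^{k}L(\om)$ may have several maximal weights (already for $k=2$, if $\langle\om,\alpha^{\vee}\rangle\neq 0$ for more than one simple root $\alpha$, the weights $2\om-\alpha$ are pairwise incomparable maximal weights, and a graph automorphism fixing $\om$ can permute them), and even a unique maximal weight can occur with multiplicity $>1$ when $V$ has weights of higher multiplicity. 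So ``the'' highest weight $\mu$ need not be $\GN$-invariant, $\WK(V)|_{\HC}$ need not be isotypic, let alone irreducible, and Dynkin--Seitz cannot be invoked. Your attempted repair --- checking multiplicity one of the relevant weight ``in each of the listed cases'' --- is circular, since the list is the output of the very classification whose applicability (irreducibility over the connected group) is in question. The same defect affects your last paragraph, where a proper $\HC$-submodule is said to be ``promoted'' to a proper $G$-submodule: Clifford theory gives no such promotion (what it does give is that all irreducible $\HC$-constituents of a $G$-irreducible module are $G$-conjugate, hence of equal dimension, which is a different and correct argument, but you did not phrase it that way).

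The paper's proof avoids the issue entirely: if $\GN \leq Z(\GC)\HC$, then $\WK(V)$ is literally irreducible over $\HC$ and \cite{Se1}, \cite{Su} give (i)--(iv); if $\GN \not\leq Z(\GC)\HC$ and $\HC \neq SO(V)$, the outer automorphism induced by $\GN$ is, modulo inner automorphisms, a graph automorphism, so $V|_{\HC}$ is self-dual (type $D_{2m}$ being handled by \cite[Prop. 5.4.3]{KlL}); replacing $G$ by $G^{*}$ as in Lemma \ref{dual2} puts $G$ inside $Sp(V)$ or $GO(V)$, the symplectic case is impossible, and in the orthogonal case $\WB(V) \simeq Lie(SO(V))$ contains $Lie(\HC)$ as a proper $G$-submodule, so $\WB(V)$ is reducible over $G$, contradicting Lemma \ref{wedge}(ii). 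The final statement is then an immediate consequence of Theorem \ref{defi}, with no need for the explicit decompositions of $\SB(V)$ and the characteristic-$\ell$ semisimplicity considerations you introduce.
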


\begin{proof}
Set $\HC := \GNC$. Since $V|_{\HC}$ is irreducible, $C_{\GC}(\HC) = Z(\GC)$. If 
$\GN \leq Z(\GC)\HC$, then $\WK(V)$ is irreducible over $\HC$ and we can apply the results of 
\cite{Se1} (see also \cite{Su}) and arrive at (i) -- (iv). Assume $\GN \not\leq Z(\GC)\HC$ and 
$\GNC \neq SO(V)$. We claim that $V|_{\HC}$ is self-dual in this case. 
(For, in the case $\HC$ is of type $D_{2m}$ the claim follows from
\cite[Prop. 5.4.3]{KlL}. In all the remaining cases, $\GN$ induces an outer automorphism
$\varphi$ of $\HC$ which stabilizes $V|_{\HC}$. Moreover, modulo inner automorphisms of $\HC$, $\varphi$ 
is just an involutive graph automorphism of $\HC$, and $\varphi$ sends any (finite dimensional) 
irreducible $\FF \HC$-module to its dual. It follows that $V|_{\HC}$ is self-dual in this case as well.) 
We will replace $G$ by $\GN^{*}$ using the construction in Lemma \ref{dual2}. Notice that now 
$\HC = G^{\circ}$ and $G \leq Sp(V)$ or $G \leq GO(V)$. The former case is impossible, as otherwise $G$ is 
reducible on $\WK(V)$. In the latter case, $\WB(V)$ can be identified with the adjoint module 
$Lie(SO(V))$, which contains $Lie(\HC)$ as a $G$-submodule. By our assumptions,  
$\dim(\HC) < \dim(SO(V))$ and so $G$ is reducible on $\WB(V)$; in particular, $k > 2$. But this contradicts
Lemma \ref{wedge}(ii). The final statement follows from Theorem \ref{defi}.        
\end{proof}

\begin{remar}
{\em An extension of Seitz's results \cite{Se1} to the disconnected (simple) case has been made in 
\cite{Fo}. However, we cannot apply results of \cite{Fo} to determine the disconnected subgroups
$G$ of $GL(V)$ such that $G^{\circ}$ is simple and some $W \in \{\SK(V),\WK(V)\}$ is irreducible over 
$G$, as \cite{Fo} imposes the condition that all $G^{\circ}$-composition factors of $W$ have restricted 
highest weights.}
\end{remar}

\section{The cross characteristic case}

The aim of this section is to prove the following theorem:

\begin{theor}\label{cross}
{\sl Theorem \ref{main} holds true in the case where $G$ is finite, $S := \soc(G/Z(G))$ is a finite 
simple group of Lie type in characteristic $p$, and $\Char(\FF) = \ell \neq p$.}
\end{theor}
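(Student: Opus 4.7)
The plan is as follows. Under the hypotheses of Theorem~\ref{cross}, conclusions (i), (ii), and (iv) of Theorem~\ref{main} are a priori excluded: (i) requires $G$ to contain the infinite algebraic group $SL(V)$ or $Sp(V)$, (ii) requires $S$ to be in characteristic $\ell$ while here $p\neq\ell$, and (iv) concerns the sporadic Monster. So the entire content of the theorem is to show that the only surviving possibility is $(d,S)=(12,G_2(4))$ with $p=2$ from conclusion (iii), and to verify that this case actually arises.

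I would first apply Proposition~\ref{red} to replace $G$ by $\GN$, so that $G$ is almost quasisimple with a perfect layer $S_0$ covering $S$, and $V|_{S_0}$ is irreducible by Lemma~\ref{step2}; Lemma~\ref{step1} also gives $\ell>k\geq 4$ (whenever $\ell>0$), and Lemma~\ref{so-sym} rules out $G\leq GO(V)$. The basic numerical tool is
$$
\binom{d+k-1}{k}\;=\;\dim\SK(V)\;\leq\;\ml(G)\;\leq\;\sqrt{|G/Z(G)|},
$$
combined with the Landazuri--Seitz--Zalesskii lower bound $d\geq \dl(S)$. For $k\geq 4$ this forces $\dl(S)^{4}/24\lesssim\sqrt{|S|\cdot|\Out(S)|}$, and with the explicit upper bounds for $\ml$ from \cite{Se2} and the lower bounds for $\dl$ recorded in \cite{T2} this inequality already fails for all but finitely many simple groups of Lie type.

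Next I would treat the remaining groups in three blocks: (a) classical $S$ of small rank and small $q$; (b) exceptional $S$ of Lie type of small rank ($G_2(q)$, ${}^3\!D_4(q)$, ${}^2\!F_4(q)$, $F_4(q)$, $E_6(q)$, etc.); and (c) the Weil representations of $Sp_{2n}(q)$ and $SU_n(q)$, together with the small-dimensional modules of $\Omega^{\pm}_{2n}(q)$. In (a) and (b) the list of faithful irreducible $\ell$-modular modules of dimension below the threshold $\ml(G)^{1/k}$ is finite and explicit from the work of Hiss--Malle, L\"ubeck, and others, and for each candidate $(S,V)$ one simply tests whether $\binom{d+k-1}{k}$ occurs as the degree of an irreducible Brauer character of $G$ of the appropriate type. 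In (c) the crude inequality leaves room, and the key device is to choose a natural subgroup $C\leq G$ (a suitable parabolic, the normalizer of a maximal torus, or the centralizer of a $p$-element), invoke Lemma~\ref{index} to produce a $C$-submodule of $\SK(V)$, and derive a contradiction with the irreducibility of $\SK(V)$ via Frobenius reciprocity, using Lemma~\ref{type} to control self-duality types whenever needed; this is exactly the strategy sketched in the introduction before Proposition~\ref{key1}.

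The main obstacle is block (c) together with the small-rank classical groups in block (a): there the dimension/character-degree bound is too weak, and the argument must combine Clifford theory for the extension $G/S_0$, the precise branching of Weil modules to Levi or torus-normalizer subgroups, the duality constraints from Lemma~\ref{type}, and a careful analysis of which $C$-summands actually lie in $\SK(V)$ as opposed to $\WK(V)$ or other Schur functors. Once every family has been ruled out, only $S=G_2(4)$ with its $12$-dimensional faithful representation in cross characteristic $\ell\in\{3,5,7\}$ survives; the irreducibility of $\SK(V)$ for $k=4,5$ in this remaining case must be checked directly from the known $\ell$-modular character tables of the double cover $2G_2(4)$, confirming the example listed in conclusion (iii) of Theorem~\ref{main}.
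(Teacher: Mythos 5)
There is a genuine gap at the very first step of your plan. You claim that the crude inequality $\binom{d+k-1}{k}\leq\ml(G)\leq\sqrt{|G/Z(G)|}$, combined with $d\geq\dl(S)$, "fails for all but finitely many simple groups of Lie type." It does not. For classical groups the minimal degree grows like $q^{O(n)}$ (e.g.\ $\dl(PSL_n(q))\approx q^{n-1}$, $\dl(PSp_{2n}(q))\approx q^{n}/2$) while $\sqrt{|S|}$ grows like $q^{\Theta(n^{2})}$, so $\dl(S)^{4}/24\leq\sqrt{|S|\cdot|\Out(S)|}$ holds comfortably for all ranks beyond a small bound; even $E_{8}(q)$ survives, since $\dl\approx q^{29}$ gives $d^{4}\approx q^{116}$ against $\ml(G)\approx q^{120}$. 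This is exactly the obstruction the paper flags in the introduction: unlike in \cite{GT2} and \cite{MMT}, irreducibility of $\SK(V)$ yields no nontrivial upper bound on $\dim(V)$, so no finite list of groups (or of modules) comes out of the dimension count, and your blocks (a)--(c) do not exhaust the problem — all large-rank classical groups with arbitrary (not just Weil) modules, and $E_{8}(q)$, are left untouched.

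The paper's actual engine for this generic case is what you relegate to a side role for block (c): Propositions \ref{key1} and \ref{key2}. One takes the long-root subgroup $Z=Z(Q)$ with $Q$ of special type, decomposes $V$ into $Z$-eigenspaces, uses the extension condition $\sta$ for $C=C_{G}(Z)$ (verified in Proposition \ref{root}, with Lemma \ref{spec} isolating the exceptions), and exhibits inside $\Sym^{n}(V)|_{C}$ a submodule of the shape $\SK(A)\otimes\SK(B)$ (or $\Sym^{2k}(A)$), whence Frobenius reciprocity gives $|Q/Z(Q)|^{2}<c\cdot(G:C)$; since $|Q/Z|^{2}\approx q^{4n-8}$ while $(G:C)\approx q^{2n-2}$ for $SL_n(q)$ (and similarly in the other families), this kills every generic family of unbounded rank, with the crude bound $(\ref{max})$ reserved for small-rank/small-$q$ cases. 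Conversely, the Weil representations of $Sp_{2n}(q)$ with $q\equiv3\pmod4$ and the odd-dimensional unitary groups in even characteristic are precisely where the hypotheses of \ref{key1}/\ref{key2} fail, and they need the separate branching analyses of Propositions \ref{su} and \ref{weil} (plus Lemma \ref{even} for even-characteristic symplectic and small exceptional groups), not the other way around. Two smaller points: for many of the surviving groups the $\ell$-modular degrees are unknown, so "testing whether $\binom{d+k-1}{k}$ occurs as a Brauer degree" is not available — the paper instead exploits self-duality types (Lemmas \ref{so-sym}, \ref{type}, \ref{index}) and restriction to explicit subgroups; and in the $2G_{2}(4)$, $d=12$ example the constraint $\ell>k$ rules out $\ell=3$ (and $\ell=5$ for $k=5$), so your list $\ell\in\{3,5,7\}$ is not accurate.
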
 

Let $L = G^{(\infty)}$ be the perfect inverse image of $S$ in $G$. 

\subsection{Generalities.}
Usually, $G$ contains a $p$-subgroup of {\it special type}, that is,
$[Q,Q] = Z(Q) = \Phi(Q)$ has exponent $p$ and $[x,Q] = Z(Q)$ for all
$x \in Q \setminus Z(Q)$. In particular, $|Q/Z(Q)| \geq 4$. 
Moreover, $Z(Q)$ is a long-root subgroup of $G$, and $P := N_{G}(Q) = N_{G}(Z(Q))$ is a 
parabolic subgroup of $G$. Let
$$\OV := \{\lam \in \IBRL(Z(Q)) \mid \lam
 \mbox{ occurs in }V|_{Z(Q)}\},~\OSV := \OV \setminus \{1_{Z(Q)}\}.$$
For any $\lam \in \OV$, let $\VL$ be the $\lam$-eigenspace for $Z(Q)$ on
$V$. Also let $\dvl := \dim(\VL)/\sqrt{|Q/Z(Q)|}$ for $\lam \in \OSV$. Given any 
nontrivial $\lam \in \IBRL(Z(Q))$, there is a unique irreducible $\FF Q$-module 
$\QL$ on which each $z \in Z(Q)$ acts as the scalar $\lam(z)$ and in fact
$\QL$ affords the $Z(Q)$-character $\sqrt{|Q/Z(Q)|}\lam$, cf. \cite[Lem. 2.3]{LS}.  We will 
consider the following condition imposed on $C_{G}(Z(Q))$:
$$\sta~:~\QL \mbox{ extends to an }\FF C_{G}(Z(Q))\mbox{-module }\EL.\hspace{6cm}$$

The key ingredient of our treatment of the cross characteristic case is the following:

\begin{propo}\label{key1}
{\sl Let $G$ be a finite group with a $p$-subgroup $Q$ of special type and
$\Char(\FF) \neq p$. Assume $C := C_{G}(Z(Q))$ satisfies the condition $\sta$. Let $V$
be an $\FF G$-module such that there is a
$\lam \in \IBRL(Z(Q))$ with $\lam,\lam^{-1} \in \OSV$ and
$\lam^{-1} \neq \lam$. 

{\rm (i)} Then $\Sym^{2k}(V)$ has a $C$-submodule $F$ of dimension
$$\begin{pmatrix}\dvl+k-1\\k \end{pmatrix} \cdot 
 \begin{pmatrix}\dwl+k-1\\k \end{pmatrix}~.$$ 

{\rm (ii)} Assume that $\Sym^{n}(V)$ is irreducible over $G$ for some $n \geq 4$, and in addition
that $|\OV| \geq 3$ if $n$ is odd. Then $|Q/Z(Q)|^{2} < (3/2)\cdot(G:C)$.}
\end{propo}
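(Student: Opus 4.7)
The plan is to prove (i) by a Clifford-theoretic decomposition of the eigenspaces $V_\lambda, V_{\lambda^{-1}}$ using hypothesis $\sta$, and then to derive (ii) via Frobenius reciprocity combined with a direct binomial estimate.

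For (i): since $\ell\neq p$, the algebra $\FF Q$ is semisimple, so $V_\lambda|_Q\simeq Q_\lambda^{\oplus d_\lambda(V)}$. Hypothesis $\sta$ and Clifford theory for $Q\lhd C$ then upgrade this to a $C$-module isomorphism $V_\lambda\simeq E_\lambda\otimes W_\lambda$, with $W_\lambda:=\Hom_Q(E_\lambda,V_\lambda)$ a $C$-module of dimension $d_\lambda(V)$; taking $E_{\lambda^{-1}}:=E_\lambda^*$ (an extension of $Q_\lambda^*=Q_{\lambda^{-1}}$ to $C$) yields similarly $V_{\lambda^{-1}}\simeq E_{\lambda^{-1}}\otimes W_{\lambda^{-1}}$. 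As $\lambda\neq\lambda^{-1}$, the multiplication map $\Sym^k V_\lambda\otimes\Sym^k V_{\lambda^{-1}}\hookrightarrow\Sym^{2k}(V)$ is an injection of $\FF C$-modules. Since $\lambda\in\OV$ forces $\dim V\geq 2m\geq 4$ (with $m:=\sqrt{|Q/Z(Q)|}$), Lemma \ref{step1}(i) yields $\ell>2k$, and the Cauchy formula $\Sym^k(E\otimes W)=\bigoplus_{\mu\vdash k}S^\mu E\otimes S^\mu W$ exhibits $\Sym^k E_\lambda\otimes\Sym^k W_\lambda$ (resp.\ $\Sym^k E_{\lambda^{-1}}\otimes\Sym^k W_{\lambda^{-1}}$) as a direct $C$-summand of $\Sym^k V_\lambda$ (resp.\ $\Sym^k V_{\lambda^{-1}}$). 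The identification $\Sym^k E_\lambda\otimes\Sym^k E_{\lambda^{-1}}\simeq\End(\Sym^k E_\lambda)$ exhibits the identity endomorphism as a $C$-invariant vector, spanning a trivial one-dimensional $C$-submodule $\chi_k$. Setting
\[
F:=\chi_k\otimes\Sym^k W_\lambda\otimes\Sym^k W_{\lambda^{-1}}\subset\Sym^{2k}(V)
\]
gives a $C$-submodule of the required dimension $\binom{d_\lambda(V)+k-1}{k}\binom{d_{\lambda^{-1}}(V)+k-1}{k}$.

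For (ii): let $n=2k$ or $n=2k+1$. In the odd case, the hypothesis $|\OV|\geq 3$ supplies some $\mu\in\OV$ distinct from $1_{Z(Q)},\lambda,\lambda^{-1}$, and the multiplication $F\cdot V_\mu\subset\Sym^n(V)$ is a $C$-submodule of dimension $\dim F\cdot m\cdot d_\mu(V)$ (again by $Z(Q)$-eigenvalue separation). Irreducibility of $\Sym^n(V)$ over $G$ and Frobenius reciprocity force
\[
\dim\Sym^n(V)\leq(G:C)\cdot\dim F\qquad(\text{resp.\ }\dim F\cdot md_\mu(V)).
\]
On the other hand, Remark \ref{schur}(ii) applied to the $Z(Q)$-stable decomposition $V=V_\lambda\oplus V_{\lambda^{-1}}\oplus U$ (with $V_\mu$ additionally split off in the odd case) yields
\[
\dim\Sym^n(V)\geq\binom{m(d_\lambda+d_{\lambda^{-1}})+n-1}{n}\quad\text{or}\quad\binom{m(d_\lambda+d_{\lambda^{-1}}+d_\mu)+n-1}{n},
\]
and a direct binomial comparison closes the desired inequality $(G:C)>(2/3)m^4=(2/3)|Q/Z(Q)|^2$.

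The main obstacle is the last binomial inequality, which is asymptotically tight. Already in the worst even case $d_\lambda=d_{\lambda^{-1}}=1$, $k=2$, $m\to\infty$, the ratio $\binom{2m+3}{4}/\dim F=(2m)(2m+1)(2m+2)(2m+3)/24$ approaches $(2/3)m^4$ from above, so strict inequality relies on factor-by-factor comparison $(2m+1)(2m+2)(2m+3)>(2m)^3$; one cannot afford to discard non-middle terms of the Remark \ref{schur}(ii) expansion. In the odd case, the hypothesis $|\OV|\geq 3$ is essential: without a third eigenvalue $\mu$, the lower bound on $\dim\Sym^n(V)$ (from only $V_\lambda\oplus V_{\lambda^{-1}}$) is too small to beat the extra factor of $md_\mu(V)$ that $V_\mu$ introduces into $\dim F'$, and the bound collapses.
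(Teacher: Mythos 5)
Your proposal follows the paper's route: decompose $V_{\lam}=\EL\otimes A$, $\VLB=\ELC\otimes B$ via $\sta$, extract the trivial $C$-constituent of $\SK(\EL)\otimes\SK(\ELC)\simeq\SK(\EL)\otimes\SK(\EL)^{*}$, so that $F\simeq\SK(A)\otimes\SK(B)$ sits inside $\Sym^{2k}(V)$, and then combine Frobenius reciprocity with a binomial estimate whose tight case is $k=2$, $\dvl=\dwl=1$ -- this is exactly the paper's proof, which settles the general even case by $ab(a+1)(b+1)\le(a+b)^{4}/4$ together with $\binom{E(a+b)+3}{4}>(E(a+b))^{4}/24$, after first reducing from $2k$ to $4$ by monotonicity of the ratio in $k$ (a step you assert as ``a direct binomial comparison'' but do not carry out). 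One small point: in (i) you cannot invoke Lemma \ref{step1}(i) to get $\ell>2k$, since (i) carries no irreducibility hypothesis; the semisimplicity needed for the Cauchy decomposition of Remark \ref{schur}(ii) has to come from the standing convention $\ell>k$, or, in (ii), from Lemma \ref{step1}(i) applied to the irreducible module $\Sym^{n}(V)$.

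The genuine gap is in the odd case. The hypothesis $|\OV|\ge 3$ does \emph{not} supply a character $\mu\in\OV$ distinct from $1_{Z(Q)},\lam,\lam^{-1}$: since $\OV$ is allowed to contain $1_{Z(Q)}$, it only guarantees some $\nu\in\OV\setminus\{\lam,\lam^{-1}\}$, which may be the trivial character. This situation actually occurs in the intended applications (e.g.\ Weil modules of $Sp_{2n}(5)$, where $\OV=\{1_{Z(Q)},\lam,\lam^{-1}\}$ and the $Z(Q)$-fixed space is nonzero), so your odd-case argument, which needs $\mu$ nontrivial in order to write $\dim V_{\mu}=m\,d_{\mu}(V)$, does not prove the statement as formulated. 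The repair is what the paper does and costs nothing: take $F':=F\otimes V_{\nu}$ for an arbitrary $\nu\in\OV\setminus\{\lam,\lam^{-1}\}$ (trivial or not); the estimate uses only $\dim V\ge E(\dvl+\dwl)+v$ with $v:=\dim V_{\nu}\ge 1$, and the elementary bound $(1+t)^{5}/t\ge 3125/256>12$ on $(0,\infty)$ yields a ratio exceeding $8E^{4}/5$, comfortably above the required $2E^{4}/3$. With that change, and with the even-case binomial inequality written out for general $\dvl,\dwl,k$, your argument coincides with the paper's.
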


\begin{proof}
Since $\Char(\FF) \neq p$, we can write
$V = \left(\bigoplus_{\mu \in \OV}V_{\mu}\right) \oplus V_{1}$, where $1$ stands for $1_{Z(Q)}$ 
for short. Clearly, $\VL|_{Q}$ is the direct sum of $\dvl$ copies of $\QL$. Hence the condition
$\sta$ implies that $\VL = \EL \otimes A$ for some $C/Q$-module $A$. Without loss we may
choose $\ELB$ to be $\ELC$, the dual of $\EL$. Then, again by $\sta$, 
$\VLB = \ELC \otimes B$ for some $C/Q$-module $B$. Denote $E:= \dim(\EL) = \sqrt{|Q/Z(Q)|} \geq 2$,
$a := \dim(A) \geq 1$, $b := \dim(B) \geq 1$, $c := a+b$, and $v := \dim(V_{\nu})$ if there exists
$\nu \in \OV \setminus \{\lam,\lam^{-1}\}$. Clearly, 
$\dim(V) \geq Ec+v$ and $ab(a+1)(b+1) \leq c^{4}/4$.

(i) Observe that $\Sym^{2k}(V)|_{C}$ contains the submodules 
$$\SK(\EL \otimes A) \otimes \SK(\ELC \otimes B) \supset 
  \SK(\EL) \otimes \SK(\ELC) \otimes \SK(A) \otimes \SK(B).$$
Since $\SK(\ELC) \simeq \SK(\EL)^{*}$, $\Sym^{2k}(V)|_{C}$ contains the submodule 
$F := \SK(A) \otimes \SK(B)$ which obviously has the indicated dimension. 

(ii) Now assume that $\Sym^{n}(V)$ is irreducible over $G$ for some $n \geq 4$. 
First consider the case $n = 2k$ is even. Then by (i) and by Frobenius' reciprocity,
$\dim(\Sym^{2k}(V)) \leq \dim(F) \cdot (G:C)$. Furthermore,    
$$\frac{\dim(\Sym^{2k}(V))}{\dim(F)} \geq 
  \frac{\begin{pmatrix}Ec+2k-1\\2k \end{pmatrix}}
  {\begin{pmatrix}a+k-1\\k \end{pmatrix} \cdot 
 \begin{pmatrix}b+k-1\\k \end{pmatrix}} \geq  
  \frac{\begin{pmatrix}Ec+3\\4 \end{pmatrix}}
  {\begin{pmatrix}a+1\\2 \end{pmatrix} \cdot 
 \begin{pmatrix}b+1\\2 \end{pmatrix}} > $$ 
$$> \frac{(Ec)^{4}/24}{c^{4}/16} = \frac{2E^{4}}{3},$$
proving the claim.

Next we consider the case $n = 2k+1$ is odd. Then $\Sym^{2k+1}(V)|_{C}$ contains the submodules 
$$\SK(\EL \otimes A) \otimes \SK(\ELC \otimes B) \otimes V_{\nu} 
 \supset \SK(\EL) \otimes \SK(\ELC) \otimes \SK(A) \otimes \SK(B) \otimes V_{\nu}.$$
It follows that $\Sym^{2k+1}(V)|_{C}$ contains the submodule 
$F' := \SK(A) \otimes \SK(B) \otimes V_{\nu}$. By Frobenius' reciprocity,
$\dim(\Sym^{2k+1}(V)) \leq \dim(F') \cdot (G:C)$. Furthermore,    
$$\frac{\dim(\Sym^{2k+1}(V))}{\dim(F')} \geq 
  \frac{\begin{pmatrix}Ec+v+2k\\2k+1 \end{pmatrix}}
  {\begin{pmatrix}a+k-1\\k \end{pmatrix} \cdot 
 \begin{pmatrix}b+k-1\\k \end{pmatrix} \cdot v} \geq 
  \frac{\begin{pmatrix}Ec+v+4\\5 \end{pmatrix}}
  {\begin{pmatrix}a+1\\2 \end{pmatrix} \cdot 
 \begin{pmatrix}b+1\\2 \end{pmatrix} \cdot v} > $$
$$> \frac{(Ec+v)^{5}/120}{c^{4}v/16} = E^{4} \cdot \frac{16}{120} \cdot 
    \frac{(1+v/Ec)^{5}}{v/Ec} > \frac{8E^{4}}{5},$$
since $(1+v/Ec)^{5} > 12v/Ec$ (indeed, on $(0,+\infty)$ the function $(1+t)^{5}/t$ attains its 
minimum at $t = 1/4$).
\end{proof}

Of course, if $p = 2$ then any $\lam \in \OV$ is self-dual. In this case we need the following
analogue of Proposition \ref{key1}:

\begin{propo}\label{key2}
{\sl Let $G$ be a finite group with a $2$-subgroup $Q$ of special type and
$\Char(\FF) \neq 2$. Let $V$ be an $\FF G$-module such that $\OSV \neq \emptyset$. Choose 
$\lam \in \OSV$ such that $\dim(\VL) = \min\{\dim(V_{\mu}) \mid \mu \in \OSV\}$. Assume that
$C := C_{G}(Z(Q))$ satisfies the condition $\sta$ for $\lam$, and that $\EL$ is of type $+$.

{\rm (i)} Then $\Sym^{2k}(V)$ has a $C$-submodule $F$ of dimension
$\begin{pmatrix}\dvl+2k-1\\2k \end{pmatrix}$. 

{\rm (ii)} Assume that $\Sym^{n}(V)$ is irreducible over $G$ for some $n \geq 4$ and in addition
that $|\OV| \geq 2$ if $n$ is odd. Then 
$|Q/Z(Q)|^{2} < (15/2) \cdot(G:C)$ if $\dim(V) \geq 2\dim(\EL)$, and 
$|Q/Z(Q)|^{2} < 24 \cdot(G:C)$ otherwise.}
\end{propo}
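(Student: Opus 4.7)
The plan is to adapt the argument of Proposition \ref{key1} to $p=2$: since now $\lam = \lam^{-1}$ (so there is no distinct $\lam^{-1}$-eigenspace available to pair with), the duality pairing $\EL \otimes \ELC \to \FF$ used there must be replaced by the $C$-invariant nondegenerate symmetric bilinear form on $\EL$ provided by the type $+$ hypothesis.

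For (i): Under $\sta$, write $\VL|_C \simeq \EL \otimes A$, where $A$ is a $C/Q$-module of dimension $a := \dvl$, and set $E := \dim \EL = \sqrt{|Q/Z(Q)|}$. The type $+$ form on $\EL$ corresponds to a $C$-invariant element $q \in \Sym^2(\EL) \subset \EL \otimes \EL$. Since $\Sym^*(\EL)$ is a polynomial algebra over $\FF$ and $\Char(\FF) \neq 2$, the power $q^k$ is a nonzero $C$-invariant element of $\Sym^{2k}(\EL)$. Via the natural Cauchy-type $C$-equivariant embedding
\[
\Sym^{2k}(\EL) \otimes \Sym^{2k}(A) \hookrightarrow \Sym^{2k}(\EL \otimes A)
\]
(coming from the Schur functor $S_{(2k)}$, which is defined over $\ZZ$), the subspace
\[
F := \FF q^k \otimes \Sym^{2k}(A) \subset \Sym^{2k}(\VL) \subset \Sym^{2k}(V)
\]
is a $C$-submodule isomorphic to $\Sym^{2k}(A)$, of the required dimension $\binom{a + 2k - 1}{2k}$.

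For (ii): Assume $\Sym^n(V)$ is irreducible over $G$ with $n \geq 4$. For $n = 2k$ even, take $F$ as above; for $n = 2k+1$ odd, using $|\OV| \geq 2$ pick $\nu \in \OV \setminus \{\lam\}$ and set $F' := F \otimes V_\nu \subset \Sym^{2k}(\VL) \otimes V_\nu \subset \Sym^n(V)$. Frobenius reciprocity combined with irreducibility forces $\dim\Sym^n(V) \leq (G:C) \cdot \dim F$ in the even case, and $\dim\Sym^n(V) \leq (G:C) \cdot \dim F'$ in the odd case. Using $\dim V \geq \dim \VL = Ea$, the ratio
\[
\frac{\dim \Sym^{2k}(V)}{\dim F} \geq \prod_{i=0}^{2k-1} \frac{Ea + i}{a + i}
\]
is monotone increasing in $k$, so the binding constraint comes from $n = 4$ (the odd case at $n = 5$ is strictly weaker because of the extra $V_\nu$ factor). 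For $n = 4$: if $\dim V \geq 2E$, then either $a \geq 2$ (worst at $a = 2$, giving ratio $\geq 16 E^4/120 = 2E^4/15$) or $a = 1$ with $\dim V \geq 2E$ (giving the even stronger ratio $\binom{2E+3}{4} \geq 2E^4/3$), so $E^4 \leq (15/2)(G:C)$. If instead $\dim V < 2E$, then since $\dim V_\mu$ is a multiple of $E$ for every $\mu \in \OSV$, one forces $a = 1$, $|\OSV| = 1$, and $\dim V_1 < E$; then $\dim\Sym^4(V)/\dim F \geq \binom{E+3}{4} \geq E^4/24$, giving $E^4 \leq 24(G:C)$.

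The principal obstacle is verifying the existence and injectivity of the Cauchy-type embedding in arbitrary characteristic $\neq 2$ needed for part (i); this is handled by the standard polynomial-functor argument together with the fact that $q$ is nondegenerate (so $q^k$ is nonzero in the polynomial ring $\Sym^*(\EL)$). The remaining technical point is the careful case split ($a = 2$ vs $a = 1$ and $\dim V \geq 2E$ vs $\dim V < 2E$) that isolates the extremal configurations producing the two separate constants $15/2$ and $24$.
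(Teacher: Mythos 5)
Your part (i) and the even case of part (ii) follow essentially the paper's route: the invariant quadratic form gives a $C$-fixed vector in $\Sym^{2k}(\EL)$ (your ``$q^{k}\neq 0$'' argument is just the paper's Lemma \ref{so-sym}(i) in disguise), so $F=\Sym^{2k}(A)$ sits inside $\Sym^{2k}(V)|_{C}$, and your $a\geq 2$ versus $a=1$, $d\geq 2E$ versus $d<2E$ split with the reduction to $n=4$ (each extra factor $(d+i)/(a+i)\geq 1$) reproduces the paper's estimates and constants.

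There is, however, a genuine gap in your treatment of odd $n$. You claim ``the odd case at $n=5$ is strictly weaker because of the extra $V_{\nu}$ factor,'' and on that basis you prove nothing for $n=2k+1$. This is backwards: the extra factor enlarges the $C$-submodule to $F'=\Sym^{2k}(A)\otimes V_{\nu}$ of dimension $\binom{a+2k-1}{2k}\cdot v$ with $v=\dim(V_{\nu})$, so the Frobenius-reciprocity inequality $\dim\Sym^{2k+1}(V)\leq(G:C)\dim(F')$ is \emph{less} restrictive, and the relevant ratio is the even-case ratio multiplied by $\tfrac{d+2k}{(2k+1)v}$, which is $<1$ whenever $v$ is large (e.g.\ $a=1$ and $V_{1}$ of dimension comparable to $d$, which does occur in the applications, where $\nu$ is typically the trivial character of $Z(Q)$). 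Nor can you reduce to $n=4$: the hypothesis is irreducibility of $\Sym^{n}(V)$ for \emph{some} $n\geq 4$, possibly only an odd one, and irreducibility of $\Sym^{5}(V)$ does not imply that of $\Sym^{4}(V)$ in the generality of the proposition (Corollary \ref{sym2}(i) would require $\ell=0$ or $\ell>4(\dim V-1)$, which is not assumed). The odd case therefore needs its own estimate; the paper does this by bounding $\binom{d+4}{5}>(Ea+v)^{5}/120$ using $d\geq Ea+v$ and the elementary inequality $(1+t)^{5}>12t$ (with $t=v/Ea$), together with a separate $a=1$, $d<2E$ analysis yielding a bound ($E^{4}/10$) still compatible with the stated constant $24$. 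Supplying this odd-case computation is what your proposal is missing.
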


\begin{proof}
As in the proof of Proposition \ref{key1}, we can write 
$V = \left(\bigoplus_{\mu \in \OV}V_{\mu}\right) \oplus V_{1}$, and $\VL = \EL \otimes A$ for some 
$C/Q$-module $A$. By Lemma \ref{so-sym}(i), $\Sym^{2k}(\EL)$ contains the submodule $1_{C}$ as $\EL$ is 
of type $+$. Hence $\Sym^{2k}(V)|_{C}$ contains the submodules 
$$\Sym^{2k}(\EL \otimes A) \supset 
  \Sym^{2k}(\EL) \otimes \Sym^{2k}(A) \supset F := \Sym^{2k}(A),$$
proving (i). Denote $E:= \dim(\EL) = \sqrt{|Q/Z(Q)|} \geq 2$, 
$a := \dim(A) \geq 1$, and $v := \dim(V_{\nu})$ if there exists
$\nu \in \OV \setminus \{\lam\}$, and $d := \dim(V)$ as 
usual. Then $d \geq Ea +v$. 

Now assume that $\Sym^{n}(V)$ is irreducible over $G$ for some $n \geq 4$. 
First consider the case $n = 2k$ is even. Then by (i) and by Frobenius' reciprocity,
$\dim(\Sym^{2k}(V)) \leq \dim(F) \cdot (G:C)$. Furthermore,   
$$\frac{\dim(\Sym^{2k}(V))}{\dim(F)} = 
  \frac{\begin{pmatrix}d+2k-1\\2k \end{pmatrix}}
  {\begin{pmatrix}a+2k-1\\2k \end{pmatrix}} \geq  
  \frac{\begin{pmatrix}d+3\\4 \end{pmatrix}}
  {\begin{pmatrix}a+3\\4 \end{pmatrix}} > \frac{(Ea)^{4} \cdot (d/Ea)^{4}}{a(a+1)(a+2)(a+3)},$$
which is at least $2E^{4}/15$ if $a \geq 2$ or if $a = 1$ and $d \geq 2E$, and at least
$E^{4}/24$ if $a = 1$ and $d < 2E$. Hence (ii) follows. In fact we can replace the constant 
$2/15$ in (ii) by $2/9$ if $d \geq 5E/2$, and we will need this remark later.

Next we consider the case $n = 2k+1$ is odd. Then $\Sym^{2k+1}(V)|_{C}$ contains the submodules 
$$\Sym^{2k}(\EL \otimes A) \otimes V_{\nu} 
 \supset \Sym^{2k}(\EL) \otimes \Sym^{2k}(A) \otimes V_{\nu}.$$
It follows that $\Sym^{2k+1}(V)|_{C}$ contains the submodule 
$F' := \Sym^{2k}(A) \otimes V_{\nu}$. By Frobenius' reciprocity,
$\dim(\Sym^{2k+1}(V)) \leq \dim(F') \cdot (G:C)$. Furthermore,   
$$\frac{\dim(\Sym^{2k+1}(V))}{\dim(F')} = 
  \frac{\begin{pmatrix}d+2k\\2k+1 \end{pmatrix}}
  {\begin{pmatrix}a+2k-1\\2k \end{pmatrix} \cdot v} \geq 
  \frac{\begin{pmatrix}d+4\\5 \end{pmatrix}}
  {\begin{pmatrix}a+3\\4 \end{pmatrix} \cdot v},$$ 
which is at least 
$$\frac{(Ea+v)^{5}/120}{(15/2) \cdot a^{4}v/24} = E^{4} \cdot \frac{4}{150} \cdot 
    \frac{(1+v/Ea)^{5}}{v/Ea} > \frac{16E^{4}}{50}$$
if $a \geq 2$ (since $(a+1)(a+2)(a+3) \leq (15/2) \cdot a^{3}$ and $(1+v/Ea)^{5} > 12v/Ea$). If
$a = 1$, then 
$$\frac{\dim(\Sym^{2k+1}(V))}{\dim(F')} > \frac{d^{5}}{120v} > E^{4} \cdot 
  \frac{(d/E)^{4} \cdot (d/v)}{120},$$
which is at least $2E^{4}/15$ if $d \geq 2E$ (in fact at least $2E^{4}/9$ if $d \geq 5E/2$), 
and at least $$\frac{(E+v)^{5}}{120v} = E^{4} \cdot \frac{1}{120} \cdot 
    \frac{(1+v/E)^{5}}{v/E} > \frac{E^{4}}{10},$$  
if $d < 2E$. 
\end{proof}

Next we will verify various conditions set in Propositions \ref{key1} and \ref{key2}. {\it Weil 
representations} of symplectic groups will account for most exceptions where not all the conditions
are met; we refer to \cite{GMST} for necessary information about them.

\begin{lemma}\label{spec}
{\sl Let $G$ be a finite (quasi-simple) Lie-type group of simply connected type defined over 
$\FQ$, $q = p^{f}$ for a prime $p$, $Z$ a long-root subgroup of $G$, and let $V$ be any 
nontrivial irreducible $\FF G$-representation. Then one of the following holds.

{\rm (i)} $p > 2$, $\OSV = \IBRL(Z) \setminus \{1_{Z}\}$,
$|\OV| \geq 3$, and there are $\lam,\lam^{-1} \in \OSV$ with $\lam \neq \lam^{-1}$. 

{\rm (ii)} $p = 2$, $\OSV = \IBRL(Z) \setminus \{1_{Z}\}$, $|\OV| \geq 2$, and $\OSV \neq \emptyset$. 

{\rm (iii)} $p > 2$, $q \equiv 1 (\mod 4)$, $G = Sp_{2n}(q)$, $V$ is a Weil representation 
of $G$, $|\OV| \geq 3$, and there are $\lam,\lam^{-1} \in \OSV$ with $\lam \neq \lam^{-1}$.

{\rm (iv)} $p > 2$, $q \equiv 3 (\mod 4)$, $G = Sp_{2n}(q)$, and $V$ is a Weil representation 
of $G$.

{\rm (v)} $G \in \{SL_{2}(5),SU_{3}(3),Sp_{4}(3)\}$.}
\end{lemma}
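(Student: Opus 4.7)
The natural object of study is the action of $N_{G}(Z)$ on $\IBRL(Z)$ by conjugation, since $\OSV$ is evidently invariant under it. Identifying $Z \cong \FQ^{+}$ through the corresponding root, the Levi factor of the parabolic $P = N_{G}(Z)$ acts on $Z$ via a subgroup of $\FQB$: for simply-laced types ($A$, $D$, $E$) the Levi acts by all of $\FQB$, so there is a single orbit on $\IBRL(Z) \setminus \{1_{Z}\}$; when $Z$ is a long-root subgroup in a non-simply-laced type ($B$, $C$, $F_{4}$, $G_{2}$), the action is through $(\FQB)^{2}$, producing two orbits for $q$ odd and a single orbit for $q$ even. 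Consequently $\OSV$ is a union of these orbits, and the analysis splits into the transitive case and a non-transitive symplectic case.

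First I would dispose of the transitive case. Here $\OSV$ is either empty or equal to all of $\IBRL(Z) \setminus \{1_{Z}\}$. Emptiness is impossible: since $G$ is quasi-simple of Lie type in characteristic $p$, it is generated by $G$-conjugates of $Z$, so a trivial action of $Z$ on $V$ would force $V$ itself trivial. Hence $\OSV = \IBRL(Z) \setminus \{1_{Z}\}$, which immediately gives (i) when $p$ is odd (with $|\OV| \geq |\OSV| = q-1 \geq 2$, augmented to $\geq 3$ using that $1_{Z} \in \OV$ whenever $V^{Z} \neq 0$, and otherwise using $q = p^{f} \geq 3$; a non-self-inverse pair exists since $-1 \neq 1$ in $\FQB$), and gives (ii) when $p = 2$, noting that every element of $\IBRL(Z)$ is then self-inverse.

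Second, I would treat the remaining non-transitive symplectic case $G = Sp_{2n}(q)$ with $q$ odd, where $\OSV$ equals one of the two Levi-orbits of size $(q-1)/2$ (their union being absorbed into (i)). At this point I would invoke the structure theory of \cite{GMST}, which characterizes precisely the irreducible representations of $Sp_{2n}(q)$ whose restriction to a long-root subgroup misses a full orbit of characters: they are exactly the Weil representations. Whether $\OSV$ is closed under inversion then reduces to whether $-1$ is a square in $\FQB$: when $q \equiv 1 \pmod{4}$ we land in (iii), provided $|\OV| \geq 3$ (which fails only for $n = 1$, $q = 5$, and the lower Weil piece of dimension $(q-1)/2 = 2$); when $q \equiv 3 \pmod{4}$ we land in (iv). The residual configurations, where the hypotheses of (iii) cannot all be met or where additional small-module coincidences arise, can be shown by direct inspection of character and Brauer-character tables (e.g.\ the Atlas) to be exactly $G \in \{SL_{2}(5), SU_{3}(3), Sp_{4}(3)\}$, yielding (v).

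The principal obstacle is the identification step in paragraph three: proving that, in the non-transitive symplectic case, $V$ must be a Weil representation. This demands the full force of \cite{GMST} rather than a direct elementary argument, and the identification must be made for Brauer characters over every $\ell \neq p$. A secondary difficulty is the exhaustive bookkeeping for small $q$ and low-dimensional representations to confirm that no further exception arises beyond the three listed in (v); here one must check, module by module for the small-rank groups defined over $\FF_{3}$ and $\FF_{5}$, that all the numerical conditions in (i)--(iv) indeed hold whenever the group is not among $\{SL_{2}(5), SU_{3}(3), Sp_{4}(3)\}$.
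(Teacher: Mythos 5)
There is a genuine gap, and it sits exactly where the paper has to invoke \cite[Lem.~2.9]{MMT}. Your orbit argument only controls the nontrivial part $\OSV$ (it is a union of $N_{G}(Z)$-orbits on $\IBRL(Z)\setminus\{1_{Z}\}$), but it says nothing about whether $1_{Z}\in\OV$, i.e.\ whether $V^{Z}\neq 0$. That information is indispensable: the conditions $|\OV|\geq 3$ in (i) and $|\OV|\geq 2$ in (ii) are automatic from transitivity only when $q\geq 4$; for $q=3$ (resp.\ $q=2$) one must know that the trivial character of $Z$ occurs in $V|_{Z}$, or else identify the group as an exception. Your parenthetical ``augmented to $\geq 3$ using that $1_{Z}\in\OV$ whenever $V^{Z}\neq 0$, and otherwise using $q\geq 3$'' is circular — if $V^{Z}=0$ and $q=3$ then $|\OV|=2$ and nothing you have said repairs this. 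This is not a corner case: $SU_{3}(3)$ has $N_{G}(Z)$ acting transitively on $\IBRL(Z)\setminus\{1_{Z}\}$ yet possesses nontrivial irreducible modules with $V^{Z}=0$, so your second paragraph's claim that the transitive case always yields (i) is false, and it contradicts your own final sentence which lists $SU_{3}(3)$ among the exceptions. The fact that for $q=p\in\{2,3\}$ and groups other than $Sp_{2n}(q)$ and $SU_{3}(p)$ one always has $1_{Z}\in\OV$ (and, when $q>p$, that at worst $\OV=\IBRL(Z)\setminus\{1_{Z}\}$ with $q\geq 9$) is precisely the content the paper imports from \cite[Lem.~2.9]{MMT}; it is not an orbit-counting statement and your proposal neither proves nor cites it.

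A second, smaller defect: your description of the Levi action on $\IBRL(Z)$ is wrong for the non-simply-laced types other than $C_{n}$. For a long-root subgroup the torus acts through the value of the highest root, and for types $B_{n}$, $F_{4}$, $G_{2}$ (and for the unitary groups) this character is surjective onto $\FQB$, so the action on nontrivial characters is transitive; only type $C_{n}$ acts through squares and produces two orbits for odd $q$. As written your argument is internally inconsistent — you assert two orbits for $B$, $F_{4}$, $G_{2}$ and then silently treat only $Sp_{2n}(q)$ in the non-transitive branch, with no justification for discarding the others (and the appeal to \cite{GMST}, which concerns symplectic and unitary groups, would not cover them). The correct transitivity statement rescues this step, but it needs to be stated and proved (or cited) rather than the incorrect version you give. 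With these two repairs — the correct orbit computation and an input equivalent to \cite[Lem.~2.9]{MMT} controlling $V^{Z}$ — your outline does follow essentially the same route as the paper: reduce to the structure of $\OV$, use \cite{GMST} to recognize Weil modules of $Sp_{2n}(q)$, and dispose of the small groups by direct inspection.
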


\begin{proof}
Let $\Om^{*} := \IBRL(Z) \setminus \{1_{Z}\}$. First we consider the case $p = 2$. Then 
$\OSV = \Om^{*}$ and $|\OV| \geq 2$ by \cite[Lem. 2.9]{MMT}, and we arrive at (ii). 
Assume $p > 2$. Clearly (i) holds if $\OV = \IBRL(Z)$. Otherwise by \cite[Lem. 2.9]{MMT} and 
\cite{GMST}, one of the following cases hold.

\smallskip
Case 1: $q > p$ and $\OV = \Om^{*}$. In this case (i) holds.

\smallskip
Case 2: $G = SU_{3}(p)$ and $\OV = \Om^{*}$. Then either (i) holds or $G = SU_{3}(3)$. 

\smallskip
Case 3: $G = Sp_{2n}(q)$, $n \geq 1$, and either $V$ is a Weil representation and 
$|\OV| = (q+1)/2$, or $n \leq 2$ and $\OV = \Om^{*}$, or $n = 1$ and $|\OV| = (q-1)/2$. In fact 
the possibilities for $\OV$ were described in \cite[p. 386]{MMT}.  
 
Assume $q \equiv 1 (\mod 4)$. If $n \geq 2$, or if $n = 1$ but $q \geq 9$, then either (i) or (iii)
holds. Otherwise $G = SL_{2}(5)$.

Assume $q \equiv 3 (\mod 4)$, but (iv) does not hold. If $n = 2$, then either (i) holds, or
$G = Sp_{4}(3)$. The case $n = 1$ is now impossible as $G \neq SL_{2}(3)$.   
\end{proof}

\begin{propo}\label{root}
{\sl Let $G$ be a finite (quasi-simple) Lie-type group of simply connected type in characteristic 
$p$, $Z$ a long-root subgroup of $G$, $C := C_{G}(Z)$, and $Q := O_{p}(C)$. Assume 
that $G \notin \{SL_{2}(p^{f}),SU_{3}(3),Sp_{4}(3)\}$. 
If $p = 2$, assume that $G$ is not of types $\ta B_{2}$, $B_{n}$, $C_{n}$, $F_{4}$, or 
$\ta F_{4}$. If $p = 3$, assume that $G$ is not of types $G_{2}$ and $\ta G_{2}$.
Let $V$ be any nontrivial irreducible $\FF G$-representation. 

{\rm (i)} Then $C$ satisfies $\sta$ for any $\lam \in \OSV$.

{\rm (ii)} Assume $p = 2$ and $\lam \in \OSV$. If $G$ is not of type $\ta A_{2n}$, then 
$\EL$ can be chosen to have type $+$. Otherwise $\EL$ can be chosen of type $-$.}
\end{propo}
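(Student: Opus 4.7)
The plan is case analysis on the Lie type of $G$, combined with the structure theory of long-root parabolic subgroups. Let me first set up the common framework. For each of the types not excluded by the hypothesis, the parabolic $P = N_G(Z) = Q \cdot C$ has a Levi decomposition whose unipotent radical is $Q$, and one has standard descriptions (going back to Steinberg and Chevalley): $Q$ is a special $p$-group with $Z(Q) = Z$, of exponent $p$ for $p$ odd and $4$ (with $Z = \Omega_1(Q)$ squared) for $p = 2$ in the allowed types; the quotient $\bar L := C/Q$ equals $Z \cdot L'$ (up to scalars) where $L'$ is a Lie-type group of smaller rank; and $Q/Z$ carries a non-degenerate bilinear form (symplectic for $p$ odd, quadratic for $p = 2$) induced by the commutator (respectively squaring) map in $Q$, with $L'$ acting on $(Q/Z,\text{form})$ through an embedding into the corresponding classical group on $Q/Z$.

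For part (i), I would exhibit the extension $\EL$ via a Heisenberg-Weil construction. Since $C$ centralizes $Z = Z(Q)$, the module $\QL$ is $C$-invariant; by Clifford theory it extends projectively to $C$, and the task is to show the cocycle $\alpha \in H^2(C/Q, \FF^\times)$ is trivial. The key observation is that $C$ embeds into the larger semidirect product $Q \rtimes \widetilde{\mathrm{CSp}}(Q/Z)$ (respectively $Q \rtimes \widetilde{GO}(Q/Z)$ for $p = 2$), where $\widetilde{\mathrm{CSp}}$ denotes the full group of automorphisms of $Q$ fixing $Z$ pointwise and preserving the form up to scalar. The classical Heisenberg-Weil theory, valid in any characteristic $\ell \nmid p$ (see \cite{GMST} for the mod-$\ell$ version), gives a genuine linear action of $\widetilde{\mathrm{CSp}}(Q/Z)$ on $\QL$ extending the $Q$-action; pulling back along $C/Q \hookrightarrow \widetilde{\mathrm{CSp}}(Q/Z)$ (or the orthogonal analogue) produces $\EL$. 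The cases excluded from the hypothesis are exactly those small-rank or twisted configurations in which either $Q$ fails to be special (the $p=2$ types $\ta B_2, B_n, C_n, F_4, \ta F_4$; the $p=3$ types $G_2, \ta G_2$), or the residual Schur cover of $C/Q$ obstructs the lift ($Sp_4(3)$, $SU_3(3)$), or $Q = Z$ and the statement is vacuous ($SL_2(p^f)$).

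For part (ii), with $p = 2$, the Frobenius-Schur type of $\EL$ is determined by the type of the quadratic form on $Q/Z$, via the standard identification of the Weil representation of $GO^+(Q/Z)$ as orthogonal and of $GO^-(Q/Z)$ as symplectic (in the $\ell \neq 2$ modular setting, using Lemma \ref{type} to descend from the characteristic-zero computation). I would verify, by reading off the Chevalley commutator relations in each allowed type, that the squaring map on $Q/Z$ yields a quadratic form of $+$ type in every allowed case except $\ta A_{2n}$, where the Frobenius twist $x \mapsto x^q$ acting on the Hermitian $\FQA$-structure flips the discriminant and produces a form of $-$ type; this then propagates to $\EL$ via the Weil construction.

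The main obstacle will be the uniform case-by-case verification of the form type for the exceptional and twisted Lie types ($E_6, E_7, E_8$, $\ta E_6$, $\ta D_n$, $\tb D_4$, and the $p=2$ cases that remain allowed such as $A_n$, $\ta A_n$, $D_n$, $E_i$), where the structure of $Q/Z$ and the action of $C/Q$ must be extracted from explicit Chevalley data or from tables in the literature. A secondary, more delicate point is to check that the Heisenberg-Weil cocycle vanishes rather than merely having bounded order: this requires that the extension $1 \to Z \to Q \to Q/Z \to 1$ together with the outer action of $C/QZ$ comes from the restriction to $C$ of a Chevalley group representation, not just an abstract central extension — which is automatic here since $C \leq G$ is already a subgroup of a Chevalley group and any irreducible constituent of a genuine $\FF G$-representation contributes a linear (not projective) model of $\QL$ inside $V|_C$.
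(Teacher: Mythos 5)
Your plan replaces the paper's construction by an abstract Heisenberg--Weil/cocycle argument, but the crucial step is exactly the one you do not prove. Condition $\sta$ is a statement that the obstruction class in $H^{2}(C/Q,\FF^{\times})$ for extending $\QL$ to $C$ vanishes, and your closing claim that this is ``automatic \dots since any irreducible constituent of a genuine $\FF G$-representation contributes a linear (not projective) model of $\QL$ inside $V|_{C}$'' is circular: Clifford theory only gives $\VL \simeq \QL \otimes A$ as a tensor product of two \emph{projective} representations of $C$ with inverse cocycles, so the existence of $V$ never forces the cocycle to be trivial. If that argument were valid, $\sta$ would hold for every finite group containing a normal special $p$-subgroup, the exclusions in the statement would be pointless, and the paper's delicate work (e.g.\ for $E_{7}(2)$) would be unnecessary. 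Likewise, the linearization of the action of the full isometry group of $Q/Z$ on $\QL$ is classical for $p$ odd, but for $p=2$ and for the twisted $\FF_{q}$-forms relevant here it is again a lifting problem of the same kind, and citing ``Heisenberg--Weil theory'' does not discharge it; nor is $C$ a semidirect product $Q\rtimes(C/Q)$ in general, so even a linear action of $Q\rtimes\mathrm{Sp}$ does not pull back to $C$ without further argument.

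The paper's route is different and avoids all of this: for each allowed type it exhibits an explicit small complex character $\chi$ of $G$ (Weil characters for $Sp_{2n}(q)$ and $SU_{n}(q)$, the doubly transitive permutation character for $SL_{n}(q)$, minimal-degree or rank~$3$ permutation characters for the spin groups, and L\"ubeck's minimal characters for the exceptional groups) and proves that the $\lam$-eigenspace of $Z$ on the corresponding module has dimension \emph{exactly} $E=\sqrt{|Q/Z|}$; that eigenspace is then itself the required $C$-module $\EL$, and reduction modulo $\ell$ finishes (i). Part (ii) is deduced from rationality and Schur-index information about $\chi$ (for instance, for $SU_{n}(q)$ the Schur index of $\zeta^{0}_{n,q}$ over $\RR$ is computed by induction through $Q{:}SU_{n-2}(q)$, with the Steinberg and cuspidal unipotent characters as base cases), not from the type of the quadratic form on $Q/Z$. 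Your part (ii) also has an unaddressed point even granting (i): $\EL^{*}$ is another extension of $\QL$, so $\EL$ need not be self-dual as a $C$-module, and one must either choose the extension carefully (as the paper does, inside a rational character) or argue separately before the form type of $Q/\Ker(\lam)$ can be transferred to $\EL$; the actual determination of that type across all allowed (including twisted and exceptional) types is moreover deferred in your sketch rather than carried out.
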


\begin{proof}
The assumption on $G$ implies that $Q$ is of special type. We will frequently aim to show that there
is a character $\chi \in \Irr(G)$ afforded by a $\QQ G$-module $W$ such that for any nontrivial 
$\lam \in \Irr(Z)$ the $\lam$-eigenspace $W_{\lam}$ of $Q$ on $W$ has dimension equal to 
$E := \sqrt{|Q/Z|}$. Since $W_{\lam}$ is clearly $C$-invariant, it then follows that $\EL$ can be 
taken to be the reduction modulo $\ell$ of the $C$-module $W_{\lam}$. Moreover, if $p = 2$ then,
since $\lam = \lam^{-1}$ and $W$ is rational, $W_{\lam}$ is of type $+$ and so is $\EL$. 
Denote $\Om^{*} := \Irr(Z) \setminus \{1_{Z}\}$ and $P := N_{G}(Z)$. 

1) Here we consider the symplectic groups $G = Sp_{2n}(q)$ with $q = p^{f}$. Then $p > 2$, 
$n \geq 2$, and $G$ has two irreducible Weil characters $\eta_{1}, \eta_{2}$ of degree 
$(q^{n}-1)/2$ which together afford all irreducible characters of $Z$. Restricting them to $C$, 
we obtain (i).   

Next let $G = SU_{n}(q)$ with $n \geq 3$. Then $G$ has a rational-valued irreducible Weil 
character $\zeta^{0}_{n,q}$ of degree $(q^{n}-(-1)^{n}q)/(q+1)$, cf. \cite{TZ2}. Since 
$(\zeta^{0}_{n,q}|_{Z},\lam)_{Z} = q^{n-2} = E$ for all $\lam \in \Om^{*}$, we arrive at (i). 
Assume in addition that $p = 2$. We will show that $\zeta^{0}_{n,q}$ has Schur 
index $1$ over $\RR$ if $n$ is even, and $2$ if $n$ is odd. The claim is clear when $n = 2$ as 
$\zeta^{0}_{2,q}$ is the Steinberg character and when $n = 3$ as $\zeta^{0}_{3,q}$ is the cuspidal 
unipotent character, see \cite{Ge}. When $n \geq 4$, one can check that $C \geq C' := Q:SU_{n-2}(q)$ and 
$\zeta^{0}_{n,q}|_{C'}$ contains $\zeta^{0}_{n-2,q}$ (inflated from $SU_{n-2}(q)$ to $C'$)
with multiplicity $1$, so we are done by induction hypothesis.

Assume $G = SL_{n}(q)$ with $n \geq 3$. Then the doubly transitive action of $G$ on $1$-spaces 
of its natural module affords the character $1+\tau$ with $\tau(1) = (q^{n}-q)/(q-1)$, and 
$\tau(t) = (q^{n-1}-q)/(q-1)$ for $1 \neq t \in Z$. It follows that $(\tau|_{Z},\lam)_{Z} = q^{n-2} = E$ 
for all $\lam \in \Om^{*}$, and so we are done. 

For the remaining Lie-type groups, any nontrivial irreducible (cross characteristic) character
of $G$ affords all $\lam \in \Om^{*}$ with equal multiplicity, since $P$ acts transitively
on $\Om^{*}$. Assume $G = Spin_{2n+1}(q)$ with $p > 2$. Then $G$ has a nontrivial irreducible 
character $\mu$ of degree $(q^{2n}-1)/(q^{2}-1)$, cf. \cite{TZ1}. Since 
$\mu(1) < 2q^{2n-3}(q-1) = 2E \cdot |\Om^{*}|$, we are done. Next, assume $G = Spin^{\eps}_{2n}(q)$
with $n \geq 4$. Then we choose $\chi$ to be an irreducible constituent of degree 
$(q^{n}-\eps)(q^{n-1}+\eps q)/(q^{2}-1)$ of the rank $3$ permutation character $\rho$ of $G$ acting 
on the singular $1$-spaces of its natural module, cf. \cite{ST}. Since $(\rho,\chi)_{G} = 1$, $\chi$ 
is rational. Notice that if $q > 2$ then $\chi(1) < 2q^{2n-4}(q-1) = 2E \cdot |\Om^{*}|$ and so we are 
done. Now assume that $q = 2$. Then $\rho(1) = (2^{n}-\eps)(2^{n-1}+\eps)$ and 
$\rho(t) = 3+4(2^{n-2}-\eps)(2^{n-3}+\eps)$ for $1 \neq t \in Z$. It follows that 
$(\rho|_{Z},\lam)_{Z} = 3 \cdot 2^{2n-4} = 3E$. Hence either $\chi$ or the other nontrivial 
constituent $\psi$ of $\rho$ affords $E\lam$, and so we are done again.

2) Now we handle the exceptional groups of Lie type. 
Consider the case $G = E_{7}(q)$. Then we choose $\chi$ to be the irreducible
character of smallest degree $q\Phi_{7}\Phi_{12}\Phi_{14}$, cf. \cite{Lu}, where $\Phi_{n}$ is the 
value of the $n^{\rm {th}}$ cyclotomic polynomial at $q$. Claim that $(\chi|_{Z},\lam)_{Z} = E$ for all 
$\lam \in \Om^{*}$. (Indeed, if $q > 2$ then $\chi(1) < 2q^{16}(q-1) = 2E \cdot |\Om^{*}|$, whence 
the claim. Assume $q = 2$ but the claim is false. Since $\chi(1) < 3q^{16}(q-1) = 3E \cdot |\Om^{*}|$,
we see that $W_{\lam}|_{Q}$ is the sum of two copies of the unique irreducible representation of 
degree $q^{16}$ of $Q$. Notice that $1 \neq t \in Z$ is $G$-conjugate to some element 
$t' \in Q \setminus Z$. Since $\chi(t) = \chi(t')$, it follows that $\chi$ has to afford some 
nontrivial linear characters of $Q$. The lengths of $P$-orbits on $\Irr(Q/Z)$ are given in 
\cite{Hof}. It follows that 
$141,986 = \chi(1) \geq 2^{17} + (2^{3}+1)(2^{5}+1)(2^{8}-1)$, a contradiction.) It remains to 
prove (ii) for even $q$. Arguing as above using $\chi(t) = \chi(t')$, one sees that $\chi$ affords
exactly one $P$-orbit, of length $(q^{3}+1)(q^{5}+1)(q^{8}-1)$, on nontrivial linear characters of 
$Q$. It was shown in \cite{Hof} that the subgroup $L' = \Omega^{+}_{12}(q)$ of the Levi subgroup $L$ 
in $P$ cannot act trivially on $C_{W}(Q)$. Let $\mu$ be an irreducible character of degree $> 1$ 
afforded by $L$ on $C_{W}(Q)$. Then $\chi$ is contained in the Harish-Chandra induction 
$R^{G}_{L}(\mu)$. Since $\chi$ is unipotent and the Harish-Chandra induction respects Lusztig series,
$\mu$ is unipotent. Notice that
$$\chi(1) = q^{16}(q-1)+(q^{3}+1)(q^{5}+1)(q^{8}-1)+(q^{6}-1)(q^{5}+q)/(q^{2}-1)+1$$
and either $\mu(1) = (q^{6}-1)(q^{5}+q)/(q^{2}-1)$ or $\mu(1) > q^{10}$ by \cite[Prop. 7.2]{TZ1}.
Hence $\mu(1) = (q^{6}-1)(q^{5}+q)/(q^{2}-1)$ and $(\chi|_{C},1_{C})_{C} = 1$. We have shown that
$\chi$ enters the permutation character $\rho = 1^{G}_{C}$ with multiplicity $1$, whence $\chi$ is 
rational and (ii) follows. 

Next we consider the case $G = E_{8}(q)$. Then we choose $\chi$ to be the irreducible
character of smallest degree $q\Phi_{4}^{2}\Phi_{8}\Phi_{12}\Phi_{20}\Phi_{24}$, cf. \cite{Lu}. 
Arguing as in the case of $E_{7}$, we see that $(\chi|_{Z},\lam)_{Z} = E$ for all $\lam \in \Om^{*}$,
whence (i) is proved. The assertion (ii) follows from a remark on \cite[p. 203]{KlL}.        

Now assume that $G = E_{6}^{\eps}(q)$, with $\eps = +$ for $E_{6}(q)$ and $\eps = -$ for 
$\ta E_{6}(q)$. Then we choose $\chi$ to be the irreducible character of smallest degree 
$q(q^{4}+1)(q^{6} + \eps q^{3}+1)$, cf. \cite{Lu}. Arguing as in the case of $E_{7}$, we see that 
$(\chi|_{Z},\lam)_{Z} = E$ for all $\lam \in \Om^{*}$, whence (i) is proved. Also, $\chi$ is an 
irreducible constituent of multiplicity $1$ of the permutation character $\rho = 1^{G}_{P_{1}}$, where 
$P_{1}$ is the parabolic subgroup of $G$ corresponding to the $A_{5}$-subdiagram of $E_{6}$ 
(before twisting if $\eps = -$). It follows that $\chi$ is rational. 

Assume $G = F_{4}(q)$. Then $p > 2$, and we choose $\chi$ to be the irreducible character of smallest 
degree $q^{8}+q^{4}+1$, cf. \cite{Lu}. Since $\chi(1) < 2q^{7}(q-1) = 2E \cdot |\Om^{*}|$, we are done.

Assume $G = \tb D_{4}(q)$. Then we choose $\chi$ to be the irreducible character of smallest 
degree $q(q^{4}-q^{2}+1)$, cf. \cite{Lu}. Since $\chi(1) < 2q^{4}(q-1) = 2E \cdot |\Om^{*}|$, (i) 
follows. Assume in addition that $q$ is even. Then the proof of \cite[Thm. 4.1]{MMT} shows that 
$(\chi|_{C},1_{C})_{C} = 1$. Thus $\chi$ enters the permutation character $\rho = 1^{G}_{C}$ with 
multiplicity $1$, whence $\chi$ is rational. 

Finally, assume $G = G_{2}(q)$ with $q \equiv \eps = \pm 1(\mod 3)$ and $q \geq 4$. Then we choose 
$\chi$ to be the irreducible character of smallest degree $q^{3}+\eps$, cf. \cite{Lu}. Since 
$\chi(1) < 2q^{2}(q-1) = 2E \cdot |\Om^{*}|$, (i) follows. Assume in addition that $q$ is even. 
The uniqueness of $\chi$ shows that $\chi$ is rational-valued. Also, since $\chi(1)$ is odd, $\chi$ has 
Schur index $1$ over $\QQ$, and so we are done.
\end{proof}

\subsection{Non-generic cases.}\label{nongeneric}
Since the unitary groups $SU_{2n+1}(q)$ with $q$ even fall out from the general scheme of arguments,
we handle them separately first. 

\begin{propo}\label{su}
{\sl Let $S \leq G/Z(G) \leq \Aut(S)$ for the simple group $S = PSU_{n}(q)$, where either 
$2 \leq n \leq 5$, or $q$ is even and $n$ is odd. Let $V$ be a faithful irreducible 
$\FF G$-representation in characteristic $\ell$ coprime to $q$ of dimension $>4$. Then $\Sym^{k}(V)$ is 
reducible for every $k \geq 4$.}
\end{propo}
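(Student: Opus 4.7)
The plan is to argue by contradiction: suppose $\Sym^{k}(V)$ is irreducible over $G$ for some $k \geq 4$. By Lemma~\ref{step1} we may assume $\ell > k \geq 4$ and that $\cs$ holds; since $\ell \neq p$, Theorem~\ref{defi} is not available, so the contradiction must come from the long-root-subgroup bounds of Propositions~\ref{key1}--\ref{key2}, combined with the lower bounds on $\dim(V)$ from \cite{T2}.

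For $n \geq 3$, fix a long root subgroup $Z$ of $L := G^{(\infty)}$, set $C := C_{G}(Z)$, $Q := O_{p}(C)$; then $Q$ is of special type with $|Z| = q$, $|Q/Z| = q^{2(n-2)}$, and $E := \sqrt{|Q/Z|} = q^{n-2}$. Except when $S = PSU_{3}(3)$, Proposition~\ref{root}(i) supplies $\sta$ for $C$, and Lemma~\ref{spec}(i)--(ii) shows that every nontrivial character of $Z$ appears in $V|_{Z}$ (the symplectic-Weil exceptions of Lemma~\ref{spec}(iii)--(iv) do not arise for unitary groups). If $p$ is odd, or if $p = 2$ and $n$ is even (so $\EL$ is of type $+$ by Proposition~\ref{root}(ii)), then Proposition~\ref{key1} or \ref{key2} yields $|Q/Z|^{2} < c \cdot (G:C)$ for an absolute constant $c$. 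Since $(L:C_{L}(Z)) \sim q^{2n-2}$ while $|Q/Z|^{2} = q^{4n-8}$, the extra factor $q^{2n-6}$ dominates the $|\Out(S)|$ contribution for $n \geq 4$ and the inequality is violated outside a finite list of $(n,q)$. The case $n = 3$ is tight (both quantities are of order $q^{4}$) and has to be refined using the explicit form of the $C$-submodule $F$ in Proposition~\ref{key1}(i), combined with the short list of $PSU_{3}(q)$-representations of dimension $>4$.

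The subcase $p = 2$ with $n$ odd (type $\ta A_{2m}$) is not immediately covered by the above, since Proposition~\ref{root}(ii) only provides $\EL$ of type $-$; here the plan is to adapt the proof of Proposition~\ref{key2} by replacing the input $1_{C} \hookrightarrow \Sym^{2}(\EL)$ (valid only for type $+$) by $1_{C} \hookrightarrow \WB(\EL)$ (valid for type $-$), yielding an embedding $\WB(A) \hookrightarrow \WB(V)|_{C}$, then bootstrapping between $\Sym^{k}$ and $\WB$ via Lemma~\ref{wedge} and Corollary~\ref{sym2}. For $n = 2$ the long-root machinery degenerates entirely ($Q = Z$ is abelian and $E = 1$), and a different approach is needed: using the known list of faithful cross-characteristic irreducible $SL_{2}(q)$-modules of dimension $d > 4$, one analyzes $V|_{U}$ for a Sylow $p$-subgroup $U$ directly, reads off the multiplicities of the $q - 1$ nontrivial $U$-characters from the Brauer character of $V$, and shows that $\Sym^{k}(V)|_{U}$ carries a $U$-fixed subspace too large to be compatible with Frobenius reciprocity against the Borel $B = UT$ of index $q+1$. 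The exceptional group $PSU_{3}(3)$ and the finite residual lists of $(n,q)$ with $n \in \{3,4,5\}$ and $q$ small are dispatched by direct inspection of the $\ell$-modular character tables in \cite{Atlas} and the GAP character table library. The main obstacle is precisely the combination of (a) the type-$-$ unitary modification, where one must reroute through exterior powers, and (b) the $n = 2$ degeneration, where the general machinery collapses entirely and each family of $SL_{2}(q)$-representations must be treated by hand.
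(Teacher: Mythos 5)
Your treatment of the case that is actually the heart of this proposition --- $q$ even and $n$ odd with $n \geq 7$ --- has a genuine gap, in two respects. First, the proposed repair of the type $-$ obstruction does not work as stated: the hypothesis only gives irreducibility of $\Sym^{k}(V)$, and an embedding $\WB(A) \hookrightarrow \WB(V)|_{C}$ tells you nothing about $\Sym^{k}(V)$; neither Lemma \ref{wedge} nor Corollary \ref{sym2} allows any passage between symmetric and exterior powers of $V$, so there is nothing to bootstrap. The correct workaround stays inside the symmetric power: by the Cauchy decomposition, $\SE(\EL \otimes A) \supset \wedge^{4}(\EL) \otimes \wedge^{4}(A)$, and since $\EL$ is of type $-$ (and $\dim(\EL) = q^{n-2} \geq 4$) one has $1_{C} \subset \wedge^{4}(\EL)$, so $\SE(V)|_{C}$ contains $\wedge^{4}(A)$. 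Second, and more seriously, even with this correction the Frobenius-reciprocity inequality only bites when $a := \dim(\VL)/\dim(\EL) \geq 4$; the low-multiplicity regime $a \leq 3$ survives, and it is occupied precisely by the unitary Weil modules. The paper's proof therefore has a substantial second half that your plan omits entirely: one first shows that $V|_{L}$ must be a Weil module (for $k \geq 6$ this uses a long-root subgroup of the subgroup $SU_{n-1}(q)$, where $n-1$ is even so the relevant extension has type $+$, and for small $a$ one invokes generation and eigenspace bounds from \cite{GS}, \cite{GT3} together with \cite[Thm. 2.7]{GMST}); then the Weil case is killed by lifting to the complex Weil character $\zeta^{i}_{n,q}$, restricting to $M = SU_{n-1}(q)$ and using the explicit branching rules of \cite{TZ2} to produce a small $M$-constituent of $\Sym^{k}(V)$, with a separate lifting argument for $(q,i) = (2,0)$ (extending $\zeta$ to $G$ and applying Lemma \ref{lift} and Corollary \ref{sym2}). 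Without a replacement for this Weil endgame your argument cannot close.

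For $2 \leq n \leq 5$ your route differs from the paper's but looks workable: the paper simply combines $\dim(V)^{4} < 24\,\ml(G)$ (from $\dim\SE(V) \leq \ml(G)$) with the Landazuri--Seitz--Zalesskii lower bounds on $\dim(V)$, which disposes of all of $n \leq 5$ at once --- including $n = 3$, which in your long-root approach is admittedly tight --- leaving only finitely many $(n,q)$ for direct checks; likewise your $SL_{2}(q)$ analysis via $U$-fixed vectors and the Borel subgroup can be made to work but is considerably more laborious than the crude degree bound, and it needs care since for Weil-type modules the $U$-fixed space of $V$ itself can be zero.
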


\begin{proof}
Assume the contrary: $\SM(V)$ is irreducible for some $m \geq 4$. Then it is clear that 
\begin{equation}\label{max}
  \dim(V)^{4} < 24\ml(G)~.
\end{equation}
We will also use the estimates $d := \dim(V) \geq \dl(S)$, $\ml(G) \leq \ml(\HS) \cdot |\Out(S)|$, 
and $|\Out(S)| \leq q(q+1)$, where $\HS$ is the universal cover of $S$. If 
$$(n,q) \in \{ (2,4), (2,5), (2,7), (2,9), (2,11), (3,3), (3,4), (4,2), (4,3), (5,2)\},$$
then using \cite{Atlas} and \cite{JLPW} it is straightforward to check that either (\ref{max})
cannot hold, or else $\SK(V)$ is reducible for all $k \geq 4$. The same applies to $(n,q) = (5,3)$,
where we use the bound $\ml(S) \leq \ml(GU_{5}(q)) = q(q+1)(q^{4}-1)(q^{5}+1)$ that follows from 
\cite{Noz}. Henceforth we will assume that $(n,q)$ is none of the above pairs.    

1) First let $S = PSL_{2}(q)$. If $2|q$, then $d = \dim(V) \geq q-1$ and $\ml(G) \leq q(q+1)/2$,
violating (\ref{max}) as $q \geq 8$. If $q$ is odd, then $d = \dim(V) \geq (q-1)/2$ and 
$\ml(G) \leq 2q(q+1)/9$, violating (\ref{max}) as $q \geq 13$. If $S = PSU_{3}(q)$, then 
$d \geq q(q-1)$ and $\ml(G) \leq 3q(q+1)(q^{2}-1)$, violating (\ref{max}) as $q \geq 5$.
If $S = PSU_{4}(q)$, then $d \geq (q^{2}+1)(q-1)$ and $\ml(G) \leq 4q(q+1)(q^{2}+1)(q^{3}+1)$, 
violating (\ref{max}) as $q \geq 4$. If $S = PSU_{5}(q)$, then 
$d \geq q(q^{2}+1)(q-1)$ and $\ml(G) \leq 5q(q+1)(q^{4}-1)(q^{5}+1)$, violating (\ref{max}) as 
$q \geq 4$. So we may assume $n \geq 7$ and $q$ is even.

2) Next we consider the case $n \geq 9$ and $m \geq 6$. Without loss we may assume $L = SU_{n}(q)$. 
Consider a long-root subgroup $Z_{1}$ of $M = SU_{n-1}(q)$, and its centralizer 
$C_{1} := C_{M}(Z_{1}) = Q_{1}.(SU_{n-3}(q).\ZZ_{q+1})$ in $M$, of index 
$q^{n-1}(q^{n}+1)(q^{n-1}-1)(q^{n-2}+1)/(q+1)$ in $L$. By Proposition \ref{root}, the condition 
$\sta$ and the assumptions of Proposition \ref{key2} hold for $C_{1}$ as $n-1$ is even. Now we will 
argue as in the proof of Proposition \ref{key2}, and denote $V_{\lam} = \EL \otimes A_{1}$, 
$E_{1} := \dim(\EL) = \sqrt{|Q_{1}/Z_{1}|} = q^{n-3}$, $a_{1} := \dim(A_{1})$, $v_{1} := \dim(V_{\nu})$ 
for some $\nu \in \OV \setminus \{\lam\}$.  

First assume that $m = 2k$. Then $\Sym^{2k}(V)$ has a $C_{1}$-submodule $F$ of dimension
$\begin{pmatrix}\dvl+2k-1\\2k \end{pmatrix}$. A simple submodule of $F$ will certainly extend to
$Z(G)C_{1}$. So by Frobenius' reciprocity, $\dim(\Sym^{2k}(V)) \leq \dim(F) \cdot (G:Z(G)C_{1})$. 
Notice that if $a_{1} \geq 21$ then $\begin{pmatrix}a_{1}+5\\6 \end{pmatrix} < a_{1}^{6}/360$ and so  
$$\frac{\dim(\Sym^{2k}(V))}{\dim(F)} = 
  \frac{\begin{pmatrix}d+2k-1\\2k \end{pmatrix}}
  {\begin{pmatrix}a_{1}+2k-1\\2k \end{pmatrix}} \geq  
  \frac{\begin{pmatrix}d+5\\6 \end{pmatrix}}
  {\begin{pmatrix}a_{1}+5\\6 \end{pmatrix}} > 
  \frac{(E_{1}a_{1})^{6} \cdot (d/E_{1}a_{1})^{6}}{2a_{1}^{6}} > E_{1}^{6}/2.$$
Next we consider the case $n = 2k+1$ is odd. Then $\Sym^{2k+1}(V)|_{C_{1}}$ contains the submodule 
$F' := \Sym^{2k}(A_{1}) \otimes V_{\nu}$. By Frobenius' reciprocity,
$\dim(\Sym^{2k+1}(V)) \leq \dim(F') \cdot (G:Z(G)C_{1})$. Again,
$$\frac{\dim(\Sym^{2k+1}(V))}{\dim(F')} = 
  \frac{\begin{pmatrix}d+2k\\2k+1 \end{pmatrix}}
  {\begin{pmatrix}a_{1}+2k-1\\2k \end{pmatrix} \cdot v_{1}} \geq 
  \frac{\begin{pmatrix}d+6\\7 \end{pmatrix}}
  {\begin{pmatrix}a_{1}+5\\6 \end{pmatrix} \cdot v_{1}},$$ 
which is at least 
$$\frac{(Ea_{1}+v_{1})^{7}/5040}{a_{1}^{6}v_{1}/360} = E_{1}^{6} \cdot \frac{1}{14} \cdot 
    \frac{(1+v_{1}/E_{1}a_{1})^{7}}{v_{1}/E_{1}a_{1}} > E_{1}^{6}/2$$
if $a_{1} \geq 21$ (since $(1+v_{1}/E_{1}a_{1})^{7} > 7v_{1}/E_{1}a_{1}$). We have shown that if 
$a_{1} \geq 21$, then 
$$q^{6n-18} = E_{1}^{6} < 2(G:Z(G)C_{1}) \leq 
  \frac{2q^{n}(q^{n}+1)(q^{n-1}-1)(q^{n-2}+1)(n,q+1)}{q+1}$$
which is impossible as $n \geq 9$. Thus $a_{1} \leq 20$, which means that $\dim(\VL) \leq 20E_{1}$ for 
all $\lam \in \OSV$. In this case, the $(-1)$-eigenspace for $1 \neq t \in Z$ on $V$ has dimension
$a_{1}(q/2)E_{1} \leq 10q^{n-2}$. Notice that $n$ conjugates of $t$ generates $L$ by \cite{GS}. Hence 
$d \leq 10nq^{n-2}$ by \cite[Lem. 3.2]{GT3}. This in turn implies by \cite[Thm. 2.7]{GMST} that 
every composition factor of $V|_{L}$ is trivial or a Weil module. Since $V$ is primitive, we conclude 
that $V|_{L}$ is in fact a Weil module.  

3) Here we consider the case $n \geq 9$ and $m \leq 5$ and return to the notation 
$V_{\lam} = \EL \otimes A$, $E := \dim(\EL) = \sqrt{|Q/Z|} = q^{n-2}$, $a := \dim(A)$, $v := \dim(V_{\nu})$ 
for some $\nu \in \OV \setminus \{\lam\}$, where $Z$ is a long-root subgroup in $L$, $C := C_{L}(Z)$ 
and $Q := O_{2}(C)$. 

First assume that $a \geq 4$. Then $\Sym^{4}(V)|_{C}$ contains the submodules 
$$\SE(\EL \otimes A) \supset \wedge^{4}(\EL) \otimes \wedge^{4}(A) \supset \wedge^{4}(A)$$ 
as $\EL$ is of type $-$ by Proposition \ref{key2}. Hence if $m = 4$, then by Frobenius' reciprocity,
$\dim(\Sym^{4}(V)) \leq \dim(\wedge^{4}(A)) \cdot (G:Z(G)C)$. Furthermore,   
$$\frac{\dim(\Sym^{4}(V))}{\dim(\wedge^{4}(A))} = 
  \begin{pmatrix}d+3\\4 \end{pmatrix}\left/ \right.\begin{pmatrix}a\\4 \end{pmatrix} > E^{4}.$$
Next we consider the case $m = 5$. Then $\Sym^{5}(V)|_{C}$ contains the submodules 
$\Sym^{4}(\EL \otimes A) \otimes V_{\nu} \supset \wedge^{4}(A) \otimes V_{\nu}$. 
By Frobenius' reciprocity, $\dim(\Sym^{5}(V)) \leq \dim(\wedge^{4}(A) \otimes V_{\nu}) \cdot (G:Z(G)C)$.
Furthermore,   
$$\frac{\dim(\Sym^{5}(V))}{\dim(\wedge^{4}(A) \otimes V_{\nu})} = 
  \begin{pmatrix}d+4\\5 \end{pmatrix}\left/ \right.
  \left(\begin{pmatrix}a\\4 \end{pmatrix} \cdot v\right) >
  \frac{(Ea+v)^{5}/120}{a^{4}v/24} = E^{4} \cdot \frac{(1+v/Ea)^{5}}{5v/Ea} > E^{4}.$$
It follows that in both cases,
$$q^{4n-8} = E^{4} < (G:Z(G)C) \leq q(q^{n}+1)(q^{n-1}-1),$$
a contradiction since $n \geq 9$.
 
Thus $1 \leq a \leq 3$. Arguing as in 2), we conclude that $V|_{L}$ is in fact a Weil module.   
  
4) Assume that $n = 7$ and $V|_{L}$ contains a non-Weil irreducible constituent. Then
by \cite[Thm. 2.7]{GMST}, $d \geq (q^{7}+1)(q^{6}-q^{2})/(q^{2}-1)(q+1)-1 > (5/3)q^{9}$, whereas
$\ml(G) \leq 7q^{28}$, contradicting (\ref{max}). As in 3), we can conclude that $V|_{L}$ is again a 
Weil module.

5) We have shown that $n \geq 7$ and $V|_{L}$ is a Weil module. Hence $V_{L}$ lifts to a  
complex Weil module of $L$, with character $\zeta := \zeta^{i}_{n,q}$ for some $i$, $0 \leq i \leq q$. 
These characters, together with their branching to $M = SU_{n-1}(q)$, are described in \cite{TZ2}. 

If $i = 0$, then $\zeta^{0}_{n,q}|_{M}$ contains $\alpha + \bar{\alpha}$, with 
$\alpha := \zeta^{1}_{n-1,q}$, whence
$$\Sym^{2k}(\zeta)|_{M} \supset \SK(\alpha) \otimes \SK(\bar{\alpha}) \supset 1_{M}.$$
If in addition $q > 2$, then in fact $\zeta^{0}_{n,q}|_{M}$ contains $\alpha + \bar{\alpha} + \beta$, 
with $\beta := \zeta^{2}_{n-1,q}$, whence
$$\Sym^{2k+1}(\zeta)|_{M} \supset \SK(\alpha) \otimes \SK(\bar{\alpha}) \otimes \beta \supset \beta.$$
If $i \neq 0$, then $\zeta^{i}_{n,q}|_{M}$ contains $\beta + \gam$, with 
$\beta := \zeta^{j}_{n-1,q}$ for some $j \neq i,0$, and $\gamma := \zeta^{0}_{n-1,q}$ is of type 
$+$, whence
$$\Sym^{2k}(\zeta)|_{M} \supset \Sym^{2k}(\gam) \supset 1_{M},~~~~
  \Sym^{2k+1}(\zeta)|_{M} \supset \Sym^{2k}(\gam) \otimes \beta \supset \beta.$$   
It follows that $\Sym^{2k}(V)$ has a composition factor of dimension
$\leq (G:Z(G)M) \leq q^{n}(q^{n}+1)(q+1)$. If $k \geq 2$, then the latter is less than 
$\dim(\Sym^{2k}(V))$ as $d \geq (q^{n}-q)/(q+1)$, whence $\Sym^{2k}(V)$ is reducible. So 
$m = 2k+1 \geq 5$. In this case, if $(q,i) \neq (2,0)$ then $\Sym^{2k+1}(V)$ has a composition factor 
of dimension $\leq (G:Z(G)M)\beta(1) \leq q^{2n-1}(q^{n}+1)$ which is again less than 
$\dim(\Sym^{2k+1}(V))$, whence $\Sym^{2k+1}(V)$ is reducible. 

Finally, assume that $(q,i) = (2,0)$. We aim to show that $V$ lifts to a complex module, in which case 
$\SM(V)$ is reducible for every $m \geq 4$ by Corollary \ref{sym2} as we have already shown that 
$\Sym^{4}(V)$ is reducible. Since $V|_{L}$ affords the character $\hat{\zeta}$, by Lemma \ref{lift} it 
suffices to show that $\zeta$ extends to $G$. Here $L = S = PSU_{n}(2)$ as $\zeta^{0}_{n,q}$ is trivial at 
$Z(SU_{n}(q))$; also $Z(G)S = Z(G) \times S$. Next, $\zeta$ is the unique irreducible character of $L$ of
degree $(2^{n}-2)/3$, so it is invariant under $\Aut(S) = PGU_{n}(2) \cdot 2$. Since $\zeta$ is real-valued 
and $L$ has odd index in $H := PGU_{n}(2)$, by \cite[Lem. 2.1]{NT}, it has a unique real-valued extension 
$\tilde{\zeta}$ to $H$. Now $\tilde{\zeta}$ is the unique irreducible, real-valued, character of degree 
$(2^{n}-2)/3$ of $H$, hence it is invariant under $H \cdot 2$. Observe that if $|G/(Z(G) \times S)| \leq 2$,
then $\zeta \otimes 1_{Z(G)}$ is $G$-invariant and so it extends to $G$ as required. In particular we 
are done if $(n,3) = 1$ as in this case $H = S$. Assume $3|n$ and $|G/Z(G)S| > 2$; in particular 
$\zeta(1) = d$ is coprime to $3$. By Proposition \ref{red} we may replace $G$ by its normalized version
$\GN$ and assume that $\det(\Phi(g)) = \pm 1$ for all $g \in G$, if $\Phi$ denotes the representation 
of $G$ on $V$. But $(d,3) = 1$ and $\Phi$ is faithful, so $O_{3}(Z(G)) = 1$. Let $K$ be the 
complete inverse image of $H = S:3$ in $G$; in particular, $|G/K| \leq 2$. Since $O_{3}(Z(G)) = 1$, we 
see that $K \simeq Z(G) \times H$. Now $\tilde{\zeta} \otimes 1_{Z(G)}$ is an extension of $\zeta$ to 
$K$ which is $G$-invariant, and so it extends to $G$ as required.       
\end{proof}

\begin{propo}\label{weil}
{\sl Let $S \leq G/Z(G) \leq \Aut(S)$ for the simple group $S = PSp_{2n}(q)$, with 
$n \geq 2$ and $q \equiv 3 (\mod 4)$ is odd. Let $V$ be a faithful irreducible 
$\FF G$-module in characteristic $\ell$ coprime to $q$ and assume that an irreducible constituent of 
$V|_{L}$ is a Weil module. Then $X^{k}(V)$ is reducible for every $k \geq 4$ and 
$X \in \{\Sym,\wedge\}$, except for $(n,q,X) = (2,3,\wedge)$.}
\end{propo}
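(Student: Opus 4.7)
The plan is to adapt the strategy used for $PSU_n(q)$ in Proposition \ref{su}: pin down $V|_L$ using Clifford theory and the action of $\Aut(S)$ on Weil modules, restrict $V$ to a natural subgroup $M$, exploit the branching of Weil characters to locate a small submodule in $X^k(V)|_M$, and derive a contradiction to irreducibility via Frobenius reciprocity and a dimension count. First, since an irreducible constituent of $V|_L$ is a Weil module and $\Aut(S)$ permutes the (at most four) Weil characters of $L$ --- which, since $q \equiv 3 \pmod 4$, come in genuine complex conjugate pairs $\zeta,\bar\zeta$ with $\zeta \neq \bar\zeta$ --- Clifford theory forces $V|_L$ to be a multiplicity-free direct sum of some of these Weil modules. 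In particular $\dim V \leq 2q^n$, and the set of Weil constituents appearing in $V|_L$ is closed under complex conjugation whenever $G$ contains the corresponding automorphism.

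Next, take $M = Sp_{2(n-1)}(q)$ embedded naturally in $L$. The key input is the standard branching rule \cite{TZ2}-style: the restriction of a Weil character of $L$ to $M$ contains every Weil character of $M$ as a constituent (with controlled multiplicities), and because $q\equiv 3 \pmod 4$ these come in conjugate pairs $\alpha \neq \bar\alpha$. Thus $V|_M$ contains $\alpha \oplus \bar\alpha$ as a submodule, together with at least one further nontrivial Weil piece $\gamma$ of $M$. Using $\bar\alpha \simeq \alpha^*$, for every $k\geq 2$ we obtain
\[
\Sym^{k}(V)|_M \supset \Sym^{\lfloor k/2\rfloor}(\alpha)\otimes \Sym^{\lfloor k/2\rfloor}(\bar\alpha) \supset 1_M,\qquad
\wedge^{k}(V)|_M \supset \wedge^{\lfloor k/2\rfloor}(\alpha)\otimes \wedge^{\lfloor k/2\rfloor}(\bar\alpha) \supset 1_M,
\]
for even $k$; for odd $k$ we tensor the above with $\gamma$ to produce a submodule of dimension at most $\dim(\gamma)\leq (q^{n-1}+1)/2$.

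By Frobenius reciprocity, $X^k(V)$ then has a simple $G$-submodule of dimension at most $(G:Z(G)M)$ in the even case and at most $(G:Z(G)M)\cdot (q^{n-1}+1)/2$ in the odd case. Since $\dim V \geq (q^n-1)/2$ and $|L|/|M| = q^{2n-1}(q^{2n}-1)$ while $|\Out(S)|\leq 2f\gcd(2,q-1)$, comparing $\dim \Sym^k(V)\sim (q^n/2)^k/k!$ and $\dim\wedge^k(V)\sim \binom{q^n/2}{k}$ against these bounds yields strict inequalities for all $k\geq 4$ whenever $n$ and $q$ are not too small. The residual pairs $(n,q)\in\{(2,3),(2,7),(3,3)\}$ are handled directly using the character tables in \cite{Atlas} and \cite{JLPW}, together with an explicit computation of symmetric and exterior powers on the four- or five-dimensional Weil modules. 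The only genuine exception is $(n,q,X)=(2,3,\wedge)$: the $5$-dimensional Weil module of $Sp_4(3)\cong 2{\cdot}U_4(2)$ factors through $SO_5(\FF)$, so $\wedge^4$ of it is isomorphic to the dual module and remains irreducible.

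The main obstacle is making the branching step sharp enough to guarantee that \emph{both} members of a conjugate pair $\alpha,\bar\alpha$ appear in $V|_M$ even when $V|_L$ is a single Weil module of $L$; this requires tracing through the explicit formulas in \cite{TZ2} for $\zeta^{\pm}_n|_{Sp_{2(n-1)}(q)}$ rather than appealing to a generic multiplicity bound. Once this is in place, the dimension comparison in the small residual cases is routine but delicate, because for $q=3$ the estimates $(q^n)^k/k! > q^{4n-1}$ only barely succeed when $k$ is close to $4$.
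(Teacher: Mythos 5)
Your central step does not survive a quantitative check, and this is where the real difficulty of the proposition lies. You restrict to $M = Sp_{2n-2}(q)$ and bound $\dim X^{k}(V)$ by $(G:Z(G)M)$ (even $k$), resp. $(G:Z(G)M)\cdot(q^{n-1}+1)/2$ (odd $k$), via Frobenius reciprocity. But $(L:M) = q^{2n-1}(q^{2n}-1)\approx q^{4n-1}$, while $\dim\Sym^{4}(V)\approx d^{4}/24 \approx q^{4n}/384$ for $d=(q^{n}\pm1)/2$; so the inequality $\dim\Sym^{4}(V)\le 2f\,(L:M)$ produces a contradiction only when roughly $q > 768f$. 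The untreated set is therefore not the three pairs $(2,3),(2,7),(3,3)$ you list but \emph{all} $(n,q)$ with $q$ below several hundred, for every $n$ — an infinite family that cannot be disposed of by \cite{Atlas}/\cite{JLPW}. The paper avoids this by restricting to a much larger subgroup: $C=[P,P]$ with $P=N_{L}(Z)$ the long-root parabolic, of index only about $(q^{2n}-1)/(q-1)$, and by locating inside $X^{4}(V)|_{C}$ a submodule $F\otimes V_{1}$ of dimension about $q^{2n-2}/2$, built from the eigenspace decomposition of $V$ under the special group $Q=O_{p}(P)$ and a relation $\alpha\beta\gamma=1_{Z}$ among characters of $Z$; that comparison succeeds for $p\ge 7$, with a separate exponent-$3$ argument for $q=3^{f}>3$. (A preliminary step you also gloss over: the paper first shows $G/Z(G)$ induces only field automorphisms — the diagonal automorphism is excluded because it fuses the two transvection classes on which the Weil character values differ — so $V$ lifts to characteristic $0$ by Lemma \ref{lift}, which is what legitimizes reducing to $k=4$ and working with ordinary characters.)

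Even if your index count were repaired, the case $q=3$, arbitrary $n$, would remain, and it is the heart of the matter: there the paper needs the character identities $\SB(\xi)=\SB(\xib)$, $\WB(\eta)=\WB(\etab)$, etc.\ from \cite{MT1} to show $\SD(\xi)$ and $\WD(\eta)$ contain $\xib$, resp.\ $\etab$, and then a Deligne--Lusztig argument (computing which semisimple character degrees of $Sp_{2n}(3)$ are prime to $3$, using Zsigmondy primes and an analysis of centralizers in the dual group $SO_{2n+1}(3)$) to show $\SE(\eta)$ and $\WE(\xi)$ are reducible for all $n\ge 7$, with $n\le 6$ checked ad hoc. Nothing in your plan substitutes for this. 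Two smaller points: in the modular setting your claim that $V|_{M}$ literally contains $\alpha\oplus\bar\alpha$ as a submodule needs justification (restrictions need not be semisimple, and you must control the $\ell$-modular reductions of the Weil constituents), which is one more reason the paper lifts to characteristic $0$ first; and your explanation of the exception $(2,3,\wedge)$ is off — the $5$-dimensional Weil characters of $Sp_{4}(3)$ are not real, so the module does not factor through $SO_{5}(\FF)$; the irreducibility of $\wedge^{4}$ there is simply $\wedge^{4}(V)\simeq V^{*}\otimes\det \simeq V^{*}$ (and $\wedge^{4}$ of the $4$-dimensional Weil module is the trivial, hence irreducible, module).
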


\begin{proof}
Assume the contrary. The case $(n,q) = (2,3)$ can be checked directly, so we will assume that 
$(n,q) \neq (2,3)$. Now $V$ is primitive, hence $V|_{L}$ is a Weil module of dimension 
$(q^{n} \pm 1)/2$. In particular, $V|_{L}$ lifts to a complex Weil module $W$. Notice that, since 
$q \equiv 3 (\mod 4)$, any field automorphism of $L$ has odd order, and so it stabilizes each of the 
two complex Weil modules of dimension $d$. Since the character of $V$ takes different values at 
the two $L$-classes of transvections, cf. \cite{TZ2}, and they are fused under the outer 
diagonal automorphism $\gam$ of $L$, we see that $G/Z(G)$ cannot induce $\gam$. Thus $G/Z(G)S$ 
is cyclic and induces only field automorphisms of $S$. We conclude that $W$ extends to a complex 
module of $G$. It follows by Lemma \ref{lift} that $V$ lifts to a complex module which without loss
we will denote also by $V$. So by Corollary \ref{sym2} and Lemma \ref{wedge}, 
$X^{4}(V)$ must be irreducible. Consider a 
long-root subgroup $Z$ in $L$ and let $P := N_{L}(Z)$, $C := [P,P]$, $Q := O_{p}(P)$. Write 
$\dim(V) = (q^{n}+\eps)/2$ for some $\eps = \pm 1$. As we have shown, $(G:Z(G)L) \leq f$
if $q = p^{f}$ for a prime $p$. Notice that $V|_{L} = V_{1} \oplus \sum_{\lam \in \OSV}\EL$ and 
$V_{1}$ is actually a $P/Q$-module of dimension $(q^{n-1}+\eps)/2$.
 
1) First we consider the case $(q,3) = 1$. We may identify $\OSV$ with the subset 
$A := \{x^{2} \mid x \in \FQB\}$. Claim that there are $a,b \in A$ such that $a+b+1 = 0$ and 
$(a,b) \neq (1,1)$. Indeed, the two subsets $\{1+x^{2} \mid x \in \FQ\}$ and $\{-y^{2} \mid y \in \FQ\}$
of $\FQ$ both have cardinality $(q+1)/2$, hence they intersect, i.e. $1 + x^{2}+ y^{2} = 0$ for some 
$x,y \in \FQ$. Since $q \equiv 3 (\mod 4)$, $xy \neq 0$, so $a := x^{2} \in A$, $b := y^{2} \in B$. 
Also, $(a,b) \neq (1,1)$ as $(q,3) = 1$. In the character language, this means that there are
$\alpha, \beta,\gam \in \OSV$ such that $\alpha\beta\gam = 1_{Z}$ and $\alpha \neq \beta$.   

Assume $\gam \notin \{\alpha,\beta\}$. Then 
$E_{\alpha} \otimes E_{\beta} \otimes E_{\gam}$ affords the $Q$-character $q^{n-1}\rho_{Q/Z}$, where
$\rho_{Y}$ denotes the regular character of a finite group $Y$. In particular, the subspace $F$ of 
$Q$-fixed points on $E_{\alpha} \otimes E_{\beta} \otimes E_{\gam}$ has dimension $q^{n-1}$, and it is
stabilized by $C$. Next assume that $\gam = \beta$. Then $E_{\alpha} \otimes X^{2}(E_{\beta})$ affords the
$Q$-character $(q^{n-1} \pm 1)/2 \cdot \rho_{Q/Z}$. In particular, the subspace $F$ of $Q$-fixed points on 
$E_{\alpha} \otimes X^{2}(E_{\beta})$ has dimension $(q^{n-1}+1)/2$, and it is stabilized by $C$.
Notice that $X^{4}(V)|_{C}$ contains $E_{\alpha} \otimes E_{\beta} \otimes E_{\gam} \otimes V_{1}$, 
resp. $E_{\alpha} \otimes X^{2}(E_{\beta}) \otimes V_{1}$. Thus we have shown that $X^{4}(V)|_{C}$ has 
a submodule $F \otimes V_{1}$ of dimension $\leq q^{n-1}(q^{n-1}+\eps)/2$. In fact, $F \otimes V_{1}$ is
a $C/Q$-module, so a simple submodule of it extends to $P$, as $P/Q = (C/Q) \times \ZZ_{q-1}$. 
Hence by Frobenius' reciprocity,
$$\dim(X^{4}(V)) \leq \dim(F \otimes V_{1}) \cdot (G:Z(G)P) \leq \frac{q^{n-1}(q^{n-1}+\eps)(q^{2n}-1)f}
  {2(q-1)},$$
which is a contradiction since $n \geq 2$ and $p \geq 7$.      
 
2) Next we consider the case $q = 3^{f} > 3$. Fix $\lam \in \OSV$. Since $Q/\Ker(\lam)$ has exponent 
$3$, direct calculation shows that the subspace $F$ of $Q$-fixed points on $X^{3}(\EL)$ has 
dimension $(q^{n-1} \pm 1)/2$, and it is stabilized by $C$. As above, $X^{4}(V)|_{C}$ contains 
$X^{3}(\EL) \otimes V_{1}$ and so it has a $C$-submodule $F \otimes V_{1}$, some simple submodule of 
which extends to $P$. Hence by Frobenius' reciprocity,
$$\dim(X^{4}(V)) \leq \dim(F \otimes V_{1}) \cdot (G:Z(G)P) \leq 
  \frac{(q^{n-1}+1)(q^{n-1}+\eps)(q^{2n}-1)f}{4(q-1)},$$
which is a contradiction since $n \geq 2$ and $q \geq 27$.

3) Now we may assume that $q = 3$ and $G = Sp_{2n}(3)$. Recall that $G$ has four Weil characters,
$\xi$, $\xib$ of degree $(3^{n}+1)/2$, and $\eta$, $\etab$ of degree $(3^{n}-1)/2$. By 
\cite[Prop. 5.4]{MT1}, $\SB(\xi) = \SB(\xib)$, $\WB(\eta) = \WB(\etab)$, $\SB(\eta)$, $\SB(\etab)$, 
$\WB(\xi)$, $\WB(\xib)$, $\xi\eta = \xib\etab$, $\xi\etab \neq \xib\eta$ are all irreducible. Next, 
$$1 = (\xi\eta,\xib\etab) = (\xi^{2},\etab^{2}) = (\SB(\xi) + \WB(\xi),\SB(\etab) + \WB(\etab)),$$
whence $\WB(\xi) = \SB(\etab)$. On the other hand,
$0 = (\xi\etab,\xib\eta) = (\xi^{2},\eta^{2})$, so 
$$\SB(\etab) = \WB(\xi) \neq \SB(\eta) = \WB(\xib).$$  
It follows that 
$$\begin{array}{c}(\SB(\xi) \otimes \xi,\xib) = (\SB(\xi),\SB(\xib)+\WB(\xib)) = 1,\\
  (\WB(\xi) \otimes \xi,\xib) = (\WB(\xi),\SB(\xib)+\WB(\xib)) = 0.\end{array}$$
Notice that 
$$\SB(\xi) \otimes \xi = \SD(\xi) + {\mathbb S}_{2,1}(\xi),~~~
  \WB(\xi) \otimes \xi = \WD(\xi) + {\mathbb S}_{2,1}(\xi),$$
where ${\mathbb S}_{2,1}$ is a Schur functor, cf. \cite{FH}. Consequently, $\SD(\xi)$ contains $\xib$ 
with multiplicity $1$ and so it is reducible. Similarly, 
$$\begin{array}{c}(\WB(\eta) \otimes \eta,\etab) = (\WB(\eta),\WB(\etab)+\SB(\etab)) = 1,\\
  (\SB(\eta) \otimes \eta,\etab) = (\SB(\eta),\WB(\etab)+\SB(\etab)) = 0,\end{array}$$
whence $\WD(\eta)$ contains $\etab$ with multiplicity $1$ and so it is reducible. 

4) Finally, we will use the Deligne-Lusztig theory, cf. \cite{DM}, to show that $\SE(\eta)$ and 
$\WE(\xi)$ are reducible. If $n = 3$ then this can be verified using \cite{Atlas}. Notice that 
$$\begin{array}{c}D_{1} := \dim(\SE(\eta)) = (3^{2n}-1)(3^{n-1}+1)(3^{n}+5)/128,\\
  D_{2} := \dim(\WE(\xi)) = (3^{2n}-1)(3^{n-1}-1)(3^{n}-5)/128\end{array}$$
are both coprime to $3$, so it suffices to show that they are not equal to the degree of any semisimple
character of $G$. If $n = 4$, then $43|D_{1}$ and $19|D_{2}$ but $(|G|, 43 \cdot 19) = 1$. If $n = 5$, 
then $31|D_{1}$ and $17|D_{2}$ but $(|G|, 31 \cdot 17) = 1$. If $n = 6$, then $367|D_{1}$ and 
$181|D_{2}$ but $(|G|, 367 \cdot 181) = 1$. So we may assume that $n \geq 7$. 

Consider the dual group $G^{*} = SO_{2n+1}(3)$ and its natural module $N = \FF_{3}^{2n+1}$. We need to
show that there is no semisimple element $s \in G^{*}$ such that $E := (G^{*}:C_{G^{*}}(s))_{3'}$ 
equals to $D_{1}$ or $D_{2}$. Assume the contrary. For each $m \geq 3$, by \cite{Zs} there is a prime 
$\ell_{m}$ that divides $3^{m}-1$ but not $\prod^{m-1}_{i=1}(3^{i}-1)$. 

Claim that if $C:=C_{G^{*}}(s)$ preserves any orthogonal decomposition $N = N^{1} \perp N^{2}$ with 
$\dim(N^{1}) \geq \dim(N^{2}) \geq 1$ then $\dim(N^{2}) \leq 3$. Otherwise $\ell_{n-1}\ell_{2n-2}$ 
divides $E$, but $(\ell_{n-1},D_{1}) = (\ell_{2n-2},D_{2}) = 1$, a contradiction. Furthermore,
if $\dim(N^{2}) = 3$, then $E$ is divisible by $(3^{2n}-1)(3^{n-1} \pm 1)/16$, whence $E \neq D_{2}$ as 
$3^{n} \not\equiv 5 (\mod 8)$.         

5) Observe that the eigenspaces of $s$ on $N \otimes_{\FF_{3}} \overline{\FF}_{3}$ gives rise to 
a $C$-invariant orthogonal decomposition $N = N_{+} \perp N_{1} \ldots \perp N_{t}$, where 
$N_{+} := \Ker(s-1)$ has odd dimension. Furthermore, if $1 \leq i \leq t$ then no eigenvalues of $s$ on 
$N_{i}$ are equal to $1$, and $C_{GO(N_{i})}(s) = GL_{a}(3^{b})$ or $GU_{a}(3^{b})$ with 
$\dim(N_{i}) = 2ab$ if $s+1$ is non-degenerate on $N_{i}$. We label $\Ker(s+1)$ by $N_{t}$ if it is 
nonzero. The above claim implies that one of the following two cases must occur.

Case 1: $t = 2$, $\dim(N_{+}) = 1$, $\{\dim(N_{1}),\dim(N_{2})\} = \{2,2n-2\}$. Notice that 
$\dim(\Ker(s+1)) \leq 2n-4$ as otherwise $C \geq SO^{\pm}_{2n-2}(3)$ and so $E$ divides 
$(3^{2n}-1)(3^{n-1} \pm 1)$, a contradiction. Hence if we label $N_{1}$ to have dimension $2n-2$ then
$s|_{N_{1}}$ is a semisimple element $s_{1}$ with no eigenvalue equal to $\pm 1$. Also by our claim in 4) 
(taking $N^{2} = N_{+} \oplus N_{2}$), we see that $E = D_{1}$, $N_{1}$ is of type $+$, and $E_{1}$ 
divides $(3^{n}+5)/8$ and is at least $(3^{n}+5)/128 > 2$, where 
$E_{1}:= (SO(N_{1}):C_{SO(N_{1})}(s_{1}))_{3'}$. Since $SO(N_{1}) = SO^{+}_{2n-2}(3)$ is self-dual, 
it follows that it has an irreducible character of degree $E_{1}$, contradicting \cite{TZ1}.    

Case 2: $t = 1$, and $\dim(N_{+}) = 1$, $3$, $2n-1$. If $\dim(N_{+}) = 3$ then we can argue as 
in Case 1 to get a contradiction. If $\dim(N_{+}) = 2n-1$, then $C \geq SO_{2n-1}(3)$ and so $E$ divides 
$3^{2n}-1$, a contradiction. Thus $\dim(N_{+}) = 1$, $\dim(N_{1}) = 2n$, $C = GU_{a}(3^{b})$ or 
$GL_{a}(3^{b})$ with $n = ab$. In the former case $E$ is coprime to $\ell_{2n}$, but $\ell_{2n}$ divides
both $D_{1}$ and $D_{2}$, a contradiction. In the latter case 
$E > 3^{n(n+1)/2} \geq 3^{4n} > \max\{D_{1}, D_{2}\}$, again a contradiction.
\end{proof}

\begin{remar}\label{rajan}
{\em The identities $\SB(\xi) = \SB(\xib)$ and $\WB(\eta) = \WB(\etab)$ mentioned in the proof of Proposition
\ref{weil} (there are similar examples with $Sp_{2n}(5)$ as well, cf. \cite{MT1}), show that (irreducible) 
representations of finite quasisimple groups cannot be recovered from their symmetric square, resp. 
exterior square, as opposed to complex simple Lie groups, see \cite{Ra2}.}
\end{remar}

\begin{lemma}\label{even}
{\sl Let $S \leq G/Z(G) \leq \Aut(S)$, where either $S = PSp_{2n}(q)$ with $n \geq 2$ and $q$ is even, 
or $S \in \{ F_{4}(q), \ta F_{4}(q)'\}$ with $q = 2^{f}$, 
or $S = \ta B_{2}(q)$ with $q = 2^{f} \geq 8$, or $S \in \{G_{2}(q), \ta G_{2}(q)'\}$ with $q = 3^{f}$.
Let $V$ be a faithful irreducible $\FF G$-representation in characteristic $\ell$ coprime 
to $q$. Then $\Sym^{k}(V)$ is reducible for every $k \geq 4$.}
\end{lemma}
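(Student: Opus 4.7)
The plan is to adapt the argument of Proposition~\ref{su}. Assume for contradiction that $\Sym^{m}(V)$ is irreducible over $G$ for some $m \geq 4$. Since $\dim(\Sym^{m}(V)) \geq \binom{d+3}{4} > d^{4}/24$ for $d := \dim(V)$, irreducibility forces the inequality
$$d^{4} < 24\,\ml(\hat{S})\cdot|\Out(S)|,$$
where $\hat{S}$ denotes the universal cover of $S$. I would then invoke the Landazuri--Seitz--Zalesskii lower bound $d \geq \dl(S)$ and its refinements recorded in \cite{T2}, together with the upper bound on $\ml(\hat{S})$ from \cite{Se2} and the explicit low-dimensional character-degree data in \cite{Lu} for the exceptional types.

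I would proceed family by family, showing that this inequality fails outside a short list of parameters. For $S = PSp_{2n}(q)$ with $q = 2^{f}$ and $n \geq 2$ it excludes all but a handful of small pairs $(n,q)$. For $S \in \{F_{4}(q), \ta F_{4}(q)'\}$ with $q$ even, $S = \ta B_{2}(q)$ with $q = 2^{f}\geq 8$, and $S \in \{G_{2}(q), \ta G_{2}(q)'\}$ with $q = 3^{f}$, the gap between $\dl(S)^{4}$ and $24\,\ml(\hat{S})|\Out(S)|$ is comfortable, so only the very smallest $q$ in each family can survive. In each situation, once the inequality is violated, one obtains a direct contradiction with the assumed irreducibility of $\Sym^{m}(V)$.

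The residual small groups --- principally $Sp_{6}(2)$, $Sp_{4}(4)$, $F_{4}(2)$, $\ta F_{4}(2)'$ (the Tits group), $G_{2}(3)$, and $\ta G_{2}(3)' \simeq PSL_{2}(8)$ --- I would handle by hand, using \cite{Atlas} and the modular tables of \cite{JLPW}, and invoking Corollary~\ref{sym2}(i) to collapse the range of $m$ to $m = 4$ whenever $\ell = 0$ or $\ell$ is large compared with $d$. The main obstacle will be precisely these residual cases: several of the listed groups carry a small faithful irreducible module (an analogue of a Weil or basic-spin representation) for which the generic dimension inequality above is too weak. For such a module I would argue instead as in Proposition~\ref{weil}: restrict $\Sym^{k}(V)$ to a suitable parabolic or reductive subgroup $C$ of $G$, exhibit an explicit small $C$-submodule (typically built from eigenspaces of a long-root subgroup together with symmetric/exterior powers of low-dimensional pieces), and invoke Frobenius reciprocity to force reducibility. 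This case-by-case character-theoretic bookkeeping, applied to each residual small group, constitutes the technical heart of the proof.
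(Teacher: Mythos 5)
Your treatment of the exceptional families ($F_{4}$, $\ta F_{4}$, $\ta B_{2}$, $G_{2}$, $\ta G_{2}$) is essentially the paper's argument: the crude bound $d^{4} < 24\,\ml(G)$ kills everything except a few tiny groups and the case $(F_{4}(2), d=52)$, which are then settled by tables and a type-$+$ observation. The genuine gap is in the symplectic family. Your claim that the inequality "excludes all but a handful of small pairs $(n,q)$" for $S = PSp_{2n}(q)$, $q$ even, is false: here $d$ is only bounded below by $\dl(S) \sim q^{2n-2}$ (unitary-Weil-type modules really have such dimensions), while $\ml(G) \sim q^{n^{2}+1}$, so for all large $n$ (already around $n \geq 6$, and for every $n \geq 7$) the inequality $d^{4} < 24\,\ml(G)$ is perfectly consistent and yields no contradiction. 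Thus infinitely many groups survive your first step, most of them far outside the range of \cite{Atlas} and \cite{JLPW} (even $(n,q)=(5,2)$, i.e. $Sp_{10}(2)$, is not in the modular Atlas and needs the classification of small modules from \cite{GT1} together with a type-$+$ argument, not a table lookup).

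Your fallback — argue as in Proposition \ref{weil} by restricting to the centralizer of a long-root subgroup of $G$ and exhibiting a small eigenspace-built submodule — does not repair this, because the groups of Lemma \ref{even} are exactly those excluded from Proposition \ref{root}: for types $B_{n},C_{n}$ in characteristic $2$ (and $F_{4}$, $\ta F_{4}$, $\ta B_{2}$, $G_{2}$, $\ta G_{2}$ in their defining characteristics) the long-root set-up needed for the condition $\sta$ and Propositions \ref{key1}/\ref{key2} fails inside $G$ itself; that is precisely why this lemma is a separate "non-generic" case. The missing idea in the paper's proof is to leave $G$ and use the natural characteristic-$2$ embedding $M = SO^{-}_{2n}(q) \leq Sp_{2n}(q)$: for $n \geq 6$ one takes a long-root subgroup $Z$ of $M$ and $C = C_{M}(Z) = Q.(SL_{2}(q)\times SO^{-}_{2n-4}(q))$, checks that the hypotheses of Proposition \ref{key2} hold for this $C$ (with $\dim(V) > (5/2)\dim(\EL)$), and runs the symmetric-power estimate to get $q^{8n-16} < (9/2)(G:Z(G)C)$, a contradiction. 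Without this (or some substitute subgroup giving a comparably small submodule of $\SK(V)$), your proposal does not dispose of $PSp_{2n}(2^{f})$ for large $n$.
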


\begin{proof}
Assume the contrary. Then we can apply (\ref{max}) to $G$ and $V$.

1) First we consider the case $S = PSp_{2n}(q)$. Notice that 
$d := \dim(V) \geq \dl(S) = (q^{n}-1)(q^{n}-q)/2(q+1)$, cf. \cite{GT1}, and 
$\ml(G) \leq q\prod^{n}_{i=1}(q^{2i}-1)/(q-1)^{n}$ as $|\Out(S)| \leq q$. Hence (\ref{max}) cannot
hold for $2 \leq n \leq 5$ and $q \geq 4$. The cases $(n,q) = (2,2)$, $(3,2)$, or $(4,2)$ can be 
checked directly.

Assume $(n,q) = (5,2)$; in particular $\Aut(S) = S$ and so we may assume $G = S$. Then (\ref{max}) 
implies that $d \leq 365$. By \cite[Thm. 1.1]{GT1}, $V$ is a unitary-Weil module. We restrict $V$ to 
$P_{1} = Q_{1}.Sp_{8}(2)$, the stabilizer of a $1$-space in the natural module of $G$. Notice that
any complex unitary-Weil character of $G$ is real-valued, and by \cite[Prop. 7.4]{GT1} its restriction 
to $P_{1}$ contains some complex unitary-Weil character of $Sp_{8}(2)$ (which is of type $+$ by 
\cite{Atlas}) with multiplicity $1$. It follows that any complex unitary-Weil module of $G$ is of type 
$+$. But $V$ is a composition factor of multiplicity $1$ in a complex unitary-Weil module, hence $V$ 
is of type $+$, a contradiction by Lemma \ref{so-sym}(i).

Thus we may assume $n \geq 6$. Since $\Mult(S) = 1$, $S \lhd G$. Consider the subgroup 
$M = SO^{-}_{2n}(q)$ in $S$, its long-root subgroup $Z$ and 
$C := C_{M}(Z) = Q.(SL_{2}(q) \times SO^{-}_{2n-4}(q))$. Then the assumptions of Proposition
\ref{key2} hold for $C$, and moreover $\dim(V) > (5/2) \dim(\EL)$ and $|\Out(S)| \leq q/2$. Hence 
the proof of Proposition \ref{key2} implies that
$$q^{8n-16} < (9/2) \cdot (G:Z(G)C) \leq (9/8) \cdot 
  q^{n+1}(q^{2n}-1)(q^{2n-2}-1)(q^{n-2}-1)/(q^{2}-1),$$
a contradiction as $n \geq 6$.  

2) Assume $S = F_{4}(q)$. If $q \geq 4$, then $d \geq \dl(S) = q^{2}(q^{3}-1)(q^{6}-q^{3}+q^{2}-1)/2$ 
by \cite{T2}, and $\ml(G) \leq q^{27}$ as $|\Out(S)| \leq q$, contradicting (\ref{max}). If $q = 2$, 
then (\ref{max}) implies $d \leq 162$, whence $d = 52$ and $V|_{L}$ is of type $+$ by \cite{HM}, and so 
$\SK(V)$ is reducible. Assume $S = \ta F_{4}(q)$ and $q \geq 8$. Then 
$d \geq \dl(S) = (q^{4}+q^{3}+q)(q-1)\sqrt{q/2}$ by \cite{T2}, and $\ml(G) \leq q^{14}/2$ as 
$|\Out(S)| \leq q/2$, contradicting (\ref{max}). Assume $S = G_{2}(q)$ and $q \geq 9$. Then 
$d \geq \dl(S) = q^{4}+q^{2}$, cf. \cite{T2}, and $\ml(G) < q^{8}$ as $|\Out(S)| < q$, contrary to 
(\ref{max}). Assume $S = \ta G_{2}(q)$ and $q \geq 27$. Then $d \geq \dl(S) = q(q-1)$ and 
$\ml(G) \leq q^{9/2}/9$ as $|\Out(S)| \leq q/9$, contrary to (\ref{max}). Assume $S = \ta B_{2}(q)$ and 
$q \geq 32$. Then $d \geq \dl(S) = (q-1)\sqrt{q/2}$ and $\ml(G) \leq q^{7/2}/2$ as $|\Out(S)| < q/2$, 
contrary to (\ref{max}). The cases $S = \ta F_{4}(2)'$, $G_{2}(3)$, $\ta G_{2}(3)'$, or $\ta B_{2}(8)$ 
can be checked directly.
\end{proof}

\subsection{Generic cases.} 
In view of the results of \S\ref{nongeneric}, we may now assume that $C_{L}(Z)$ satisfies the 
assumptions of Propositions \ref{key1} and \ref{key2} for a long-root subgroup $Z$ of $G$. Assume that
$\SK(V)$ is irreducible for some $k \geq 4$ and $d := \dim(V) > 4$. We will then apply (\ref{max}) and 
Propositions \ref{key1} and \ref{key2} to $G$ and $V$ to get a contradiction. 

Suppose that $S = PSL_{n}(q)$, $n \geq 3$. If $(n,q) = (3,2)$, $(3,3)$, $(4,2)$, $(4,3)$, 
or $(5,2)$, then it is easy to check that $\SK(V)$ is reducible for $k \geq 4$. Assume $(n,q) = (3,4)$. 
Then again it is easy to check that $\SK(V)$ is reducible for $k \geq 4$, except possibly when $d = 6$ and
$L=6 \cdot S$. In this case $V$ lifts to a complex module $\VC$, and $\SE(\VC)|_{L}$ contains irreducible 
constituents of dimensions $21$ and $84$, so $\SK(V)$ is reducible for $k \geq 4$. In all other cases, 
$d \geq \dl(S) \geq (q^{n}-q)/(q-1)-1$, and $\ml(G) \leq q^{(n^{2}+3)/2}$ as 
$|\Out(S)| \leq q(q-1)$. Hence (\ref{max}) cannot hold for $n \leq 5$. Assume $n \geq 6$. In the notation
of Proposition \ref{key2} we have $\dim(\EL) = q^{n-2} < d/2$. Hence Propositions \ref{key1} and
\ref{key2} imply 
$$q^{4n-8} = (Q:Z)^{2} < (15/2) \cdot (G:Z(G)C) \leq (15/2) \cdot (q^{n}-1)(q^{n}-q),$$
a contradiction.

Suppose that $S = PSU_{n}(q)$, $n \geq 6$, $(n,q) \neq (6,2)$, and $q$ is odd if $n$ is odd. Then 
$$q^{4n-8} = (Q:Z)^{2} < 24 \cdot (G:Z(G)C) \leq 24 \cdot (q^{n}-(-1)^{n})(q^{n}+(-1)^{n}q),$$
a contradiction. 

Suppose that $S = PSp_{2n}(q)$, $n \geq 2$, $q$ is odd, $(n,q) \neq (2,3)$, $(2,5)$ 
(and $q \equiv 1 (\mod 4)$ if $V|_{L}$ involves a Weil module). Then 
$$q^{4n-4} = (Q:Z)^{2} < (3/2) \cdot (G:Z(G)C) \leq (3/4) \cdot (q^{2n}-1)q,$$
a contradiction if $n \geq 3$. If $n = 2$, then $q \geq 7$, $d \geq \dl(S) \geq (q^{2}-1)/2$, and 
$\ml(G) \leq 4q(q^{2}-1)(q^{4}-1)/9(q-1)^{2}$ as $|\Out(S)| \leq 4q/9$, whence (\ref{max}) cannot hold.

Suppose that $S = \Om_{2n+1}(q)$, $n \geq 3$, $q$ is odd, $(n,q) \neq (3,3)$. Then 
$$q^{8n-12} = (Q:Z)^{2} < (3/2) \cdot (G:Z(G)C) \leq (3/2) \cdot (q^{2n}-1)(q^{2n-2}-1)q/(q^{2}-1),$$
a contradiction. 

Suppose that $S = P\Om^{\eps}_{2n}(q)$, $n \geq 4$, $(n,q) \neq (4,2)$. Then 
$d \geq (q^{n}-1)(q^{n-1}-q)/(q^{2}-1) > 2\dim(\EL)$ and $|\Out(S)| \leq 12q$, whence  
$$q^{8n-16} = (Q:Z)^{2} < (15/2) \cdot (G:Z(G)C) \leq (15/2) \cdot 
  \frac{(q^{n}-1)(q^{2n-2}-1)(q^{n-2}+1)}{q^{2}-1} \cdot 12q,$$
a contradiction. 

Suppose that $S = E_{8}(q)$. Then 
$$q^{112} = (Q:Z)^{2} < 24 \cdot (G:Z(G)C) \leq 12q \cdot 
  \frac{(q^{20}-1)(q^{24}-1)(q^{30}-1)}{(q^{6}-1)(q^{10}-1)},$$
a contradiction. If $S = E_{7}(q)$, then 
$$q^{64} = (Q:Z)^{2} < 24 \cdot (G:Z(G)C) \leq 24q \cdot 
  \frac{(q^{12}-1)(q^{14}-1)(q^{18}-1)}{(q^{4}-1)(q^{6}-1)},$$
a contradiction. If $S = E_{6}(q)$, then 
$$q^{40} = (Q:Z)^{2} < 24 \cdot (G:Z(G)C) \leq 72q \cdot 
  \frac{(q^{8}-1)(q^{9}-1)(q^{12}-1)}{(q^{3}-1)(q^{4}-1)},$$
a contradiction. If $S = \ta E_{6}(q)$ and $q > 2$, then 
$$q^{40} = (Q:Z)^{2} < 24 \cdot (G:Z(G)C) \leq 72q \cdot 
  \frac{(q^{8}-1)(q^{9}+1)(q^{12}-1)}{(q^{3}+1)(q^{4}-1)},$$
a contradiction. If $S = F_{4}(q)$ with $q$ odd, then 
$$q^{28} = (Q:Z)^{2} < (3/2) \cdot (G:Z(G)C) \leq (3q/4) \cdot (q^{4}+1)(q^{12}-1),$$
a contradiction. If $S = \tb D_{4}(q)$ and $q > 2$, then 
$$q^{16} = (Q:Z)^{2} < 24 \cdot (G:Z(G)C) \leq 36q \cdot (q^{8}+q^{4}+1)(q^{2}-1),$$
a contradiction. Assume $S = G_{2}(q)$ with $(q,3) = 1$ and $q \geq 5$, then
$d \geq q^{3}-1 > 2\dim(\EL)$, whence 
$$q^{8} = (Q:Z)^{2} < (15/2) \cdot (G:Z(G)C) \leq (15/4) \cdot q(q^{6}-1),$$
again a contradiction.

The cases $S = PSp_{4}(3)$, $PSp_{4}(5)$, $\Om_{7}(3)$, $\Om^{\pm}_{8}(2)$, $\tb D_{4}(2)$  can be 
handled easily using \cite{Atlas} and \cite{JLPW}. The case $S = G_{2}(4)$ leads to the example 
$d = 12$, $L = 2 \cdot S$; furthermore, if $\ell = 0$ then $\SK(V)$ is irreducible over $L$ 
for $k \leq 4$ and $\SF(V)$ is irreducible over $L \cdot 2$. Finally, assume $S = \ta E_{6}(2)$. 
Since $\ml(G) \leq (1.66) \cdot 10^{12}$ and $d \geq 1536$, $\SK(V)$ is reducible for $k \geq 5$; also
$\SE(V)$ is reducible if $d \geq 2513$. Assume $d \leq 2512$.  Observe that $G$ contains a subgroup 
$H \in \{F_{4}(2), 2 \cdot F_{4}(2) \}$. The irreducible $\ell$-modular Brauer characters of 
$2 \cdot F_{4}(2)$ are known \cite{ModAt}. In particular, any such a character of degree $\leq 2512$ is 
of type $+$. It follows by Lemma \ref{index}(i) that $d \leq 458$, a contradiction. This completes
the proof of Theorem \ref{cross}. 

\section{Normalizers of Extraspecial Groups}
The aim of this section is to prove the following

\begin{theor}\label{extra}
{\sl Assume $\GC := GL(V)$, $Sp(V)$ or $GO(V)$, $G \leq \GC$ is Zariski closed, $\dim(V) > 4$, and that 
the conclusion {\rm (iii)} of Proposition \ref{red} holds. Then, for $X \in \{\Sym,\wedge\}$, 
$X^{k}(V)$ is reducible, if $k > 2$ and $p > 2$, or if $k > 3$ and $p = 2$, unless 
$(\dim(V),X) = (5,\wedge)$.}
\end{theor}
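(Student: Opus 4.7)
First, since $G \le N := N_{\GC}(P)$, every $N$-invariant subspace of $X^k(V)$ is automatically $G$-invariant, so it suffices to show $X^k(V)$ is reducible over $N$. Writing $q := \dim V = \sqrt{|E/Z(E)|} = p^n$, let $\psi$ be the faithful character of $Z(P)$ by which $Z(P)$ acts on $V$. The structure of the normalizer of an extraspecial group gives $T := N/(PC_{\GC}(P)) \hookrightarrow Sp_{2n}(p)$ for $p$ odd, or $T \hookrightarrow O_{2n}^{\pm}(2)$ for $p = 2$, with $T$ acting on $V$ projectively as a Weil representation. In the exterior case, the $N$-equivariant duality $\wedge^k V \simeq (\wedge^{d-k} V)^{*} \otimes \det V$ reduces reducibility of $\wedge^k V$ to that of $\wedge^{d-k} V$, so we may assume $k \le d/2$.

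The main argument splits according to whether the character $\psi^k$ of $Z(P)$ on $X^k(V)$ is trivial. In Case A, where $\psi^k = 1$, the module $X^k(V)$ descends to the abelian group $P/Z(P) \simeq \mathbb{F}_p^{2n}$ and hence decomposes as a direct sum of one-dimensional characters. For $g \in P \setminus Z(P)$ of order $p$, the eigenvalues of $g$ on $V$ are the $p$-th roots of unity each with multiplicity $q/p$, so by the generating function identities
$$\sum_{k\geq 0}\chi_{\Sym^k V}(g)\,t^k = (1-t^p)^{-q/p},\quad \sum_{k\geq 0}\chi_{\wedge^k V}(g)\,t^k = (1+(-1)^{p+1}t^p)^{q/p},$$
one computes $\chi_{X^k V}|_P$ explicitly and checks that both the trivial and some nontrivial character of $P/Z(P)$ occur with positive multiplicity in $X^k(V)$. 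The $P$-fixed subspace $X^k(V)^P$ is therefore a proper nonzero $N$-submodule, giving reducibility.

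In Case B, where $\psi^k \neq 1$, the restriction $X^k(V)|_P$ is isotypic, equal to $m$ copies of the unique $q$-dimensional irreducible $P$-module $V^{(k)}$ of central character $\psi^k$, with $m = \dim X^k(V)/q$. Clifford theory gives a projective decomposition $X^k(V) \simeq V^{(k)} \otimes \mathcal{M}_k$ as an $N$-module, where $\mathcal{M}_k$ is a projective $T$-representation of dimension $m$; the $N$-reducibility of $X^k(V)$ is equivalent to the $T$-reducibility of $\mathcal{M}_k$. For $k = 2$, $m = (q \pm 1)/2$ matches the dimension of a Weil representation of $T$, and $\mathcal{M}_2$ is irreducible over $Sp_{2n}(p)$, which explains why $k = 2$ is excluded. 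For $k \ge 3$ one computes $\chi_{\mathcal{M}_k}$ on $T$ via Weil character formulas for $\chi_V$ and Newton-type identities expressing $\chi_{X^k V}$ in terms of power sums of eigenvalues, and shows $\langle \chi_{\mathcal{M}_k}, \chi_{\mathcal{M}_k}\rangle_T \ge 2$, yielding reducibility. The exception $(\dim V, X) = (5, \wedge)$ arises because here $p = 5$, $n = 1$, and the duality reduction leaves only $k = 2$ in the range $k \le d/2$, where $\wedge^2 V$ is genuinely irreducible (a Weil representation); hence by duality $\wedge^3 V$, $\wedge^4 V \simeq V^{*} \otimes \det V$, and $\wedge^5 V \simeq \det V$ are also irreducible.

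The main obstacle will be Case B with $k \ge 3$: dimensional considerations alone do not suffice to rule out irreducibility of $\mathcal{M}_k$ over $T$, especially for small $n$ or $p$ where $T$ may have many irreducibles of moderate dimension, so a uniform character-theoretic lower bound on $\langle \chi_{\mathcal{M}_k}, \chi_{\mathcal{M}_k}\rangle_T$ is the key technical step. The stronger hypothesis $k > 3$ for $p = 2$ is needed to compensate for the facts that elements of $P \setminus Z(P)$ can have order $4$, that $|Z(P)|$ itself can equal $4$, and that the Weil representation of $O_{2n}^{\pm}(2)$ behaves differently from its odd-characteristic counterpart.
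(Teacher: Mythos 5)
The decisive gap is Case B, which is exactly the heart of the theorem: it covers every $k$ with $\psi^k\neq 1$, in particular $k=3$ for every $p\geq 5$, and your proposal does not prove it. You yourself defer the needed uniform bound $\langle \chi_{\mathcal{M}_k},\chi_{\mathcal{M}_k}\rangle_T\geq 2$ to "the key technical step"; but that inequality, uniformly in $n,p,k$, \emph{is} the theorem in the main case, and extracting it from Weil-character formulas and Newton identities is a substantial computation you have not carried out, not a routine check. The paper's proof avoids it altogether: after lifting to characteristic $0$ (Lemma \ref{lift}) it invokes Corollary \ref{sym2} and Lemma \ref{wedge} to reduce to $k=3$ for $p>2$ (resp. $k=4$ for $p=2$), and then, for $p\geq 5$, argues by contradiction: if $X^{3}(V)$ were irreducible, Clifford theory gives $X^{3}(V)=V_{3}\otimes A$ with $A$ an \emph{irreducible} module for $G/P\simeq Sp_{2n}(p)$ of dimension $(d+\eps)(d+2\eps)/6$, so the central involution $j$ of $Sp_{2n}(p)$ acts scalarly on $A$ and the difference of the $j$-eigenspace dimensions on $X^{3}(V)$ is divisible by $\dim A$; on the other hand a direct count using $V=V_{+}\oplus V_{-}$ shows this difference equals $a$ or $a+1$ with $a=(d-1)/2$, which is impossible for $n\geq 2$, while $n=1$ is eliminated by $\ml(G/P)=p+1$ (and this is where $(\dim V,X)=(5,\wedge)$ legitimately escapes). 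You have no substitute for this step, and dimension counting alone, as you note, does not provide one.

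There are two further problems. First, the theorem allows any $\ell\neq p$, but your argument is ordinary character theory throughout: the decomposition of $X^{k}(V)|_{P}$ into $P/Z(P)$-characters is fine because $\ell\neq p$, but the criterion "reducibility of $\mathcal{M}_k$ over $T$ follows from $\langle\chi_{\mathcal{M}_k},\chi_{\mathcal{M}_k}\rangle_T\geq 2$" is not valid when $\ell$ divides $|T|$; you need the reduction to $\ell=0$ which the paper obtains from Lemma \ref{lift} (using that $V|_{P}$ lifts to a complex representation extending to $G$), and this reduction is also what legitimizes the use of Corollary \ref{sym2} and Lemma \ref{wedge}. Second, in Case A your generating-function identities only describe elements of order $p$; for $p=2$ the groups $2^{1+2n}_{\pm}$ and $\ZZ_{4}*2^{1+2n}$ have many noncentral elements of order $4$ (you acknowledge this but never incorporate it), so the asserted occurrence of both the trivial and a nontrivial character of $P/Z(P)$ in $X^{k}(V)$ is not actually verified as stated. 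That part is repairable — such elements have eigenvalues $\pm\sqrt{-1}$ with equal multiplicities, and summing over all of $P$ reproduces the paper's counts $\dim\SE(V)^{P}=(d+1)(d+2)/6$ and $\dim\WE(V)^{P}=(d-1)(d-2)/6$ — but Case B and the characteristic reduction are genuine missing pieces, so the proposal as it stands does not prove the theorem.
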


\begin{proof}
Assume the contrary. Without loss we may replace $G$ by $\GN$. Recall that either $P = E$, or $p = 2$ and 
$P = \ZZ_{4} * E$; furthermore, $d := \dim(V) = \sqrt{|E/Z(E)|} = p^{n}$. It is well known that the 
$\ell$-modular representation $V|_{P}$ is liftable to a complex representation which extends to $G$. Hence 
by Lemma \ref{lift} we may assume that $\ell = 0$.

1) First we consider the case $p > 2$. Then we may assume $G = p^{1+2n}_{+} \cdot Sp_{2n}(p)$ which is
a split extension. Assume $p = 3$. Direct computation shows that the fixed point subspace $\SD(V)^{P}$, 
resp. $\WD(V)^{P}$, has dimension $(d + 1)/2$, resp. $(d - 1)/2$, whence $\SD(V)$ and $\WD(V)$ are reducible.
Now we may assume that $p \geq 5$ and consider the central involution $j$ of $Sp_{2n}(p)$. It is 
well known that $V = V_{+} \oplus V_{-}$, where $j$ acts on $V_{\delta}$ as the scalar $\delta 1$ for 
$\delta = \pm$, and $\{\dim(V_{+}),\dim(V_{-})\} = \{a,a+1\}$ with $a = (d-1)/2$. Set $\eps := 1$ if 
$X = \Sym$ and $\eps := -1$ if $X = \wedge$. By Corollary \ref{sym2} and Lemma \ref{wedge}, $X^{3}(V)$ is 
irreducible, and it affords the $Z(P)$-character $d(d+\eps)(d+2\eps)/6 \cdot \lam^{3}$, if $V|_{Z(P)}$ 
affords the character $d\lam$. Let $V_{3}$ denote the unique complex $P$-representation with 
$Z(P)$-character $d\lam^{3}$. Notice that $V_{3}$ extends to $G$ and, by Clifford theory, 
$X^{3}(V) = V_{3} \otimes A$ for some (irreducible) $G/P$-module $A$ of dimension $(d+\eps)(d+2\eps)/6$. 
If $n = 1$ and $(d,X) \neq (5,\wedge)$, $(7,\wedge)$, then $d = p \geq 5$ and $\dim(C) > p+1 = \ml(G/P)$, 
whence $X^{3}(V)$ is reducible. If $(d,X) = (7,\wedge)$, then $\dim(C) = 5$ does not divide $|G|$ and so 
$X^{3}(V)$ is again reducible. So we may assume $n \geq 2$. Now $j$ acts scalarly on $A$, whence the 
difference $D$ between the dimensions of the $1$-eigenspace and the $(-1)$-eigenspace of $j$ on $X^{3}(V)$ 
must be divisible by $\dim(A)$. On the other hand, since the $1$-eigenspace of $j$ on 
$X^{3}(V)$ is just $X^{3}(V_{+}) \oplus V_{+} \otimes X^{2}(V_{-})$, we see that $|D| = a+1$ if 
$X = \Sym$ and $|D| = a$ if $X = \wedge$, a contradiction. Observe that $(V \otimes V^{*})/1_{G}$ is 
irreducible, therefore both $\SB(V)$ and $\WB(V)$ are irreducible over $G = p^{1+2n}_{+} \cdot Sp_{2n}(p)$, 
cf. \cite{GT2}. 

2) Now we consider the case $p = 2$. Then $E = 2^{1+2n}_{\eps} \leq P \leq \ZZ_{4} *E$ for some 
$\eps = \pm$. Direct computation shows that the fixed point subspace $\SE(V)^{P}$, resp. $\WE(V)^{P}$,
has dimension $(d+1)(d+2)/6$, resp. $(d-1)(d-2)/6$, whence $\SE(V)$ and $\WE(V)$ are reducible. 
Observe however that $\SD(V)$ and $\WD(V)$ are irreducible over $N_{\GC}(E)$ by \cite{GT2}.       
\end{proof} 

\section{Alternating groups, symmetric groups, and their covers} 

This section is devoted to the proof of the following theorem

\begin{theor}\label{alt}
{\sl Theorem \ref{main} holds true in the case $G$ is finite and $S := \soc(G/Z(G))$ is the alternating 
group $\AN$ for some $n \geq 5$.}
\end{theor}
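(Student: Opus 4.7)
The plan is to combine the reduction of Proposition~\ref{red} with direct computation in small rank, and with dimension bounds plus Lemma~\ref{index}(i) in the generic regime.

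First I would apply Proposition~\ref{red} to reduce to the situation where $H := \GN$ is finite, $L := H^{(\infty)}$ is a quasisimple central cover of $\AN$, and $V|_L$ is irreducible, primitive, and tensor-indecomposable. Since $\Mult(\AN) = \ZZ_2$ for $n \notin \{6,7\}$ and $\Mult(\AAA_6) = \Mult(\AAA_7) = \ZZ_6$, the possibilities for $L$ are $\AN$, $2\AN$, and (for $n \in \{6,7\}$) $3\AN$ or $6\AN$. The exceptional isomorphisms $\AAA_5 \cong L_2(5)$, $\AAA_6 \cong L_2(9)$, and $\AAA_8 \cong L_4(2)$ already place $G$ within the scope of Theorem~\ref{cross}, so the outstanding cases are $n = 7$ and $n \geq 9$. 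The small values (say $n \leq 13$) together with the exceptional covers at $n = 6, 7$ I would dispatch by enumerating faithful irreducibles $V$ of $L$ subject to the bound $d := \dim V < (24\ml(G))^{1/4}$ implicit in (\ref{max}), and for each candidate either applying Lemma~\ref{index}(i) or computing the decomposition of $\SK(V)$ on a carefully chosen subgroup (using the Atlas and the modular Atlas) to exhibit a proper submodule.

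For $n \geq 14$ the argument splits into the ordinary case $L = \AN$ and the spin case $L = 2\AN$. In the ordinary case one has $d \geq n-1$, with equality iff $V$ is the heart $W = S^{(n-1,1)}$ of the natural permutation module. For $V = W$, Young's rule gives in characteristic zero
\[
  \Sym^2(W) \cong S^{(n)} \oplus S^{(n-1,1)} \oplus S^{(n-2,2)},
\]
which is visibly reducible; by Lemma~\ref{step1}(i) we have $\ell > k$, and Corollary~\ref{sym2}(i) propagates the reducibility to every $\SK(V)$ with $k \geq 2$. For $d$ strictly larger, I would apply Lemma~\ref{index}(i) with $H_0 := \AAA_{n-1}$: the key observation is that every real-valued irreducible $\ell$-modular Brauer character of $\AAA_{n-1}$ of degree $\leq d$ arises, via Thompson's Lemma~\ref{type}, from a self-dual (hence type $+$) character of $\SSS_{n-1}$, so the hypothesis of Lemma~\ref{index}(i) is met. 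The conclusion is $\binom{d+2k-1}{2k} \leq (G : Z(G) H_0) \leq 2n$ for even $k$ and the analogous bound for odd $k$; both inequalities fail already at $k = 4$, $d \geq n-1$, $n \geq 14$.

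The main obstacle is the spin case $L = 2\AN$, where the basic spin has dimension only $2^{\lfloor (n-2)/2 \rfloor}$, so the bound $d \geq n-1$ is lost and the Lemma~\ref{index}(i) route needs more care. Here the plan is to combine Schur's classical decomposition of tensor squares of spin characters with the Morris--Kleshchev branching rules for $\HAN \supset \HA_{n-1}$: the tensor square of the basic spin is shown to contain several distinct irreducible constituents, forcing $\SB(V)$ reducible and, via Corollary~\ref{sym2}(i), $\SK(V)$ reducible for every $k \geq 2$. For non-basic spin irreducibles of $2\AN$ of small dimension, one would verify type-$+$ positivity of the low-degree spin Brauer characters of $\HA_{n-1}$ (again using Thompson's Lemma~\ref{type} together with tables of Frobenius--Schur indicators for spin characters) and then resume the Lemma~\ref{index}(i) argument with $H_0 := \HA_{n-1}$. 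This verification of indicators for modular spin characters, together with the branching analysis when $\ell$ divides $|\HAN|$, is the principal technical hurdle of the proof.
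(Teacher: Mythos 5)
Your overall architecture (reduce via Proposition~\ref{red}, dispose of $n\leq 13$ by tables, split $n\geq 14$ into ordinary and spin cases) matches the paper, but the two steps that carry the generic case do not go through as stated. In the ordinary case you invoke Lemma~\ref{index}(i) with $H_{0}=\AAA_{n-1}$, and to meet its hypothesis you argue only that every \emph{real-valued} irreducible $\ell$-modular Brauer character of $\AAA_{n-1}$ of degree $\leq d$ is of type $+$ (via Lemma~\ref{type}). The hypothesis of Lemma~\ref{index}(i) concerns \emph{all} irreducible Brauer characters of degree $\leq d$ — equivalently, its proof needs a simple submodule of $V|_{H_{0}}$ of type $+$ — and restrictions from $\SSS_{n-1}$ to $\AAA_{n-1}$ can split into pairs of non-self-dual constituents, which are never of type $+$. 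Since your only a priori control on $d$ is a bound of the shape $d^{4}<24\,\ml(G)$, i.e. $d$ may be super-exponential in $n$, you cannot exclude such constituents below degree $d$, and Thompson's lemma says nothing about them. In the spin case the analogous assertion (``type-$+$ positivity of the low-degree spin Brauer characters of $\HA_{n-1}$'') is simply false for infinitely many $n$: the basic spin characters of $\HA_{m}$ are non-real when $m\equiv 3\pmod 4$, and their behaviour depends on $m\bmod 8$. This is exactly why the paper does not argue through types at all in the generic case: Proposition~\ref{nbasic} decomposes $V=V_{0}\oplus V_{1}\oplus V_{2}$ under a third-order preimage of a $3$-cycle, uses the rationality Lemma~\ref{rat} to get $V_{2}\simeq V_{1}^{*}$ over the centralizer, and (in the spin case, Lemmas~\ref{basic2}--\ref{basic3} and Proposition~\ref{mbasic}) chooses subgroups such as $\HA_{n-2}$, $\HA_{n-3}$, $\HA_{n-5}$, $H*\HA_{5}$ according to $n\bmod 8$ and the reality of the basic spin characters.

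The second problem is your propagation step. Corollary~\ref{sym2}(i) requires $\ell=0$ or $\ell>k(\dim V-1)$, whereas the ambient hypothesis only gives $\ell>k$; applied to a modular basic spin module of dimension about $2^{n/2}$ (or even to the heart) this condition is unavailable, so ``reducible $\SB(V)$ implies reducible $\SK(V)$'' does not follow as you state it. For liftable modules the chain can be repaired by doing the propagation on the complex lift and then observing that a module whose Brauer character is a sum of two ordinary characters is reducible (this is essentially Lemma~\ref{cbasic} plus the first line of Proposition~\ref{mbasic}); for the heart the paper avoids all of this since the heart is of type $+$ and Lemma~\ref{so-sym} applies directly. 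But the genuinely modular basic spin modules — those with $\ell\mid n$ that do \emph{not} lift to characteristic $0$ — are not addressed by your plan at all, and they are the principal difficulty: the paper devotes the case-by-case analysis of Proposition~\ref{mbasic} (residues of $n$ modulo $8$, restrictions to $\HA_{n-2}$, $\HA_{n-5}$, $H*\HA_{5}$, $H*\HA_{6}$, $H\times\ZZ_{3}$) precisely to them. As written, your proposal therefore has genuine gaps in both the non-basic generic case and the modular basic spin case.
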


We begin with considering the case $n \geq 8$. Then the representation $\Phi$ of $G$ on $V$ yields a 
projective representation of $G/Z(G) = \AN$ or $\SN$. It follows that $Z(GL(V))G = Z(GL(V))\Psi(H)$, 
where $H = \HAN$, resp. $\TSN$ (the double cover of $\SN$ in which transpositions lift to elements of 
order $4$), and $\Psi~:~H \to GL(V)$ is an irreducible representation. Ignoring the faithfulness of $G$ 
acting on $V$, we may therefore assume that $G \in \{\HAN,\TSN\}$. In view of Lemma \ref{step1} we 
will assume that $\Char(\FF) = \ell > 3$. Whenever we consider a subgroup $\HA_{m}$ or $\TS_{m}$ of
$\HAN$ or $\TSN$, we will mean a standard one, that is the one fixing $1,2, \ldots , n-m$ in the natural
permutation representation of $\SSS_{n}$. Also by a sum of simple modules we mean the sum in the 
Grothendieck group. We will also fix a preimage $\ts$ of order $3$ in $\HAN$ of a $3$-cycle.  

We begin with the following observations: 

\begin{lemma}\label{rat}
{\sl {\rm (i)} Every element of $\HAN$ is rational in $\TSN$.

{\rm (ii)} Assume $\varphi \in \IBRL(\TSN)$ and $\varphi|_{\HAN}$ is reducible. Then $\varphi$ is 
rational-valued.

{\rm (iii)} Assume $\varphi \in \IBRL(\HAN)$ and $\varphi$ extends to $\TSN$. Then $\varphi$ is 
rational-valued.}
\end{lemma}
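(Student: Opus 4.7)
The proof naturally decomposes: (i) is the substantive input, while (ii) and (iii) follow from it via Clifford theory applied to the index-$2$ inclusion $\HAN \lhd \TSN$.

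For (i), I plan an element-by-element analysis. Fix $\tilde g \in \HAN$ and an integer $k$ coprime to $m := |\tilde g|$; set $g := \phi(\tilde g) \in \AN$ where $\phi \colon \TSN \to \SN$ is the projection, and let $z$ denote the central involution. By rationality of $\SN$ (cycle type is invariant under coprime powers), there is $h \in \SN$ with $hgh^{-1} = g^k$; lifting to $\tilde h \in \TSN$ gives $\tilde h \tilde g \tilde h^{-1} \in \{\tilde g^k,\, z\tilde g^k\}$. Two scenarios arise. If the preimage of the $\SN$-class of $g$ is a single $\TSN$-class (equivalently $\tilde g \sim_{\TSN} z\tilde g$), one can modify $\tilde h$ by a suitable element of $\phi^{-1}(C_{\SN}(g))$ to absorb any $z$-factor, yielding $\tilde g^k$. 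Otherwise the class splits, and I split on parity: when $m$ is odd, necessarily $z \notin \langle \tilde g\rangle$, so $|\tilde g^k| = m$ while $|z\tilde g^k| = 2m$, forcing $\tilde h \tilde g \tilde h^{-1} = \tilde g^k$ on order grounds. When $m$ is even I would invoke the standard cocycle formulas in $\TSN$, namely $\tilde c_1 \tilde c_2 = (-1)^{(\ell_1 - 1)(\ell_2 - 1)} \tilde c_2 \tilde c_1$ for disjoint cycles of lengths $\ell_1, \ell_2$, together with the explicit cycle decomposition of $g$ and the structure of $C_{\SN}(g)$, to exhibit a conjugator producing $\tilde g^k$ rather than $z\tilde g^k$.

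For (ii), if $\varphi \in \IBRL(\TSN)$ restricts reducibly to $\HAN$, Clifford theory at the index-$2$ level forces $\varphi = \psi^{\TSN}$ for some $\psi \in \IBRL(\HAN)$, so $\varphi \equiv 0$ on $\TSN \setminus \HAN$. On $\HAN$, part (i) implies every $\ell'$-element is rational in $\TSN$, hence $\varphi|_{\HAN}$ takes rational values; combined with the vanishing off $\HAN$, the character $\varphi$ is rational-valued. For (iii), if $\varphi \in \IBRL(\HAN)$ extends to $\tilde\varphi \in \IBRL(\TSN)$, then (i) applied to $\tilde\varphi$ yields $\tilde\varphi(g) \in \QQ$ for every $\ell'$-element $g \in \HAN$, and $\varphi = \tilde\varphi|_{\HAN}$ is therefore rational-valued.

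The main obstacle is (i), specifically the split-class subcase with $m$ even, where the clean order argument fails and one is forced into the cocycle bookkeeping sketched above. A cleaner but less elementary alternative is to invoke the fact that every irreducible character of $\TSN$ takes rational values on $\HAN$-classes (a consequence of Schur's formula for spin character values), from which (i) is immediate via the standard equivalence between rational conjugacy classes and invariance under $\mathrm{Gal}(\overline{\QQ}/\QQ)$ of character values; however this route requires careful treatment of the spin characters $\chi_{\lambda}$ whose restrictions to $\HAN$ split into associate pairs $\chi_{\lambda}^\pm$, since individually the pieces can have irrational (indeed complex) values while their sum on any given $\HAN$-class is rational.
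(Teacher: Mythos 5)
Your parts (ii) and (iii) are fine and essentially coincide with the paper's argument: reducibility of $\varphi|_{\HAN}$ forces $\varphi$ to be induced from $\HAN$, hence to vanish on $\TSN \setminus \HAN$, and rationality of the values at $\ell'$-elements of $\HAN$ follows from (i) via the usual Galois action on Brauer character values; likewise (iii). The problem is in (i): your case analysis leaves the one nontrivial case unproved. In the split-class scenario you only settle $|\tilde g|$ odd by the order argument; for $|\tilde g|$ even you offer a plan (cocycle bookkeeping with $\tilde c_1\tilde c_2=(-1)^{(\ell_1-1)(\ell_2-1)}\tilde c_2\tilde c_1$, the cycle decomposition of $g$, and the structure of $C_{\SN}(g)$) but no argument, and you yourself flag it as the main obstacle. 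As written, (i) is therefore not established.

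The missing ingredient is the splitting criterion for conjugacy classes of $\TSN$ (Hoffman--Humphreys, Thm.~3.8, which is exactly what the paper invokes): the preimage of the $\SN$-class of $\sigma$ splits into two $\TSN$-classes only if either all cycle lengths of $\sigma$ are odd, or $\sigma$ is an odd permutation with pairwise distinct cycle lengths. Since your $\tilde g$ lies in $\HAN$, its image is an even permutation and the second alternative is excluded; so in the split case the underlying permutation has all cycles of odd length and hence odd order $k$. Consequently $\tilde g^{k}\in\{1,z\}$, i.e. one of $\tilde g,\ z\tilde g$ has odd order $k$ and the other order $2k$. Your order argument handles the odd-order one, and the even-order one is just $z$ times it, so its rationality is inherited for free: every exponent $j$ coprime to $2k$ is odd, hence $(z\tilde g)^{j}=z\tilde g^{j}$, and any conjugator carrying $\tilde g$ to $\tilde g^{j}$ carries $z\tilde g$ to $(z\tilde g)^{j}$. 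In other words, once the splitting criterion is cited, the even-order split case you were planning to attack with cocycle computations is a two-line corollary of the odd-order case; neither the explicit $2$-cocycle of $\TSN$ nor the detour through Schur's spin character values is needed. (Your non-split scenario is also simpler than you make it: $\tilde g^{k}$ lies over a conjugate of $g$, hence already lies in the unique $\TSN$-class above $g^{\SN}$, with no need to modify the lifted conjugator.)
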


\begin{proof}
(i) Let $\pi~:~\TSN \to \SN$ be the natural projection and $g \in \HAN$. Clearly, $g$ is rational in 
$\TSN$ if $\pi^{-1}(\pi(g)^{\SN})$ is a single $\TSN$-conjugacy class. Otherwise by 
\cite[Thm. 3.8]{HH} $\pi(g)$ is a product of disjoint cycles of odd lengths. In particular, 
$\pi(g)$ has odd order $k$. It follows that $g^{k} = z^{i}$ and $(zg)^{k} = z^{1+i}$ for some 
$i \in \ZZ$ and $Z(\AN) = \la z \ra$. Replacing $g$ by $zg$ if necessary, we may assume that $i = 0$ 
and so $|g| = k$, $|zg| = 2k$. In this case, $\pi^{-1}(\pi(g)^{\SN}) = g^{\TSN} \cup (zg)^{\TSN}$. Hence, all 
generators of $\la g \ra$ belong to $g^{\TSN}$ and so $g$ is rational, in which case $zg$ is also 
rational.     

(ii) By assumptions, $\varphi = \Ind^{\TSN}_{\HAN}(\psi)$ for some $\psi \in \IBRL(\HAN)$, whence 
$\varphi = 0$ on $\TSN \setminus \HAN$. On the other hand, $\varphi|_{\HAN}$ is rational by (i).

(iii) follows from (i).
\end{proof}

Among all the irreducible representations of $G$, the {\it basic spin} and {\it second basic spin} 
representations, cf. \cite{Wa} and \cite{KT}, will require special attention. For a fixed $\ell$,
define $\kn$ to be $1$ if $0 < \ell|n$ and $0$ otherwise. Then the ($\ell$-modular) basic spin modules
of $\HAN$, resp. of $\TSN$, have dimension $\DA_{n} := 2^{\lfloor (n-2-\kn)/2 \rfloor}$, resp.
$\DB_{n} := 2^{\lfloor (n-1-\kn)/2 \rfloor}$. The second basic 
spin modules of $\TSN$ have dimension at least $2^{(n-3)/2}(n-4)$ unless $2|n$ and $\ell|(n-1)$
in which case they have dimension $2^{(n-4)/2}(n-4)$. Let $\DCN$ denote the
smallest one among the dimensions of second basic spin representations of $\TS_{n}$ and $\TS_{n-1}$.
Then 
\begin{equation}\label{dadb}
  \DCN \geq \max\{(n-5) \cdot \DA_{n-2}, (n-5)/2 \cdot \DB_{n-2}\}.
\end{equation}

Basic spin modules are distinguished by the following property:

\begin{lemma}\label{basic1} {\rm \cite[Thm. 8.1]{Wa}} 
{\sl Let $V$ be an irreducible $\FF$-representation of $G \in \{ \HAN,\TSN\}$ such that the action of
$\ts$ (a $3^{\mathrm {rd}}$ order preimage in $G$ of a $3$-cycle) on $V$ has a quadratic minimal 
polynomial. Then $V$ is a basic spin module.
\hfill $\Box$}
\end{lemma}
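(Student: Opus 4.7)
The plan is to exploit the quadratic minimal polynomial constraint on $\ts$ to force very strong restrictions on $V$, then identify $V$ with a basic spin module by reducing to small cases and inducting on $n$.

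First I would observe that since $\ts$ has order $3$ and $\ell > 3$ by our standing assumption, the element $\ts$ acts semisimply with eigenvalues contained in $\{1, \zeta, \zeta^{2}\}$, where $\zeta \in \FF$ is a primitive cube root of unity. The quadratic hypothesis forces exactly two of these three values to occur, so the Brauer character of $V$ at $\ts$ takes the very rigid form $a + b \zeta^{i}$ for some non-negative integers $a, b$ with $a + b = \dim V$ and some $i \in \{0,1,2\}$. In particular one gets a sharp upper bound on $|\varphi_{V}(\ts)|^{2}$ that will later feed into orthogonality computations.

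Second, I would analyze the situation locally: pick preimages $\ts_{1}, \ts_{2} \in G$ of two $3$-cycles whose supports overlap in exactly one point, say $(1,2,3)$ and $(3,4,5)$. The subgroup $\la \ts_{1}, \ts_{2} \ra$ is a double cover of $\AAA_{5}$, isomorphic to $SL_{2}(5)$. Running through the (Brauer) character tables of $SL_{2}(5)$ and of $\HA_{5}$, $\TS_{5}$, one lists the irreducibles on which both $\ts_{1}$ and $\ts_{2}$ act with quadratic minimal polynomial, and verifies that the only faithful such modules for $\HA_{5}$ or $\TS_{5}$ are the basic spin modules.

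Third, I would induct on $n$. Restrict $V$ from $G$ to $G_{n-1} \in \{\HA_{n-1}, \TS_{n-1}\}$; every composition factor on which preimages of $3$-cycles in $\AAA_{n-1}$ still act with quadratic minimal polynomial must, by the inductive hypothesis, be a basic spin module of $G_{n-1}$. Combining this with the known branching rules for the spin covers (as recorded in \cite{KT} for $\TSN$) and with the dimension formulas for $\DA_{n}$, $\DB_{n}$ pins down the isomorphism type of $V$ as a basic spin module of $G$.

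The main obstacle is the modular cases in which $\ell$ divides one of $n$, $n-1$, $n-2$: there the dimensions $\DA_{n}$, $\DB_{n}$ jump (as encoded by the parameter $\kn$) and the branching from $G$ to $G_{n-1}$ becomes non-semisimple, so some composition factors of $V|_{G_{n-1}}$ may fail to be basic spin even though $V$ itself is. A secondary difficulty is that one must verify the inductive hypothesis applies to enough $3$-cycles of $\AAA_{n-1}$, which requires direct small-case verification for $n \leq 7$ using the character tables of $\HA_{n}$ and $\TS_{n}$.
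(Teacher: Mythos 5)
First, note that the paper does not prove this lemma at all: it is quoted verbatim from Wales \cite[Thm.~8.1]{Wa} (the $\Box$ with no argument, as with Serre's and Thompson's results earlier), so a self-contained proof has to reprove Wales' theorem, including its modular form. Your architecture (semisimplicity of $\ts$ since $\ell>3$, restriction to $\la \ts_{1},\ts_{2}\ra\simeq \HA_{5}\simeq SL_{2}(5)$, induction on $n$) is a reasonable outline in the spirit of that classical argument, but as written it has genuine gaps and is a plan rather than a proof.

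The decisive inductive step is missing. Knowing that every composition factor of $V|_{G_{n-1}}$ on which a $3$-cycle preimage has quadratic minimal polynomial is basic spin does not identify $V$. You must first exclude composition factors on which $\ts$ acts with a \emph{linear} minimal polynomial (the inductive hypothesis says nothing about these); that requires the normal-closure/perfectness argument showing a scalar action of $\ts$ forces the factor to be trivial, together with a treatment of non-faithful $V$ (representations of $\AN$ or $\SN$), which your base case explicitly omits by only listing the \emph{faithful} modules of $\HA_{5}$ and $\TS_{5}$ — yet the paper applies the lemma to arbitrary irreducibles (see the proofs of Lemma \ref{basic2} and Proposition \ref{nbasic}). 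Second, passing from ``$V|_{G_{n-1}}$ involves only basic spin modules'' to ``$V$ is basic spin'' requires knowing the composition factors of $\Ind^{G}_{G_{n-1}}$ of a basic spin module; that restriction/induction structure (only basic and second basic spin occur) is itself the substance of \cite{Wa} and \cite{KT}, so invoking those branching rules comes uncomfortably close to circularity, and even granting it you still must compute that the second basic spin modules violate the hypothesis (all three cube roots of unity occur as eigenvalues of $\ts$ on them) — a step absent from your plan. Finally, you yourself flag the cases $\ell\mid n$, $n-1$, $n-2$ as ``the main obstacle'' and leave them unresolved; these are precisely the cases in which the paper uses the lemma (e.g.\ Proposition \ref{mbasic} applies it when $\ell\mid n$). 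Until these points are carried out, the proposal is an outline of a strategy, not a proof of the statement.
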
 

\begin{lemma}\label{basic2}
{\sl Let $G \in \{\HAN, \TSN\}$ and let $V$ be an irreducible $\FF G$-module which is not a basic spin 
module. 

{\rm (i)} Assume $G = \HAN$ and $\soc(V|_{\HA_{n-2}})$ contains a basic spin module of $\HA_{n-2}$. Then 
$\dim(V) \geq \DCN/2 \geq \DA_{n-2} \cdot (n-5)/2$.

{\rm (ii)} Assume $G = \TSN$ and $\soc(V|_{\TS_{n-2}})$ contains a basic spin module of $\TS_{n-2}$. 
Then $\dim(V) \geq \DCN \geq \DB_{n-2} \cdot (n-5)/2$.}
\end{lemma}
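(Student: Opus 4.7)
My plan is to descend along the tower $G \supset G_{n-1} \supset G_{n-2}$ (where $G_m$ denotes either $\HA_m$ or $\TS_m$), using Frobenius reciprocity, Lemma \ref{basic1}, and the branching behavior of basic spin modules. I focus on case (ii), i.e.\ $G = \TSN$; case (i) follows by the same argument coupled with the Clifford correspondence between spin representations of $\HAN$ and $\TSN$, which is precisely what accounts for the factor of $2$ between the bounds $\DCN/2$ and $\DCN$.

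First, since $W$ is faithful on the central involution $z$ generating $Z(\TSN) \cap \TS_{n-2}$ and $W$ embeds in $V|_{\TS_{n-2}}$, the simple module $V$ itself is a spin representation of $\TSN$. Next I would produce a simple $\TS_{n-1}$-subquotient $N$ of $V|_{\TS_{n-1}}$ whose $\TS_{n-2}$-restriction still contains $W$: refine a $\TS_{n-1}$-composition series of $V|_{\TS_{n-1}}$ by restriction to $\TS_{n-2}$ and pick the topmost layer in which the constituent $W$ of $V|_{\TS_{n-2}}$ appears. Since $W$ is a spin module, so is $N$.

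The argument now splits on whether $N$ is the basic spin of $\TS_{n-1}$. If not, then by the minimality defining the second basic spin dimension of $\TS_{n-1}$ (together with a direct verification that no other spin $\TS_{n-1}$-module falls below this dimension in the relevant modular settings), $\dim N$ is at least the second basic spin dimension of $\TS_{n-1}$, and hence $\dim V \geq \dim N \geq \DCN$, finishing the argument. The harder case is when $N$ is the basic spin of $\TS_{n-1}$. Here I apply Lemma \ref{basic1} to a $3$-cycle preimage $\ts \in \TS_{n-1}$: on $N$, $\ts$ has quadratic minimal polynomial, while on $V$, $\ts$ has cubic minimal polynomial because $V$ is not the basic spin of $\TSN$. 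Thus a nonzero $\ts$-eigenspace of $V$ lies transverse to $N$. Combined with the branching of basic spin modules, this should force $V$ to be (at least) the second basic spin of $\TSN$, whose dimension is at least $\DCN$.

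The main obstacle is making the last step rigorous: converting the transversality of the ``third'' $\ts$-eigenspace into a genuine lower bound on $\dim V$ via identification with the second basic spin of $\TSN$, uniformly across the modular regimes $\ell \nmid n$ and $\ell \mid n$. The definition $\DCN = \min(d_n, d_{n-1})$, where $d_m$ is the second basic spin dimension of $\TS_m$, is engineered precisely to absorb whichever of these dimensions becomes degenerate under modular reduction: whichever descent succeeds---at level $n$ or at level $n-1$---the stated bound holds. The explicit lower bounds $(n-5)\DA_{n-2}$ and $(n-5)/2 \cdot \DB_{n-2}$ then follow directly from (\ref{dadb}).
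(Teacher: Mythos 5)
Your skeleton (descend through $\TS_{n-1}$, split on whether the intermediate spin constituent is basic, let $\DCN$ absorb the two levels) matches the paper's, but the two hard steps are missing, and they are exactly where the paper's mechanism does the work. The case you flag yourself --- $N$ equal to the basic spin module of $\TS_{n-1}$ --- is not proved: Lemma \ref{basic1} only tells you that the third eigenvalue of $\ts$ occurs on $V$, which gives $\dim V \geq \dim N + 1$, roughly $\DB_{n-1}$, smaller than the target $\DCN \geq (n-5)\DB_{n-2}/2$ by a factor on the order of $n$; nothing in your sketch converts ``the third eigenspace is transverse to $N$'' into ``$V$ is a second basic spin module of $\TSN$''. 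What is needed is the induction argument one level up: the paper keeps the \emph{socle} hypothesis (a genuine submodule, not merely a constituent), so Frobenius reciprocity exhibits $V$ (resp.\ its $\TSN$-cover $W$ in case (i), which is where the factor $2$ comes from) as a quotient of $\Ind^{\TSN}_{\TS_{n-2}}(Y_{n-2})$ for a basic spin module $Y_{n-2}$ of $\TS_{n-2}$, and Wales' theorem \cite{Wa} says that inducing a basic spin module one step up produces only basic and second basic spin constituents; applying this at both levels $n-2 \to n-1 \to n$ forces $V$ to be a second basic spin module (it is not basic by hypothesis), whence $\dim V \geq \DCN$. Your passage to composition factors (``the topmost layer containing $W$'') discards the submodule structure, and there is no Frobenius reciprocity at the level of Brauer constituents ($[\Res N : W]\neq 0$ does not formally imply $[\Ind W : N]\neq 0$ in the modular setting), so the identification of $N$, and then of $V$, cannot be recovered along your route.

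The other branch, where $N$ is not basic, rests on the unproven blanket claim that every non-basic spin module of $\TS_{n-1}$ has dimension at least the second basic spin dimension ``in the relevant modular settings''. That is a nontrivial assertion about modular spin representations, of difficulty comparable to the lemma itself, and it is not among the quoted tools; the paper never needs it, because the branching argument above already pins the relevant constituent down to basic or second basic, whose dimensions are controlled by the definition of $\DCN$ and (\ref{dadb}). Your Clifford-theoretic reduction of (i) to (ii) with the factor $2$ is fine and is essentially the paper's opening step, but as written the proof of the key case is incomplete.
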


\begin{proof}
(i) Let $W \in \IBRL(\TSN)$ such that $V$ is a submodule of $W|_{\HAN}$. Then we can choose 
$x \in C_{\TSN}(\HA_{n-2})$ such that $W|_{\HAN} = V$ or $W|_{\HAN} = V \oplus xV$. By assumptions,
$W|_{\HA_{n-2}}$ contains a basic spin module $U$, whence $W$ is a quotient of 
$\Ind^{\TSN}_{\TS_{n-2}}(\Ind^{\TS_{n-2}}_{\HA_{n-2}}(U))$. Since 
$\Ind^{\TS_{n-2}}_{\HA_{n-2}}(U)$ is a sum of basic spin modules of $\TS_{n-2}$, there is a basic
spin module $Y_{n-2}$ of $\TS_{n-2}$ such that $W$ is a quotient of 
$\Ind^{\TSN}_{\TS_{n-2}}(Y_{n-2})$. Next, by \cite{Wa} $\Ind^{\TS_{n-1}}_{\TS_{n-2}}(Y_{n-2})$ is a sum 
of basic and second basic spin modules of $\TS_{n-1}$. It follows that $W$ is a quotient of 
$\Ind^{\TSN}_{\TS_{n-1}}(X)$, where either $X$ is a basic spin module $Y_{n-1}$ of $\TS_{n-1}$, or 
a second basic spin module $T_{n-1}$ of $\TS_{n-1}$. In the former case, $W$ must be a second basic 
spin module of $\TSN$ (as $V$ is not basic), whence $\dim(W) \geq \DCN$. In the latter case, 
$\soc(W|_{\TS_{n-1}})$ contains a second basic spin module of $\TS_{n-1}$, hence $\dim(W) \geq \DCN$ 
again. So we are done by (\ref{dadb}).

(ii) can be proved similarly.  
\end{proof}

\begin{corol}\label{basic3}
{\sl Let $G \in \{\HAN,\TSN\}$ and let $V$ be an irreducible $\FF G$-module which is 
not a basic spin module. Assume $\soc(V|_{\HA_{n-2}})$ contains a basic spin module of $\HA_{n-2}$.
Then $\SK(V)$ is reducible for any $k \geq 3$ if $n \geq 20$, and for any $k \geq 4$ if $n \geq 13$.}
\end{corol}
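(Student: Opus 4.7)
The plan is to apply Lemma \ref{index}(i) with $H = \HA_{n-2}$ in tandem with Lemma \ref{basic2}. Suppose for contradiction that $\SK(V)$ is irreducible; the strategy is to derive a polynomial-in-$n$ upper bound on $d := \dim V$ which conflicts with the exponential lower bound from Lemma \ref{basic2}. Since $\binom{d+k-1}{k}$ is nondecreasing in $k$ for $d \geq 2$, while the upper bound to be used does not involve $k$, it is enough to handle $k = 4$ in the $n \geq 13$ regime and $k = 3$ in the $n \geq 20$ regime.

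By hypothesis, the socle of $V|_H$ contains a basic spin module $U$ of $\HA_{n-2}$. Assuming the type-$+$ condition of Lemma \ref{index}(i) for $H$ at degrees $\leq d$, that lemma yields
\[
 \dim \SK(V) \;\leq\; (G : Z(G) H) \cdot \begin{cases} 1 & k \text{ even}, \\ d & k \text{ odd}. \end{cases}
\]
Since $[G : H] \leq 2n(n-1)$ and $Z(G) \subseteq H$, combining with $\binom{d+k-1}{k} \geq d^k/k!$ forces $d^{k} \leq 2\, k!\, n(n-1)$ when $k$ is even and $d^{k-1} \leq 2\, k!\, n(n-1)$ when $k$ is odd. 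On the other hand, Lemma \ref{basic2} together with (\ref{dadb}), taking the worst case $\kappa_{n-2} = 1$, gives
\[
 d \;\geq\; (n-5) \cdot 2^{\lfloor (n-5)/2 \rfloor - 1}.
\]
A direct numerical check at the boundary values is then enough: for $n = 13$, $k = 4$ one has $d \geq 64$ whereas the displayed bound forces $d^{4} \leq 48 \cdot 156 = 7488$, so $d \leq 9$; for $n = 20$, $k = 3$ one has $d \geq 960$ whereas the bound forces $d^{2} \leq 12 \cdot 380 = 4560$, so $d \leq 67$. In both cases the two constraints are incompatible, and the gap widens exponentially with $n$.

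The main obstacle will be verifying the type-$+$ hypothesis of Lemma \ref{index}(i): the Frobenius--Schur indicators of the basic spin module of $\HA_{n-2}$ (and of the other small irreducibles of $\HA_{n-2}$ of degree $\leq d$) depend on $n - 2 \pmod 8$ and are not uniformly $+$. When the hypothesis fails one modifies the argument, either by switching to $G = \TSN$ with $H = \TS_{n-2}$ (where the spin type may be different), or by exploiting that a self-dual $U$ always yields $1_H \leq \SB(U)$ or $1_H \leq \WB(U)$ and combining the $\SK$- and $\WK$-bounds of Lemma \ref{index}. Resolving this case-division by Frobenius--Schur type is the principal technical step; once done, the polynomial-versus-exponential contradiction above is routine.
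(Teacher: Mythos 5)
Your argument has a genuine gap at its crucial step, and you acknowledge it yourself: the whole quantitative part rests on the hypothesis of Lemma \ref{index}(i), namely that \emph{every} irreducible $\ell$-modular Brauer character of $H$ of degree $\leq d$ is of type $+$, and this is neither verified nor true in general. Here $d \geq (n-5)\cdot 2^{\lfloor (n-5)/2\rfloor -1}$, so the degree range is huge, and already the basic spin modules of $\HA_{n-2}$ (or $\TS_{n-2}$) have Frobenius--Schur type depending on $n \bmod 8$ and on $\ell$: they can be of type $-$ or fail to be self-dual, and the same is true of many other constituents in that range. Your fallback devices do not close this: if $U$ is of type $-$ the invariant vector sits in $\WB(U)$, not $\SB(U)$, so one does not obtain $1_{H}$ inside $\SK(V)|_{H}$ for even $k$; if $U$ is not self-dual one gets neither; and Lemma \ref{index}(ii) bounds $\dim(\WK(V))$ under irreducibility of $\WK(V)$, which is not the hypothesis of this corollary. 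So as written the ``principal technical step'' you defer is exactly where the proof is missing, and it cannot be settled by a uniform type-$+$ claim.

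The paper's proof sidesteps types entirely, and you could too: since $U$ lies in $\soc(V|_{H})$ (take $H = \HA_{n-2}$ if $G = \HAN$ and $H = \TS_{n-2}$ if $G = \TSN$, so that $(G:H) = n(n-1)$), the subspace $\SK(U)$ is an $H$-submodule of $\SK(V)|_{H}$; inducing a simple submodule of $\SK(U)$ and using Frobenius reciprocity gives the \emph{hypothesis-free} bound $\dim(\SK(V)) \leq \dim(\SK(U)) \cdot (G:H)$, i.e.
\[
n(n-1) \;\geq\; \binom{e(n-5)/2+k-1}{k}\Big/\binom{e+k-1}{k}, \qquad e := \dim(U),
\]
using $\dim(V) \geq e(n-5)/2$ from Lemma \ref{basic2}. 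With $e \geq \DA_{n-2} = 2^{\lfloor (n-5)/2 \rfloor}$ the right-hand side is roughly $((n-5)/2)^{k}$ up to factors tending to $1$, which already exceeds $n(n-1)$ for $n \geq 20$, $k \geq 3$ and for $n \geq 13$, $k \geq 4$. This bound is weaker than the one you were aiming for, but it needs no information about Frobenius--Schur indicators, which is precisely what makes the corollary provable in these ranges.
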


\begin{proof}
Assume the contrary. Define $H := \HA_{n-2}$ if $G = \HAN$, and $H := \TS_{n-2}$ if $G = \TSN$. By 
assumptions, $\soc(V|_{H})$ contains a basic spin module $U$ of $H$, whence $\soc(\SK(V)|_{H})$ 
contains $\SK(U)$. By Frobenius' reciprocity, $\dim(\SK(V)) \leq \dim(\SK(U)) \cdot (G:H)$. Since 
$(G:H) = n(n-1)$ and $\dim(V) \geq e(n-5)/2$ for $e := \dim(U)$ by Lemma \ref{basic2}, this implies 
$n(n-1) \geq \begin{pmatrix}e(n-5)/2+k-1\\k \end{pmatrix}\left/ \right. 
 \begin{pmatrix}e+k-1\\k \end{pmatrix}$. The last inequality cannot hold if $n \geq 20$ and $k \geq 3$, 
or if $n \geq 13$ and $k \geq 4$, since $e \geq \DA_{n-2} = 2^{\lfloor (n-5)/2 \rfloor}$.
\end{proof}

\begin{propo}\label{nbasic}
{\sl Let $G \in \{\HAN,\TSN\}$ and let $V$ be an irreducible $\FF G$-module of dimension $d > 1$. 
If $m$ is odd or if $(G,\ell) = (\HA_{14},7)$, assume in addition that $V$ is not a basic spin module. 
Then $\SM(V)$ is reducible, if $m \geq 3$ and $n \geq 23$, or if $m \geq 4$ and $n \geq 14$.}
\end{propo}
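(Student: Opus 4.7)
The plan is to split according to whether $V$ is a basic spin module and, in the non-basic-spin case, whether a basic spin of $H := \HA_{n-2}$ or $\TS_{n-2}$ (of index $n(n-1)$ in $G$) appears in $\soc(V|_H)$.

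Suppose first that $V$ is not basic spin. If $\soc(V|_H)$ contains a basic spin module of $H$, then Corollary \ref{basic3} directly yields reducibility of $\SM(V)$ for $m \geq 3$ when $n \geq 20$ and for $m \geq 4$ when $n \geq 13$, which covers the stated ranges. Otherwise $\soc(V|_H)$ has no basic spin of $H$, and Lemma \ref{basic1} applied over $H$ to each irreducible summand of $\soc(V|_H)$ shows that the $3$-cycle $\ts \in H$ acts with minimum polynomial of degree $\geq 3$ on each such summand, hence on $V$; so all three cube roots of unity occur as eigenvalues of $\ts$ on $V$. I would then subdivide according to whether $V$ is faithful: if $V$ factors through $G/Z(G) = \AN$ or $\SN$, known decompositions of $\SB(V)$ for small $\AN/\SN$-modules (heart of the natural permutation module and the like) show $\SB(V)$ is already reducible, hence $\SM(V)$ is reducible for $m \geq 2$ via Corollary \ref{sym2}; if $V$ is faithful, I invoke the classification of minimal-dimensional faithful representations of $\HAN$ and $\TSN$ (cf.\ \cite{KT}) to force $\dim V \geq \DCN$, from which $\dim \SM(V) > \sqrt{|G|/|Z(G)|} \geq \ml(G)$ and $\SM(V)$ cannot be irreducible.

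For the remaining case, $V$ is basic spin, which by hypothesis requires $m$ to be even and $(G,\ell) \neq (\HA_{14},7)$. When $V$ is self-dual (for instance $G = \TSN$ with $n$ odd, or $G = \HAN$ with $n$ even), the invariant symmetric bilinear form $b \in (\SB V)^G$ yields via multiplication an injective $G$-homomorphism $\Sym^{m-2}(V) \hookrightarrow \SM(V)$ (the polynomial algebra being a domain), producing a proper submodule for $m \geq 2$, as in the proof of Lemma \ref{so-sym}(i). When $V$ is not self-dual (the remaining basic spin parameters), one passes to the standard subgroup $\HA_{n-1}$ or $\TS_{n-1}$, where the restricted basic spin becomes self-dual, and combines this with the explicit decompositions of tensor and symmetric powers of basic spin modules available in \cite{Wa}.

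The main obstacle is the faithful non-basic-spin branch in which $\soc(V|_H)$ contains no basic spin of $H$: securing the dimension bound $\dim V \geq \DCN$ requires the classification of low-dimensional faithful representations of $\HAN$ and $\TSN$, together with explicit elimination of the finitely many intermediate-dimensional exceptional modules. A secondary obstacle is the non-self-dual basic spin subcase, which cannot be handled by invariant-form multiplication and instead demands direct character computations or the $\HA_{n-1}/\TS_{n-1}$ descent sketched above.
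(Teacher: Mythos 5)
Your branch (a) (basic spin in the socle, via Corollary \ref{basic3}) is fine, but the remaining branches have genuine gaps, all stemming from the fact that you never establish an upper bound on $d$. The paper's engine is to restrict to $C_{G}(\ts)$ (or to $N_{H}(\la \ts \ra)$ with $H = \TS_{n-2}$ when $V$ does not extend), use the eigenspace decomposition $V = V_{0}\oplus V_{1}\oplus V_{2}$ with $V_{2}\simeq V_{1}^{*}$ to produce a trivial (or $U_{0}$-sized) submodule of $\SM(V)$ over that subgroup, and then apply Frobenius reciprocity to get $(d+1)(d+2)(d+3)(d+4)\le 20\,n(n-1)(n-2)(n-3)(n-4)$; only then do the lower bounds of \cite{KT} and \cite{GT3} finish the job. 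You replace this by the crude count $\dim \SM(V) > \ml(G)$, which in the faithful non-basic branch requires your claimed bound $\dim V \ge \DCN$. That bound is not in \cite{KT} and is not proved here: Lemma \ref{basic2} yields it only under the hypothesis that $\soc(V|_{\HA_{n-2}})$ \emph{does} contain a basic spin module, i.e.\ exactly the opposite of your branch (b); with only the universal faithful bound $d \ge 2^{\lfloor (n-2-\kn)/2\rfloor}$ the count fails at the bottom of the range (e.g.\ $n=14$, $\ell = 7$, $d=32$). In the non-faithful branch, reducibility of $\SB(V)$ (itself asserted, not proved, for all non-faithful irreducibles) does not give reducibility of $\SM(V)$: Corollary \ref{sym2}(i) needs $\ell = 0$ or $\ell > 2(\dim V-1)$, whereas here only $\ell > m$ is available, so the passage from $\Sym^{2}$ to $\Sym^{m}$ is exactly what the modular setting forbids. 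The paper avoids this by combining the same centralizer upper bound with the lower bound $d\ge (n^{2}-5n+2)/2$ from \cite{GT3}, leaving only $V|_{\HAN}\simeq \DC$, which is of type $+$ and is eliminated by Lemma \ref{so-sym}.

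The basic spin case (even $m$) is also not handled correctly. Self-duality does not provide a $G$-invariant \emph{symmetric} form: basic spin modules are frequently of type $-$, or not self-dual at all, depending on the residue of $n$ modulo $8$ (this is precisely the bookkeeping from \cite{T1} carried out in the proof of Proposition \ref{mbasic}), and when the invariant form is alternating there is no invariant quadric, so the multiplication map $\Sym^{m-2}(V)\hookrightarrow \SM(V)$ simply does not exist; the fallback via restriction to $\TS_{n-1}$ and \cite{Wa} is too vague to assess and meets the same obstruction. In the paper no such analysis is needed inside Proposition \ref{nbasic}: the $C_{G}(\ts)$ argument only uses $V_{1},V_{2}\neq 0$ when $m$ is even (the non-basic hypothesis, via Lemma \ref{basic1}, is needed only to ensure $V_{0}\neq 0$ when $m$ is odd), and the excluded case $(\HA_{14},7)$ is exactly where the final dimension comparison breaks down. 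A smaller slip: in branch (b) a non-basic summand of $\soc(V|_{H})$ may be one-dimensional, so "minimal polynomial of degree $\ge 3$ on each summand" is not quite right, though this is minor next to the issues above.
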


\begin{proof}
1) Assume the contrary. Write $V = V_{0} \oplus V_{1} \oplus V_{2}$, where 
$\ts$ acts on $V_{j}$ as the scalar $\omega^{j}$, $\omega$ a primitive cubic root of unity in $\FF$, and
$V_{1}, V_{2} \neq 0$. By Lemma \ref{basic1}, $V_{0} \neq 0$ if $V$ is not basic. We give the proof
for the case of $m \geq 4$, the case with $m = 3$ is proceeded similarly.

Case 1: Assume $G = \TSN$ and $V|_{\HAN}$ is reducible. Then $V$ is self-dual by Lemma \ref{rat}. 
Setting $C := C_{G}(\ts)$, we see that $V_{2} \simeq V_{1}^{*}$ as $C$-modules. Clearly, 
$\Sym^{2k}(V)|_{C} \supset \SK(V_{1}) \otimes \SK(V_{2}) \supset 1_{C}$, and 
$\Sym^{2k+1}(V)|_{C} \supset V_{0} \otimes \SK(V_{1}) \otimes \SK(V_{2}) \supset V_{0}$. By Frobenius'
reciprocity, if $m = 2k \geq 4$ then $\dim(\SE(V)) \leq \dim(\SM(V)) \leq (G:C)$. If $m = 2k+1 \geq 5$ 
then $\dim(\SF(V)) \leq \dim(\SM(V)) \leq (G:C) \cdot \dim(V_{0})$. In either case we obtain
$(d+1)(d+2)(d+3)(d+4) \leq 120(G:C) = 40n(n-1)(n-2)$.

Case 2: Assume that either $G = \TSN$ and $V|_{\HAN}$ is irreducible, or $G = \HAN$ and $V$ extends to
$\TSN$. In either case, $V|_{\HAN}$ is irreducible and self-dual by Lemma \ref{rat}. So we can
set $C := C_{\HAN}(\ts)$ and repeat the above argument to get 
$(d+1)(d+2)(d+3)(d+4) \leq 120(G:C) \leq 80n(n-1)(n-2)$.

Case 3: Assume that $G = \HAN$ and $V$ does not extend to $\TSN$. In this case, we can embed 
$H := \TS_{n-2}$ in $G$ (as the inverse image in $G$ of a subgroup of index $2$ in 
$S_{n-2} \times S_{2}$ that contains $\ts$). Consider a simple submodule $U$ of smallest dimension of 
$V|_{H}$.

Assume $\dim(U) = 1$. Setting $C := H' = \HA_{n-2}$, we see that $U|_{C} = 1_{C}$, whence 
$\SM(V)|_{C} \supset \SM(1_{C}) = 1_{C}$, and so $d(d+1)(d+2)(d+3) \leq 2(G:C) = 2n(n-1)$.

Now we may assume that $\dim(U) > 1$. Again write $U = U_{0} \oplus U_{1} \oplus U_{2}$,
where $\ts$ acts on $U_{j}$ as the scalar $\omega^{j}$, and $U_{1}, U_{2} \neq 0$.  
If $U|_{H'}$ is reducible, then $U$ is self-dual by Lemma \ref{rat}, whence 
$U_{2} \simeq U_{1}^{*}$ as modules over $C := C_{H}(\ts)$; set $N := N_{H}(\la \ts \ra)$ 
in this case. If $U|_{H'}$ is irreducible, then $U|_{H'}$ is self-dual by Lemma 
\ref{rat}, whence $U_{2} \simeq U_{1}^{*}$ as modules over $C := C_{H'}(\ts)$; set 
$N := N_{H'}(\la \ts \ra)$ in this case. Now if $m = 2k \geq 4$, then as in Case 1 we see that
$\SM(V)|_{C}$ contains $1_{C}$. The proof of Lemma \ref{sub} shows that $\SM(V)|_{N}$ contains a
submodule of dimension $1$, so $\dim(\SE(V)) \leq (G:N)$. Assume $m = 2k+1 \geq 5$. By Corollary 
\ref{basic3} we may assume that $\soc(V|_{\HA_{n-2}})$ does not contain a basic spin module of 
$\HA_{n-2}$, so $U_{0} \neq 0$ by Lemma \ref{basic1}. Now $\Sym^{2k+1}(V)|_{C}$ contains a submodule 
$F \simeq U_{0}$ inside $U_{0} \otimes \SK(U_{1}) \otimes \SK(U_{2})$. Since $U_{0}$ is $N$-invariant, 
by Lemma \ref{sub} $\Sym^{2k+1}(V)|_{C}$ contains a submodule of dimension $\leq \dim(U_{0})$. It follows
that $\dim(\SF(V)) \leq (G:N) \cdot \dim(U_{0})$. In either case, we obtain
$(d+1)(d+2)(d+3)(d+4) \leq 120(G:N) \leq 20n(n-1)(n-2)(n-3)(n-4)$.    

2) We have shown that in all cases
$$(d+1)(d+2)(d+3)(d+4) \leq 20n(n-1)(n-2)(n-3)(n-4).$$ 
Assume that $V$ is faithful. Then $d \geq 2^{\lfloor (n-2-\kn)/2 \rfloor}$ by \cite{KT}, so we get a
contradiction if $n \geq 15$, or if $n = 14$ but $(G,\ell) \neq (\HA_{14},\ell)$. Now assume that $V$ 
is not faithful and that $V|_{\HAN}$ is not the heart $\DC$ of the natural permutation module. Then 
$d \geq (n^{2}-5n+2)/2$ by \cite[Lem. 6.1]{GT3} and its proof, so we again get a contradiction when 
$n \geq 14$. Thus $V|_{\HAN} \simeq \DC$. Since $\DC$ is of type $+$, $\SM(V)$ is reducible by
Lemma \ref{so-sym}.     
\end{proof}

\begin{lemma}\label{cbasic}
{\sl Let $G \in \{\HAN,\TSN\}$ with $n \geq 12$ and let $V$ be a complex basic spin $G$-module. Then 
$\SK(V)$ is reducible for all $k \geq 2$, and $\WK(V)$ is reducible for all $2 \leq k \leq d/2$.}
\end{lemma}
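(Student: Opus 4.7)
The plan is to reduce the lemma to showing that both $\SB(V)$ and $\WB(V)$ are reducible as $G$-modules. Once this is done, Corollary \ref{sym2}(i) yields the reducibility of $\SK(V)$ for every $k \geq 2$, and the contrapositive of Lemma \ref{wedge}(ii) yields the reducibility of $\WK(V)$ for every $k$ with $2 \leq k \leq d/2$. Both auxiliary results apply directly in characteristic zero.

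To prove the two squares are reducible, I restrict $V$ to a suitable subgroup $H$ of $G$: typically $H := \TS_{n-2}$ if $G = \TSN$ and $H := \HA_{n-2}$ if $G = \HAN$, replacing $H$ by $\TS_{n-3}$ or $\HA_{n-3}$ in the exceptional case where $H$ would carry two non-isomorphic self-dual basic spins (this depends only on the residue of $n$ modulo $4$). Since $\ts \in H$ still acts with quadratic minimal polynomial on $V|_H$, Lemma \ref{basic1} forces every irreducible constituent of $V|_H$ on which $\ts$ acts non-scalarly to be a basic spin $H$-module, and the fact that $z \in H$ acts as $-1$ on $V$ rules out trivial and sign constituents. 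A dimension count together with Wales' branching rules \cite{Wa} then identifies $V|_H$ as a direct sum $W_1 \oplus W_2$ of two (possibly isomorphic) basic spins of $H$. A short case analysis on the self-duality pattern of basic spins of $H$ shows that either $W_1 \cong W_2^{*}$ with $W_1 \not\cong W_2$, or $W_1 \cong W_2$ is self-dual. In either situation the summand $W_1 \otimes W_2$ appearing in both
$$\SB(V)|_H = \SB(W_1) \oplus (W_1 \otimes W_2) \oplus \SB(W_2), \quad \WB(V)|_H = \WB(W_1) \oplus (W_1 \otimes W_2) \oplus \WB(W_2)$$
contains a copy of the trivial $H$-module $1_H$.

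Having obtained $1_H \subset \SB(V)|_H \cap \WB(V)|_H$, Frobenius reciprocity shows that $\SB(V)$ and $\WB(V)$ each share an irreducible constituent with $\Ind_H^G 1_H$. Since $Z(G)$ acts trivially, this induced module factors through $\SN$ or $\AN$, coinciding up to restriction with $\Ind_{\SSS_{n-j}}^{\SN} 1$ for $j \in \{2,3\}$; by Young's rule (for $j = 2$) one has $\Ind_{\SSS_{n-2}}^{\SN} 1 = S^{(n)} + 2 S^{(n-1,1)} + S^{(n-2,2)} + S^{(n-2,1,1)}$ with maximum constituent dimension $(n-1)(n-2)/2$, and an analogous bound of order $n^{3}$ holds in the three-box variant. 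If either $\SB(V)$ or $\WB(V)$ were irreducible, it would have to coincide with such a constituent, forcing $d(d \pm 1)/2$ to be bounded by a polynomial of degree at most $3$ in $n$. But for $n \geq 12$ we have $d \geq 2^{5} = 32$, so $d(d-1)/2 \geq 496$ already exceeds $(n-1)(n-2)/2 = 55$ at $n = 12$, and the gap widens exponentially in $n$ since $d$ doubles each time $n$ increases by $2$ while the bounds are only polynomial. Both $\SB(V)$ and $\WB(V)$ are therefore reducible, completing the proof. The main obstacle is the case analysis on self-duality of basic spin $H$-modules, which dictates the choice of $H$ and guarantees that a copy of $1_H$ sits in both squares simultaneously.
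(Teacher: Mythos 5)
Your proposal is correct and follows essentially the same route as the paper's proof: reduce to $k=2$ via Corollary \ref{sym2}(i) and Lemma \ref{wedge}(ii), restrict to a double cover two or three levels down chosen according to $n \bmod 4$ (your deferred ``short case analysis'' is precisely the reality pattern of basic spin characters from \cite{T1} that the paper invokes) so that $V|_{H}$ contains $U \oplus U^{*}$, deduce that $1_{H}$ sits in both $\SB(V)|_{H}$ and $\WB(V)|_{H}$, and play the resulting Frobenius-reciprocity bound against the exponential growth of $d$. The only divergence is the endgame: you sharpen the bound by viewing $\Ind^{G}_{H}(1_{H})$ as a permutation module for $\SN$ or $\AN$ and applying Young's rule, which covers all $n \geq 12$ uniformly, whereas the paper uses the cruder bound $\dim X(V) \leq (G:H)$ and then treats the borderline cases $12 \leq n \leq 17$ (notably $\HA_{15}$, via the subgroup $\HA_{12} \times \ZZ_{3}$) separately.
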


\begin{proof}
It suffices to prove the statement for $k = 2$. Recall, cf. \cite{T1}, that if $m$ is even then 
$\HA_{m}$ has a unique basic spin character $\alpha_{m}$ which is real-valued, whereas if $m$ is odd then
$\HA_{m}$ has two basic spin characters which are real-valued if and only if $m \equiv 1 (\mod 4)$. 
Choose $m = n$ if $2|n$ or if $n \equiv 1 (\mod 4)$, and $m = n-1$ if $n \equiv 3 (\mod 4)$, and let 
$H := \HA_{m-2}$. Let $\alpha_{m}$ be an irreducible character afforded by the $\HA_{m}$-module $V$; in 
particular, $\alpha_{m}$ is real-valued. All irreducible constituents of $\alpha_{m}|_{H}$ are of degree 
$\alpha(1)/2$ and are basic. If $m$ is even, then the uniqueness of $\alpha_{m-2}$ implies that 
$\alpha_{m}|_{H} = 2\alpha_{m-2}$. If $m$ is odd, then $m-2 \equiv 3 (\mod 4)$ and so the basic spin 
characters of $H$ are not real-valued, whence $\alpha_{m}|_{H} = \alpha_{m-2} + \overline{\alpha}_{m-2}$.
We have show that $V|_{H}$ contains $U \oplus U^{*}$ for some $H$-module $U$. 

Now assume that $X(V)$ is irreducible for some $X \in \{\SB,\WB\}$. Clearly, $X(V)|_{H}$ contains 
$U \otimes U^{*} \supset 1_{H}$ and so $\dim(X(V)) \leq (G:H)$; in particular, 
$d(d-1) \leq 4n(n-1)(n-2)$. The last inequality cannot hold for $n \geq 18$ as 
$d \geq 2^{\lfloor n/2 \rfloor -1}$. If $n \in \{12,13,14,16,17\}$ then $(G:H) \leq 2n(n-1)$
as $m = n$, again yielding a contradiction. If $n = 15$ and $G = \TSN$ then $d = 2^{7}$, leading to 
a contradiction. Finally, assume $G = \HA_{15}$, so $H = \HA_{12}$. We have shown that $X(V)|_{H}$ 
contains $1_{H}$. On the other hand, $G > K := \HA_{12} \times \ZZ_{3}$. Hence $X(V)|_{K}$ contains 
a $1$-dimensional submodule and so $\dim(X(V)) \leq (G:K)$, yielding a contradiction.  
\end{proof}

Notice that $\WB(V)$ is irreducible for a complex spin module $V$ of $\HA_{11}$.

\begin{propo}\label{mbasic}
{\sl Let $G \in \{\HAN,\TSN\}$ and let $V$ be an $\ell$-modular basic spin $G$-module. 
Then $\SK(V)$ is reducible if $k \geq 2$ and $n \geq 16$, or if $k \geq 4$ and $n \geq 14$.}
\end{propo}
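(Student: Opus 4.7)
The strategy parallels that of Lemma \ref{cbasic} and extends the complex argument to characteristic $\ell > k$. Assume for contradiction that $\SK(V)$ is irreducible. First I would choose a subgroup $H \leq G$ of the form $\HA_{m-2}$ or $\TS_{m-2}$, with $m \in \{n, n-1\}$ selected according to the parity of $n$ (mirroring the choice in Lemma \ref{cbasic}). Using the branching theorems of Wales \cite{Wa} and their $\ell$-modular refinements in Kleshchev--Tiep \cite{KT}, the restriction $V|_H$ decomposes as a sum of basic spin modules of $H$, and crucially contains a submodule of the form $U \oplus U^*$ for some basic spin $H$-module $U$ of dimension $e$ with $d := \dim V \geq 2e$.

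For even $k$, Remark \ref{schur}(ii) together with the trace pairing in $\End(\Sym^{k/2}(U))$ yields
$$\SK(V)|_H \supset \Sym^{k/2}(U) \otimes \Sym^{k/2}(U^*) \supset 1_H,$$
so the trivial $H$-module occurs in $\SK(V)|_H$, and Frobenius reciprocity gives $\dim \SK(V) \leq (G : Z(G)H)$. For odd $k$, iterating the embedding of Lemma \ref{sym1} from the base case $U^* = \Sym^{0}(U) \otimes \Sym^{1}(U^*)$ produces an $H$-equivariant inclusion $\Sym^{(k-1)/2}(U) \otimes \Sym^{(k+1)/2}(U^*) \supset U^*$; since this tensor summand lies in $\SK(U \oplus U^*) \subset \SK(V)|_H$, one obtains the weaker bound $\dim \SK(V) \leq e \cdot (G : Z(G)H)$.

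The final step is a numerical comparison. We have $(G:H) = n(n-1)$ while $d \geq \DA_n$ or $\DB_n$, each growing as $2^{\Theta(n/2)}$, so the inequalities $\binom{d+k-1}{k} \leq (G : Z(G)H)$ (for even $k$) and $\binom{d+k-1}{k} \leq e \cdot (G : Z(G)H)$ (for odd $k$) fail in the ranges claimed, giving the desired contradictions.

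The principal obstacle lies in establishing the branching structure $V|_H \supset U \oplus U^*$ uniformly in all $\ell$-modular cases, paying particular attention to the situations where $\ell \mid n$ causes the basic-spin dimension to drop ($\kn = 1$). In such cases one may need to pass to a smaller subgroup such as $\HA_{n-3}$ or $\TS_{n-3}$, or invoke Lemma \ref{basic2} to control any contribution from second basic spin modules of $H$ appearing in $V|_H$. The slight gap between the thresholds $n \geq 16$ for $k \geq 2$ versus $n \geq 14$ for $k \geq 4$ reflects a tighter dimension inequality for small $k$; in the boundary cases one may additionally need to verify that the occurring basic spin submodules of $H$ are of type $+$ in order to invoke Lemma \ref{index}(i) for a sharper bound.
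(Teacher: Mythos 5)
Your proposal has a genuine gap exactly at the step you flag yourself: the claim that $V|_{H}$ contains $U \oplus U^{*}$ for a basic spin $H$-module $U$. That branching picture is a complex-character fact, and the only instances of the proposition not already disposed of by the complex result are precisely those where it breaks down. The paper's first move is to note that if $V$ lifted to a complex module, Lemma \ref{cbasic} would already finish the proof (reduction mod $\ell$ of a reducible complex symmetric power is reducible); hence one may assume $V$ does not lift, which by \cite{KT} forces $\ell \mid n$ and either $G = \TSN$ with $n$ odd or $G = \HAN$ with $n$ even, so that $\dim V$ equals $2^{(n-3)/2}$, resp. $2^{(n-4)/2}$ --- half the complex basic spin dimension. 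In this situation the restriction of $V$ to the natural subgroups of index roughly $n^{2}$ (e.g.\ to $\TS_{n-2}$, or $V|_{\HAN}$ itself) is typically a \emph{single} irreducible basic spin module, not $U \oplus U^{*}$, and whether that module is self-dual of type $+$, of type $-$, or not self-dual at all depends on $n \bmod 8$ via the reality and Frobenius--Schur type of basic spin characters \cite{T1}. When the relevant restriction is of symplectic type, the trivial module lies in the exterior square rather than the symmetric square, so your even-$k$ inclusion $\SK(V)|_{H} \supset \Sym^{k/2}(U) \otimes \Sym^{k/2}(U^{*}) \supset 1_{H}$ has no starting point; your proposed remedies (passing to $\HA_{n-3}$ or $\TS_{n-3}$, or invoking Lemma \ref{basic2}, which concerns \emph{non-basic} modules whose socle contains a basic spin module) do not touch this self-duality/type obstruction.

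What the paper actually does in the non-liftable case is a case analysis on $n \bmod 8$. In the residues where the basic spin restriction is real of type $+$ (e.g.\ $n \equiv 1,3 \pmod 8$ for $\TSN$, or $n \equiv 2,4 \pmod 8$ for $\HAN$ restricted to $\HA_{n-2}$), Lemma \ref{so-sym} or a direct Frobenius reciprocity bound applies. In the remaining residues it descends to $\HA_{n-3}$, $\HA_{n-4}$, $\HA_{n-5}$ or $\HA_{n-6}$, where the basic spin constituents of the complex lift are real but of dimension only $d/2$ or $d/4$; since the index of such a subgroup is now too large for the naive bound, it enlarges the subgroup to $K = H * \HA_{5}$, $H * \HA_{6}$, $H * \HA_{3}$ or $H \times \ZZ_{3}$ and uses Lemma \ref{sub} (with the bounds $\ml(\HA_{5})$, $\ml(6\AAA_{6})$) to manufacture a small $K$-submodule of $\SK(V)$ before applying Frobenius reciprocity; separate arguments are needed for $\ell = 3$ when $8 \mid n$ (a twisted $\HS_{n-5}$-type subgroup) and for the boundary values $n = 14, 15, 23$ (including a divisibility-by-$41$ argument for $n = 23$, $k = 2,3$). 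None of this machinery appears, even in outline, in your sketch, and without it the final numerical comparison you describe cannot be set up in the cases that actually matter.
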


\begin{proof}
Assume the contrary. By Lemma \ref{cbasic}, $V$ cannot lift to a complex module. It follows by 
\cite{KT} that $\ell|n$, and either $G = \TSN$ and $n$ is odd, or $G = \HAN$ and $n$ is even.

\smallskip
1) Here we assume that $G = \TSN$ and $n$ is odd; in particular, $d = 2^{(n-3)/2}$. Hence 
$V|_{\HAN}$ is irreducible, self-dual by Lemma \ref{rat}, and lifts to a complex module $W$.
Clearly, $\SK(V)|_{\HAN}$ is a sum of at most two irreducible constituents, all of the same degree.
The same must be true also for $\SK(W)$. 

\smallskip
Case 1: Assume $n \equiv 3 (\mod 8)$. By \cite[p. 106]{T1}, $V|_{\HAN}$ is of type $+$, whence $\SK(V)$ is 
reducible if $n \geq 11$ by Lemma \ref{so-sym}.

\smallskip
Case 2: Assume $n \equiv 1 (\mod 8)$ and $n \geq 17$. Then $W$ is of type $+$, cf. \cite{T1}, and so is $
V|_{\HAN}$, so we can argue as in Case 1.  

\smallskip
Case 3: Assume $n \equiv 7(\mod 8)$ and $n \geq 15$. Consider a subgroup $H = \HA_{n-5}$. By \cite{T1}, 
any simple submodule $U$ of $W|_{H}$ is basic spin, real, and of dimension $d/4$. Thus $\SK(W)|_{H}$ 
contains $U$ if $k \geq 3$ is odd, resp. $1_{H}$ if $k$ is even. Consider $K = H * \HA_{5}$ inside $\HAN$.
Since $K/H = A_{5}$ and $\ml(\HA_{5}) = 6$, by Lemma \ref{sub} $\SK(W)|_{K}$ has a submodule $T$ of 
dimension $\leq 6\dim(U) = 3d/2$, resp. $\leq 6$. Thus $\SK(W)$ has a subquotient of dimension 
at most $\dim(T)(\HAN:K) = \dim(T)(G:K)/2$. Since $\SK(V)$ is irreducible, $\dim(\SK(V))$ is at most 
$\dim(T)(G:K)$, which is $3d/2 \cdot (G:K)$, resp. $6(G:K)$. In particular,
$2^{n-3} < 2n(n-1)(n-2)(n-3)(n-4)/5$, a contradiction if $n \geq 31$. The upper bound on $\dim(\SK(V))$ 
also gives a contradiction if $n = 15, 23$ and $k \geq 4$. Assume $n = 23$ and $k = 2,3$. Then 
$41$ divides $\dim(\SK(W))$ but not $|\HAN|$, hence $\SK(W)$ cannot be a sum of $1$ or $2$ irreducible
constituents of the same degree.  

\smallskip
Case 4: Assume $n \equiv 5(\mod 8)$ and $n \geq 21$. Consider a subgroup $H = \HA_{n-3}$. By \cite{T1}, 
any simple submodule $U$ of $W|_{H}$ is basic spin, real, and of dimension $d/2$. Thus $\SK(W)|_{H}$ 
contains $U$, resp. $1_{H}$, if $k \geq 3$ is odd, resp. if $k$ is even. Consider 
$K = H \times \ZZ_{3}$ inside $\HAN$. By Lemma \ref{sub} $\SK(V)|_{K}$ has a submodule of dimension 
$\leq \dim(U) = d/2$, resp. $\leq 1$. Thus $\dim(\SK(V))$ is at most $d/2 \cdot (G:K)$, resp. $(G:K)$. 
In particular, $2^{n-3} < 2n(n-1)(n-2)$, a contradiction as $n \geq 21$.

\smallskip
2) Here we assume that $G = \HAN$ and $n$ is even; in particular, $d = 2^{(n-4)/2}$. Hence 
$V|_{\HA_{n-1}}$ is irreducible and lifts to a complex module $W$.

\smallskip
Case 5: Assume $n \equiv 0 (\mod 8)$. Since $2 < \ell|n$, we may assume that $n \geq 24$. 
First we assume that $\ell \geq 5$; in particular $n \geq 40$. Consider a subgroup $H = \HA_{n-6}$. By 
\cite{T1}, any irreducible constituent of $W|_{H}$ is basic spin, real, irreducible modulo $\ell$ (as 
$(\ell,n-6) = 1$) and of dimension $d/4$. It follows that $V|_{H}$ contains a simple submodule 
$U$ of type $+$ and dimension $d/4$. Hence $\SK(V)|_{H}$ contains $U$ if $k \geq 3$ is odd, resp. 
$1_{H}$ if $k$ is even. Consider $K = H * \HA_{6}$ inside $G$. Since $K/H = A_{6}$ and $\ml(6A_{6}) = 15$, 
by Lemma \ref{sub} $\SK(V)|_{K}$ has a submodule of dimension $\leq 15\dim(U) = 15d/4$, resp. $\leq 15$. 
Thus $\dim(\SK(V))$ is at most $15d/4 \cdot (G:K)$, resp. $15(G:K)$. In particular,
$2^{n-4} < n(n-1)(n-2)(n-3)(n-4)(n-5)/12$, a contradiction as $n \geq 40$. 

Now we assume that $\ell = 3$ and consider a subgroup $H = \HA_{n-5}$. Notice that $\HA_{n-1}$ contains 
an overgroup $H_{1} \simeq \TS_{n-5}$ of $H$. If $\Phi$ denotes the representation of $G$ on $V$, 
set $H_{2} := \la \Phi(x), \sqrt{-1}\Phi(y) \mid x \in H, y \in H_{1} \setminus H \ra$. Then 
$H_{2}$ is a subgroup of $GL(V)$ that is isomorphic to $\HS_{n-5}$. In the same way we can make 
$H_{2}$ act on $W$. By \cite{T1}, any irreducible constituent of $W|_{H_{2}}$ is 
basic spin, real, irreducible modulo $\ell$ (as $(\ell,n-5) = 1$) and of dimension $d/2$. It 
follows that $V|_{H_{2}}$ contains a simple submodule $U_{2}$ of type $+$ and dimension $d/2$. Hence, if 
$k \geq 3$ is odd then $\SK(V)|_{H_{2}}$ contains $U_{2}$, and so $\SK(V)|_{H}$ contains a basic spin 
module $U$ of $H$ of dimension $d/4$. If $k$ is even, then $\SK(V)|_{H_{2}}$ contains $1_{H_{2}}$, and so
$\SK(V)|_{H}$ contains $1_{H}$. Consider $K = H * \HA_{5}$ inside $G$. Since $K/H = A_{5}$ and 
$\ml(\HA_{5}) = 10$, by Lemma \ref{sub} $\SK(V)|_{K}$ has a submodule of dimension 
$\leq 6\dim(U) = 3d/2$ if $k$ is odd, resp. $\leq 6$ if $k$ is even. Thus $\dim(\SK(V))$ is at most 
$3d/2 \cdot (G:K)$, resp. $6(G:K)$. In particular, $2^{n-4} < n(n-1)(n-2)(n-3)(n-4)/5$, a contradiction as 
$n \geq 24$. 

\smallskip
Case 6: Assume $n \equiv 6 (\mod 8)$ and $n \geq 14$. Consider a subgroup $H = \HA_{n-4}$. By 
\cite{T1}, any irreducible constituent of $W|_{H}$ is basic spin, real, irreducible modulo $\ell$ (as 
$(\ell,n-4) = 1$) and of dimension $d/2$. It follows that $V|_{H}$ contains a simple submodule 
$U$ of type $+$ and dimension $d/2$. Hence $\SK(V)|_{H}$ contains $U$ if $k \geq 3$ is odd, resp. 
$1_{H}$ if $k$ is even. Consider $K = H * \HA_{3}$ inside $G$. By Lemma \ref{sub} $\SK(V)|_{K}$ has a 
submodule of dimension $\leq \dim(U) = d/2$, resp. $\leq 1$. Thus $\dim(\SK(V))$ is at most 
$d/2 \cdot (G:K)$, resp. $(G:K)$. In particular, $2^{n-4} < n(n-1)(n-2)(n-3)$, a contradiction if 
$n \geq 22$. If $n = 14$ and $k \geq 4$, then the upper bound on $\dim(\SK(V))$ yields 
$2^{2n-8} < 20n(n-1)(n-2)(n-3)$, again a contradiction. 

\smallskip
Case 7: Assume $n \equiv 2,4 (\mod 8)$ and $n \geq 18$. Consider a subgroup $H = \HA_{n-2}$. By 
\cite{T1}, $W|_{H}$ is basic spin, real, irreducible modulo $\ell$ (as $(\ell,n-2) = 1$). It follows that 
$V|_{H}$ is of type $+$. Hence $\SK(V)|_{H}$ contains $V|_{H}$ if $k \geq 3$ is odd, resp. 
$1_{H}$ if $k$ is even. Thus $\dim(\SK(V))$ is at most $d(G:H)$, resp. $(G:H)$. In particular,
$2^{n-4} < 6n(n-1)$, a contradiction as $n \geq 18$. 
\end{proof}

The proof of Theorem \ref{alt} is now completed by the following lemma:

\begin{lemma}\label{tiny}
{\sl Assume $S = \soc(G/Z(G)) = \AN$ with $5 \leq n \leq 13$, $G < GL(V)$, $\ell \neq 2,3$, and $d > 4$. 

{\rm (i)} If $5 \leq n \leq 10$ then $\SK(V)$ is reducible for $k \geq 4$.

{\rm (ii)} Assume $11 \leq n \leq 13$. Then $\SK(V)$ is reducible for $k \geq 4$. Furthermore, either
$\WK(V)$ is reducible for $k \geq 4$, or $G^{(\infty)} = \AN$ and $V|_{\AN} \simeq \DC$, the heart of 
the natural permutation module.}
\end{lemma}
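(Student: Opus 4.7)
The plan is to carry out an exhaustive case-by-case analysis of the faithful irreducible $\FF G$-modules $V$ of dimension $d>4$, for every covering group $G$ of $\SN$ or $\AN$ with $5\le n\le 13$. As in the treatment of the $n\geq 8$ case we may assume $G\in\{\HAN,\TSN\}$ after replacing $G$ by a suitable central extension; for $n=6,7$ we must in addition consider the exceptional triple (resp.\ sextuple) covers of $\AAA_n$, $\SSS_n$. By Lemma \ref{step1}(i) we have $\ell>k\ge 4$, so $\ell\ge 5$; and by Corollary \ref{sym2}(i), once $\ell>k(d-1)$ or $\ell=0$, the reducibility of $\SE(V)$ propagates to all $\SK(V)$ with $k\ge 4$. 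So for most $(G,\ell)$ it suffices to establish reducibility of $\SE(V)$.

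The enumeration of the relevant irreducibles is read off from the ordinary character table together with the decomposition matrices tabulated in \cite{JLPW} and the Modular Atlas; the basic and second basic spin representations are handled through \cite{KT} and the information recorded before Lemma \ref{basic1}. For each representation $V$ the argument will follow one of three templates:
\begin{itemize}
\item[\rm(a)] If $V|_{L}$ with $L:=G^{(\infty)}$ is of type $+$, then Lemma \ref{so-sym}(ii) already gives reducibility of $\SK(V)$ for every $k\ge 2$. This disposes of the bulk of the cases, since a large majority of faithful irreducibles of dimension exceeding $4$ of the covers in question are orthogonal.
\item[\rm(b)] When (a) fails, we mimic the Frobenius-reciprocity bound used in Proposition \ref{nbasic}: pick a subgroup $H\le G$ (typically $\HA_{n-1}$, $\TS_{n-1}$, or the normalizer of $\la\ts\ra$ with $\ts$ a $3$-cycle preimage), decompose $V|_{H}$ into eigenspaces of a central element, locate a small $H$-submodule $F$ inside $\SE(V)|_{H}$, and conclude $\dim\SE(V)\le\dim(F)\cdot(G:H)$. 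A direct numerical comparison with $\binom{d+3}{4}$ then yields a contradiction.
\item[\rm(c)] The basic and second basic spin modules are checked directly using Lemmas \ref{basic1} and \ref{cbasic} together with their $\ell$-modular analogues; the relevant $n$ are small enough that the inductive structure used in Lemma \ref{basic2} applies with no new ideas.
\end{itemize}

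The main obstacle, and where the bulk of the work lies, is completeness: for every pair $(n,\ell)$ with $5\le n\le 13$ and $\ell\ge 5$, one must enumerate every faithful irreducible of dimension $>4$ up to duality, record its type over $L$, and, in cases falling outside template (a), carry out the dimension estimate of (b) or the spin analysis of (c). A handful of tight situations (notably the faithful modules of $6\cdot\AAA_6$, $6\cdot\AAA_7$, and certain modular reductions at small $\ell\mid|G|$) are best verified by direct Brauer-character computation, invoking the decomposition matrices in \cite{JLPW} to read off the constituents of $\SE(V)$.

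For part (ii), the additional claim about $\WK(V)$ follows the same scheme. Since $\ell>4$ and $d>4$, Lemma \ref{wedge}(ii) reduces the problem to showing $\WB(V)$ is reducible whenever $V|_{\AN}\not\simeq\DC$. The heart $\DC$ of the natural permutation module must genuinely be singled out, because the hook Specht modules realized inside $\wedge^{k}(\DC)$ show that its exterior powers are irreducible (and of type $+$, so their symmetric powers are reducible by (a), consistent with (i)). Outside this exception the enumeration and the same Frobenius-reciprocity estimates applied to $\WB(V)|_{H}$ rule out irreducibility of $\WB(V)$, completing the lemma.
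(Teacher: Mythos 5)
Your route diverges from the paper's, and as written it has two genuine gaps. First, the reduction of the exterior-power claim in (ii) to reducibility of $\WB(V)$ via Lemma \ref{wedge}(ii) is not available: that lemma needs $\ell=0$ or $\ell>2k(d-k)$, not merely $\ell\notin\{2,3\}$. In exactly the cases this lemma has to handle (e.g. a basic spin module with $d=32$ for $n=13$ and $\ell\in\{5,7,11,13\}$, with $k$ anywhere in $[4,d-4]$) the inequality $\ell>2k(d-k)$ fails badly, so reducibility of $\WB(V)$ says nothing about $\WK(V)$. The same issue undercuts your reduction of $\SK$ to $\SE$ via Corollary \ref{sym2}(i), whose hypothesis $\ell>k(d-1)$ fails precisely in the modular cases $\ell\mid |G|$ that carry the content of the lemma; deferring these to unspecified ``direct Brauer-character computations'' for every relevant $k$ is not a proof, and for the largest covers involved (e.g. $\HA_{13}$, $\TS_{13}$) the decomposition matrices are not fully tabulated in \cite{JLPW}, so the data you propose to read off is not all in the cited sources.

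Second, and more fundamentally, you are missing the single observation that makes the whole lemma a short check and removes any need for propagation lemmas or an open-ended enumeration: every irreducible $\ell$-modular Brauer character of $G$ has degree at most $\ml(G)\le\ml(\TSN)$, while $\dim\SK(V)\ge\dim\SE(V)$ for all $k\ge4$ and $\dim\WK(V)\ge\dim\WE(V)$ for $4\le k\le d-4$ (and $k\ge\ell$ is already excluded for $\SK$ by Lemma \ref{step1}(i)). Hence irreducibility of any $\SK(V)$ or $\WK(V)$ with $k\ge4$ forces $\dim\WE(V)\le\ml(\TSN)$, which for $n=13$ equals $41600$ and so gives $d\le33$; analogous bounds for $5\le n\le12$ leave, besides the heart $\DC$ (the permitted exception, and of type $+$, hence harmless for $\SK$ by Lemma \ref{so-sym}), essentially only the basic spin modules, which are killed by Lemma \ref{cbasic} when they lift to characteristic $0$ and by a direct check in the one remaining modular case $(G,\ell)=(\TS_{13},13)$. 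Your templates (a)--(c) are reasonable local tools, but without this initial degree bound your plan is unbounded in $d$ and cannot be completed from the available tables; with it, most of the machinery you set up becomes unnecessary.
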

  
\begin{proof}
(i) follows by inspecting \cite{Atlas} and \cite{JLPW}. Assume (ii) is false. We need to look at the 
modules $V$ with $\dim(\WE(V)) \leq \ml(\TSN)$. We give the details of the computation for $n = 13$. Here
$\ml(\TSN) = 41600$, so $d \leq 33$. It follows that $d = 32$ and either $V$ lifts to a complex basic spin
module, or $(G,\ell) = (\TS_{13},13)$. In the former case we are done by Lemma \ref{cbasic}. The latter
case can be checked directly using \cite{Atlas} and \cite{JLPW}.    
\end{proof}

\section{Sporadic groups}
In this section we prove the following theorem:

\begin{theor}\label{spor}
{\sl Theorem \ref{main} holds true in the case $G$ is finite and $S := \soc(G/Z(G))$ is a sporadic 
simple group.}
\end{theor}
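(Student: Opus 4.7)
The plan is to run through the 26 sporadic groups and their Schur covers systematically. For each candidate $G$, the starting point is the bound $\binom{d+k-1}{k} = \dim(\Sym^{k}(V)) \leq \ml(G)$, which for $k = 4$ becomes roughly $d^{4}/24 < \ml(G)$. Combined with the lower bound $d \geq \dl(S)$ coming from \cite{T2} (and with $\ml(G) \leq \ml(\HS) \cdot |\Out(S)|$ where $\HS$ is the universal cover), this reduces, for each sporadic $S$, the possible dimensions of $V$ to a short explicit list. We use Lemma~\ref{step1}(i) to assume $\ell > k$ throughout, and Proposition~\ref{red} to assume $S \lhd G/Z(G) \leq \Aut(S)$.

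For each remaining candidate $(G, V)$ I would compute the symmetric-power character $\Sym^{k}(\chi)$ from the ordinary character table (using the Newton-type formula expressing $\Sym^{k}(\chi)(g)$ in terms of $\chi(g^{i})$) and check irreducibility via $\langle \Sym^{k}(\chi), \Sym^{k}(\chi) \rangle = 1$. When $\ell > 0$, one works with the Brauer character table from the modular Atlas, and uses Lemma~\ref{type} to transfer type information from ordinary characters. This finite check is routine for sporadic groups whose (modular) character tables are completely known, and it will produce the three exceptional examples in conclusion~(iii) of Theorem~\ref{main}: the $6$-dimensional representation of $2J_{2}$ and the $12$-dimensional representations of $2G_{2}(4)$ and $6Suz$. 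In each of these, the Schur-functor decomposition happens to collapse due to a sporadic coincidence between $\Sym^{k}(V)$ and a single irreducible character of a small quasi-simple group.

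When the character table is unavailable or too large to handle entirely by direct inner products, I would fall back on the techniques of \S3--4: embed a subgroup $H \leq G$ whose modular theory is understood, locate a simple $H$-submodule $U$ of $V|_{H}$, and apply Lemma~\ref{index} or Lemma~\ref{sub} to force $\dim(\Sym^{k}(V)) \leq \dim(F) \cdot (G:Z(G)H)$ for some small $F \subseteq \Sym^{k}(V)|_{H}$ (typically $1_{H}$ when $k$ is even and $U$ when $k$ is odd, depending on the type of $U$). Choosing $H$ to be a large maximal subgroup -- e.g.\ $H = 2 \cdot B$ inside $M$, $H = Fi_{23}$ inside $Fi_{24}'$, $H = Co_{2}$ inside $Co_{1}$, and so on -- drives the resulting inequality $d^{k} \lesssim \dim(F) \cdot k!\,(G:H)$ into contradiction in all but the flagged cases. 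For the quasi-Monster situation one also needs Thompson's Lemma~\ref{type} to verify that low-degree constituents of $V|_{H}$ are of type $+$, in order to extract a trivial $H$-summand from $\Sym^{k}(V)|_{H}$.

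The hard part, as anticipated in the introduction, is the Monster $M$ itself. In characteristic $0$ the complex character table of $M$ is available and one can in principle test each low-degree irreducible against $\Sym^{4}$ and $\Sym^{5}$ irreducibility, ruling out $M$ entirely; the main labor is organizing the computation for the many candidates $d \leq (24\ml(M))^{1/4}$. In characteristic $\ell \neq 5,7$ where modular data is essentially absent, the argument must go through the restriction to $2 \cdot B$ (whose $\ell$-Brauer characters are known in more cases) and through reduction-mod-$\ell$ of the complex $196883$-dimensional representation, combined with the Landazuri--Seitz--Zalesskii-type bound $d \geq 196882$ and the dimension estimate $(G:Z(G)H)$ with $H = 2 \cdot B$ to force $d < 8900000$ in the surviving characteristics $\ell \in \{5,7\}$. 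This is precisely conclusion~(iv) of Theorem~\ref{main}, and pinning down the upper bound $8900000$ requires the most delicate sub-argument -- effectively one checks, via the action on the Griess algebra and its twists, that any larger faithful $\ell$-modular representation of $M$ would already have $\Sym^{4}$ reducible by a subgroup restriction. Handling this Monster case, together with similar but easier analyses for $B$ and $Fi_{24}'$, is where the bulk of the remaining work -- and the reliance on the auxiliary computations credited to Breuer, Lübeck, Malle, and Müller in the acknowledgements -- will be concentrated.
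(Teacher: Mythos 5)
Your overall strategy is essentially the paper's: cut the candidates down with $\dim(\SE(V))\leq \ml(G)$ together with the known lower bounds on $d$, dispose of the sporadic groups with known (modular) character data by direct inspection of \cite{Atlas}, \cite{JLPW}, \cite{HM}, \cite{ModAt}, and attack the largest groups by restricting to a big subgroup ($Co_{2}$ in $Co_{1}$, $Fi_{23}$ in $Fi'_{24}$, $2\cdot\ta E_{6}(2)$ in $B$, $2\cdot B$ in $M$), extracting a small submodule of $\SK(V)|_{H}$ (a trivial one for even $k$, via type-$+$ information coming from Thompson's Lemma \ref{type} and Brauer trees) and applying Frobenius reciprocity as in Lemmas \ref{index} and \ref{sub}; the surviving Monster case $\ell\in\{5,7\}$, $196882\leq d<8900000$, is exactly conclusion (iv) of Theorem \ref{main}.

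One step of your Monster discussion, however, is mislocated and as described would send you down a dead end. The bound $8900000$ is not the delicate part and has nothing to do with the Griess algebra or with subgroup restrictions: it is precisely your own inequality $d\leq(24\,\ml(M))^{1/4}$, i.e.\ for $d\geq 8.9\cdot 10^{6}$ one has $\dim(\SE(V))>\ml(M)$ and $\SE(V)$ is reducible outright, in every characteristic. The genuinely hard regime is the opposite one, $196882\leq d<8.9\cdot 10^{6}$ with $\ell\notin\{0,5,7\}$: there one must know that every relevant Brauer character of $2\cdot B$ (of $B$ when $\ell=47$) of degree below that bound is of type $+$, which for $\ell=13$ and $\ell=47$ requires the Brauer trees of \cite{HL}, and for $\ell=47$ a further argument with the eigenspaces $V_{0},V_{1}$ of a $2A$-involution, defect-zero/block considerations, and character values at a second commuting $2A$-involution; it is this analysis, not the numerical bound, that cannot be pushed through for $\ell=5,7$ (note also that for $\ell=5$ the case $k=5$ is excluded at the outset by Lemma \ref{step1}). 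Your toolkit (restriction to $2\cdot B$, Lemma \ref{type}, Frobenius reciprocity) is the right one, so this is repairable, but the plan as written misidentifies where the work lies. A minor bookkeeping point: of the exceptions in Theorem \ref{main}(iii), only $(6,2J_{2})$ and $(12,6Suz)$ arise in the sporadic case; $(12,2G_{2}(4))$ has a Lie-type socle and belongs to Theorem \ref{cross}.
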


\begin{proof}
Obviously, we need to consider only the cases where 
\begin{equation}\label{bound1}
  \dim(\WD(V)) < \ml(G)~.
\end{equation}
For brevity, we take the convention that the condition $k \geq k_{0}$ for $\WK(V)$ will actually mean
that $k_{0} \leq k \leq d-k_{0}$. In fact, we will also work with $\WK(V)$ and we assume that $\ell > 3$.
The detailed results are listed in Table I (below), where in the third column we list the values 
$(k_{1},d_{1})$ such that $k_{1}$ is the (known) highest possible $k \geq 3$ for which $\SK(V)$ is 
irreducible (over some extension $G$ of $L$ and for some $G$-module of dimension $d_{1}$ in some 
characteristic $\ell$), and the fifth column we list the values $(k_{2},d_{2})$ such that $k_{2}$ is the 
(known) highest possible $k \geq 3$ for which $\WK(V)$ is irreducible (over some extension $G$ of $L$ and 
for some $G$-module of dimension $d_{2}$ in some characteristic $\ell$). The cases marked by$\diam$
are the ones where we only look at $\SK(V)$ and $\WK(V)$ with $k \geq 4$. The cases marked by$\heart$
are the ones where we only look at $\SK(V)$ with $k \geq 4$ and $\WK(V)$ with $k \geq 5$. The cases 
marked by$\spade$ are the ones where we only look at $\SK(V)$ with $k \geq 4$ and $\WK(V)$ with $k \geq 6$.

\begin{figure}[ht]
\centerline
{{\sc Table} I. Groups $G$ with $S := \soc(G/Z(G))$ being a sporadic finite simple group.}
\vspace{0.3cm}
\begin{tabular}{|c||c|c|c|c|} \hline
   $S$ & $\SK$ reducible when & Irreducible $\SK$ & $\WK$ reducible when & Irreducible $\WK$ 
    \\ \skipa \hline \hline
   $M_{11}$ & $k \geq 3$ & & $3 \leq k \leq d-3$ & \\ \hline
   $M_{12}$ & $k \geq 4$ & $k=3,~d=10$ & $4 \leq k \leq d-4$ & $k=3,~d=10,12$\\ \hline
   $J_{1}$ & $k \geq 3$ & & $3 \leq k \leq d-3$ & \\ \hline
   $M_{22}$ & $k \geq 3$ & & $6 \leq k \leq d-6$ & $k=5,~d=10$\\ \hline
   $J_{2}$ & $k \geq 6$ & $k=5,~d=6$ & $3 \leq k \leq d-3$ & \\ \hline
   $M_{23}$ & $k \geq 3$ & & $3 \leq k \leq d-3$ & \\ \hline
   $HS$ & $k \geq 3$ & & $4 \leq k \leq d-4$ & $k=3,~d=22$\\ \hline
   $J_{3}$ & $k \geq 4$ & $k=3,~d=18$ & $4 \leq k \leq d-4$ & $k=3,~d=18$\\ \hline
   $M_{24}$ & $k \geq 3$ & & $4 \leq k \leq d-4$ & $k=3,~d=23$\\ \hline
   $McL$ & $k \geq 3$ & & $4 \leq k \leq d-4$ & $k=3,~d=22$\\ \hline
   $He$ & $k \geq 3$ & & $3 \leq k \leq d-3$ & \\ \hline \hline
   $Ru\diam$ & $k \geq 4$ & & $6 \leq k \leq d-6$ & $k=5,~d=28$\\ \hline
   $Suz\diam$ & $k \geq 6$ & $k=5,~d=12$ & $4 \leq k \leq d-4$ & $k=3,~d=12$\\ \hline
   $O'N\diam$ & $k \geq 4$ & & $4 \leq k \leq d-4$ & \\ \hline
   $Co_{3}\diam$ & $k \geq 4$ & & $4 \leq k \leq d-4$ & $k=3,~d=23$ \\ \hline
   $Co_{2}\diam$ & $k \geq 4$ & & $4 \leq k \leq d-4$ & $k=3,~d=23$ \\ \hline
   $Fi_{22}\diam$ & $k \geq 4$ & & $4 \leq k \leq d-4$ &  \\ \hline
   $HN\diam$ & $k \geq 4$ & & $4 \leq k \leq d-4$ &  \\ \hline 
   $Fi_{23}\diam$ & $k \geq 4$ & & $4 \leq k \leq d-4$ &  \\ \hline    
   $J_{4}\diam$ & $k \geq 4$ & & $4 \leq k \leq d-4$ & $k=3,~d=1333$  \\ \hline \hline
   $Ly\heart$ & $k \geq 4$ & & $5 \leq k \leq d-5$ &  \\ \hline
   $Th\heart$ & $k \geq 4$ & & $5 \leq k \leq d-5$ &  \\ \hline
   $Co_{1}\heart$ & $k \geq 4$ & & $5 \leq k \leq d-5$ & $k=3,~d=24$ \\ \hline
   $Fi'_{24}\heart$ & $k \geq 4$ & & $5 \leq k \leq d-5$ &  \\ \hline \hline
   $B\spade$ & $k \geq 4$ & & $6 \leq k \leq d-6$ &  \\ \hline
   $M\spade$ & $\begin{array}{l}k \geq 4,~\ell \neq 5,7\\k \geq 5,~\ell = 5\\k \geq 6,~\ell = 7\end{array}$ 
             & & $6 \leq k \leq d-6$ &  \\ \hline
\end{tabular}
\end{figure}

1) For the first $11$ sporadic groups, the $\ell$-modular decomposition matrix is completely known, 
cf. \cite{JLPW}. In these cases, it is straightforward to verify the above statements. 
Assume $S = M_{11}$. Then $\ml(G) = 55$ and $d \geq 10$, and so (\ref{bound1}) cannot hold. 
Assume $S = M_{12}$. Then $\ml(G) = 320$ and so (\ref{bound1}) implies $d \leq 13$. Using \cite{Atlas} 
and \cite{JLPW}, we can check that $\SK(V)$ and $\WK(V)$ are reducible for $k \geq 4$, $\SD(V)$ is 
reducible except when $d= 10$, and $\WD(V)$ is reducible except when $d = 10, 12$. The cases $S = J_{1}$, 
$M_{23}$ are similar.

Assume $S = M_{22}$. Then $\ml(G) = 1120$ and so $d \leq 19$. It follows that either $d = 10$, 
in which case $\SK(V)$ is reducible for $k \geq 3$, $\WK(V)$ is reducible for $k \geq 6$ but irreducible
(over $L \cdot 2$) when $\ell = 0$ and $k \leq 5$, or $(d,\ell) = (16,7)$, in which case 
$\SK(V)$ is reducible for $k \geq 3$, $\WK(V)$ is reducible for $k \geq 4$, but irreducible for $k = 3$. 
The cases $S = J_{2}$, $HS$, $J_{3}$, $M_{24}$, $McL$, $He$ are similar.

2) For the remaining $15$ sporadic simple groups which are not included in \cite{JLPW}, we will 
work with the stronger bound 
\begin{equation}\label{bound2}
  \dim(\WE(V)) < \ml(G)~.
\end{equation}
The lower bound for $d$ is listed in \cite{Jan}. In some cases, the modules $V$ satisfying (\ref{bound1}) 
are determined using \cite{HM}. We also use the decomposition matrices available online at 
\cite{ModAt}. Assume $S = Ru$. Then $\ml(G) = 250,560$ and so (\ref{bound2}) implies 
$d \leq 52$, whence $d = 28$ by \cite{HM}. It follows that $V$ lifts to a complex module $\VC$. Using 
\cite{Atlas} and \cite{JLPW}, we can check that $\SK(\VC)$ is reducible for $k \geq 4$, $\WK(\VC)$ is 
reducible for $k \geq 6$ and irreducible for $k \leq 5$. The cases $S = Suz$, $O'N$, $CO_{3}$,
$Co_{2}$, $Fi_{22}$, $HN$, $Fi_{23}$, $J_{4}$ are similar. 

3) For the $6$ largest sporadic simple groups, there is only very scarce information about the 
irreducible $\ell$-modular Brauer characters of them and their covers. Even in the case the Sylow 
$\ell$-subgroups of $L$ are cyclic, the shape of the Brauer tree is not known in some cases, cf. \cite{HL}.

Assume $S = Ly$. Then (\ref{bound2}) implies $d \leq 203$. Hence $d = 111$ and $V$ is of type $+$ by 
\cite{HM}, and so $\SK(V)$ is reducible for $k \geq 2$ and $\WK(V)$ is reducible for $k \geq 5$.

Assume $S = Th$. Then $d \geq 248$, whence $\dim(\WF(V)) > \ml(G)$ and so $\SK(V)$ and $\WK(V)$ are 
reducible for $k \geq 5$. Claim that $\SE(V)$ is reducible as well. Assume the contrary. Consider subgroups
$H = \tb D_{4}(2)$ and $K = H \cdot 3$ of $G$. Then all irreducible $\ell$-modular Brauer characters of 
$H$ are of type $+$. The proof of Lemma \ref{index}(i) implies that $\SE(V)|_{H}$ contains $1_{H}$, whence
$\SE(V)|_{K}$ contains a $1$-dimensional submodule by Lemma \ref{sub}(i). By Frobenius' reciprocity, 
$\dim(\SE(V)) \leq (G:Z(G)K)$ and so $d \leq 242$, a contradiction.  

Assume $S = Co_{1}$. Then (\ref{bound2}) implies $d \leq 398$. In fact if $d \geq 170$ then 
$\dim(\WF(V)) > \ml(G)$ and so $\SK(V)$ and $\WK(V)$ are reducible for $k \geq 5$. If $d < 170$ then 
$d = 24$ and $V$ lifts to a complex module $\VC$, cf. \cite{HM}, with reducible $\WF(\VC)$ and 
$\SB(\VC)$. Claim that $\SE(V)$ is reducible if $170 \leq d \leq 398$. Assume the contrary. Consider the 
subgroup $H = Co_{2}$ of $G$. Using \cite{ModAt}, one can check that all irreducible $\ell$-modular 
Brauer characters of $H$ of degree $\leq 398$ are of type $+$. Now Lemma \ref{index}(i) implies that 
$d \leq 39$, a contradiction.

Assume $S = Fi'_{24}$. Then $d \geq 781$, whence $\dim(\WF(V)) > \ml(G)$ and so $\SK(V)$ and $\WK(V)$ are 
reducible for $k \geq 5$. In fact if $d \geq 2726$ then $\dim(\SE(V)) > \ml(G)$ and so $\SE(V)$ is
reducible. Claim that $\SE(V)$ is also reducible if $2 \leq d \leq 2725$. Assume the contrary. Consider the 
subgroup $H = Fi_{23}$ of $G$. Using \cite{ModAt}, one can check that all irreducible $\ell$-modular 
Brauer characters of $H$ of degree $\leq 2725$ are of type $+$ if $\ell \neq 17$. In the case 
$\ell = 17$, $H$ has exactly $17$ complex irreducible characters of positive $17$-defect and they all
belong to the principal $17$-block. The shape of the Brauer tree of this block is determined in 
\cite{HL}. Using this information we can show that the irreducible Brauer characters in the
block are either trivial, or equal to $\hat{\chi}-1_{H}$, or of degree $\geq 3588$, where $\chi$ is
the unique character of degree $783$ in $\Irr(H)$ and $\hat{\chi}$ denotes the restriction of $\chi$ to 
$\ell'$-elements. Now it is easy to verify our claim for $\ell = 17$. Hence Lemma \ref{index}(i) implies 
that $d < 83$, a contradiction.

4) Assume $S = B$. Then $d \geq 4370$, whence $\dim(\wedge^{6}(V)) > \ml(G)$ and so $\SK(V)$ and $\WK(V)$ are 
reducible for $k \geq 6$. In fact if $d \geq 29,130$ then $\dim(\SE(V)) > \ml(G)$ and so $\SE(V)$ is
reducible. Claim that $\SE(V)$ and $\SF(V)$ are also reducible if $2 \leq d \leq 29,129$ (in particular 
$L = S$) and $\ell \neq 5,7$. Assume the contrary. Consider the subgroup 
$H = C_{G}(t) \simeq 2 \cdot \ta E_{6}(2)$ for some involution $t \in G$. Then $V_{0} := \Ker(t-1)$ is 
actually a nonzero $\ta E_{6}(2)$-module. Using \cite{Atlas} and \cite{ModAt}, one can check that all 
irreducible $\ell$-modular Brauer characters of $\ta E_{6}(2)$ of degree $\leq 29,129$ are of type $+$ if 
$\ell \geq 11$. This is also true for $\ell = 7$, cf. \cite{Mu1} and for $\ell = 5$, cf. \cite{Mu2}. 
Now any simple $H$-submodule $U$ of $V_{0}$ is of type $+$. Hence Lemma \ref{index}(i) and its proof imply 
that $d < 1600$, a contradiction.

5) From now on we assume $S = M$. Since $\Mult(M) = 1$, $L = S$, $G = Z(G) \times S$. Without loss
we may assume $G = M$. Now $d \geq 196,882$ and $\ml(G) < (2.6) \cdot 10^{26}$, whence 
$\dim(\wedge^{6}(V)) > \ml(G)$ and so $\SK(V)$ and $\WK(V)$ are reducible for $k \geq 6$. In fact if 
$d \geq D := (8.9) \cdot 10^{6}$ then $\dim(\SE(V)) > \ml(G)$ and so $\SE(V)$ is reducible. We will show that
if $2 \leq d < D$ and $\ell \neq 5,7$ then $\SK(V)$ is also reducible for $k = 4,5$. Assume the contrary. 
According to \cite{Atlas} and \cite{ModAt}, any such a $V$ is of type $+$ if $\ell = 0$, $17$, $19$, $23$, 
or $31$. So $\ell \in \{11,13,29,41,47,59,71\}$. 

Consider the subgroup $H = 2 \cdot B = C_{G}(t)$, where $t$ is an involution of class $2A$ of $G$. Since
$\ell \neq 2$, we have $V = V_{0} \oplus V_{1}$, where $V_{j} = \Ker(t-(-1)^{j}) \neq 0$. According to 
\cite{Jan}, $\dim(V_{1}) \geq 96256$. Claim that any Brauer character $\psi$ of degree $<D$, in $\IBRL(H)$ if 
$\ell \neq 47$ and in $\IBRL(B)$ if $\ell = 47$, is of type $+$. Using \cite{Atlas} and \cite{ModAt}, one 
readily checks the claim for $\ell \in \{11,29,41,59,71\}$. Also, one needs to consider only characters 
$\psi$ belonging to $\ell$-blocks of positive defect. Notice that $\Irr(H)$ have a unique character of 
degree $1$, resp. $4371$, $96,255$, $1,139,374$, $96,256$, all of type $+$, and we denote them by 
$\chi_{1}$, resp. $\chi_{2}$, $\chi_{3}$, $\chi_{4}$, $\chi_{5}$. Assume $\ell = 13$. Then $H$ has $7$ 
blocks of positive defect (all with cyclic defect group), and the shapes of their Brauer trees have been 
determined in \cite{HL}. Now we can find the degrees of Brauer characters in these blocks. It follows that 
such a $\psi$ is $\hat{\chi}_{i}$ with $1 \leq i \leq 5$, and so the claim follows. Assume $\ell = 47$. Then
the only block of positive defect of $B$ is the principal block (and it has cyclic defect group). There are 
five possible shapes for the Brauer tree of this block, as shown in \cite{HL}. Now we can find the degrees 
of Brauer characters in all of these five cases. It follows that such a $\psi$ is either $\hat{\chi_{i}}$ 
with $i = 1,3,4$ or $\hat{\chi}_{2}-1_{B}$, and so the claim follows again.
  
6) First we handle the case $\ell \neq 47$. The above claim implies that, for $j = 0,1$, $V_{j}|_{H}$ 
contains a simple submodule $U_{j}$ of type $+$. Now if $k = 4$, then 
$$\SE(V)|_{H} \supset \SE(U_{0}) \oplus \SE(U_{1}) \supset 2 \cdot 1_{H}.$$
Thus 
$$2 \leq \dim\Hom_{H}(1_{H},\SE(V)|_{H}) = \dim\Hom_{G}(\Ind^{G}_{H}(1_{H}),\SE(V))$$ 
and so $\dim(\SE(V)) \leq (G:H)/2$ by Frobenius' reciprocity. It follows that $d \leq 186,120$, a 
contradiction. Assume $k = 5$. Then  
$$\SF(V)|_{H} \supset \SF(U_{0}) \oplus \SF(U_{1}) \supset U_{0} \oplus U_{1}.$$
Among $U_{0}$ and $U_{1}$ we choose $U_{j}$ of smaller dimension. Then Frobenius' reciprocity implies
$\dim(\SF(V)) \leq (G:H)d/2$, whence $d \leq 278,315$ and $\dim(V_{0}) \leq d-96,256 = 182,059$. Thus 
all composition factors of $V_{0}|_{H}$ have dimension $1$, $4371$, or $96,255$, and all composition factors 
of $V_{1}|_{H}$ have dimension $96,256$. Claim that $V_{0}|_{H}$ has a simple submodule or a simple quotient,
call it $U$, of dimension $\leq 4371$. (If not, all simple submodules and simple quotients of it have 
dimension $96,255$. But $\dim(V_{0}) < 2 \cdot 96,255$, so in fact $\dim(V_{0}) = 96,255$. Now we have 
$$100,627 = 196,882 - 96,255 \leq \dim(V_{1}) \leq 182,060 = 278,315 - 96,255,$$
which is impossible as all composition factors of $V_{1}|_{H}$ have dimension $96,256$.) Since $V_{0}$ is a 
direct summand of $V|_{H}$, Frobenius' reciprocity implies 
$\dim(\SF(V)) \leq \dim(U) \cdot (G:H) \leq 4371 \cdot (G:H)$, and so $d \leq 139,300$, again a 
contradiction.  

7) Finally we treat the case $\ell = 47$. The claim proved in 5) implies that $V_{0}|_{H}$ 
contains a simple submodule $U_{0}$ of type $+$. Let $\varphi$, resp. $\alpha$, $\beta$, denote the Brauer 
character afforded by $V$, resp. $V_{0}|_{H}$, $V_{1}|_{H}$. According to \cite{Atlas}, $G$ has another
involution $t'$ such that $t,t',tt'$ are all in the class $2A$ of $G$ and $t'$ belongs to class $2A$ in $H$;
in fact $C_{G}(\la t,t' \ra) = 2^{2} \cdot \ta E_{6}(2)$. Observe that $\beta(t') = 0$. Indeed, 
$t|_{V_{0}} = 1_{V_{0}}$ and $t|_{V_{1}} = -1_{V_{1}}$, whence $\alpha(tt') = \alpha(t')$ and 
$\beta(tt') = -\beta(t')$. Now
$$\alpha(t') + \beta(t') = \varphi(t') = \varphi(tt') = \alpha(tt') + \beta(tt') = \alpha(t') - \beta(t'),$$
and so $\beta(t') = 0$.

By Lemma \ref{index}(i) we get $d \leq 221,336$ if $k = 4$ 
and $d \leq 330,975$ if $k = 5$. It follows that all irreducible constituents of $\alpha$ have degree $1$, 
$4371$, or $96,254$, and we label them as $\psi_{1}$, $\psi_{3}$, and $\psi_{2}$, respectively. As in 6), 
we see that $V_{0}|_{H}$ cannot have a simple submodule or a simple quotient of dimension $\leq 4371$ if 
$k = 5$, and it cannot have two distinct simple submodules if $k = 4$. Observe that $\psi_{1}$ and 
$\psi_{2}$ belong to the principal block and $\psi_{3}$ has defect $0$. Claim that all composition factors 
of $V_{0}|_{H}$ belong to the principal block. Assume the contrary. Then $k \neq 5$ as we noted. Now if 
these composition factors involve two different blocks then $V_{0}|_{H}$ has simple submodules $U_{0}$ and 
$U_{1}$ (from different blocks), a contradiction. So all composition factors of $V_{0}|_{H}$ have Brauer 
character $\psi_{3}$ (of degree $4371$) and are isomorphic to $U_{0}$. Since $\psi_{3}$ has defect $0$, 
$\Ext^{1}_{H}(U_{0},U_{0}) = 0$. Thus $V_{0}|_{H}$ is in fact a direct sum of say $a$ copies of $U_{0}$. 
As we noted, $a$ cannot be greater than $1$, so $\alpha = \psi_{3}$, in particular, $\alpha(t') = -493$.
Now
$$4371 - \dim(V_{1}) = \varphi(t) = \varphi(t') = \alpha(t') + \beta(t') = -493 + 0,$$
whence $\dim(V_{1}) = 4864$ and $d = 9235$, a contradiction.  

Thus we may write $\alpha = x\psi_{1} + y\psi_{2}$ for some non-negative integers $x,y$. Then 
$$x + y \cdot 96,254 - \dim(V_{1}) = \varphi(t) = \varphi(t') = \alpha(t') + \beta(t') = 
  x + y \cdot 4862 + 0,$$
and so $\dim(V_{1}) = y \cdot 91,392$. Since $\dim(V_{1}) \geq 96,256$, $y \geq 2$. It follows 
that $d \geq 2 \cdot 96,255 + 96,256 = 288,766$ and so $k = 5$ (as $d \leq 221,336$ if $k = 4$). Clearly, 
$\dim(U_{0}) \leq 96,254$ and $\SF(V)|_{H} \supset \SF(U_{0}) \supset U_{0}$. By Frobenius' reciprocity,
$\dim(\SF(V)) \leq (G:H) \cdot 96,254$, whence $d \leq 258,535$, contradicting the bound
$d \geq 288,766$.

To complete the proof, we notice that $\SF(V)$ is reducible when $\ell = 5$ by Lemma \ref{step1}.
\end{proof}

\section{Proofs of Main Results}

{\bf Proof of Theorem \ref{main}.} It follows immediately from Proposition \ref{red} and Theorems 
\ref{defi}, \ref{cross}, \ref{extra}, and \ref{spor}.
\hfill $\Box$

\medskip
{\bf Proof of Theorem \ref{lowdim}.} Since there is nothing to prove in the case $d = 1$, we may assume 
that $d \geq 2$. Now we can apply Proposition \ref{red} to $G$. 

First we consider the case $\GNC \neq 1$. If $d = 2$ then $\GC$ and $\GNC$ are both of type $A_{1}$ 
whence $\GNC = SL(V)$. If $d \geq 3$, then $\ell > k$ by Lemma \ref{step1} and we can apply Theorem 
\ref{defi} to $\GNC$.

Next we consider the case where the conclusion (iii) of Proposition \ref{reduction} holds. As in the 
proof of Theorem \ref{extra} we may assume $\ell = 0$ and $k = 4$; in particular $\dim(\SE(V))$ divides 
$|G|$. Now if $d = 2$ then $P$ is a $2$-group and $G/P \leq Sp_{2}(2)$ is a $\{2,3\}$-group, but 
$\dim(\SE(V)) = 5$. If $d = 3$ then $P$ is a $3$-group and $G/P \leq Sp_{2}(3)$ is a $\{2,3\}$-group, but 
$\dim(\SE(V)) = 15$. If $d = 4$ then $P$ is a $2$-group and $G/P \leq Sp_{4}(2)$ is a $\{2,3,5\}$-group, but 
$\dim(\SE(V)) = 35$. Thus all these possibilities cannot occur here.

So we may assume that the conclusion (ii) of Proposition \ref{reduction} holds. Consider $L := G^{(\infty)}$ 
and $S := L/Z(L)$. First suppose that $\ell > 0$ and $S \in Lie(\ell)$. If $d = 2$ then 
$L = SL_{2}(q)$ by \cite[Prop. 5.4.13]{KL}. If $d \geq 3$, then $\ell > k$ and again we can apply Theorem 
\ref{defi} to $\GN$. Now we may assume that $S \notin Lie(\ell)$ if $\ell > 0$. Existing lower bounds on the
dimension of (projective) irreducible representations of $S$, cf. \cite{KL} and \cite{LS}, and the condition 
$d \leq 4$ imply that $S \in \{\AAA_{5},\AAA_{6},\AAA_{7},PSL_{2}(7),PSL_{3}(4),PSU_{4}(2)\}$. 
We will analyze these
possibilities case by case and apply the obvious upper bound $\dim(\SE(V)) \leq \ml(G)$. 

Assume $S = \AAA_{5}$. Since $\ml(G) = 6$, $d = 2$, and we arrive at (iii) by inspecting \cite{Atlas} and 
\cite{JLPW}. Assume $S = \AAA_{6}$. Since $S \notin Lie(\ell)$, $\ell \neq 2,3$. It follows that $d = 3,4$ and
$V$ lifts to a complex module $\VC$. Now it is easy to check that $\SE(\VC)$ is reducible and so is 
$\SK(V)$. A similar argument applies to the cases $S = PSL_{2}(7)$, $PSU_{4}(2)$.  
Assume $S = PSL_{3}(4)$. Since $S \notin Lie(\ell)$, $\ell \neq 2$, whence $d \geq 3$ by \cite{JLPW}. This 
in turn implies by Lemma \ref{step1} that $\ell > k \geq 4$ and so $d \geq 6$ by \cite{JLPW}. Finally,
assume $S = \AAA_{7}$. Again, $d \geq 3$ by \cite{JLPW} and so $\ell \geq 5$ by Lemma \ref{step1}. If 
$d = 4$ then $V$ lifts to a complex module $\VC$ and $\SE(\VC)$ is reducible. So $d = 3$, $\ell = 5$ and we 
arrive at (iv).     
\hfill $\Box$

\medskip
{\bf Proof of Corollary \ref{sym3}.} Clearly we need to consider only the case where $\ell > 0$ and 
$d \geq 3$. Now the statement is a consequence of Theorems \ref{main} and \ref{lowdim}
(and a direct computation for the small groups listed in Theorem \ref{main}(iii) and 
Theorem \ref{lowdim}(iv)).  
\hfill $\Box$

\section{Larsen's conjecture}
Let $\FF$ be an algebraically closed field of characteristic $0$, $V = \FF^{d}$ with 
$d > 4$, and let $\GC = GL(V)$, $GO(V)$, or $Sp(V)$. Label the fundamental weights of 
$\GC$ such that $V = L(\om_{1})$, the irreducible $\GC$-module with highest weight $\om_{1}$, and 
$L(\om_{k})$ is a subquotient of the $\GC$-module $\WK(V)$ if $d \geq 2k$. Then $L(k\om_{1})$ is a 
subquotient of the $\GC$-module $\SK(V)$. 

\medskip
{\bf Proof of Theorem \ref{sym-alt}.}
Let $G$ be as in the theorem, $\GC = GO(V)$, and assume that $G$ is irreducible on 
$L(4\om_{1}) = \SE(V)/\SC(V)$. By Corollary \ref{sym2}(ii), $G$ is irreducible on 
$L(2\om_{1}) = \SB(V)/1_{G}$. By \cite[Cor. 1.7]{T2}, one of the following holds for $S := \soc(G/Z(G))$
and $L := G^{(\infty)}$.

a) $d = 7$ and $G = G_{2}(\FF)$. Here $G$ is reducible on $\WB(V)$.

b) $d = 2^{a} \geq 8$ and $G \leq N_{\GC}(E)$ for $E = 2^{1+2a}_{+}$. By Theorem \ref{extra}, $G$ is 
reducible on $\WE(V)$.

c) $d = (5^{n}+1)/2 \geq 13$, $S = PSp_{2n}(5)$, and $V|_{S}$ is a Weil representation.
We restrict $V$ to the subgroup $C := C_{S}(t)$, where $t$ is a long-root element, and apply 
Lemma \ref{index}(ii). It follows that $G$ is reducible on $ \WE(V)$.

d) $d = (3^{2n+1}+1)/4 \geq 7$, $S = SU_{2n+1}(3)$, and $V|_{S}$ affords the Weil character 
$\zeta^{2}_{2n+1,3}$, cf. \cite{TZ2}. Then for the subgroup $H = SU_{2n}(3)$, $V|_{H}$ affords
the character $\zeta^{0}_{2n,3} + \zeta^{1}_{2n,3} + \bar{\zeta}^{1}_{2n,3}$. When $n \geq 2$ we can apply 
Lemma \ref{index}(ii) to see that $G$ is reducible on $\WE(V)$. When $n = 1$, clearly
$G$ is reducible on $L(4\om_{1})$ (of dimension $182$).

e) $d = (2^{2n}+2)/3 \geq 6$, $S = SU_{2n}(2)$, and $V|_{S}$ affords the Weil character 
$\zeta^{0}_{2n,2}$, cf. \cite{TZ2}. Then for the subgroup $H = SU_{2n-1}(2)$, $V|_{H}$ affords
the character $\zeta^{1}_{2n-1,2} + \bar{\zeta}^{1}_{2n-1,2}$. When $n \geq 4$ we can apply 
Lemma \ref{index}(ii) to see that $G$ is reducible on $\WE(V)$. When $n = 2$, clearly
$G$ is reducible on $L(4\om_{1})$ (of dimension $105$).  
      
f) $(d,L) = (7,SL_{2}(8))$, $(18,Sp_{4}(4))$, $(7,Sp_{6}(2))$, $(8,\HA_{8})$, $(8,\HA_{9})$, 
$(8,2 \cdot Sp_{6}(2))$, $(8,\Om^{+}_{8}(2))$, $(14,G_{2}(3))$, $(22,McL)$, $(23,Co_{3})$, $(23,Co_{2})$, 
$(24,2 \cdot Co_{1})$, $(52,2 \cdot F_{4}(2))$, $(78,Fi_{22})$, $(133,HN)$, $(248,Th)$. In all cases but
$(24,2 \cdot Co_{1})$, $G$ is reducible on $L(4\om_{1})$. In the case of $(24,2 \cdot Co_{1})$, $G$ is 
reducible on $\WE(V)$ (but observe that $G$ is irreducible on $L(k\om_{1})$ for $k \leq 5$ !)
\hfill $\Box$

\medskip
{\bf Proof of Corollary \ref{larsen}.}
Notice that the reductivity of $G^{\circ}$ implies that the $G$-module $V^{\otimes 4}$ is semisimple. Also, 
$L(4\om_{1})$ and $L(\om_{4})$ are composition factors of the $\GC$-module $V^{\otimes 4}$, and $L(\om_{2})$ 
is a composition factors of the $\GC$-module $V^{\otimes 2}$. Hence the statement follows from Theorem 
\ref{sym-alt} in the case $\GC = GO(V)$ (notice that here $d > 4$ by the assumptions). Assume that 
$\GC = GL(V)$ or $Sp(V)$. Then $L(4\om_{1}) = \SE(V)$, and we can apply Theorems \ref{main} and 
\ref{lowdim}. First suppose that $\GC = GL(V)$. Then notice that $Sp(V)$ is reducible on the $\GC$-submodule 
$L(\om_{4}) = \WE(V)$ if $d \geq 8$ and on $L(\om_{2}) = \WB(V)$ if $d = 4,6$; furthermore, 
$G$ is reducible on $\WE(V)$ in the cases $(d,L) = (12,2G_{2}(4))$, $(12,6Suz)$. So we arrive at (ii). 
Assume $\GC = Sp(V)$. Then $G$ is reducible on $L(\om_{4})$ (of dimension $429$) in the case $(d,L) = (12,2G_{2}(4))$, so we again arrive at (ii).
\hfill $\Box$

\end{document}